\newcommand{\ts}{\textstyle}
\newcommand{\bwedge}{{\textstyle\bigwedge}}
\newcommand{\bph}{{\boldsymbol \varphi}}
\newcommand{\col}{\colon}
\newcommand{\hra}{\hookrightarrow}
\newcommand{\id}{\operatorname{id}}
\newcommand{\lra}{\longrightarrow}
\newcommand{\Shift}{\mathsf{\Sigma}}
\newcommand{\Spec}{\operatorname{Spec}}
\newcommand{\Supp}{\operatorname{Supp}}
\newcommand{\Hom}[3]{\operatorname{Hom}_{#1}(#2,#3)}
\newcommand{\Rhom}[3]{\operatorname{\mathsf{R}Hom}_{#1}(#2,#3)}
\newcommand{\dtensor}[1]{\otimes^{\mathsf{L}}_{#1}}
\newcommand{\dcat}[1][S]{{\mathsf D}(#1)}
\newcommand{\dcatb}[1]{{\mathsf{D}_{\mathsf f}^{\mathsf b}}(#1)}
\newcommand{\dcatc}[1]{{\mathsf{D}^{\mathsf b}_{\lift.9,\mathsf c,}}(#1)}
\newcommand{\dc}[1]{{\mathsf{D}^{}_{\lift.9,\mathsf c,}}(#1)}
\newcommand{\dcatf}[1]{{\mathsf P}(#1)}
\newcommand{\dcatg}[1]{{\mathsf G}(#1)}
\newcommand{\tra}{\twoheadrightarrow}
\newcommand{\xra}{\xrightarrow}
\newcommand{\BZ}{{\mathbb Z}}
\newcommand{\CF}{{\mathsf F}}
\newcommand{\oCF}{{\,\overline{\!\mathsf F}}}
\newcommand{\rc}[2]{\mathsf R\mathsf c^{}_{\<#1}(#2)}
\newcommand{\rp}[2]{\mathsf R\mathsf p^{}_{\<#1}(#2)}
\newcommand{\oc}[1]{\mathsf O\mathsf C(#1)}
\theoremstyle{plain}
\newtheorem{theorem}{Theorem}[section]
\newtheorem{proposition}[theorem]{Proposition}
\newtheorem{corollary}[theorem]{Corollary}
\newtheorem{itheorem}{Theorem}
\newtheorem{subtheorem}{Theorem}[subsection]
\newtheorem{subproposition}[subtheorem]{Proposition}
\newtheorem{sublemma}[subtheorem]{Lemma}
\newtheorem{subcorollary}[subtheorem]{Corollary}
\theoremstyle{definition}
\newtheorem{definition}[theorem]{Definition}
\newtheorem{example}[theorem]{Example}
\newtheorem{chunk}[theorem]{}
\newtheorem{subdefinition}[subtheorem]{Definition}
\newtheorem{subexample}[subtheorem]{Example}
\newtheorem{subremark}[subtheorem]{\rm{\emph{Remark}}}
\theoremstyle{remark}
\newtheorem{remark}[theorem]{Remark}
\font\cms=cmss10
\font\cmss=cmss8
\font\cmt=cmtex10
\newcommand{\D}{{\text {\cms\char'104}}}
\newcommand{\qc}{{\text {\cmss\char'161}{{\text {\cmss\char'143}}}}}
\newcommand{\Dqc}{\D_\qc\mkern1mu}
\newcommand\Dqcpl{\D_\qc^{\lift.95,\text{\cmt\char'053},}}
\newcommand{\Dc}{\D_{{{\text {\cmss\char'143}}}}\mkern1mu}
\newcommand\Dcmi{\D_{{{\text {\cmss\char'143}}}}^{\lift.95,\text
{\cmt\char'055},}}
\newcommand\Dcpl{\D_{{{\text {\cmss\char'143}}}}^{\lift.95,\text
{\cmt\char'053},}}
\newcommand{\Otimes}[1]{\otimes_{#1}^{\mathsf L}}
\newcommand\iso{{\mkern8mu\longrightarrow \mkern-25.5mu{}^\sim\mkern17mu}}
\newcommand{\<}{\mkern-1mu}
\renewcommand{\>}{\mkern1mu}
\newcommand{\OX}{{\mathcal O_{\<\<X}}}
\newcommand{\OY}{{\mathcal O_{\<\<Y}}}
\newcommand{\OZ}{{\mathcal O_{\<\<Z}}}
\newcommand{\OTX}{\dtensor{\<\<X}}
\newcommand{\bL}{\mathsf L}
\newcommand{\R}{\mathsf R}
\newcommand{\RH}{\mathsf{R}\>\mathcal{H}om}
\newcommand{\Rf}{\R f_{\<*}}
\newcommand{\Bigcheck}{\textup{\begin{Huge}$\mspace{-4mu}\lift{1},\check{},\mspace{4mu}$\end{Huge}}}
\newcommand{\bigcheck}[1]{{#1}^\dagger}
\newcommand{\upl}{{\lift.75,\text{\cmt\char'053},}}
\newcommand{\umi}{{\lift.85,\text{\cmt\char'055},}}
\def\lift#1,#2,{\vbox to 0pt{\vskip-#1 ex\hbox{$\scriptstyle #2$}\vss}}
\def\bilap#1{\hbox to 0pt{\hss#1\hss}}
\def\Rarrow#1{\bilap{\hbox to#1{\rightarrowfill}}}
\def\Larrow#1{\bilap{\hbox to#1{\leftarrowfill}}}
\def\Equals#1{\bilap
 %               {\hbox{\rule[4pt]{#1} {.5pt}}% for 12 point
              {\hbox{\rule[3.5pt]{#1} {.5pt}}%  for conm-p-l
               \kern-#1
               \hbox{\rule[1pt]{#1}{.5pt}}%
             }}
\begin{document}

\title[Reflexivity and rigidity over schemes]{Reflexivity and rigidity for complexes, II:\\ schemes}

\author[L.\,L.\,Avramov]{Luchezar L.~Avramov} \address{Department of Mathematics, 
University of Nebraska,  Lincoln, NE 68588, U.S.A.}  
\email {avramov@math.unl.edu}

\author[S.\,B.\,Iyengar]{Srikanth B.~Iyengar} \address{Department of Mathematics, 
University of Nebraska, Lincoln, NE 68588, U.S.A.}  
\email{iyengar@math.unl.edu}

\author[J.\,Lipman]{Joseph Lipman} \address{Department of Mathematics, 
Purdue University, W. Lafayette, IN 47907, U.S.A.} 
\email{jlipman@purdue.edu}

\thanks{Research partly supported by NSF grants DMS 0803082 (LLA) and
DMS 0903493 (SBI), and NSA grant H98230-06-1-0010 (JL).  JL thanks MSRI
for supporting a visit during which part of this paper was written.}

\keywords{Perfect complexes, G-perfect complexes, relative dualizing complexes, reflexivity, rigidity, semidualizing complexes}

\subjclass[2000]{Primary  14A15, 14B25. Secondary 13D05}

%\date{\today}

\begin{abstract} 
We prove basic facts about reflexivity in derived categories over
noetherian schemes; and about related notions such as semidualizing
complexes, invertible complexes, and Gorenstein-perfect maps. Also, we
study a notion of rigidity with respect to semi\-dualizing complexes,
in particular, relative dualizing complexes for Gorenstein-perfect
maps. Our results include  theorems of Yekutieli and Zhang concerning
rigid dualizing complexes on schemes. This work is a continuation of
part I, which dealt with commutative rings.
 \end{abstract}

\maketitle

\setcounter{tocdepth}{2}

\tableofcontents

\section*{Introduction} 

This paper is concerned with properties of complexes over noetherian
schemes, that play important roles in duality theory.  Some such
properties, like (derived) reflexivity, have been an integral part of
the theory since its inception; others, like rigidity, appeared only
recently.  Our main results reveal new aspects of such concepts and
establish novel links between them.  

Similar questions over commutative
rings were examined  in \cite{AIL}. Additional topics treated there are semidualizing
complexes, complexes of finite Gorenstein dimension, perfect complexes, 
invertible complexes, and rigidity with respect to semidualizing
complexes, as well as versions of these notions 
relative to  essentially-finite-type ring-homomorphisms that have finite flat 
dimension or, more generally, finite Gorenstein dimension. In this sequel we globalize such
considerations, that is, extend them to the context of schemes. 
%In particular, we introduce and use several classes
%of scheme-maps with remarkable properties, originally identified and
%examined for commutative rings.
  \medskip

This work is a substantial application of Grothendieck duality theory,
seen as the study of a twisted inverse image pseudofunctor~$(-)^!$~defined
on appropriate categories of schemes. Duality theory provides
interpretations of the local facts, a technology to globalize them,
and suggestions for further directions of development.

To place our work in context, we review two methods for proving existence
of~$(-)^!$~for noetherian schemes and separated scheme-maps of finite
type.  The original approach of Grothendieck involves the construction of
a `coherent family' of dualizing complexes; details are presented in
\cite{H} and revised in \cite{Co1}.  An alternative method, based on
Nagata compactifications and sketched in \cite{De} and \cite{Ve}, is
developed in~\cite{Lp2}.  Recent extensions of these approaches to maps
essentially of finite type provide a principal object of this study---the
concept of rigidity---and one of our main tools.

Indeed, rigid dualizing complexes over rings, introduced by Van den Bergh 
\cite{VdB} in the context of non-commutative algebraic geometry, are used by
Yekutieli and Zhang \cite{YZ1, YZ2} in an ongoing project aiming to simplify
Grothendieck's construction of~$(-)^!$, and extend it to schemes essentially 
of finite type over a regular ring of finite Krull dimension.  On the other
hand, Nayak \cite{Nk2} proved an analog of Nagata's compactification
theorem and extended the pseudofunctor~$(-)^!$ to the category of all
noetherian schemes and their separated maps essentially of finite type.
We work in this category.
  \medskip

Next we describe in some detail the notions and results of the paper.
Comparison with earlier work is postponed until the end of this
Introduction.
  \medskip

Let $X$ be a scheme, $\D(X)$ the derived category of the category of
$\OX$-modules, and $\dcatc X\subset \D(X)$ the full subcategory whose objects are the
complexes with coherent homology that vanishes in all but finitely many
degrees.  For $F$ and $A$ in $\D(X)$, we say that $F$ is \emph{derived\/
$A$-reflexive} if both $F$ and $\RH_{\<X}(F\<,A)$ are in~$\dcatc X$,
and if the canonical $\D(X)$-map is an isomorphism
$$
F\iso \RH_{\<X}\big(\RH_{\<X}(F\<,A),A\big).
$$
When $\OX$ itself is derived\/ $A$-reflexive the complex $A$ is said to be 
\emph{semidualizing}.   (The classical notion of \emph{dualizing complex} 
includes the additional requirement that $A$ be isomorphic, in $\D(X)$, 
to a bounded complex of injective sheaves.)
  \medskip

In Chapter 1 we prove basic results about semidualizing complexes in
$\D(X)$, and examine their interplay with perfect complexes, that is,
complexes $F\in\dcatc X$ 
such that for every $x\in X$ the stalk $F_x$ is 
isomorphic in $\D(\mathcal O_{\<\<X\<,\>x\>})$ to a bounded complex of 
flat $\mathcal O_{\<\<X\<,\>x\>}$-modules 
(or equivalently, such that $F$ is isomorphic in $\D(X)$ to a bounded complex
of flat $\OX$-modules).
 
  \medskip

In Chapter 2 we explore conditions on a scheme-map $f\colon X\to Y$
that allow for the transfer of properties, such as reflexivity, along
standard functors $\D(Y)\to\D(X)$.

One such condition involves the notion of \emph{perfection relative to 
$f$}, defined for $F$ in $\dcatc X$ by replacing $\mathcal O_{X,x\>}$ 
with $\mathcal O_{Y,\>f(x)\>}$ in the definition of perfection.  If this 
condition holds with $F=\OX$, then $f$ is said to be perfect.  Flat maps are classical examples.
We relate the basic global notions to ones that are local not only in
the Zariski topology, but also in the flat topology; that is, we find
that they behave rather well under faithfully flat maps. (This opens
the way to examination of more general sites, not undertaken here.)
As a sample of results concerning ascent and descent along perfect maps,
we quote from Theorem~\ref{descent} and Corollary \ref{lift semid}:

\begin{itheorem}
\label{iB-compn}
Let\/ $f\col X\to Y$ be a perfect map and $B$ a complex in\/ $\Dcpl(Y)$. 

If $M\in\D(Y)$ is derived\/ $B$-reflexive, then the complex $\bL f^*\<\<M$
in\/ $\D(X)$ is both derived\/ $\bL f^*\<\<B$-reflexive and derived\/
$f^!\<B$-reflexive. For\/ $M=\OY$ this says that if\/ $B$~is semidualizing
then so are $\bL f^*\<\<B$ and $f^!B.$

For each of these four statements, the converse holds if $M$ and\/~$B$
are in $\dcatc Y$,  and\/ $f$ is faithfully flat, or $f$ is perfect,
proper and surjective.
 \end{itheorem}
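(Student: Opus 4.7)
The plan is to reduce the four ascent claims to two base-change formulas available for perfect maps, and then run descent as a separate argument. The hypothesis $B\in\Dcpl(Y)$ is crucial in guaranteeing that the derived functors involved behave well.

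\emph{Ascent.} The essential input, which I would establish first (using results of Chapter~2 on perfect maps), is a natural isomorphism
\[
\bL f^{*}\RH_{\<Y}(M,B)\iso\RH_{\<X}(\bL f^{*}\<\<M,\bL f^{*}\<\<B)
\]
together with the fact that $\bL f^{*}$ preserves membership in $\dcatc$ along a perfect $f$. Granting these, if $M$ is derived $B$-reflexive then the right-hand side lies in $\dcatc X$; and applying $\bL f^{*}$ to the biduality isomorphism for $(M,B)$ and substituting the formula twice (once for the outer $\RH$, once for the inner) yields the biduality isomorphism for $(\bL f^{*}\<\<M,\bL f^{*}\<\<B)$. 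To upgrade $\bL f^{*}\<\<B$-reflexivity to $f^{!}B$-reflexivity, I would invoke the formula $f^{!}B\simeq f^{!}\OY\dtensor{X}\bL f^{*}\<\<B$ for perfect $f$, combined with invertibility of $f^{!}\OY$: tensoring a derived $A$-reflexive complex with an invertible complex $L$ gives a derived $(L\dtensor{}A)$-reflexive complex, since $\RH$ and biduality commute with such a twist. The semidualizing statements are the case $M=\OY$.

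\emph{Descent, faithfully flat case.} When $f$ is faithfully flat, $\bL f^{*}=f^{*}$ is exact and conservative on complexes with coherent homology: both ``lies in $\dcatc$'' and ``is an isomorphism'' can be tested after pullback. Starting from reflexivity of $\bL f^{*}\<\<M$ for $\bL f^{*}\<\<B$ and reading the ascent base-change formula backwards reconstructs the downstairs biduality isomorphism. For the $f^{!}B$-converse, the invertibility argument above lets me cancel the invertible factor $f^{!}\OY$ and reduce to the $\bL f^{*}\<\<B$ case.

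\emph{Descent, perfect proper surjective case.} Here the natural conservative tool is $\R f_{*}$, together with Grothendieck duality $\R f_{*}\RH_{\<X}(\bL f^{*}\<\<M,f^{!}B)\simeq\RH_{\<Y}(M,\R f_{*}f^{!}B)$ and the trace map $\R f_{*}f^{!}B\to B$. The main obstacle will be this step: showing that the combination of trace and duality suffices to detect $\dcatc$-membership and the biduality isomorphism downstairs, so that reflexivity upstairs actually descends to $Y$. This requires the finer behaviour of $f^{!}$ under perfect proper surjections developed in Chapter~2---in particular the relationship between $f^{!}\OY$, invertibility, and the trace, applied after reducing the $f^{!}B$-case to the $\bL f^{*}\<\<B$-case via the projection formula and then descending the $\bL f^{*}\<\<B$-reflexivity through $\R f_{*}$. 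I expect this to be the technical heart of the theorem.
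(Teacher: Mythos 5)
Your faithfully flat descent is essentially the paper's argument, and the first ascent step (the base-change formula for $\RH$ along $\bL f^*$, plus the fact that an isomorphism suffices by Proposition~\ref{greflexive}(ii)) is also on target. The gap is your claim that $f^!\OY$ is invertible for a perfect map $f$. That is false: in the paper's sense, invertible means both perfect \emph{and} semidualizing. For perfect $f$, the complex $f^!\OY$ is semidualizing (so $f$ is G-perfect) and is perfect \emph{relative to} $f$, but it is not a perfect $\OX$-complex in general; that extra condition is exactly what defines quasi-Gorenstein maps (a finite flat non-Gorenstein map is already a counterexample). Consequently your appeal to Corollary~\ref{refl and inv} to pass between $\bL f^*\<\<B$-reflexivity and $f^!B$-reflexivity fails, and this breaks both the second half of your ascent argument and the reduction you propose inside the proper surjective descent.

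The paper achieves the $\bL f^*\<\<B \leftrightarrow f^!B$ equivalence for $\bL f^*\<\<M$ (Lemma~\ref{B-compn}) without invertibility, by two different means. For the biduality isomorphism it uses $\beta(M,N,f)\colon\RH_{\<X}(\bL f^*\<\<M,\bL f^*\<\<N)\iso\RH_{\<X}(f^!M,f^!N)$ from Lemma~\ref{! and *} (proved by a way-out reduction to $M=N=\OY$), sandwiched between \eqref{first_iso} and \eqref{second_iso}. For the boundedness transfer it uses only the weaker fact $\Supp_{\<X}f^!\OY=X$ (a consequence of semidualizing, not invertibility), applied to $f^!\OY\dtensor{\<X}\RH_{\<X}(\bL f^*\<\<M,\bL f^*\<\<B)\simeq f^!\RH_Y(M,B)$. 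For the proper surjective converse, the paper does not pass through $\bL f^*\<\<B$ at all: Proposition~\ref{dir image and refl} shows directly, via sheafified duality, that $\R f_*$ of an $f^!B$-reflexive complex is $B$-reflexive (again invoking the existence-of-an-isomorphism criterion~\ref{greflexive}(ii), since the composite is not visibly the canonical $\delta$), and then the projection formula $\R f_*\bL f^*\<\<M\simeq\R f_*\OX\dtensor{\<\<Y}M$ together with perfection and full support of $\R f_*\OX$ and Theorem~\ref{descent through perfect} finishes descent.
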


The perfection of $f$ can be recognized by its \emph{relative dualizing 
complex}, $f^!\OY$.  Indeed, $f$ is perfect if and only if $f^!\OY$ is
relatively perfect.  Furthermore, if $f$ is perfect, then
every perfect complex in $\D(X)$ is derived $f^!\OY$-reflexive.  

One sees, in particular, that when $f$ is perfect the complex $f^!\OY$
is semidualizing.  We take this condition as the definition of
\emph{G-perfect} maps. (Here G stands for Gorenstein.) They form a class  significantly
larger than that of perfect maps.  For instance, when the scheme $Y$ is
Gorenstein \emph{every} scheme map $X\to Y$ is G-perfect.  In
\S\ref{reldual}, we prove some basic properties of such maps, and, more
generally, of $\OX$-complexes that are derived $f^!\OY$-reflexive. For
such complexes there exist nice dualities with respect to the relative
dualizing complex (see Corollary~\ref{dualities}).

\emph{Quasi-Gorenstein} maps are defined by the condition that $f^!\OY$ is
perfect.  A very special case has been extensively studied: a \emph{flat}
map is quasi-Gorenstein if and only if all its fibers are Gorenstein
schemes.  On the other hand, \emph{every} map of Gorenstein schemes
is quasi-Gorenstein.  Every quasi-Gorenstein map is G-perfect.

All these classes interact in many pleasing ways with composition and
base change of scheme-maps, as explicated mainly in \S\ref{compbc}.
Such results generalize, and often strengthen, theorems about ascent and
descent along perfect maps.  For example, several assertions in Theorem
\ref{iB-compn} are obtained by taking $f=\id^X$ in the following theorem,
which is proved as part of Proposition \ref{qG and G-perfect}:

\begin{itheorem}
Let\/ $Z\xra{g}X\xra{f} Y$ be scheme-maps, with $f$ quasi-Gorenstein. 

The composition $f\<g$~is G-perfect if and only if so is~$g$.

Also, if\/ $g$ is quasi-Gorenstein then so is $f\<g$.
 \end{itheorem}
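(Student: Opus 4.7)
The plan is to reduce both assertions to properties of $g^!$ applied to the complex $L:=f^!\OY$, via the standard composition isomorphism for the twisted inverse image pseudofunctor:
$$
(fg)^!\OY\;\simeq\;g^!(f^!\OY)\;=\;g^!L.
$$

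First I would exploit the hypothesis that $f$ is quasi-Gorenstein: by definition $L$ is perfect on $X$, and since (as the text notes) every quasi-Gorenstein map is G-perfect, $L$ is also semidualizing. The results on the interplay of perfect and semidualizing complexes developed in Chapter~1 then force $L$ to be \emph{invertible}, i.e.\ Zariski-locally on $X$ a shifted line bundle. The next ingredient is the projection formula
$$
g^!(L)\;\simeq\;g^!\OX\Otimes{\OZ}\bL g^*L,
$$
valid when $L$ is perfect; note that $\bL g^*L$ is then invertible on $Z$ (invertibility being a local property), hence automatically both perfect and semidualizing.

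With this rewriting of $(fg)^!\OY$ both assertions become formal. For the first one, tensoring with the invertible complex $\bL g^*L$ is an autoequivalence of $\D(Z)$ that preserves derived reflexivity and hence the semidualizing property; therefore $(fg)^!\OY$ is semidualizing if and only if $g^!\OX$ is, which is precisely the claimed equivalence $fg$ G-perfect $\Longleftrightarrow$ $g$ G-perfect. For the second, if $g$ is quasi-Gorenstein then $g^!\OX$ is perfect, and since perfection is preserved by $\bL g^*$ and by the derived tensor product, the complex $g^!\OX\Otimes{\OZ}\bL g^*L\simeq(fg)^!\OY$ is perfect, whence $fg$ is quasi-Gorenstein.

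The main obstacle is the combination of the two inputs used at the outset: the assertion that a perfect semidualizing complex is invertible (a scheme-theoretic globalization of the corresponding commutative-algebra fact from \cite{AIL}), together with the projection formula for $g^!$ applied to perfect complexes in the full generality considered here, namely separated maps essentially of finite type between noetherian schemes in Nayak's setting. Once these ingredients are in hand, the proof reduces to transporting properties along an autoequivalence of $\D(Z)$ and to the stability of perfection under $\bL g^*$ and $\Otimes{\OZ}$.
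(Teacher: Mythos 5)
Your overall strategy matches the paper's: rewrite $(fg)^!\OY\simeq g^!\OX\Otimes{Z}\bL g^*\<\<L$ with $L:=f^!\OY$ invertible, then invoke the stability of the semidualizing and invertible properties under tensoring by an invertible complex (Corollaries~\ref{refl and inv} and~\ref{tensor inv}). There is, however, a gap in how you establish the key isomorphism. You invoke a ``projection formula $g^!L\simeq g^!\OX\Otimes{Z}\bL g^*L$, valid when $L$ is perfect,'' but the map~\eqref{!tensor} underlying this formula is only known to be an isomorphism when the \emph{map} $g$ is perfect (Remark~\ref{char perfect}(iv)); here $g$ is an arbitrary scheme-map, so the formula cannot be quoted off the shelf. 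The paper's proof instead verifies the isomorphism directly: for \emph{invertible} $F$ the question whether $g^!\OX\Otimes{Z}\bL g^*\<\<F\to g^!F$ is an isomorphism is local (using~\S\ref{any base change}), and by Theorem~\ref{thm:semid}(iii$'$) invertible complexes are Zariski-locally shifts of $\OX$, which reduces everything to the trivial case $F=\OX$. Once you insert this local reduction your argument coincides with the paper's. Two smaller imprecisions: the paper defines quasi-Gorenstein as $f^!\OY$ being \emph{invertible}, not merely perfect, so your opening detour from perfection to invertibility via G-perfectness is unnecessary (and, read literally, circular, since the fact that quasi-Gorenstein implies G-perfect is itself deduced from invertibility); and in the final step you should conclude that $(fg)^!\OY$ is invertible, not just perfect --- which does hold, being a tensor product of two invertible complexes.
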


In Chapter 3 we define \emph{rigidity} with respect to an arbitrary
semi\-dualizing complex $A\in\D(X)$.  An $A$-rigid structure on $F$
in $\dcatc X$ is a $\D(X)$-isomorphism
$$
\rho\col F\iso\RH_{\<X}(\RH_{\<X}(F\<,A),F).
$$
We say that $(F\<,\rho)$ is an \emph{$A$-rigid pair}; $F\in\dcatc
X$ is an $A$-\emph{rigid complex} if such an isomorphism 
$\<\rho$ exists. Morphisms of rigid pairs are defined in the obvious 
way.   

In Theorem \ref{thm:global rigidity} we establish the basic fact about
rigid pairs:

\pagebreak[3]

\begin{itheorem}\label{icanonical}
Let $A$ be a semi\-dualizing complex in $\D(X)$.

For each quasi-coherent $\OX$-ideal $I$ such that $I^2=I,$ there exists
a canonical $A$-rigid structure on $IA\>;$ and every\/ $A$-rigid pair
is uniquely isomorphic in\/ $\D(X)$ to such an $IA$ along with its
canonical structure.
  \end{itheorem}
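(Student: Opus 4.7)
My approach would follow the strategy of the analogous affine theorem from part~I~\cite{AIL}, globalizing it to the derived category of a noetherian scheme. The plan has three parts: construction of a canonical $A$-rigid structure on $IA$ for each idempotent quasi-coherent~$I$; extraction of an idempotent ideal from an arbitrary $A$-rigid pair; and verification that these two constructions are mutually inverse on isomorphism classes, uniquely. A useful first reduction is to interpret $IA$ derivedly, namely as $I\dtensor{\OX}A$; the usual tensor--hom adjunction together with the semidualizing identity $\RH_X(A,A)\simeq\OX$ then yields a natural isomorphism
\[
\RH_X(IA,A)\>\simeq\>\RH_X(I,\OX),
\]
which is the hinge of every subsequent computation.

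To build the canonical structure $\rho_I$ on $IA$, I would plug the above identity into $\RH_X(\RH_X(IA,A),IA)$ and use adjunction again to obtain
\[
\RH_X\bigl(\RH_X(IA,A),\>IA\bigr)\>\simeq\>\RH_X\bigl(\RH_X(I,\OX),\,I\bigr)\dtensor{\OX}\<\<A.
\]
A key lemma to establish is a \emph{derived idempotence} statement, namely that $I^2=I$ implies $\RH_X(\RH_X(I,\OX),I)\simeq I$ in $\D(X)$ via the canonical biduality map. Tensoring this with $A$ then gives an isomorphism $IA\iso\RH_X(\RH_X(IA,A),IA)$, which I would take as the definition of $\rho_I$. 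Naturality of adjunctions makes $\rho_I$ canonical.

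For the classification, given an $A$-rigid pair $(F\<,\rho)$, I would first prove uniqueness of morphisms of $A$-rigid pairs, as an easy consequence of $\RH_X(A,A)\simeq\OX$ combined with the universal property of~$\rho$. This uniqueness allows the assignment $(F\<,\rho)\mapsto I(F,\rho)$ to be performed sheaf-theoretically: on any affine open $U=\Spec R\subseteq X$, the affine theorem of part~I attaches to $(F|_U,\rho|_U)$ an idempotent ideal $I_U\subseteq R$ together with a distinguished isomorphism $F|_U\simeq I_U\cdot A|_U$. The uniqueness of morphisms of rigid pairs forces the $I_U$ to patch to a single quasi-coherent idempotent ideal $I\subseteq\OX$, and the local isomorphisms to patch to a global one $F\iso IA$ intertwining $\rho$ with $\rho_I$.

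The main obstacle I expect is precisely the globalization, and within it the derived idempotence assertion $\RH_X(\RH_X(I,\OX),I)\simeq I$; the identity $I^2=I$ lies in the underived category, and translating it into a derived-category statement requires either a direct sheaf-theoretic argument or a stalkwise reduction invoking the ring-theoretic case of part~I. A subtlety is that $I$ need not be coherent even though $IA$ lies in $\dcatc X$, so one cannot naively apply the affine results pointwise to $I$ itself. Once derived idempotence is established and uniqueness of morphisms of rigid pairs is proved, the construction of~$\rho_I$, the extraction of~$I$ from $(F\<,\rho)$, and the mutual-inverseness via gluing all proceed by bookkeeping with adjunctions.
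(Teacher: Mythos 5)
Your overall strategy is sound and parallels the paper's: reduce to the affine theorem of part~I, use the semidualizing identity $\RH_X(A,A)\simeq\OX$ to control endomorphisms, and glue. But there are two significant issues.

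First, you dismiss the uniqueness of morphisms of $A$-rigid pairs as ``an easy consequence of $\RH_X(A,A)\simeq\OX$ combined with the universal property of $\rho$,'' but there is no universal property of $\rho$ in the usual sense, and this step is not easy — it is the core of Theorem~\ref{unique iso}. The actual argument requires tracking how the $\textup{H}^0(X,\OX\<)$-action interacts with the double dual: multiplication by $\alpha$ on $F$ induces multiplication by $\alpha$ on $\bigcheck F$ and hence multiplication by $\alpha^2$ on $\RH_X(\bigcheck F,F)$ (Lemma~\ref{mult and RHom}). From $\rho\>u_F=u_H^2\>\rho=\rho\>u_F^2$ one deduces $u=u^2$, and then for an automorphism $u=1$. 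Without this observation the claimed uniqueness — and hence the patching of the locally constructed $I_U$'s, which depends on it — is not established.

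Second, your worry that ``$I$ need not be coherent'' is misplaced: on a noetherian scheme a quasi-coherent idempotent ideal is automatically of finite type (this is built into Definition~\ref{defidem} and, via Nakayama's lemma, forces $I_x\in\{0,\mathcal O_{X,x}\}$), so it is coherent and the affine results apply directly. Concerning the route itself: the paper applies \cite[7.2]{AIL} locally only to establish that $\Supp_X F$ is open-and-closed, then passes to the case $\Supp_X F=X$ and runs a global argument showing $L:=\bigcheck F$ is invertible and in fact $L\simeq\OX$, so $F\simeq A$; whereas you propose to produce $I_U$ affine-locally from the full affine theorem and patch. That is a legitimate alternative, and would avoid the invertibility step, but it leans even more heavily on the uniqueness statement you have left unproved. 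Also, your ``derived idempotence'' lemma $\RH_X(\RH_X(I,\OX\<),I)\simeq I$ is correct but heavier than necessary: for an idempotent ideal the paper's Corollary~\ref{idem and Hom} gives $\RH_X(I,G)\simeq I\dtensor{\<\<X}G\simeq IG$ for all $G$, which yields your isomorphism (and the canonical $\rho_I$) immediately.
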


The theorem validates the term `rigid', as it implies that the only
automorphism of a rigid pair is the identity.  It also shows that
isomorphism classes of $A$-rigid complexes correspond bijectively to the
open-and-closed subsets of $X$. A more precise description---in terms of
those subsets---of the skeleton of the category of rigid pairs appears
in Theorem~\ref{rigid and clopen}.

In the derived category, gluing over open coverings is usually not
possible; but it is for idempotent ideals (Proposition~\ref{idem gluing}).
Consequently the uniqueness  expressed by Theorem~\ref{icanonical}
leads to gluing for rigid pairs, in the following strong sense:

\begin{itheorem}
\label{igluing}
For any open cover\/ $(U_\alpha)$ of\/ $X$ and  family\/
$(F_\alpha,\rho_\alpha)$ of\/ $A|_{U_\alpha}\!$-rigid pairs such that for
all\/ $\alpha,\;\alpha'$ the restrictions of\/ $(F_\alpha,\rho_\alpha)$
and\/ $(F_{\alpha'},\rho_{\alpha'})$ to\/ $U_\alpha\cap U_{\alpha'}$ are
isomorphic, there is a unique (up to unique isomorphism)\/ $A$-rigid pair
$(F\<,\rho)$, such that for each $\alpha$, $(F\<,\rho)|_{U_\alpha}\simeq
(F_\alpha,\rho_\alpha)$.
  \end{itheorem}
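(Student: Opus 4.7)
The plan is to reduce the gluing problem for rigid pairs to a gluing problem for quasi-coherent idempotent ideals, so that Theorem~\ref{thm:global rigidity} and Proposition~\ref{idem gluing} together do the work. By Theorem~\ref{thm:global rigidity} applied on each $U_\alpha$, the given rigid pair $(F_\alpha,\rho_\alpha)$ is uniquely isomorphic to $(I_\alpha(A|_{U_\alpha}),\rho^{\mathrm{can}}_{I_\alpha})$ for a unique quasi-coherent idempotent ideal $I_\alpha\subseteq \OX|_{U_\alpha}$, where $\rho^{\mathrm{can}}_I$ denotes the canonical $A$-rigid structure attached to an idempotent ideal~$I$.

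First I would translate the overlap compatibility into equality of the $I_\alpha$'s on intersections. The hypothesis provides an isomorphism of rigid pairs $(F_\alpha,\rho_\alpha)|_{U_\alpha\cap U_{\alpha'}}\simeq(F_{\alpha'},\rho_{\alpha'})|_{U_\alpha\cap U_{\alpha'}}$; passing through the canonical models yields an isomorphism between the rigid pairs $(I_\alpha(A|_{U_\alpha\cap U_{\alpha'}}),\rho^{\mathrm{can}}_{I_\alpha|_{U_\alpha\cap U_{\alpha'}}})$ and $(I_{\alpha'}(A|_{U_\alpha\cap U_{\alpha'}}),\rho^{\mathrm{can}}_{I_{\alpha'}|_{U_\alpha\cap U_{\alpha'}}})$. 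The uniqueness clause of Theorem~\ref{thm:global rigidity} then forces $I_\alpha|_{U_\alpha\cap U_{\alpha'}}=I_{\alpha'}|_{U_\alpha\cap U_{\alpha'}}$ as subsheaves of~$\OX$.

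Next I would apply Proposition~\ref{idem gluing} to obtain a unique quasi-coherent idempotent ideal $I\subseteq\OX$ with $I|_{U_\alpha}=I_\alpha$ for every $\alpha$, and set $(F,\rho):=(IA,\rho^{\mathrm{can}}_{I})$. Because the canonical rigid structure is manufactured out of local data (the ideal $I$ and tautological maps attached to $A$), its formation commutes with Zariski restriction, so $(F,\rho)|_{U_\alpha}$ is canonically isomorphic to $(I_\alpha(A|_{U_\alpha}),\rho^{\mathrm{can}}_{I_\alpha})$, hence to $(F_\alpha,\rho_\alpha)$.

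For uniqueness, any other rigid pair $(F',\rho')$ satisfying the restriction condition is, by Theorem~\ref{thm:global rigidity}, uniquely isomorphic to $(I'A,\rho^{\mathrm{can}}_{I'})$ for a unique idempotent ideal $I'$; the compatibility on each $U_\alpha$ forces $I'|_{U_\alpha}=I_\alpha$, whence $I'=I$ by the uniqueness in Proposition~\ref{idem gluing}. The resulting isomorphism $(F',\rho')\simeq (F,\rho)$ is itself unique because the only automorphism of a rigid pair is the identity, a further consequence of Theorem~\ref{thm:global rigidity}. The main obstacle in the argument is verifying that the canonical rigid structure is functorial under Zariski restriction, i.e.\ that the bijection between rigid pairs and idempotent ideals provided by Theorem~\ref{thm:global rigidity} is compatible with localization; this is a bookkeeping matter rather than a conceptual one, but it is precisely what converts the derived-category gluing problem, which has no general solution, into the tractable problem for idempotent ideals handled by Proposition~\ref{idem gluing}.
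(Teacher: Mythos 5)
Your proof is correct and follows essentially the same strategy as the paper's: convert rigid pairs to idempotent ideals via Theorem~\ref{thm:global rigidity}, glue the ideals, and reconvert. The paper actually states and proves the more general flat-topology version (Theorem~\ref{gluing}), of which the Zariski statement in the Introduction is a special case: there, a single faithfully flat map $g\colon Z\to X$ replaces the open cover, a single idempotent $\mathcal O_Z$-ideal $J$ replaces your family $(I_\alpha)$, and the compatibility hypothesis $\pi_1^*J\simeq\pi_2^*J$ on $Z\times_XZ$ replaces your equality of $I_\alpha$ and $I_{\alpha'}$ on $U_\alpha\cap U_{\alpha'}$. Two small remarks on your write-up. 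First, the uniqueness-of-isomorphism statements you use (the only automorphism of a rigid pair is the identity; there is a unique isomorphism between rigid pairs with the same support) come from Theorem~\ref{unique iso}, not directly from Theorem~\ref{thm:global rigidity}, and it is Theorem~\ref{unique iso} together with Corollary~\ref{idem and clopen} that converts an isomorphism of rigid pairs into an \emph{equality} of idempotent ideals. Second, the ``bookkeeping'' step you flag --- compatibility of the canonical rigidifying isomorphism with Zariski restriction --- is sidestepped entirely in the paper's proof: rather than insisting on the canonical $\rho^{\mathrm{can}}_I$, the paper takes \emph{any} rigidifying isomorphism $\rho$ for $F=IA$ and then invokes the uniqueness of isomorphism in Theorem~\ref{unique iso} on each $U_\alpha$ (where the supports already agree) to produce the required compatibilities $(F,\rho)|_{U_\alpha}\simeq(F_\alpha,\rho_\alpha)$. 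This removes the only unverified step in your argument, so you may want to adopt that device rather than chase the functoriality of $\rho^{\mathrm{can}}$.
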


This gluing property holds even under the flat topology, see
Theorem~\ref{gluing}.\vspace{1pt}

In \S\ref{Relatively rigid complexes} we study complexes that are
\emph{relatively rigid}, that is, rigid with respect to the relative
dualizing complex $f^!\OY$ of a G-perfect map $f\colon X\to Y$
(a complex that is, by the definition of such maps, semidualizing).
As a consequence of gluing for rigid complexes under the flat topology,
gluing for relatively rigid complexes holds under the \'etale topology,
see Proposition~\ref{etale gluing}.

Relative rigidity behaves naturally with respect to (G-)perfect
maps, in the sense that certain canonical isomorphisms from duality
theory, involving relative dualizing complexes,\vspace{.5pt} respect
the additional rigid structure.  In Corollary~\ref{g! rigid} we
show that, when $g$ is perfect, the twisted inverse image functor
$g^!$ preserves relative rigidity; and also, for a composition
$Z\xrightarrow{\lift.7,g,}X\xrightarrow{\lift1.05,f,}Y$ where $f$ is
G-perfect, we demonstrate the interaction of rigidity with the
canonical isomorphism
$$
g^!\OX\Otimes{Z}\bL g^*\<\<f^!\OY\iso (f\<g)^!\OY.
$$
In Corollary~\ref{rigidity and base change} we do the same with respect to
flat base change.  Such results are obtained as applications of simple 
necessary and sufficient condition for additive functors of rigid complexes 
to be liftable to rigid pairs, detailed in Theorem~\ref{extend functors}.

The results above can be applied to complete some work started in 
\cite{AILN}.  In that paper, we associated a relative dualizing complex 
to each essentially-finite-type homomorphism of commutative rings,
but did not touch upon the functoriality properties of that complex.  
This aspect of the construction can now be supplied by using the 
fact that the sheafification of the complex in \cite{AILN} is a relative 
dualizing complex for the corresponding map of spectra; see
Example \ref{relative_for_rings}. One can then use the results in~\S\ref{Relatively 
rigid complexes}, discussed above, to enrich the 
reduction isomorphism~\mbox{\cite[4.1]{AILN}} to a functorial one. 
For such applications, it is crucial to work with scheme-maps that are \emph{essentially} of finite type; 
this is one of our reasons for choosing this category in the setup for this paper.

  \medskip

Notions and notation related to scheme-maps, as well as pertinent material
from Gro\-thendieck duality theory, as used in this paper, are surveyed
in the appendices.

  \medskip

We finish the introduction by reviewing connections to earlier work.

  \medskip

The results in Chapter 1 are, for the most part, extensions to the
global situation of results proved over commutative rings in \cite{AIL};
the transfer is fairly straightforward. 

Homomorphisms of commutative noetherian rings that track Gorenstein-type
properties were introduced and studied in \cite{AF:Gor, AF:qG, IW},
without finiteness hypotheses.  Those papers are based on Auslander and
Bridger's \cite{AB} theory of Gorenstein dimension, which is defined in
terms of resolutions by finite modules or projective modules, and so
does not globalize.  The scheme-maps defined and studied in Chapter 2
are based on a different description of finite Gorenstein dimension for
ring-homomorphisms essentially of finite type, obtained in \cite[2.2]{AIL}.

The developments in Chapter 3 are largely motivated and inspired by
work of Yekutieli and Zhang, starting with \cite{YZ}.  One of their goals 
was to construct a new foundation for Grothendieck duality 
theory.  Making extensive use of differential graded algebras (DGAs), 
in \cite{YZ1, YZ2} they extended Van den Bergh's construction~\cite{VdB} 
of rigid \emph{dualizing} complexes to schemes \emph{essentially of finite 
type over a regular ring of finite Krull dimension}, and analyzed the 
behavior of such complexes under some types of perfect maps.  Theirs 
is a novel approach, especially with regard to the introduction of DGAs 
into the subject.  However, it remains to be seen whether, once all the 
details are fully exposed, it will  prove to be simpler than the much more 
generally applicable theory presented, for example, in \cite{Lp2}.

We come to rigidity from the opposite direction, presupposing duality
theory and making no use of DGAs.  The concept obtained in this way
applies to \emph{semi}dualizing complexes over arbitrary schemes, and
behaves well under \emph{all} perfect scheme-maps.  In the setup of 
\cite{YZ2}, the regularity of the base ring implies that relative dualizing 
complexes are actually dualizing.  To compare results, one also needs 
to know that, when both apply,  our concept of rigidity coincides with 
Yekutieli and Zhang's. This follows from the Reduction Theorem 
\cite[4.1]{AILN}; see \cite[8.5.5]{AIL}.  

%In this connection, one might 
%also compare Theorem~\ref{icanonical} above with \cite[Theorem 3.10]{YZ2}.

\section{Derived reflexivity over schemes}
  \label{globalization}
\numberwithin{equation}{subtheorem}

\emph{Rings are assumed to be commutative, and both rings and 
schemes are assumed to be noetherian.}

\subsection{Standard homomorphisms}
  \label{firstdefs}

Let $(X,\OX\<)$ be a scheme and $\D(X)$  the derived category of the
category of sheaves of $\OX$-modules.

Let $\D^\upl (X)$, resp.~$\D^\umi (X)$, be the full subcategory
of $\D(X)$ having as objects those complexes whose cohomology
vanishes in all but finitely many negative, resp.~positive,
deg\-rees;  set $\D^{\mathsf b}(X)\!:=\D^\upl (X)\cap \D^\umi
(X)$. For $\bullet = \text{\cmt\char'053}, \text{\cmt\char'055}$
or $\mathsf b$, let \mbox{$\D^\bullet_{\lift.9,\mathsf c,}(X),$
resp.~$\D^\bullet_{\lift.9,\mathsf q\mathsf c,}(X)$}, be the full
subcategory of $\D(X)$ with objects those complexes all of whose
cohomology sheaves  are  \emph{coherent,} resp.~\emph{quasi-coherent}.

To lie in $\D^{\bullet}_*(X)$ ($*=\mathsf c$ or $\mathsf q\mathsf c$,
and $\bullet = \text{\cmt\char'053}, \text{\cmt\char'055}$ or $\mathsf
b$) is a \emph{local condition}: if $(U_\alpha)$ is an open cover
of~$X\<$, then $F\in\D(X)$ lies in $\D^{\bullet}_*(X)$ if and only if
for all $\alpha$ the restriction $F|_{U_\alpha}$ lies in\vspace{1pt}
$\D^{\bullet}_*(U_\alpha)$.\looseness=-1

A number of canonical homomorphisms play a fundamental role
in this paper.

  \begin{subremark}
There is a standard trifunctorial isomorphism, relating the derived tensor 
and sheaf-homomorphism functors (see e.g., \cite[\S2.6]{Lp2}):\vspace{-1pt}
\begin{equation}
\label{sheafified adjunction}
\RH_{\<X}\<\<\big(E\Otimes{\<\<X}F,G\big)\!\iso\!\RH_{\<X}\<\big(E,\RH_{\<X}(F,G\>)\big)
\quad(E, F\<, G\in\<\D(\<X))
\end{equation}
from which one gets, by application of the composite functor $\textup{H}^0\R\Gamma(X,-)$,
\begin{equation}
\label{adjunction2}
\textup{Hom}_{\mathsf D(X)}\<\big(E\Otimes{\<\<X}F,G\big)\!\iso\!\textup{Hom}_{\mathsf D(X)}\<\big(E,\RH_{\<X}(F,G\>)\big).
\end{equation}

The map  corresponding via \eqref{adjunction2} to the identity map
of  $\>\RH_{\<X}(F,G\>)$
\begin{equation}\label{evaluation}
\varepsilon=\varepsilon^F_{\<G}\col \RH_{\<X}(F,G\>)\dtensor{\<\<X}F \to G
\qquad(F,G\in\D(X))
\end{equation}
is called   \emph{evaluation}. When $F$ is a flat complex in $\D^\umi(X)$ 
(or more generally, any q-flat complex in $\D(X)$, see \cite[\S2.5]{Lp2}), 
and $G$ is an injective complex in $\D^\upl(X)$ (or more generally, any 
q-injective complex in $\D(X)$, see \cite[\S2.3]{Lp2}),  one verifies that 
$\varepsilon$~is induced by the family of maps of complexes
$$
\varepsilon(U)\col \Hom{\OX(U)}{F(U)}{G(U)}\otimes_{\OX(U)}F(U) \to G(U)
\qquad (U\subseteq X\textup{ open})
$$ 
where,  for homogeneous
$\alpha\in\Hom{\OX(U)}{F(U)}{G(U)}$ and $b\in F(U)$,
$$
\varepsilon(U)(\alpha\otimes b)=\alpha(b).
$$ 
  \end{subremark}

Basic properties of supports of complexes are recalled for further
reference.

  \begin{subremark}
For any $F\in\D(X)$, the \emph{support of $F$} is the set
\begin{equation}
\label{defsupport} 
\Supp_{\<X}\<\<F\!:= \{\,x\in X\mid H^n(F_x)\ne 0 \textup{ for some } n \,\}.  
\end{equation} 
If $F\in\dcatc X$, then $\Supp_{\<X}\<\<F$ is a \emph{closed subset} of $X$.  
Also, for all $F$ and $G$ in~$\D^\umi_{\lift.9,\mathsf c,}(X)$, it follows 
from e.g., \cite[A.6]{AIL} that
 \begin{equation}
 \label{Supp Tensor}
\Supp_{\<X}(F\Otimes{\<\<X}\<G) =\Supp_{\<X}\<F\cap \Supp_{\<X}\<G.
\end{equation}
Note that $\Supp_{X}F=\emptyset$ if and only if $F=0$ in $\D(X)$.
  \end{subremark}

The following example opens the door to applications of the results in
\cite{AIL}.

\begin{subexample}
    \label{affine schemes}
Let $R$ be a ring. Let $\D(R)$ be the derived category of the category of 
$R$-modules, and define, as above, its full subcategories $\D^\bullet(R)$ 
for $\bullet ={}$\text{\cmt\char'053}, \text{\cmt\char'055} or $\mathsf b$. 
Let $\D^\bullet_{\mathsf f} (R)$ be the full subcategory of $\D^{\bullet}(R)$ 
having as objects those complexes whose cohomology modules are all 
\emph{finite}, i.e., finitely generated, over $R$.

For the affine scheme $X=\Spec R$, the functor that associates to each complex 
$M\in\D(R)$ its sheafification $M^\sim$ is an \emph{equivalence of categories} 
$\D^\bullet_{\mathsf f} (R)\xra{\lift.55,\approx,}\D^{\bullet}_{\lift.9,\mathsf c,}(X)$, 
\vspace{.5pt} see \cite[5.5]{BN}; when $\bullet={}$\text{\cmt\char'053} or 
$\mathsf b$, see also \cite[p.\,133, 7.19]{H}.

There is a natural bifunctorial isomorphism
\begin{equation}\label{sheafify Otimes}
(M\dtensor RN)^\sim\iso M^\sim\<\dtensor{\<\<X} N^\sim\qquad\big(M\<,\,N\in\D(R)\big);
\end{equation}
to define it one may assume that $M$ and $N$ are suitable flat complexes,
so that $\dtensor{}$ becomes ordinary $\otimes$, see \cite[\S2.5 and (2.6.5)]{Lp2}.

There is also a natural bifunctorial map  
\begin{equation}
\label{sheafify RHom}
\Rhom RMN^\sim\lra
\R\mathcal Hom_{\<X}\<\big(M^\sim\<,\>\>N^\sim\big),
\end{equation}
defined to be the one that corresponds via \eqref{adjunction2} to the composite map
$$
\smash{ 
\Rhom RMN ^\sim\<\dtensor{\<\<X}M^\sim\iso
(\Rhom RMN\dtensor RM)^\sim
\xra{\varepsilon^\sim} N^\sim,
}
$$ 
where the isomorphism comes from \eqref{sheafify Otimes}, and the 
\emph{evaluation~map} $\varepsilon$ corresponds to the identity map 
of $\Rhom RMN$ via the analog of \eqref{adjunction2} over $\D(R)$.

The map \eqref{sheafify RHom} is an \emph{isomorphism} if 
$M\in\D^\umi_{\lift.9,\mathsf f,}(R)$ and $N\in\D^\upl(R)$. To show this for 
variable $M$ and fixed $N$ one can use the ``way-out" Lemma 
\cite[p.\,68, 7.1]{H}, with~$A$ the opposite of the category of $R$-modules 
and  $P$ the family $(R^n)_{n>0}\>$, to reduce to the case $M=R$, where, 
one checks,  the map  is the obvious isomorphism.
  \end{subexample}

\subsection{Derived multiplication by global functions}
\label{Gamma acts}
Let $(X\<, \OX\<)$  be a scheme.  Here we discuss some technicalities
about the natural action of $\>\textup{H}^0(X,\OX\<)$ on~$\D(X)$.

We identify  $\textup{H}^0(X,\OX\<)$ with $\textup{Hom}_{\mathsf
D(X)}(\OX,\OX\<)$ via the ring  isomorphism that takes
$\alpha\in\textup{H}^0(X,\OX\<)$ to multiplication by $\alpha$. For
$\alpha\in\textup{H}^0(X,\OX\<)$ and $F\in\D(X)$, let $\mu_{\<F}(\alpha)$
(``multiplication by $\alpha$ in $F\>$") be the natural composite
$\D(X)$-map
$$
F\simeq
\OX\Otimes{\<\<X}F\xrightarrow{\alpha\Otimes{\<\<X}\>1}\OX\Otimes{\<\<X}F\simeq
F,
$$
or equivalently,
$$
F\simeq
F\Otimes{\<\<X}\OX\xrightarrow{1\Otimes{\<\<X}\alpha}F\Otimes{\<\<X}\OX\simeq
F.
$$

Clearly, for any $\D(X)$-map $\phi\colon F\to C$,
$$
\phi\alpha\!:=\phi\circ \mu_{\<F}(\alpha)=\mu_C(\alpha)\circ
\phi=:\alpha\phi.
$$
Furthermore, using the obvious isomorphism
$(\OX\Otimes{\<\<X}F)[1]\iso\OX\Otimes{\<\<X}F[1]$ one sees that
$\mu_F(\alpha)$ commutes with translation, that is,
$\mu_F(\alpha)[1]=\mu_{F[1]}(\alpha)$.

Thus, the family $(\mu_{\<F})_{\<F\in\mathsf D(X)\<}$ maps\vspace{.6pt}
$\textup{H}^0(X,\OX\<)$ into the ring $\mathsf C_{\<X}$ consisting
of endomorphisms of the identity functor of~$\D(X)$ that commute with
translation---the~\emph{center} of~$\D(X)$. It is straightforward to
verify that this map is an injective ring homomorphism onto the subring
of \emph{tensor-compatible} members of $\mathsf C_{\<X}$, that is, 
those~$\eta\in\mathsf C_{\<X}$ such that for all $F$, $G\in\D(X)$,
$$
\eta(F\Otimes{\<\<X} G)=\eta(F)\Otimes{\<\<X} \id^G=\id^G\Otimes{\<\<X}\>\>\eta(G).
$$

The category $\D(X)$ is $\mathsf C_{\<X}$-linear: for all $F$, $G\in\D(X)$, 
$\textup{Hom}_{\mathsf D(X)}(F,G)$ has a natural structure of 
$\mathsf C_{\<X}$-module, and composition of maps is 
$\mathsf C_{\<X}$-bilinear. So $\D(X)$ is also $\textup{H}^0(X,\OX\<)$-linear, 
via $\mu$.

\begin{sublemma}
\label{mult and RHom}
For any\/ $F,$ $G\in\D(X)$ and\/ $\D(X)$-homomorphism\/\mbox{$\alpha\colon\OX\to\OX,$} and\/ $\mu_\bullet(\alpha)$ as above, there are equalities
\[
\RH_{\<X}(\mu_{\<F}(\alpha),G) = \mu_{\RH_{\<X}\<(F,G)}(\alpha) = \RH_{\<X}(F,\mu_{\<G}(\alpha))).
\]
\end{sublemma}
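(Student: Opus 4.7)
My plan is to apply the adjunction \eqref{adjunction2} with $E\!:=\RH_{\<X}(F,G)$, thereby converting each of the three endomorphisms of $\RH_{\<X}(F,G)$ named in the lemma into an element of $\Hom{\D(X)}{\RH_{\<X}(F,G)\Otimes{\<\<X}F}{G}$ (its ``adjoint''); since \eqref{adjunction2} is bijective, equality of the adjoints will yield the claim. The computations of the three adjoints will exploit the three possible naturalities of \eqref{adjunction2}, starting from the tautology that $\varepsilon$ is the adjoint of $\id_{\RH_{\<X}(F,G)}$.

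Naturality in the first slot at an endomorphism $\phi$ of $\RH_{\<X}(F,G)$ shows that the adjoint of $\phi$ is $\varepsilon\circ(\phi\Otimes{\<\<X}1_F)$; applied to $\phi=\mu_{\RH_{\<X}(F,G)}(\alpha)$, this identifies the middle term's adjoint as $\varepsilon\circ\bigl(\mu_{\RH_{\<X}(F,G)}(\alpha)\Otimes{\<\<X}1_F\bigr)$. Naturality in the (contravariant) second slot at $\mu_F(\alpha)\colon F\to F$ shows that the left term $\RH_{\<X}(\mu_F(\alpha),G)$ has adjoint $\varepsilon\circ\bigl(1_{\RH_{\<X}(F,G)}\Otimes{\<\<X}\mu_F(\alpha)\bigr)$. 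Finally, naturality in the third slot at $\mu_G(\alpha)\colon G\to G$ shows that the right term $\RH_{\<X}(F,\mu_G(\alpha))$ has adjoint $\mu_G(\alpha)\circ\varepsilon$.

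It will then remain to verify that all three adjoints coincide. The first two agree by the tensor-compatibility of $\mu_\bullet(\alpha)$ established in the paragraph preceding the lemma, which yields
$$
\mu_{\RH_{\<X}(F,G)}(\alpha)\Otimes{\<\<X}1_F
\;=\;\mu_{\RH_{\<X}(F,G)\Otimes{\<\<X}F}(\alpha)
\;=\;1_{\RH_{\<X}(F,G)}\Otimes{\<\<X}\mu_F(\alpha).
$$
The middle adjoint coincides with the right adjoint because $\mu_\bullet(\alpha)$ lies in the center $\mathsf C_{\<X}$ of $\D(X)$ and hence commutes with the morphism $\varepsilon$, so $\varepsilon\circ\mu_{\RH_{\<X}(F,G)\Otimes{\<\<X}F}(\alpha)=\mu_G(\alpha)\circ\varepsilon$. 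The only real obstacle is bookkeeping through the three naturalities; once that is done, the lemma reveals itself as a formal consequence of $\mu_\bullet(\alpha)$ being both tensor-compatible and central in $\D(X)$.
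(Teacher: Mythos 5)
Your proof is correct and takes essentially the same approach as the paper's: both exploit the trifunctorial adjunction \eqref{adjunction2}, the tensor-compatibility of $\mu_\bullet(\alpha)$, and the fact that $\mu_\bullet(\alpha)$ is central (i.e.\ natural on the identity functor). The paper works with a variable $E$ and first records that $\tau$ is $\textup{H}^0(X,\OX\<)$-linear before specializing to $E=\RH_{\<X}(F,G)$ and evaluating at the identity; you specialize immediately and track the image of $\varepsilon$ under the three naturalities, which is a slightly more explicit packaging of the same argument.
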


\begin{proof}
Consider, for any $E\in\D(X)$, the natural trifunctorial isomorphism
$$
\tau\colon\textup{Hom}_{\mathsf D(X)}(E\Otimes{\<\<X}F\<, G)\iso
\textup{Hom}_{\mathsf D(X)}(E,\RH_{\<X}(F,G)).
$$
{From} tensor-compatibility in the image of $\mu$, and
$\textup{H}^0(X,\OX\<)$-linearity of $\D(X)$,  it follows that for any
$\alpha\in\textup{H}^0(X,\OX\<)$, the map $\mu_E(\alpha)$ induces
multiplication
by $\alpha$ in both the source and target of $\tau$. Functoriality
shows then that $\tau$ is an isomorphism
of $\textup{H}^0(X,\OX\<)$-modules.

Again, tensor-compatibility  implies that $\mu_F(\alpha)$ induces
multiplication by $\alpha$ in the source of the
$\textup{H}^0(X,\OX\<)$-linear map $\tau$, hence
also in the target. Thus, by functoriality, $\RH_{\<X}(\mu_{\<F}(\alpha),G)$
induces multiplication by $\alpha$ in the target of $\tau$. For
$E=\RH_{\<X}(F,G)$, this gives
$\RH_{\<X}(\mu_{\<F}(\alpha),G)=\mu_{\RH_{\<X}\<(F,G)}(\alpha)$. One shows similarly 
that $\RH_{\<X}(F,\mu_{\<G}(\alpha))=\mu_{\RH_{\<X}\<(F,G)}(\alpha)$.
\end{proof}

\subsection{Derived reflexivity}
\stepcounter{theorem}
\setcounter{equation}{-1}

Let $(X,\OX\<)$ be a scheme. 

One has,  for all $A$ and $F$ in~$\D(X)$, a \emph{biduality morphism}\vspace{-1pt}
\begin{equation}\label{eq:sheafbiduality}
\delta^A_F\col F\to \RH_{\<X}\big(\RH_{\<X}(F\<,A),A\big),
\end{equation}
corresponding via \eqref{adjunction2} to the natural composition\vspace{-1pt}
$$
F\Otimes{\<\<X}\RH_{\<X}(F\<,A) \iso \RH_{\<X}(F\<,A)\Otimes{\<\<X} F
\xra{\varepsilon^F_{\<A}} A.
$$

The map $\delta_F^A$ ``commutes" with restriction to open subsets
(use  \cite[2.4.5.2]{Lp2}).\vspace{1pt}

When $A$ is a q-injective complex in $\D(X)$, $\delta_F^A$ is induced by the family 
$$
\delta(U)\col F(U)\to\Hom{\OX(U)}{\Hom{\OX(U)}{F(U)}{A(U)}}{A(U)}
\qquad (U\subseteq X\textup{ open})
$$ 
of maps of complexes, where, for each $n\in F(U)$  of degree $b$, the map $\delta(U)(n)$ is 
\[
\alpha\mapsto(-1)^{ab}\alpha(n)\,,
\]
for $\alpha\in\Hom{\OX(U)}{F(U)}{A(U)}$ homogeneous of degree $a$.

\begin{subdefinition}
\label{defreflexive} Given $A$ and $F$  in $\D(X),$ 
we say  that $F$ is \emph{derived $A$-reflexive}\vspace{.6pt} if 
both $F$ and $\>\RH_{\<R}(F\<,A)$ are in~$\dcatc X$
and $\delta_F^A$ is an isomorphism.\looseness=-1 
\end{subdefinition}

This is a \emph{local condition}: for any open cover $(U_\alpha)$ of~$X\<$, $F$ is derived $A$-reflexive if and only if the same is true over every $U_\alpha$ for the restrictions of $F$ and $A$.  Also, as indicated below, if $U$ is affine, say 
$U\!:=\Spec R,$ and $C,M\in\dcatb R,$ then $M^\sim$ is\/ derived 
$C^\sim\<$-reflexive in $\D(U)\iff $ $M$ is derived $C$-reflexive in\/ $\D(R)$.

  \begin{subexample}
When $X=\Spec R$ and $M,C\in\D(R)$, it follows that with $\delta^C_M$ as in \cite[(2.0.1)]{AIL},  the map
$\delta_{M^\sim}^{C^\sim}$ factors naturally as
$$
M^\sim \xra{{(\delta^C_M\>)}^\sim} \big(\<\<\Rhom R{\Rhom RMC}C\big)^\sim
\xra{\ s\ }
 \RH_{\<X}\big(\RH_{\<X}(M^\sim\<,\>C^\sim),\>C^\sim\big)\<,
$$
where, as in \eqref{sheafify RHom}, the map $s$ is an isomorphism if 
$M\in\D^{\lift.9,\text{\cmt\char'055},}_{\mathsf f}(R)$,
$C\in\D^{\lift.9,\text{\cmt\char'053},}\<(R)$ and 
$\Rhom RMC\in\D^{\mathsf b}_{\mathsf f}(R)$.  Thus, derived 
reflexivity globalizes the notion in \cite[\S2]{AIL}.
  \end{subexample}

{From}  \cite[2.1 and 2.3]{AIL} one now gets:\vspace{-1pt}

\begin{subproposition}
\label{greflexive}
Let\/ $X$ be a noetherian scheme, and\/ $A,$ $F\in \dcatc X.$ Then
the~following conditions are equivalent.

\textup{(i)} $F$ is derived $A$-reflexive.

\textup{(ii)}  $\RH_{\<X}(F\<,\>A)\in\D^\umi(X)$ and 
there exists an isomorphism in $\D(X)$
$$
F\iso \RH_{\<X}\big(\RH_{\<X}(F\<,A),\>A).
$$

\textup{(iii)}
$\RH_{\<X}(F\<,\>A)$ is derived\/ $A$-reflexive and
$\Supp_{\<X}\<\<F\subseteq\Supp_{\<X}\<\<A$.\qed
  \end{subproposition}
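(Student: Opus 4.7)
My plan is to reduce the equivalence to the corresponding statement over commutative rings and then invoke \cite[2.1 and 2.3]{AIL}. First I would observe that all three conditions are Zariski-local on $X$: membership in $\dcatc{X}$ and $\D^\umi(X)$ is local by the remark at the start of \S\ref{firstdefs}, support containment is visibly local, and a morphism in $\D(X)$ between complexes with coherent cohomology is an isomorphism precisely when its restriction to each member of an open cover is. It therefore suffices to check the equivalence after replacing $X$ by an arbitrary affine open $U=\Spec R$.

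On such a $U$, the equivalence of categories in Example \ref{affine schemes} yields $M, C\in\dcatb{R}$ with $F|_U\iso M^\sim$ and $A|_U\iso C^\sim$. Since $M\in\D^\umi_{\mathsf f}(R)$ and $C\in\D^\upl(R)$, the sheafification map \eqref{sheafify RHom} is an isomorphism
\[
\RH_U(F|_U,A|_U)\iso (\Rhom{R}{M}{C})^\sim.
\]
Next I would verify, by unwinding the construction of \eqref{sheafify RHom} through the adjunction \eqref{adjunction2}, that the restriction of $\delta^A_F$ to $U$ corresponds via this identification to the sheafification of the ring-theoretic biduality map $\delta^C_M$; this is essentially the content of the subexample immediately preceding the proposition. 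In particular $\delta^A_F|_U$ is an isomorphism if and only if $\delta^C_M$ is. Supports transfer similarly, since $\Supp_U(M^\sim)$ coincides with the locus in $\Spec R$ where some cohomology module of $M$ is nonzero, because stalks of the sheafification compute localizations and localization commutes with cohomology.

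It follows that each of (i), (ii), (iii) for $(F|_U,A|_U)$ is equivalent to its analog for $(M,C)$ in $\D(R)$; the equivalence of the affine versions is \cite[2.1]{AIL} (giving (i)$\Leftrightarrow$(ii)) together with \cite[2.3]{AIL} (giving (i)$\Leftrightarrow$(iii)). The main subtlety I anticipate concerns (ii), where one is given only an abstract isomorphism $F\iso\RH_{\<X}(\RH_{\<X}(F\<,A),A)$ rather than the assertion that $\delta^A_F$ itself is an isomorphism. Descent of (ii) to the affine setting uses the full faithfulness of the sheafification equivalence from Example \ref{affine schemes} on complexes with coherent cohomology, while the hypothesis $\RH_{\<X}(F\<,A)\in\D^\umi(X)$ guarantees $\Rhom{R}{M}{C}\in\D^\umi(R)$, as required by the affine form of (ii).
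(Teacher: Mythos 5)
Your proposal follows the paper's own approach exactly: the paper's entire proof is the single phrase ``From~\cite[2.1 and 2.3]{AIL} one now gets,'' leaving the localization/sheafification bookkeeping to the reader. You have simply filled in that bookkeeping.

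One point deserves sharpening. Your opening claim that ``all three conditions are Zariski-local'' is, strictly read, not quite right for condition~(ii): the existence of an \emph{abstract} isomorphism $F\iso\RH_{\<X}(\RH_{\<X}(F\<,A),A)$ does not descend from an open cover to~$X$ in any obvious way, since isomorphisms in a derived category do not glue. The justification you offer for locality (a morphism of coherent complexes being an isomorphism is local) applies to the \emph{fixed} canonical map $\delta^A_F$, i.e.\ to condition~(i), not to the mere existence statement in~(ii). What saves the argument is that only the easy direction is needed: (ii)~restricts to each affine $U$; over each~$U$ the sheafification equivalence of Example~\ref{affine schemes} together with \cite[2.1]{AIL} converts local~(ii) into local~(i); and since~(i) \emph{is} a local condition, one concludes~(i) globally, and hence~(ii) globally. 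Your final paragraph more or less says this, but your opening sentence overclaims. Rephrasing it as ``conditions (i) and (iii) are Zariski-local, and (ii) passes from $X$ to any open cover by restriction, which is all that will be needed'' would make the logic airtight.
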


\begin{subremark}
\label{dual G-perfect} 
For $A=\OX$ the theorem above shows that $F\in\dcatc X$ is derived
$\OX$-reflexive if and only if so is $\RH_{\<X}(F\<,\OX\<)$.

In the affine case, $X=\Spec R$, for any $M\in\dcatb R$, the derived
$\OX$-reflexivity of $M^{\sim}$ is equivalent to finiteness of the
Gorenstein dimension of $M\<$, as defined by Auslander and Bridger;
see \cite{AB}.
 \end{subremark}

\begin{subdefinition}
\label{defsemidual} 
An $\OX$-complex~$A$ is  \emph{semidualizing} if  $\>\OX$ is derived
$A$-reflexive. In other words, $A\in\dcatc X$ and the map $\chi^A\col \OX
\to \RH_{\<X}(A,A)$ corresponding via \eqref{adjunction2} to the natural
map $\OX\Otimes{\<\<X}A\to A\>$ is an isomorphism.
 \end{subdefinition}

As above, this condition is local on $X\<$. When $X=\Spec R$, a complex
$C\in\dcatb R$ is semidualizing  in the commutative-algebra sense (that
is, $R$ is derived $C$-reflexive, see e.g., \cite[\S3]{AIL}) if and only
if $C^\sim$ is semidualizing in the present sense.\vspace{2pt}

\begin{sublemma}
\label{semi and mult}
If $A\in\D(X)$ is semidualizing then each\/~$\D(X)$-endomorphism
of\/~$A$ is multiplication by a uniquely determined\/
$\alpha\in\textup{H}^0(X,\OX\<)$.
 \end{sublemma}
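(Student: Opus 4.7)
The plan is to exploit the defining isomorphism $\chi^A\col\OX\iso\RH_{\<X}(A,A)$ of the semidualizing property, together with the adjunction \eqref{adjunction2}, to produce a bijection between $\textup{H}^0(X,\OX)$ and $\textup{End}_{\D(X)}(A)$ that sends $\alpha\mapsto\mu_{\<A}(\alpha)$. Concretely, I would consider the composite
\[
\textup{H}^0(X,\OX)=\textup{Hom}_{\D(X)}(\OX,\OX)\xra{\chi^A\circ-}\textup{Hom}_{\D(X)}(\OX,\RH_{\<X}(A,A))\xra{\tau}\textup{Hom}_{\D(X)}(A,A),
\]
where $\tau$ is the inverse of the adjunction isomorphism of \eqref{adjunction2} (using the canonical identification $\OX\Otimes{\<\<X}A\simeq A$). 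The first arrow is a bijection because $\chi^A$ is an isomorphism by the semidualizing hypothesis, and $\tau$ is a bijection by \eqref{adjunction2}; hence the composite is a bijection.

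The one point requiring verification is that this composite sends $\alpha$ to $\mu_{\<A}(\alpha)$. This follows by unwinding definitions: by the construction of $\chi^A$ in Definition~\ref{defsemidual}, $\chi^A$ is the image of $\id_{\<A}$ under the adjunction, so $\tau(\chi^A)=\id_{\<A}$. Naturality of the adjunction in the source then identifies $\tau(\chi^A\circ\alpha)$ with $\id_{\<A}\circ(\alpha\Otimes{\<\<X}\id_{\<A})$, which, via the canonical $\OX\Otimes{\<\<X}A\simeq A$, is precisely $\mu_{\<A}(\alpha)$ as defined in \S\ref{Gamma acts}.

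With the bijection established, existence and uniqueness in the lemma follow simultaneously: every endomorphism of $A$ is of the form $\mu_{\<A}(\alpha)$ for some $\alpha\in\textup{H}^0(X,\OX)$, and the element $\alpha$ producing a given endomorphism is unique. I do not foresee a serious obstacle here; the argument is formal, essentially the Yoneda-style observation that semidualizing means $\OX$ represents $\textup{End}_{\D(X)}(A)$ via $\chi^A$. The only care needed is to track the canonical identifications so that the correspondence is indeed $\alpha\leftrightarrow\mu_{\<A}(\alpha)$ and not some twist thereof.
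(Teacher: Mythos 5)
Your proof is correct and follows essentially the same route as the paper: both identify $\mu_A$ with the composite of post-composition by $\chi^A$, the adjunction \eqref{adjunction2}, and the canonical identification $\OX\Otimes{\<\<X}A\simeq A$, then invoke that $\chi^A$ is an isomorphism. Your write-up simply makes explicit the unwinding that the paper summarizes as ``the map $\mu_A$ is easily seen to factor.''
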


\begin{proof}
With $\chi^A\colon\OX\to\RH_{\<X}(A,A)$ as in Definition~\ref{defsemidual},
the map $\mu_A$ is easily seen to factor as follows:\vspace{-1.5pt}
\begin{align*}
\textup{Hom}_{\mathsf D(X)}(\OX,\OX\<)\xra{\!\text{via}\;\chi^A\!}
&\;\textup{Hom}_{\mathsf D(X)}\big(\OX,\RH_{\<X}(A,A)\big)\\
\cong &\;\textup{Hom}_{\mathsf D(X)}(\OX\<\<\Otimes{\<\<X}\<\<A,\>A)\\
\cong &\;\textup{Hom}_{\mathsf D(X)}(A,A).
\end{align*}
\vskip-1.5pt
\noindent The assertion results.
\end{proof}

\begin{sublemma}
\label{semid-prop}
Let \kern1pt $X$ be a noetherian scheme. \kern-1pt If\/ $A$ is a semidualizing $\OX$-complex,
then\/ $\Supp_{\<X}\<\<A=X$. Furthermore,   If 
 there is an isomorphism $A\simeq A_1\oplus A_2$ then
$\Supp_{\<X}A_1\cap \Supp_{\<X}A_2=\emptyset$. 
\end{sublemma}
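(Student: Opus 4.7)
My plan is as follows. For the first claim, I would leverage the fact that $A$ being semi\-dualizing means $\chi^A\colon\OX\iso\RH_{\<X}(A,A)$ is an isomorphism. Since every stalk $\mathcal O_{X,x}$ is nonzero, $\Supp_{\<X}\OX = X$; on the other hand, the stalk $\RH_{\<X}(A,A)_x \simeq \Rhom{\mathcal O_{X,x}}{A_x}{A_x}$ vanishes whenever $A_x=0$, so $\Supp_{\<X}\RH_{\<X}(A,A)\subseteq \Supp_{\<X}A$. Chaining these through $\chi^A$ yields $X=\Supp_{\<X}\OX\subseteq\Supp_{\<X}A$, whence equality.

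For the second claim, given $A\simeq A_1\oplus A_2$, let $\pi\colon A\to A$ denote the associated idempotent endomorphism (projection onto $A_1$ followed by reinjection). By Lemma \ref{semi and mult} the map $\mu_A\colon\textup{H}^0(X,\OX)\to\textup{Hom}_{\mathsf D(X)}(A,A)$ is a bijective ring homomorphism, so there is a unique $\alpha\in\textup{H}^0(X,\OX)$ with $\pi=\mu_A(\alpha)$, and $\pi^2=\pi$ forces $\alpha^2=\alpha$. Hence at each $x\in X$ the germ $\alpha_x$ is an idempotent of the local noetherian ring $\mathcal O_{X,x}$, so $\alpha_x\in\{0,1\}$.

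Finally, I would use tensor-compatibility (i.e., $\mu$ lying in the center) so that, for the split inclusion $\iota_1\colon A_1\hra A$, one has $\iota_1\circ\mu_{A_1}(\alpha) = \mu_A(\alpha)\circ\iota_1 = \pi\circ\iota_1 = \iota_1$, whence $\mu_{A_1}(\alpha)=\id^{A_1}$; symmetrically $\mu_{A_2}(\alpha)=0$. Passing to cohomology and stalks, multiplication by $\alpha_x$ is the identity on every $\hH^n(A_1)_x$ and zero on every $\hH^n(A_2)_x$. If some $x$ lay in both supports there would be $n,m$ with $\hH^n(A_1)_x\neq 0$ and $\hH^m(A_2)_x\neq 0$, forcing $\alpha_x$ to equal both $1$ and $0$, a contradiction. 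The one step requiring care is confirming that the bijection in Lemma \ref{semi and mult} is multiplicative (so that idempotency really transfers to $\alpha$) and that $\mu$ commutes with arbitrary maps in $\D(X)$; both follow at once from the tensor-compatibility characterisation of the image of $\mu$ recorded just before Lemma \ref{semi and mult}.
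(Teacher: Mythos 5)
Your proof of the first claim is essentially the same as the paper's: both note that $\RH_{\<X}(A,A)\simeq\OX$ is supported everywhere, while $\RH_{\<X}(A,A)$ vanishes where $A$ does, forcing $\Supp_{\<X}A=X$.

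For the second claim you take a genuinely different route. The paper passes immediately to stalks: since $\mathcal O_{X,x}\simeq\Rhom{\mathcal O_{X,x}}{A_x}{A_x}\simeq\bigoplus_{i,j}\Rhom{\mathcal O_{X,x}}{A_{i,x}}{A_{j,x}}$ and $\mathcal O_{X,x}$ is local, at most one summand can be nonzero, so the identity of one $A_{i,x}$ is zero, i.e., $A_{i,x}=0$. Your argument instead works globally: you invoke Lemma~\ref{semi and mult} to realize the projection idempotent $\pi\in\textup{End}_{\mathsf D(X)}(A)$ as $\mu_A(\alpha)$ for a unique $\alpha\in\textup{H}^0(X,\OX)$, use the (correct) observation that $\mu_A$ is a ring isomorphism to conclude $\alpha^2=\alpha$, and then use centrality of $\mu$ to transport $\alpha$ to $\mu_{A_1}(\alpha)=\id$, $\mu_{A_2}(\alpha)=0$, whence the stalk idempotent $\alpha_x\in\{0,1\}$ decides which summand survives at $x$. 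Both arguments are sound. The paper's version is shorter and needs only that a local ring has no nontrivial orthogonal idempotents of endomorphisms; yours carries the conceptual bonus of producing the global idempotent section $\alpha$ (and hence the open-and-closed decomposition of $X$ underlying the supports) explicitly, which fits naturally with the theme of idempotent ideals developed later in Appendix~C, at the cost of presupposing Lemma~\ref{semi and mult} and the ring-homomorphism property of $\mu$ set out in~\S\ref{Gamma acts}. You are right to flag the multiplicativity of $\mu_A$ as the point to verify; it indeed follows from $\mu$ landing in the center $\mathsf C_X$ and the evaluation-at-$A$ map $\mathsf C_X\to\textup{End}_{\mathsf D(X)}(A)$ being a ring homomorphism.
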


\begin{proof}
The $\OX$-complex $\RH_{\<X}(A,A)$, which is isomorphic in $\D(X)$ to
$\OX$, is acyclic over the open set $X\setminus \Supp_{\<X}\<\<A$. This
implies $\Supp_{\<X}A=X$.

As to the second assertion, taking stalks at
arbitrary $x\in X$ reduces the problem to showing that if $R$ is a local
ring, and $M_1$ and $M_2$ in $\D(R)$ are such that the natural map
$$
R\to\Rhom R{M_1\oplus M_2}{\>M_1\oplus M_2}=
{\bigoplus}_{i,j=1}^2\Rhom R{M_i}{M_j}
$$
is an isomorphism, then either $M_1=0$ or $M_2=0$. 

\pagebreak[3]

But clearly, $R$
being local, at most one of the direct summands $\Rhom R{M_i}{M_j}$
can be nonzero,  so for $i=1$ or $i=2$ the identity map of $M_i$ is 0,
whence the conclusion.
 \end{proof}

\subsection{Perfect complexes}
\label{perfection}
Again, $(X,\OX\<)$ is a scheme.

  \begin{subdefinition}
    \label{defperfectcomplex}
An $\OX$-complex $P$ is \emph{perfect} if $X$ is a union of open subsets~$U$ such that the restriction $P|_U$ is $\D(U)$-isomorphic to a bounded complex of finite-rank locally free $\mathcal O_U$-modules.
  \end{subdefinition}

{From}  \cite[p.\,115, 3.5 and p.\,135, 5.8.1]{Il}, one gets:

\begin{subremark}
\label{perfect and ffd} 
The complex $P$ is perfect if and only if\/ $P\in\Dc(X)$ and $P$~is isomorphic in\/ $\D(X)$ to a bounded complex of flat\/ $\OX$-modules.
\end{subremark}

Perfection is a local condition. If $X=\Spec R$ and \mbox{$M\in\D(R)$} then 
$M^\sim$~is perfect if and only if $N$ is isomorphic in $\D(R)$ to a bounded 
complex of finite projective $R$-modules, cf.~\cite[\S4]{AIL}. 
The next result is contained in \cite[2.1.10]{Ch1}; see also \cite[4.1]{AIL}.

\begin{subtheorem}
\label{global perfect} 
$P\in\dcatc X$ is perfect if and only if so is\/ $\RH_{\<X}(P\<,\OX\<)$.\qed
\end{subtheorem}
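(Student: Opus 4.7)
The plan is to reduce this global statement to its affine counterpart, \cite[4.1]{AIL}, by working on an affine open cover. The three key ingredients, all supplied by the excerpt, are: (i)~perfection is a local condition on~$X$ (noted in the paragraph following Remark~\ref{perfect and ffd}); (ii)~the equivalence of categories $\dcatb{R}\xrightarrow{\approx}\dcatb{\Spec R}$ from Example~\ref{affine schemes} transports perfection in either direction (also noted in that same paragraph); and (iii)~the sheafification isomorphism \eqref{sheafify RHom}, which converts the derived sheaf-hom on an affine scheme into a sheafified derived hom whenever the source is in $\D^{\umi}_{\mathsf f}$ and the target in $\D^{\upl}$.

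First, I would choose an affine open cover $X=\bigcup_\alpha U_\alpha$ with $U_\alpha=\Spec R_\alpha$. Since $P\in\dcatc X$, the equivalence from Example~\ref{affine schemes} produces $M_\alpha\in\dcatb{R_\alpha}$ with $P|_{U_\alpha}\simeq M_\alpha^\sim$. By ingredient~(ii), $P|_{U_\alpha}$ is perfect in $\D(U_\alpha)$ if and only if $M_\alpha$ is perfect in $\D(R_\alpha)$, and by~(i), $P$~itself is perfect on~$X$ if and only if $P|_{U_\alpha}$ is perfect for every~$\alpha$.

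Next, since $\RH_{\<X}(-,-)$ commutes with restriction to open subsets, and since $M_\alpha\in\D^{\umi}_{\mathsf f}(R_\alpha)$ and $R_\alpha\in\D^{\upl}(R_\alpha)$, the map~\eqref{sheafify RHom} supplies a natural isomorphism
$$
\RH_{\<X}(P,\OX)|_{U_\alpha}\;\simeq\;\RH_{U_\alpha}\!\bigl(M_\alpha^\sim,R_\alpha^\sim\bigr)\;\simeq\;\Rhom{R_\alpha}{M_\alpha}{R_\alpha}^{\<\<\sim}.
$$
Appealing again to~(i) and~(ii), perfection of $\RH_{\<X}(P,\OX)$ on $X$ is equivalent to perfection of $\Rhom{R_\alpha}{M_\alpha}{R_\alpha}$ over $R_\alpha$ for every~$\alpha$.

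Invoking the affine theorem \cite[4.1]{AIL}, $M_\alpha$ is perfect over~$R_\alpha$ if and only if $\Rhom{R_\alpha}{M_\alpha}{R_\alpha}$ is perfect over~$R_\alpha$. Combining the equivalences of the last two paragraphs gives the conclusion. The main obstacle is purely bookkeeping: one must verify that restriction, sheafification, and the equivalence of categories all commute compatibly with the respective notions of perfection. The sheafification step requires the hypotheses of~\eqref{sheafify RHom}, but these are satisfied automatically because $P\in\dcatc X$ makes $M_\alpha$ cohomologically bounded with finite cohomology, while $R_\alpha$~lies trivially in~$\D^{\upl}(R_\alpha)$.
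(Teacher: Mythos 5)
Your argument is correct and carries out exactly the reduction that the paper's bare citation (to \cite[2.1.10]{Ch1} and \cite[4.1]{AIL}) leaves implicit: cover $X$ by affines, transport to $\D(R_\alpha)$ via the equivalence of Example~\ref{affine schemes}, use the sheafification isomorphism~\eqref{sheafify RHom} (whose hypotheses hold because $P\in\dcatc X$), and invoke the affine duality result \cite[4.1]{AIL}. This is the same approach the paper has in mind; you have simply spelled out the bookkeeping that restriction to opens, sheafification, and the equivalence all commute with the relevant notions of perfection.
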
 

\begin{subproposition}
\label{perf-reflexive}
Let\/ $A$ and\/ $P$ be in $\D(X),$ with $P$ perfect. 

If\/ $F\in\D(X)$ is derived\/ $A$-reflexive then so is\/
$P\dtensor{\<\<X}F\>;$ in particular, $P$~is derived $\OX$-reflexive.
If\/ $A$~is semidualizing then $P$ is derived $A$-reflexive.
  \end{subproposition}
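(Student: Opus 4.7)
The plan is to first establish derived $\OX$-reflexivity of $P$, and then use that, together with the adjunction \eqref{sheafified adjunction} and a ``projection-type'' isomorphism valid for perfect complexes, to transfer $A$-reflexivity from $F$ to $P\Otimes{\<\<X}F$. The last assertion will then be obtained by specializing $F=\OX$, which is $A$-reflexive by the definition of semidualizing.

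For the first step, note that by Theorem~\ref{global perfect} both $P$ and $\RH_{\<X}(P,\OX)$ lie in $\dcatc X$, so the biduality map $\delta_P^{\OX}\colon P\to \RH_{\<X}(\RH_{\<X}(P,\OX),\OX)$ is a morphism between objects of $\dcatc X$. Being an isomorphism is a local property, so we may replace $X$ by an open $U$ over which $P|_U$ is represented by a bounded complex $E$ of finite-rank locally free $\mathcal O_U$-modules. For such $E$, both sides of $\delta^{\OX}_E$ are computed by the naive sheaf-Hom, and $\delta^{\OX}_E$ is (up to sign) the classical biduality map of a finitely generated projective module in each degree, which is an isomorphism.

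For the main step, the key input is that for perfect $P$ and every $G\in\D(X)$ there is a canonical isomorphism
\[
\RH_{\<X}(P,\OX)\dtensor{\<\<X} G\iso \RH_{\<X}(P,G),
\]
which again is local and trivial once $P$ is replaced by a bounded complex of finite-rank free modules. Combining this with \eqref{sheafified adjunction} we get
\begin{align*}
\RH_{\<X}(P\dtensor{\<\<X}F,A)
&\iso\RH_{\<X}\bigl(P,\RH_{\<X}(F,A)\bigr)\\
&\iso\RH_{\<X}(P,\OX)\dtensor{\<\<X}\RH_{\<X}(F,A).
\end{align*}
The right-hand side lies in $\dcatc X$ because $\RH_{\<X}(P,\OX)$ is perfect (hence in $\dcatc X$) and $\RH_{\<X}(F,A)\in\dcatc X$ by $A$-reflexivity of $F$; a parallel computation shows $P\dtensor{\<\<X}F\in\dcatc X$. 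Applying the projection isomorphism and \eqref{sheafified adjunction} once more, together with the $A$-reflexivity of $F$ and the $\OX$-reflexivity of $P$ established above, yields
\begin{align*}
\RH_{\<X}\bigl(\RH_{\<X}(P\dtensor{\<\<X}F,A),A\bigr)
&\iso\RH_{\<X}\bigl(\RH_{\<X}(P,\OX)\dtensor{\<\<X}\RH_{\<X}(F,A),A\bigr)\\
&\iso\RH_{\<X}\Bigl(\RH_{\<X}(P,\OX),\RH_{\<X}(\RH_{\<X}(F,A),A)\Bigr)\\
&\iso\RH_{\<X}(\RH_{\<X}(P,\OX),F)\\
&\iso\RH_{\<X}(\RH_{\<X}(P,\OX),\OX)\dtensor{\<\<X}F\\
&\iso P\dtensor{\<\<X}F.
\end{align*}

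The main obstacle is verifying that the composite of these canonical identifications equals the biduality map $\delta^A_{P\otimes^{\mathsf L}\!F}$. I would handle this by a local reduction: both the composite and $\delta^A_{P\otimes^{\mathsf L}\!F}$ are morphisms in $\dcatc X$ that are natural in all three variables, and the construction of each isomorphism commutes with restriction to open subsets (as noted after \eqref{eq:sheafbiduality}). Localizing to an open $U$ on which $P|_U$ is a bounded complex of finite-rank free modules, the question reduces to the case $P=\OX$, where the composite is, by construction, $\delta^A_F$. Finally, taking $F=\OX$, which is $A$-reflexive exactly when $A$ is semidualizing, gives the last sentence of the proposition.
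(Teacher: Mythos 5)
Your proposal takes a genuinely different route from the paper's proof, and the final step as written has a gap.

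The paper's own argument is much shorter: it localizes so that $P$ is a bounded complex of finite-rank free $\OX$-modules and then proceeds by induction on the number of nonzero degrees, using the observation that if two vertices of a triangle are derived $A$-reflexive then so is the third. That "two-out-of-three for reflexivity" immediately reduces to the case where $P$ is a shifted free module, where the statement is trivial. Your approach instead builds an explicit chain of isomorphisms (via Lemma~\ref{RHom and perfect} and \eqref{sheafified adjunction}) showing $\RH_{\<X}\bigl(\RH_{\<X}(P\dtensor{\<\<X}F,A),A\bigr)\simeq P\dtensor{\<\<X}F$. That chain is correct, and the boundedness checks you sketch are sound, but the payoff is only that \emph{some} isomorphism exists, not that the biduality map $\delta^A_{P\otimes^{\mathsf L}F}$ is one. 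You correctly identify this as the remaining obstacle.

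The gap is in how you propose to close it. You claim that naturality and compatibility with restriction to opens allow you to "reduce to the case $P=\OX$" once $P|_U$ is a bounded complex of finite-rank free modules. That step is not justified: a bounded complex of free modules with nontrivial differentials is not $\OX$, and naturality plus restriction-compatibility alone does not pass from $\OX$ to an arbitrary such complex. What you need is exactly the inductive triangle argument---both your composite and $\delta^A_{P\otimes^{\mathsf L}F}$ are natural transformations of triangulated functors in $P$, agree on $\OX$ and its shifts, and hence agree on any complex obtained from these by iterated mapping cones. But once you appeal to that, your whole isomorphism chain becomes superfluous: the triangle argument applied directly to the property "$P\dtensor{\<\<X}F$ is derived $A$-reflexive," as in the paper, is shorter. (An appeal to Proposition~\ref{greflexive}(ii) to bypass the biduality check would also close the gap, but that proposition requires $A\in\dcatc X$, which is not assumed here; the paper's route avoids that restriction.)
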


\begin{proof}  
The assertion being local,  we may assume that $P$ is a bounded complex
of finite\kern.5pt-rank free $\OX\<$-modules. If  two vertices of a
triangle are derived $A$-reflexive then so is the third, whence an easy
induction on the number of degrees in which $P$ is nonzero shows that
if $F$ is $A$-reflexive then so is $P\dtensor{\<\<X}F$. To show that $P$
is derived $\OX$-reflexive, take $A=\OX=F$ .

For the final assertion, take $F=\OX$.  
\end{proof}

A partial converse is given by the next result:

  \begin{subtheorem}
    \label{descent through perfect}
Let\/  $F\in\Dc(X),$  let\/ $A\in\Dcpl(X),$ and let\/ $P$ be a perfect\/ 
$\OX$-complex with\/ $\Supp_{\<X}\<\<P\supseteq \Supp_{\<X}\<\<F\<$.\vspace{1 pt} 
If\/ $P\dtensor{\<\<X}F$ is in\/ $\dcatc X,$  or $P\dtensor{\<\<X}F$ is perfect, or\/  
$P\dtensor{\<\<X}F$~is derived\/ $A$-reflexive, then the corresponding property
 holds for\/$F\<$.\looseness=-1 
  \end{subtheorem}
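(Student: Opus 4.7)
All three properties considered---membership in $\dcatc X$, perfection, and derived $A$-reflexivity---are local on $X$, so it suffices to verify each at the stalk $R := \mathcal O_{X\<,\>x}$ for every $x \in X$. When $x \notin \Supp_{\<X}\<\<F$ the stalk $F_{x}$ vanishes and the conclusion is trivial; when $x \in \Supp_{\<X}\<\<F$, the hypothesis $\Supp_{\<X}\<\<F \subseteq \Supp_{\<X}\<\<P$ forces $P_{x}$ to be a \emph{nonzero} perfect complex over the noetherian local ring $R$. The problem thereby reduces to the local-ring analogues, the substance of the affine results in \cite[\S4]{AIL} to which I would ultimately appeal.

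For a self-contained argument over a local ring, the key ingredients are: a minimal model for $P$, namely a bounded complex of finite free $R$-modules with all differentials having entries in the maximal ideal, so that $P \dtensor{R} k \ne 0$ where $k$ is the residue field; derived Nakayama on the finitely generated cohomology modules of $F$; and the canonical isomorphism $(P \dtensor{R} F) \dtensor{R} k \simeq (P \dtensor{R} k) \dtensor{k} (F \dtensor{R} k)$ in $\D(k)$, which transfers properties controlled by $\dtensor{R} k$ (bounded cohomology, bounded Tor against~$k$) from $P \dtensor{R} F$ to $F$ using that $P \dtensor{R} k$ is a nonzero graded $k$-vector space. This handles the first two cases.

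For the $A$-reflexive case, one first deduces $F \in \dcatc X$ from the bounded-coherence step. Then, since $P^\vee := \RH_{\<X}(P, \OX)$ is perfect by Theorem \ref{global perfect}, standard tensor-Hom adjunctions yield a natural isomorphism
$$
\RH_{\<X}\big(\RH_{\<X}(P \Otimes{\<\<X} F, A), A\big) \simeq P \Otimes{\<\<X} \RH_{\<X}\big(\RH_{\<X}(F, A), A\big),
$$
under which the biduality map $\delta^A_{P \Otimes{\<\<X} F}$ is identified with $\id_P \Otimes{\<\<X} \delta^A_F$. Writing $C$ for the cone of $\delta^A_F$, the hypothesis that $P \Otimes{\<\<X} F$ is $A$-reflexive gives $P \Otimes{\<\<X} C = 0$, while $\Supp_{\<X} C \subseteq \Supp_{\<X} F \subseteq \Supp_{\<X} P$. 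At each stalk where $C$ is nonzero, $C$ is bounded below coherent (because $F \in \dcatc X$ and $A \in \Dcpl(X)$), so derived Nakayama---combined with $P_x \dtensor{R} k \ne 0$---forces $C_x = 0$, a contradiction. Hence $C = 0$ and $\delta^A_F$ is an isomorphism.

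The main obstacle is this $A$-reflexive case: it requires both identifying the biduality map for $P \Otimes{\<\<X} F$ with $\id_P \Otimes{\<\<X} \delta^A_F$ via a careful unravelling of the tensor-Hom adjunctions involving the perfect complexes $P$ and $P^\vee$, and then extracting the biduality isomorphism for $F$ from that for $P \Otimes{\<\<X} F$ by combining the support hypothesis with a stalkwise derived-Nakayama argument.
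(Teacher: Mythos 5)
Your overall strategy—localize, reduce to the stalk at each point of $\Supp_{\<X}\<\<F$, and argue over the noetherian local ring $\mathcal O_{\<\<X,\>x}$—is precisely what the paper does; the paper's proof is a one-line reduction to the affine results \cite[4.3, 4.4, 4.5]{AIL}, and your minimal-model-plus-Nakayama arguments for the bounded-coherence and perfection cases are a reasonable sketch of the underlying ring-theoretic content. So the route is the same; the difference is you attempt to supply the details the paper delegates to \cite{AIL}.

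The one genuine gap is exactly the one you flag yourself: the claim that, under the composite of the tensor–Hom isomorphisms from Lemma~\ref{RHom and perfect}, the biduality map $\delta^A_{P\OTX F}$ corresponds to $\delta^{\OX}_P\OTX \delta^A_F$ (equivalently, after the identification $P\simeq P^{\vee\vee}$ via the isomorphism $\delta^{\OX}_P$, to $\id_P\OTX\delta^A_F$). This is true, and it is the heart of the reflexivity case, but it is a nontrivial diagram chase through the adjunction and evaluation morphisms that needs to be carried out rather than asserted. Without it you cannot conclude $P\OTX C=0$ for the cone $C$ of $\delta^A_F$; note that an attempted shortcut via Proposition~\ref{greflexive}(ii)—showing only that \emph{some} isomorphism $F\simeq F^{\vee\vee}$ exists—would require cancelling $P$ from $P\OTX F\simeq P\OTX F^{\vee\vee}$, and derived categories over a local ring need not have the Krull–Schmidt property, so that route hits essentially the same wall. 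Finally, once $P\OTX C=0$ is in hand, the minimal-model/Nakayama detour you describe for concluding $C=0$ is unnecessary: $C\in\dcatc X$, $\Supp_{\<X} C\subseteq\Supp_{\<X}F\subseteq\Supp_{\<X}P$, and the support formula~\eqref{Supp Tensor} gives $\emptyset=\Supp_{\<X}(P\OTX C)=\Supp_{\<X}P\cap\Supp_{\<X}C=\Supp_{\<X}C$, so $C=0$ directly.
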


\begin{proof} 
The assertions are all local, and the local statements are proved in 
\cite[4.3, 4.4, and 4.5]{AIL}, modulo sheafification; see Example \ref{affine schemes}.  
\end{proof}

We'll need the following  isomorphisms (cf.~\cite[pp.\:152--153, 7.6 and 7.7]{Il}).

Let\/ $E,$ $F$ and $G$ be complexes in\/ $\D(X)$, and consider the map
\begin{equation}
\label{right}
\RH_{\<X}(E\<,F)\dtensor{\<\<X}G\to\RH_{\<X}(E\<,F\dtensor{\<\<X}G\>),
\end {equation}
corresponding via ~\eqref{adjunction2} to the natural composition
$$
\smash{
(\RH_{\<X}( E\<,\> F)\Otimes{\<\<X}G\>) \Otimes{\<\<X} E\iso
(\RH_{\<X}( E\<,\> F)\Otimes{\<\<X}E) \Otimes{\<\<X} G
\xra{\varepsilon\>\>\Otimes{\<\<X}\<1} F\Otimes{\<\<X}G.
}
$$
where $\varepsilon$ is the evaluation map from \eqref{evaluation}. 

\pagebreak[3]
\begin{sublemma} 
\label{RHom and perfect}
Let\/ $E,$ $F$ and $G$ be complexes in\/ $\D(X)$.
\begin{enumerate}[\quad\rm(1)]
\item
When either $E$ or $G$ is perfect, the map \eqref{right} is an isomorphism
\[
\RH_{\<X}(E\<,F)\dtensor{\<\<X}G\simeq\RH_{\<X}(E\<,F\dtensor{\<\<X}G\>).
\]
\item When $G$ is perfect, there is a natural isomorphism 
\[
\RH_{\<X}(E\dtensor{\<\<X}\<G,F)\simeq \RH_{\<X}(E,F)\dtensor{\<\<X}\RH_{\<X}(G,\OX\<).
\]
\end{enumerate}
\end{sublemma}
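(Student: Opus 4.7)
The assertion is local on $X$, so by Definition~\ref{defperfectcomplex} I may assume the perfect complex (say $G$; the case when $E$ is perfect is symmetric) is isomorphic in $\D(X)$ to a bounded complex of finite-rank free $\OX$-modules. The canonical map~\eqref{right} is trivially an isomorphism when $G=\OX$, since both sides reduce to $\RH_{\<X}(E,F)$ and the map is the obvious identification; it then commutes with finite direct sums and with shifts, so it is an isomorphism for any finite-rank free $\OX$-module placed in any single degree. Both sides of~\eqref{right} are triangulated functors of~$G$, and the map~\eqref{right} is a morphism of such triangulated functors (this is a straightforward consequence of the trifunctoriality of~\eqref{adjunction2}); therefore, by the standard two-out-of-three property applied to the brutal truncation triangles of a bounded complex of finite-rank free modules, an induction on the number of nonzero degrees of~$G$ yields that~\eqref{right} is an isomorphism. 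The case when $E$ is perfect is handled in the same way, noting that~\eqref{right} is also a morphism of triangulated functors in~$E$.

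\textbf{Plan for (2).} Assume $G$ perfect. Apply~\eqref{sheafified adjunction} to get the first isomorphism in
\[
\RH_{\<X}(E\dtensor{\<\<X}G,F)\iso \RH_{\<X}\bigl(E,\RH_{\<X}(G,F)\bigr)\iso \RH_{\<X}\bigl(E,F\dtensor{\<\<X}\RH_{\<X}(G,\OX\<)\bigr),
\]
where the second isomorphism is obtained by applying~(1) with $F$ replaced by $\OX$ and $G$ replaced by $F$ to produce a natural isomorphism
\[
\RH_{\<X}(G,\OX\<)\dtensor{\<\<X}F\iso\RH_{\<X}(G,\OX\<\dtensor{\<\<X}F)\iso\RH_{\<X}(G,F).
\]
By Theorem~\ref{global perfect} the complex $\RH_{\<X}(G,\OX\<)$ is perfect; so another application of~(1), with $G$ replaced by $\RH_{\<X}(G,\OX\<)$, gives a natural isomorphism
\[
\RH_{\<X}\bigl(E,F\dtensor{\<\<X}\RH_{\<X}(G,\OX\<)\bigr)\iso \RH_{\<X}(E,F)\dtensor{\<\<X}\RH_{\<X}(G,\OX\<).
\]
Composing these three isomorphisms produces the isomorphism of~(2).

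\textbf{Expected obstacle.} The only real subtlety is that the inductive step in~(1) requires one to know that the map~\eqref{right} is genuinely a morphism of triangulated functors of~$G$ (and of~$E$), not merely that both sides are triangulated; once this compatibility is granted, the bounded-free induction is routine. Verifying this compatibility amounts to chasing the definition of~\eqref{right} through the adjunction~\eqref{adjunction2} and using the tensor-compatibility of translations---tedious but formal. Assuming this, everything else is bookkeeping, and no further difficulty arises in deducing~(2) from~(1).
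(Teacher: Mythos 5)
Your proposal is correct and follows essentially the same route as the paper: both reduce (1) to a bounded complex of finite-rank free modules after localizing and then induct on the number of nonzero degrees, and both deduce (2) from (1) via the adjunction \eqref{sheafified adjunction} together with the observation (from Theorem~\ref{global perfect}) that $\RH_{\<X}(G,\OX\<)$ is perfect. The only difference is cosmetic: you spell out the triangulated-functor compatibility that the paper leaves implicit.
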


\begin{proof} (1). 
Whether the map \eqref{right} is an isomorphism is a local question, so if $E$ is perfect then one may assume that 
$E$ is a bounded complex of finite\kern.5pt-rank free $\OX$-modules. 
The affirmative answer is then given by a
simple induction on the number of degrees in which $E$
is nonzero.\looseness=-1

A similar argument applies when $G$ is perfect.\vspace{1pt}

(2). Setting $G\Bigcheck\!:=\RH_{\<X}(G,\OX\<) $, we get from (1), with $(E,F,G\>)$ changed to $(G,\OX,F)$,  an isomorphism 
\[
F\dtensor{\<\<X}G\Bigcheck\simeq G\Bigcheck\dtensor{\<\<X}F\iso\RH_{\<X}(G,F).
\]
This induces the second isomorphism below:\vspace{-5pt}
\begin{align*}
\RH_{\<X}(E\dtensor{\<\<X}\<G,F)&\iso\RH_{\<X}(E\<,\RH_{\<X}(G,F))\\[-8pt]
&\iso\RH_{\<X}(E\<,F\dtensor{\<\<X}G\Bigcheck) \\[-8pt]
&\iso \RH_{\<X}(E\<,F)\dtensor{\<\<X} G\Bigcheck;
\end{align*}
the isomorphism comes from \eqref{sheafified adjunction} and the third from (1), 
since $G\Bigcheck$ is also perfect, by Theorem~\ref{global perfect}. The desired map is 
the composite isomorphism.
\end{proof}

\subsection{Invertible complexes}
Again, $(X,\OX\<)$ is a scheme.

\begin{subdefinition}\label{definvertible}
\hskip-1pt A complex in $\D(X)$ is  \emph{invertible} if it is
semidualizing and perfect.\looseness=-1 
\end{subdefinition}

This condition is local.  If $X=\Spec R$ and $M\in\D(R)$, then $M$~is invertible in the sense of \cite[\S5]{AIL} if and only if $M^\sim$ is invertible in the present sense.

Recall that $\Shift$ denotes the usual translation (suspension) operator on complexes.

\begin{subtheorem}
\label{thm:semid}
For $L\in\dcatc X$ the following conditions are equivalent.
\begin{enumerate}[\quad\rm(i)]

\item
$L$ is invertible.\vspace{1pt}

\item
$L^{-1}\!:=\RH_{\<X}(L,\OX\<)$ is 
invertible.\vspace{1pt}

\item
Each $x\in X$ has an open neighborhood\/ $U_{\<x}$ such that for 
some integer\/ $r_{\<\<x},$ there is a $\D(U_{\<x})$-isomorphism 
$L|_{U_{\<x}}\simeq \Shift^{r_{\<\<x}}\mathcal O_{U_{\<x}}.$ 
\vspace{1pt}

\item[\rm(iii$'$)] For each connected component\/ $U$ of\/ $X$ there is an
integer\/ $r,$ a locally free rank-one $\mathcal O_U$-module\/~$\mathcal L,$ and a\/ $\D(U)$-isomorphism $L|_U\simeq \Shift^r\mathcal L$.\vspace{1pt}

\item
For some $F\in\D^{}_{\lift.9,\mathsf c,}(X)$  there is an isomorphism
$F\dtensor{\<\<X}L\simeq \OX$.
\vspace{1pt}

\item
For all $G\in\D(X)$ the evaluation map $\varepsilon$ from \eqref{evaluation}
is an isomorphism
\[
\RH_{\<X}(L,G)\dtensor {\<X} L\iso G.
\]

\item[\rm(v$'$)] 
For all $G$ and $G'$ in $\D(X),$ the natural composite map
\textup{(see \ref{sheafified adjunction})}
\begin{align*}
\RH_{\<X}({G'\dtensor{\<\<X} L},{G})\dtensor{\<\<X}L
& \!\iso\! 
\RH_{\<X}({L\dtensor{\<\<X} G'\<},{\>G})\dtensor{\<\<X}L \\
& \!\iso\!
\RH_{\<X}(L,{\RH_{\<X}({G'\<},{\>G}}))\dtensor{\<\<X}L \\
& \underset{\lift1.1,\varepsilon,}\lra \RH_{\<X}({G'\<},{\>G})
\end{align*}
\vskip-3pt
is an isomorphism.
\end{enumerate}
\end{subtheorem}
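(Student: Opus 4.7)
The plan is to handle the purely topological equivalence (iii) $\Leftrightarrow$ (iii$'$) directly, and to reduce each of the remaining conditions to the analogous assertion over the stalks $\mathcal O_{X,x}$ via Example~\ref{affine schemes} together with the sheafification isomorphisms \eqref{sheafify Otimes}--\eqref{sheafify RHom}, invoking the ring-theoretic equivalences from \cite[\S5]{AIL}. For (iii$'$) $\Rightarrow$ (iii) there is nothing to do. For (iii) $\Rightarrow$ (iii$'$), one uses that a noetherian scheme has only finitely many connected components, each of which is open-and-closed; on a connected component~$U$, the function $x\mapsto r_x$ of (iii) is locally constant, hence constant, say equal to~$r$; the complex $L|_U$ then has coherent cohomology concentrated in degree $-r$, and $\mathcal L := H^{-r}(L|_U)$ is a rank-one locally free $\mathcal O_U$-module with $L|_U\simeq \Shift^r\mathcal L$ in~$\D(U)$.

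Next, the implication (iii) $\Rightarrow$ (i), (ii) is immediate, since a shift of $\OX$ is trivially both perfect and semidualizing; while (iv) $\Rightarrow$ (iii) and (i) $\Leftrightarrow$ (ii) pass to stalks and reduce to the corresponding affine statements in \cite[\S5]{AIL}. A clean global proof of (i) $\Rightarrow$ (iv) takes $F := L^{-1}$: applying Lemma~\ref{RHom and perfect}(1) with $(E,F,G)=(L,\OX,L)$ and $L$ in the perfect slot, followed by semiduality of~$L$, yields
\[
L^{-1}\dtensor{X} L \;=\; \RH_{\<X}(L,\OX)\dtensor{X} L \;\iso\; \RH_{\<X}(L,L) \;\iso\; \OX.
\]

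Finally, (v$'$) visibly specializes to (v) on setting $G'=\OX$; conversely, assuming (i), Lemma~\ref{RHom and perfect}(2) with $G=L$ (in the sense of that lemma) yields a canonical isomorphism $\RH_{\<X}(L,G)\simeq G\dtensor{X} L^{-1}$, whence (iv) provides abstractly that $\RH_{\<X}(L,G)\dtensor{X} L\simeq G$; a naturality check then identifies this composite with the evaluation map $\varepsilon$ of \eqref{evaluation}, and (v$'$) follows from~(v) applied to $\RH_{\<X}(G'\<,G)$ in place of $G$, together with the adjunction \eqref{sheafified adjunction}. The main obstacle in this plan is precisely that naturality check: one must verify that the abstract isomorphism assembled from Lemma~\ref{RHom and perfect} and the adjunction coincides with the specific canonical maps appearing in (v) and (v$'$). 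This is routine but requires some care in unwinding the definitions of $\varepsilon$ and the trifunctorial isomorphism~\eqref{sheafified adjunction}, and is ultimately what justifies that ``abstract invertibility'' in the sense of (iv) really upgrades to the sharp evaluation isomorphisms of (v) and (v$'$).
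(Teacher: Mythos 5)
Your cycle through conditions (i)--(iv) and (iii$'$) is sound and, modulo the choice of which arrows to prove directly, matches the paper's approach: the paper also obtains the key isomorphism $\OX\simeq L\dtensor{\<\<X}L^{-1}$ from Lemma~\ref{RHom and perfect}, and also handles (iii)$\Leftrightarrow$(iii$'$) by the locally-constant-rank argument. Your choice to do (iii)$\Rightarrow$(i),(ii) and (iv)$\Rightarrow$(iii) (rather than the paper's (i)$\Rightarrow$(iii) and (iii)$\Rightarrow$(iv)$\Rightarrow$(i)) and to reduce (i)$\Leftrightarrow$(ii) to the affine case (rather than the paper's global argument via Proposition~\ref{perf-reflexive} and Proposition~\ref{greflexive}) are legitimate variants.

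However, your handling of (v) and (v$'$) leaves a genuine gap: you prove (v$'$)$\Rightarrow$(v), (i)$\Rightarrow$(v), and (v)$\Rightarrow$(v$'$), but at no point do you show that (v) (or (v$'$)) implies any of (i)--(iv). As written, (v) and (v$'$) are only exhibited as \emph{consequences} of invertibility, not equivalent to it. The missing closing arrow is cheap and is exactly the paper's one-liner for (v)$\Rightarrow$(iv): take $F\!:=L^{-1}=\RH_{\<X}(L,\OX\<)$, note that $L\in\dcatc X$ forces $L^{-1}\in\Dc(X)$, and apply (v) with $G=\OX$ to get $F\dtensor{\<\<X}L\simeq\OX$. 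You should add this step. (Secondarily, your ``pass to stalks'' for (iv)$\Rightarrow$(iii) needs, as in the paper's (i)$\Rightarrow$(iii), the spreading-out argument from the stalk statement to an open neighborhood using coherence and boundedness of the cohomology of $L$; the stalk-level conclusion alone is weaker than (iii).)
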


\begin{proof} 
When (i) holds, Lemma~\ref{RHom and perfect}(2), with $E=\OX$ and $G=L= F\>$, yields:
\begin{equation}
\label{LL-1}
\OX\iso\RH_X(L,L)\iso L\dtensor{\<\<X}L^{-1}.
\end{equation}

(i)${}\Leftrightarrow{}$(ii).
By Theorem~\ref{global perfect}, the $\OX$-complex $L$ is perfect
if and only if so is $L^{-1}\<$. If (i) holds, then \eqref{LL-1},
Proposition~\ref{perf-reflexive} (with $A=\OX=F$, $P=L$), and
Lemma~\ref{RHom and perfect}(1) give isomorphisms
 $$
\OX\iso L\dtensor{\<\<X}L^{-1}\iso
\RH_{\<X}(L^{-1},\OX\<)\dtensor{\<\<X}L^{-1} \iso\RH_{\<X}(L^{-1}\<,L^{-1}),
 $$
so that by Proposition~\ref{greflexive}(ii) (with $F=\OX$ and $A=L^{-1}$),
the $\OX$-module $L^{-1}$ is semidualizing; since it also perfect
(ii) holds.

The same argument with $L$ and $L^{-1}$ interchanged establishes that
(ii)${}\Rightarrow{}$(i).\vspace{1pt}

(i)${}\Rightarrow{}$(iii).
One may assume here that $X$ is affine. Then, since $L$ is invertible,
\mbox{\cite[5.1]{AIL}} gives that the stalk at $x$ of the cohomology
of~$L$ vanishes in all but one degree, where it is isomorphic to
$\mathcal O_{\<\<X\<,\>x}\>$.  The cohomology of $L$ is bounded and
coherent, therefore there is an open neighborhood $U_x$ of $x$ over
which the cohomology of $L$ vanishes in all but one degree, where it is
isomorphic to $\mathcal O_{U_x}$, i.e., (iii) holds.\vspace{1pt}

(iii)${}\Rightarrow{}$(iv).
If (iii) holds then the evaluation map~\eqref{evaluation} (with $A=L$
and $G=\OX$) is an isomorphism $L^{-1}\dtensor{\<\<X}L\iso \OX$.

(iv)${}\Rightarrow{}$(i).
This is a local statement that is established (along with some other
unstated equivalences) in \cite[5.1]{AIL}; see also \cite[4.7]{FST}.

\vspace{1pt}

(iii)${} \Rightarrow{}$(iii$'$). The function $x\mapsto r_{\<\<x}$
must be locally constant, so of constant value, say $r$, on $U$; and
then in $\D(U)$, $L\simeq \Shift^r H^{-r}(L)$. 

(iii$'$)${} \Rightarrow{}$(iii).  This implication is clear.

(i)${}\Rightarrow{}$(v). 
The first of the following isomorphisms comes from Lemma~\ref{RHom
and perfect}(2) (with $(E,F,G)=(L,G,\OX\<)$), and the second from
\eqref{LL-1}:\vspace{-2pt}
 $$
\RH_X(L,G)\dtensor{\<\<X}L \iso L^{-1}\dtensor{\<\<X}G\dtensor{\<\<X}L\iso
G.
 $$
That this composite isomorphism is $\varepsilon$ is essentially the
definition of the isomorphism
 $$
L^{-1}\dtensor{\<\<X}G=\RH_{\<X}(L,\OX\<)\dtensor{\<\<X}G\iso
\RH_{\<X}(L,G)\,;
 $$
see the proof of Lemma~\ref{RHom and perfect}.

(v)${}\Rightarrow{}$(iv). Set $F\!:=L^{-1}$, and apply  (v)
for $G=\OX$.

(v)${}\Leftrightarrow{}$(v$'$). Replace $G$ in (v) by
$\RH_{\<X}(G'\<,\>G)$; or $G'$ in~(v$'$) by $\OX$.\vspace{1pt} 
\end{proof}

\begin{subcorollary}
\label{tensor inv}
Let $L_{1}$ and $L_{2}$ be complexes in $\Dc(X)$.
\begin{enumerate}[{\quad\rm(1)}]
\item
If $L_1$ and $L_2$ are invertible, then so is $L_1\dtensor{\<\<X}L_2\>.$ 
\item
If\/ $L_1$ is in $\dcatc X$ and $L_1\dtensor{\<\<X}L_2$ is invertible, then $L_1$ is invertible.
\item
For any scheme-map $g\col Z\to X,$ if $L_{1}$ is invertible then so is $\bL g^*\< \<L_{1}$.  
\end{enumerate}
\end{subcorollary}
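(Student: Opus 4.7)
The plan is to exploit the local characterization Theorem~\ref{thm:semid}\textup{(iii)}: a complex in $\dcatc X$ is invertible if and only if it is, locally on $X,$ a shift of the structure sheaf. Parts~(1) and~(3) then follow almost by inspection, whereas~(2) demands a short extra argument.

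For~(1), given $x\in X,$ intersect the open neighborhoods supplied by Theorem~\ref{thm:semid}\textup{(iii)} for $L_1$ and~$L_2$ to obtain an open $U\ni x$ and integers $r_1,r_2$ with $L_i|_U\simeq\Shift^{r_i}\mathcal O_U\ (i=1,2).$ Then $(L_1\dtensor{\<\<X}L_2)|_U\simeq\Shift^{r_1+r_2}\mathcal O_U,$ so~\textup{(iii)}${}\Rightarrow{}$\textup{(i)} yields invertibility. For~(3), apply~\textup{(iii)} to~$L_1$ to produce an open cover $(U_{\<x})$ of~$X$ with $L_1|_{U_{\<x}}\simeq\Shift^{r_{\<x}}\mathcal O_{U_{\<x}}.$ Since $\bL g^*$ commutes with translation and carries $\OX$ to $\OZ,$ restriction along $g$ gives $(\bL g^*L_1)|_{g^{-1}(U_{\<x})}\simeq\Shift^{r_{\<x}}\mathcal O_{g^{-1}(U_{\<x})};$ in particular $\bL g^*L_1\in\dcatc Z,$ and~\textup{(iii)}${}\Rightarrow{}$\textup{(i)} again gives the conclusion.

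For~(2), set $L\!:=L_1\dtensor{\<\<X}L_2,$ invertible by hypothesis, and $L^{-1}\!:=\RH_{\<X}(L,\OX\<),$ which is invertible by Theorem~\ref{thm:semid}\textup{(i)}${}\Leftrightarrow{}$\textup{(ii)}. By Theorem~\ref{thm:semid}\textup{(v)} applied with $G=\OX,$ one has $L^{-1}\dtensor{\<\<X}L\iso\OX.$ Setting $F\!:=L_2\dtensor{\<\<X}L^{-1}$ and invoking associativity of the derived tensor product gives
\[
L_1\dtensor{\<\<X}F\iso L_1\dtensor{\<\<X}L_2\dtensor{\<\<X}L^{-1}\iso\OX.
\]
Since $L^{-1}$ is locally a shift of~$\OX$ by~\textup{(iii)}, $F$~is locally a shift of~$L_2,$ so its cohomology is locally coherent and thus $F\in\Dc(X).$ As $L_1\in\dcatc X,$ Theorem~\ref{thm:semid}\textup{(iv)}${}\Rightarrow{}$\textup{(i)} then shows that $L_1$ is invertible.

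The sole step demanding attention is the verification $F\in\Dc(X)$ in part~(2): a derived tensor product of complexes with coherent cohomology need not remain coherent when one factor is unbounded, but the local triviality of the perfect complex~$L^{-1}$ sidesteps this difficulty.
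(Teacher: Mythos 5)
Your proof is correct and follows essentially the same route as the paper's: both arguments rest on the equivalences in Theorem~\ref{thm:semid}, using the local-triviality characterization (your~(iii), the paper's~(iii$'$)) for parts~(1) and~(3), and condition~(iv) with the same witness $F=L_2\dtensor{\<\<X}L^{-1}$ for part~(2). Your explicit check that $F\in\Dc(X)$ is just an unwinding of the paper's remark that tensoring with an invertible complex preserves $\Dc(X)$.
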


 \begin{proof}
For (1), use  Theorem~\ref{thm:semid}(iii$'$);  for (2),
Theorem~\ref{thm:semid}(iv)---noting that the~$F$ there may be taken to be
the invertible complex $L^{-1}$, and that tensoring with an invertible
complex takes $\Dc(X)$ into itself; and for (3), the fact that $g$
maps any connected component of $Z$ into a connected component of $X\<$.
 \end{proof}

 \begin{subcorollary}
 \label{refl and inv}
Let $A,$ $L$ and $F$ be complexes in $\dcatc X,$ with $L$ invertible.
 \begin{enumerate}[\quad\rm(1)]
  \item
$F$ is derived $A$-reflexive if and only if it is derived $L\dtensor
{\<\<X}A$-reflexive.
  \item
$F$ is derived $A$-reflexive if and only if\/ $F\dtensor{\<\<X}L$ is
derived $A$-reflexive.
  \item
$A$ is semidualizing if and only if\/ $L\dtensor{\<\<X}A$ is
semidualizing.
   \end{enumerate}
  \end{subcorollary}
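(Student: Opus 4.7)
My plan is to exploit two key facts: since $L$ is invertible, the endofunctor $-\dtensor{X}L$ of $\D(X)$ is an autoequivalence with inverse $-\dtensor{X}L^{-1}$, and this autoequivalence preserves $\dcatc X$ (because by Theorem~\ref{thm:semid}(iii), $L$ is locally $\Shift^r\OX$ for some integer~$r$); furthermore, Lemma~\ref{RHom and perfect} provides natural isomorphisms that commute $L$ through the functors $\RH_{\<X}(-,-)$ and $-\dtensor{X}-$. Granted these tools, each of the equivalences in~(1)--(3) reduces to a compatibility statement between biduality maps.

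For~(2), two applications of Lemma~\ref{RHom and perfect}(2) (with $G=L$, whose dual is $L^{-1}$) yield a natural isomorphism
$$
\RH_{\<X}\bigl(\RH_{\<X}(F\dtensor{X}L,A),A\bigr)\iso \RH_{\<X}\bigl(\RH_{\<X}(F,A),A\bigr)\dtensor{X}L\,,
$$
and I would verify that under this isomorphism $\delta^A_{F\dtensor{X}L}$ corresponds to $\delta^A_F\dtensor{X}\id^L$; the equivalence of the two reflexivity conditions then follows because $-\dtensor{X}L$ reflects both isomorphisms and membership in $\dcatc X$. For~(1), I would use Lemma~\ref{RHom and perfect}(1) to get $\RH_{\<X}(F,L\dtensor{X}A)\simeq L\dtensor{X}\RH_{\<X}(F,A)$, and then combine both parts of Lemma~\ref{RHom and perfect} to construct a natural isomorphism
$$
\RH_{\<X}\bigl(\RH_{\<X}(F,L\dtensor{X}A),L\dtensor{X}A\bigr)\iso\RH_{\<X}\bigl(\RH_{\<X}(F,A),A\bigr)\,,
$$
and check that $\delta^{L\dtensor{X}A}_F$ corresponds to $\delta^A_F$ under it. Part~(3) is the special case $F=\OX$ of~(1), in view of Definition~\ref{defsemidual}.

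The main obstacle will be the compatibility of biduality maps with the natural isomorphisms of Lemma~\ref{RHom and perfect}. Since the biduality map~\eqref{eq:sheafbiduality} and those isomorphisms are all built from the evaluation map $\varepsilon$ of~\eqref{evaluation} via the trifunctorial adjunction~\eqref{sheafified adjunction}, a direct diagram chase is feasible. The cleaner route I would favor, however, is a local reduction: derived reflexivity is a local condition (noted after Definition~\ref{defreflexive}), so by Theorem~\ref{thm:semid}(iii) one may work on an open cover on which $L\simeq \Shift^r\OX$ for some integer~$r$. On such an open set $L\dtensor{X}A\simeq\Shift^r\<\<A$ and $F\dtensor{X}L\simeq\Shift^r\<F$, so all three claims reduce to the evident facts that $F$ is derived $A$-reflexive if and only if $F$ is derived $\Shift^r\<\<A$-reflexive, if and only if $\Shift^r\<F$ is derived $A$-reflexive, and that $A$ is semidualizing if and only if $\Shift^r\<\<A$ is.
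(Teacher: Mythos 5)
Your proposal is correct and covers essentially the same ground as the paper: the paper's proof also uses Lemma~\ref{RHom and perfect} (together with Theorem~\ref{thm:semid}(iii$'$)) to handle the boundedness conditions and part~(1), and uses the local description Theorem~\ref{thm:semid}(iii) for part~(2), with~(3) following from~(1) by taking $F=\OX$.

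One observation worth making: you identify as the ``main obstacle'' the task of verifying that the biduality maps $\delta^A_{F\dtensor{X}L}$, $\delta^{L\dtensor{X}A}_F$ correspond to $\delta^A_F$ under the natural isomorphisms furnished by Lemma~\ref{RHom and perfect}. That verification is not actually needed. Proposition~\ref{greflexive}(ii) shows that, once the boundedness hypotheses are in place, derived $A$-reflexivity is equivalent to the mere \emph{existence} of some isomorphism $F\iso\RH_{\<X}(\RH_{\<X}(F,A),A)$ — there is no need to check that the specific biduality morphism is the one that does the job. So the natural isomorphisms you build from Lemma~\ref{RHom and perfect}, together with the boundedness equivalence, already suffice. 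Your alternative local reduction is also sound, and is the route the paper uses for part~(2); it would work equally well for part~(1), since $L|_{U_x}\dtensor{U_x}A|_{U_x}\simeq\Shift^{r_x}(A|_{U_x})$ and derived reflexivity is invariant under translation of either argument.
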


  \begin{proof}
{From}, say,  Theorem~\ref{thm:semid}(iii$'$) and Lemma~\ref{RHom and
perfect}(1), one gets
  $$
\RH_{\<X}(F\<,A)\in\dcatc X\iff \RH_{\<X}(F\<,L\dtensor {\<X}A)\in\dcatc
X.
  $$
Since $L^{-1}\dtensor{\<\<X} L\simeq\OX$,  (1) follows now from 
Lemma~\ref{RHom and perfect}; (2) follows from Theorem~\ref{thm:semid}(iii);  
and (3) follows from~(1).
 \end{proof}

\begin{subremark}
\label{ex:Gorenstein}
A complex $A\in\dcatc X$ is \emph{pointwise dualizing} if every $F\in\dcatc X$ 
is derived $A$-reflexive (see \cite[6.2.2]{AIL}). Such an $A$ is semidualizing: take $F=\OX$.

It is proved in \cite[8.3.1]{AIL} that $\OX$ \emph{is pointwise dualizing} 
\emph{if and only if $X$ is a Gorenstein scheme} (i.e., the local ring 
$\mathcal O_{\<\<X\<\<,\>x}$ is  Gorenstein for all $x\in X$).

It follows from \cite[5.7]{AIL} that invertible complexes can be characterized 
as those that are semidualizing and derived $\OX$-reflexive. Hence 
\emph{when $X$ is Gorenstein, $A\in\dcatc X$ is semidualizing $\iff A$ 
is pointwise dualizing $\iff A$ is invertible.}
 \end{subremark}

\section{Gorenstein-type properties of scheme-maps}
\label{global finite G-dim}

\stepcounter{theorem}
\numberwithin{equation}{subtheorem}

\emph{All schemes are assumed to be noetherian; all scheme-maps are assumed
to be essentially of finite type \emph{(}see Appendix \emph{\ref{eft etc}}\emph{)}
and separated}.

\subsection{Perfect maps}
Let $f\col X\to Y$ be a scheme-map.

Let $f_0\col X\to Y$ denote the underlying map of topological spaces,
and $f_0^{-1}$ the left adjoint of the direct image functor~$f_{0*}$
from sheaves of abelian groups on $X$ to sheaves of abelian groups
on~$Y\<$.   There is then a standard way of making  $f_0^{-1}\OY$ into
a sheaf of commutative rings on~$X\<$, whose stalk at any point $x\in X$
is $\mathcal O_{Y,\>f(x)\>}$.

  \begin{subdefinition}
  \label{relatively perfect}
An $\mathcal{O}_X$-complex $F$ is \emph{perfect relative to}
$f$---or, as we will write, \emph{perfect over}~$f$---if it is in $\dcatc X$, and in the derived category of 
the category of $f_0^{-1}\OY$-modules $F$ is isomorphic to a 
bounded complex of flat $f_0^{-1}\OY$-modules. 

The map $f$ is \emph{perfect} if $\OX$ is perfect over $f\<$. (See \cite[p.\,250, D\'efinition 4.1]{Il}.)
  \end{subdefinition}

\begin{subremark}
\label{perfect via factorization}
Since $f$ is essentially of finite type, there is always such a $U$ for
which $f|U$ factors as (essentially smooth)$\,\circ\,$(closed immersion).
If $X\xra{i}W\to Y$ is such a factorization, then $F$ is perfect over~$f$ 
if and only if $i_*F$ is perfect over $\id^{W}\<$: the proof
of \emph{\cite[pp.\,252, 4.4]{Il}} applies here (see~Remark~\ref{diag}).

Using \cite[p.\,242, 3.3]{Il}, one~sees that perfection over $f$
is local on $X\<$, in the sense that $F$ has this property if and only
if \mbox{every $x\in X$} has an open neighborhood~$U$ such that 
$F|_U$ is perfect over $f|_U$.

Perfection over $\id^{\<X}\<\<$ is equivalent to perfection in
$\dcat[X]$; see Remark \ref{perfect and ffd}.
 \end{subremark}

Let  $\dcatf f$ be the full subcategory of $\D(X)$ whose objects are
all the complexes that are perfect over $f$; and let 
$\dcatf{X}\!:=\dcatf{\id^X\<}$ be the full subcategory of $\D(X)$ whose 
objects are all the perfect $\mathcal{O}_{X}$-complexes.

\begin{subexample}
  \label{affine perfect}
If $f\col X=\Spec S\to\Spec K=Y\<$ corresponds to a homomorphism of
noetherian rings $\sigma\col K\to S$, then $\dcatf f$ is equivalent to
the full subcategory~$\dcatf{\sigma}\subseteq\dcatb S$ with objects
those complexes $M$ that are isomorphic in $\dcat[K]$ to some
bounded complex of flat $K$-modules; this follows from \cite[p.\,168,
2.2.2.1 and p.\,242, 3.3]{Il}, in~view of the standard equivalence,
given by sheafification, between finite $S$-modules and coherent
$\mathcal{O}_{X}$-modules.\vspace{1.5pt}
 \end{subexample}

Recall that an exact functor $\mathsf{F}\col\dcat[Y]\to\dcat[X]$
is said to be \emph{bounded below} if there is an integer~$d$ such that
for all $M\in\D(Y)$ and $n\in\mathbb Z$ the following holds:
   $$
H^i(M)=0\ \textup{ for all }i<n\implies H^j(\mathsf{F}(M))= 0
\ \textup{ for all } j<n-d,
  $$
By substituting ``$>$" for ``$<$" in the preceding definition one obtains
the notion of \emph{bounded above}. If $\mathsf F$ is bounded below (resp.~bounded above) then, clearly,
$\mathsf F\>\>\D^\upl(Y)\subseteq\D^\upl(Y)$ (resp.~$\mathsf
F\>\>\D^\umi(Y)\subseteq\D^\umi(Y)$).

  \begin{subremark}
   \label{char perfect*} 
For \emph{every} scheme-map~$f\<$ the functor $\bL f^*$ is bounded
above. It is bounded below if and only if $f$ is perfect.  When $f$
is perfect, one has
 \begin{equation*}
\bL f^*\D^\mathsf{b}_\mathsf{c}(Y)\subseteq \D^\mathsf{b}_\mathsf{c}(X)\,.
  \end{equation*}
For, $\bL f^*$ is bounded above and below, hence, as above,
\mbox{$\bL f^*\D^\mathsf b(Y)\subseteq \D^\mathsf b(X)$.} Also, $\bL
f^*\dc Y \subseteq \dc X$, see \cite[p.\,99, 4.4]{H}, whose proof uses
\cite[p.\,73, 7.3]{H} and compatibility of $\bL f^*$ with open base
change to reduce to the assertion that \mbox{$\bL f^*\OY=\OX$.}
  \end{subremark}

The following characterization of perfection of $f$, in terms of the
twisted inverse image functor $f^!\<$, was proved for finite-type $f$
in \cite[4.9.4]{Lp2} and then extended to the essentially finite-type
case in \cite[5.9]{Nk2}.

  \begin{subremark}
\label{char perfect} 
For any scheme-map $f\col X\to Y$, and for all $M,N$ in $\Dqcpl(Y)$,
there is defined in \cite[\S4.9]{Lp2} and \cite[5.7--5.8]{Nk2} a
functorial $\D(X)$-map
  \begin{equation}
    \label{!tensor}
f^!M\dtensor{\<\<X}\bL f^*\<\<N\to f^!(M\dtensor{\<\<Y} N).
  \end{equation}

The following conditions on\/ $f$ are equivalent:
  \begin{enumerate}[\quad\rm(i)]
\item
The map $f$ is perfect.\vspace{1pt}
\item
The functor $f^!\col\Dqcpl(Y)\to\Dqcpl(X)$ is bounded above and below.
\item
The complex\/ $f^!\OY$ is perfect over $f$.\vspace{1pt}
\item
When $M$ is perfect, $f^!M$ is perfect over $f$; and whenever $M\dtensor YN$ 
is in $\Dqcpl(Y)$, natural the map \eqref{!tensor} is an isomorphism
  \begin{equation}
    \label{!tensoriso}
f^!M\dtensor{\<\<X}\bL f^*\<\<N\iso f^!(M\dtensor{\<\<Y}N).
  \end{equation}
    \end{enumerate}
\vskip1pt

{}From (ii) one gets, as above, $f^!{\Dqc}^{\mspace{-20mu}\mathsf
b}\mspace{8mu}(Y)\subseteq \D^\mathsf b(X)$; and the last paragraph in
\S5.4 of~\cite{Nk} gives
\begin{equation}
\label{!perfect}
f^!\Dcpl(Y)\subseteq \Dcpl(X).
\end{equation} 
Thus, for perfect $f$, one has
 \begin{equation}
f^!\D^\mathsf{b}_\mathsf{c}(Y)\subseteq \D^\mathsf{b}_\mathsf{c}(X)\>.
  \end{equation}
 \end{subremark}

\begin{subremark}
\label{PinG}
If $f\col X\to Y$ is a perfect map, then each complex that is perfect 
over~$f$ (in particular, $\OX$) is derived $f^{!}\OY$-reflexive.

This is given by~\cite[p.\,259, 4.9.2]{Il}, in whose proof ``smooth"
can be replaced by ``essentially smooth," see \cite[5.1]{AILN}.
 \end{subremark}

Next we establish some further properties of perfect maps for later use.

  \begin{sublemma}
    \label{perfect_maps}
Let  $f\col X\to Y$ be a scheme-map, and $M,B$ complexes in $\D(Y)$. 

If $f$ is an open immersion, or if $f$ is perfect, $M$ is in $\Dcmi(Y)$ and $B$ 
is in $\Dqcpl(Y),$ then~there are natural isomorphisms
\begin{align}
  \label{first_iso}
\bL f^*\RH_Y(M,B)&\iso\RH_{\<X}(\bL f^*\<\<M\<,\bL f^*\<\<B)\,,
\\
  \label{second_iso}
f^!\RH_Y(M,B)&\iso\RH_{\<X}(\bL f^*\<\<M,\>f^!\<B)\,.
\end{align}
  \end{sublemma}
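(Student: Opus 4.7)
The open-immersion case is immediate: both $\bL f^*$ and $f^!$ then equal restriction to the open subscheme $X$, and since $\RH$ is sheaf-theoretic it commutes with such restriction. I therefore focus on the perfect case.

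Construction of the maps: the map in \eqref{first_iso} corresponds, under the $\otimes$--$\RH$ adjunction~\eqref{adjunction2}, to the composite
\[
\bL f^*\RH_Y(M, B) \dtensor{\<X} \bL f^*M \iso \bL f^*\bigl(\RH_Y(M, B) \dtensor{\<Y} M\bigr) \xra{\bL f^*\varepsilon^M_B} \bL f^*B,
\]
in which the first isomorphism is pullback--tensor compatibility and $\varepsilon^M_B$ is the evaluation map of~\eqref{evaluation}.  The map in \eqref{second_iso} is built analogously, with $f^!$ in place of $\bL f^*$ and with the projection morphism \eqref{!tensor} of Remark~\ref{char perfect} in place of pullback--tensor compatibility:
\[
f^!\RH_Y(M, B) \dtensor{\<X} \bL f^*M \xra{\eqref{!tensor}} f^!\bigl(\RH_Y(M, B) \dtensor{\<Y} M\bigr) \xra{f^!\varepsilon^M_B} f^!B.
\]

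To prove these maps are isomorphisms, my plan is a way-out argument in~$M$.  The statement is Zariski-local on $X$ and $Y$, since $\bL f^*$, $f^!$, and $\RH$ each commute with restriction to open subsets---the first and third by construction, and $f^!$ by its flat-base-change property.  I may therefore reduce to the case where $X = \Spec S$ and $Y = \Spec K$ are affine; by Example~\ref{affine schemes}, $M$~is then the sheafification of some $N \in \dcatn K$, and over the noetherian ring~$K$ such an $N$ is quasi-isomorphic to a bounded-above complex of finitely generated free $K$-modules.  Both sides of each displayed isomorphism are exact contravariant functors of~$M$ compatible with $\Shift$, and for $M = \OY$ they reduce, via the canonical isomorphism $\RH_Y(\OY, B) \simeq B$, to the identity of $\bL f^*B$, respectively $f^!B$.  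The way-out lemma \cite[p.\,68, 7.1]{H}, applied with generator family $P = \{\OY\}$, then propagates the isomorphism from $M = \OY$ to arbitrary $M \in \Dcmi(Y)$; its boundedness hypotheses are supplied by Remarks~\ref{char perfect*} and~\ref{char perfect}, which guarantee, for perfect $f$, that both $\bL f^*$ and $f^!\colon \Dqcpl(Y) \to \Dqcpl(X)$ are bounded above and below.

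The main technical obstacle is the verification of this boundedness on the right-hand side of \eqref{second_iso}, where the composite $\RH_X(\bL f^*(-), f^!B)$ mixes a contravariant pullback with the covariant~$f^!$; bounding its cohomological amplitude in $M$ requires combining the two-sided boundedness of both $\bL f^*$ and~$f^!$ (from the perfection of~$f$) with the coherence and cohomological boundedness of~$M$.
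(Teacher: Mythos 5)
The open-immersion case and the proof of \eqref{first_iso} are sound and essentially the paper's route: reduce to affine, replace $M$ by a bounded-above complex of finite-rank free modules, and propagate the (evident) isomorphism for $M=\OY$; the paper cites \cite[p.\,181, (4.6.7)]{Lp2} where you invoke the way-out lemma, but these are the same argument.

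For \eqref{second_iso} your route diverges from the paper's and has a genuine gap in the construction of the map. You define it as the composite
\[
f^!\RH_Y(M, B) \dtensor{\<X} \bL f^*M \xra{\eqref{!tensor}} f^!\bigl(\RH_Y(M, B) \dtensor{\<Y} M\bigr) \to f^!B,
\]
but \eqref{!tensor} is defined (Remark~\ref{char perfect}) only for arguments in $\Dqcpl(Y)$, whereas the lemma's $M$ lies in $\Dcmi(Y)$ and need not be bounded below. Worse, $\RH_Y(M,B)\dtensor{\<Y} M$ need not lie in $\Dqcpl(Y)$ (it is a tensor of a bounded-below complex with a possibly unbounded-below one), so $f^!$ cannot even be applied to the middle term. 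The way-out lemma propagates an isomorphism only once a natural transformation exists on the whole source category; it does not supply the transformation for unbounded-below $M$. The obstacle you flag at the end (boundedness of the right side in $M$) is real but secondary to this one. The paper avoids the issue entirely: for proper $f$ the map comes from the adjunction $\Rf\dashv f^!$ together with sheafified duality \eqref{duality iso} (this is \cite[4.2.3(e)]{Lp2}), and the general case then follows by writing $f=\bar f u$ with $\bar f$ proper and $u$ a localizing immersion (Nayak's compactification, Appendix~\ref{eft etc}), composing the proper case for $\bar f$ with the open-immersion case for $u$. If you want a direct construction, you should follow that route rather than \eqref{!tensor}.
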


  \begin{proof}
As a map in $\D(X)$, \eqref{first_iso} comes from \ref{^* and Hom}.
To show it an isomorphism we may assume $Y$ affine, say $Y=\Spec
R$. Then by \cite[5.5]{BN} and  \cite[p.\,42, 4.6.1 (dualized)]{H},
any $M\in\Dcmi(Y)$ is isomorphic to the sheafification of a complex  of
finite-rank free $R$-modules, vanishing in all sufficiently large degrees;
so \cite[p.\,181, (4.6.7)]{Lp2} gives the desired assertion.

For~\eqref{second_iso}, use \cite[4.2.3(e)]{Lp2} when $f$ is proper;
and then in the general case, compactify, see Appendix \ref{eft etc}.
\end{proof}

\begin{subremark} 
\label{Rf*perfect} 
Let $f\col X\to Y$ be a perfect and proper scheme map.

One has $\R f_*(\dcatc X)\subseteq \dcatc Y$, by \cite[p.\,237,
2.2.1]{Il}. Moreover,  if $F\in\dcatc X$ is perfect, then so is $\R
f_{*}F\>$; see Remark~\ref{perfect and ffd} and \mbox{\cite[p.\,250,
Proposition 3.7.2]{Il}}.
 \end{subremark}

  \begin{subremark}
In $\D(X)$ there is a natural map 
\[
\alpha(E,F,G)\col\RH_{\<X}(E\<,\>F)
\to \RH_{\<X}(E\Otimes{X}G\<,\>F\Otimes{X}G)
\qquad\big(E,F,G\in\D(X)\big), 
\]
corresponding via \eqref{adjunction2} to $$
\smash{(\RH_{\<X}(E\<,\>F)\Otimes{X}E)\Otimes{X}G\xra{\varepsilon\>\Otimes{\<\<X}\mathbf1}
F\Otimes{X}G} $$ where $\varepsilon$ is evaluation~\eqref{evaluation}.

Assume now that $f$ is perfect. By~Remark~\ref{char perfect} there is a natural isomorphism
\begin{equation}
\label{* and !}
\bL f^*\<\<N\dtensor{\<X} f^!\OY\simeq f^!N\qquad(N\in\Dqcpl(Y)).
\end{equation}
Hence $\alpha(\bL f^*\<\<M,\bL f^*\<\<N\<,\>f^!\OY)$ gives rise to a natural map, for all $M,N\in\Dqcpl(Y)$,
\begin{equation}
\label{an iso}
\textup{\LARGE\mathstrut}\beta(M\<,N\<,\>f)\col\<\RH_{\<X}(\bL f^*\<\<M,\bL f^*\<\<N)
\<\to\<\RH_{\<X}(f^!\<M,\>f^!\<N).
\end{equation}
  \end{subremark}

\begin{sublemma}
\label{! and *}
When $f\col X\to Y$ is perfect, $M$ is in $\dcatc Y$, and  $N$ is in $\Dqcpl(Y)$, the map $\beta(M\<,N\<,\>f)$ is an isomorphism.
\end{sublemma}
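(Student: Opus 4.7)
The map $\beta(M,N,f)$ is, by its definition, the composite of $\alpha(\bL f^*M, \bL f^*N, f^!\OY)$ with the isomorphism \eqref{* and !}. By the tensor--Hom adjunction \eqref{sheafified adjunction}, the map $\alpha(E,F,G)\col\RH_X(E,F)\to\RH_X(E\otimes G,F\otimes G)$ identifies with $\RH_X(E,-)$ applied to the unit $\eta_{F,G}\col F\to\RH_X(G,F\otimes G)$ of the adjunction $({-}\dtensor{X}G)\dashv \RH_X(G,-)$. Consequently, $\beta(M,N,f)$ will be an isomorphism (for arbitrary $M\in\dcatc Y$) provided that the unit map
\[
\eta_N\col\bL f^*\<\<N\lra \RH_X\bigl(f^!\OY,\>\bL f^*\<\<N\dtensor{X}f^!\OY\bigr)\iso\RH_X(f^!\OY,\>f^!\<N)
\]
is an isomorphism for every $N\in\Dqcpl(Y)$, where the second identification uses \eqref{* and !}. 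So the task reduces to establishing this last claim.

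Since $f$ is perfect, Remark \ref{PinG} asserts that $\OX$ is derived $f^!\OY$-reflexive, whence the biduality map is an isomorphism $\OX\iso\RH_X(f^!\OY, f^!\OY)$. Tensoring with $\bL f^*\<\<N$ and post-composing with the map \eqref{right} of Lemma \ref{RHom and perfect}, one checks that $\eta_N$ factors as
\[
\bL f^*\<\<N\iso\RH_X(f^!\OY,\>f^!\OY)\dtensor{X}\bL f^*\<\<N\xra{\eqref{right}}\RH_X\bigl(f^!\OY,\>f^!\OY\dtensor{X}\bL f^*\<\<N\bigr).
\]
So the problem is further reduced to showing that the map \eqref{right} is an isomorphism when $E=F=f^!\OY$ and $G=\bL f^*\<\<N$.

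This is a local question on $X$. Because $f$ is perfect and essentially of finite type, we may work on an open set where $f$ factors as $X\xra{i}W\xra{p}Y$, with $p$ essentially smooth and $i$ a perfect closed immersion (Remark \ref{perfect via factorization}). Then $p^!\OY$ is invertible on $W$, and $f^!\OY\simeq i^!p^!\OY\simeq i^!\OW\dtensor{X}\bL i^*p^!\OY$ by \eqref{!tensoriso} applied to~$i$. Using the invertibility of $\bL i^*p^!\OY$ (Theorem \ref{thm:semid}(iii$'$) and Corollary \ref{refl and inv}) one strips off this invertible tensor factor to reduce to the analogous statement with $f^!\OY$ replaced by $i^!\OW$. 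Now by Remark \ref{perfect via factorization}, $i_*\OX$ is perfect in $\D(W)$; transferring the reduced statement from $\D(X)$ to $\D(W)$ via the adjoint pair $(i_*, i^!)$ and the projection formula for $i_*$, the question becomes an instance of Lemma \ref{RHom and perfect}(1) on $\D(W)$ with the perfect complex $i_*\OX$ in one of the positions, and the assertion follows.

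The principal obstacle is the closed-immersion step: although $f^!\OY$ is perfect over~$f$ (Remark \ref{char perfect}(iii)), it is generally not perfect as an $\OX$-complex, so Lemma \ref{RHom and perfect}(1) does not apply directly in $\D(X)$. The plan circumvents this by moving the question to $\D(W)$, where the surrogate $i_*\OX$ of $i^!\OW$ is perfect and the lemma does apply.
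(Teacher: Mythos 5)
Your proof is correct in outline but follows a genuinely different path from the paper's. The paper localizes, then invokes the way-out lemma twice (using that $\bL f^*$ and $f^!$ are bounded both ways) to reduce first to $M=\OY$ and then to $N=\OY$, at which point $\beta(\OY,\OY,f)$ is recognized directly as the isomorphism $\chi^{f^!\OY}$ from Remark~\ref{PinG}. You instead eliminate $M$ for free by the adjunction observation that $\beta(M,N,f)$ is $\RH_{\<X}(\bL f^*\<\<M,-)$ applied to the unit $\eta_N=\beta(\OY,N,f)$, which sidesteps the first way-out reduction (and in fact requires nothing of $M$ beyond $M\in\Dqcpl(Y)$); you then handle the $N$-dependence explicitly by factoring $\eta_N$ through $\chi^{f^!\OY}$ and the map \eqref{right}, localizing via $f=p\circ i$, stripping the invertible factor $\bL i^*p^!\OY$, and pushing forward to $\D(W)$ where a perfect complex lands in the $E$-slot of Lemma~\ref{RHom and perfect}(1). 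Both routes work; yours is more explicit about ``why'' the map is an isomorphism, while the paper's is shorter and defers the work to the standard way-out machinery. Two small remarks. First, the perfect complex that actually ends up in the $E$-position after the push-forward is $i_*i^!\OW\simeq\RH_W(i_*\OX,\OW)$, not $i_*\OX$ itself; of course its perfection follows from that of $i_*\OX$ by Theorem~\ref{global perfect}, so this is only a slip of phrasing. Second, the transfer to $\D(W)$ rests on a compatibility you state but do not verify: that $i_*$ applied to the map~\eqref{right} on $X$, transported through the duality isomorphism~\eqref{duality iso}, the projection formula, and~\eqref{!tensoriso}, really does agree with the map~\eqref{right} on $W$. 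This is true by naturality but is exactly the kind of diagram-chase the paper's way-out argument is designed to avoid; a complete write-up along your lines would need to address it.
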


\begin{proof}
One checks, using \S\ref{any base change}, \S\ref{twisted inverse}(i), and Lemma \ref{perfect_maps},  that the question is local on both $X$ and $Y$. Hence, via \cite[p.\,133, 7.19]{H}, one may assume that $M$ is a complex of finite\kern.5pt-rank free $\OY$-modules.  

By Remarks \ref{char perfect*} and \ref{char perfect}, respectively, the functors 
$\bL f^*$ and $f^!$ are bounded both above and below.   Therefore, for every
fixed $N$ in $\Dqcpl(Y)$, the source and target of $\beta(M,N,f)$ are bounded
below functors of $M$.  Now one can argue as in the proof of \cite[p.69,  (iv)]{H}
to  reduce the problem to the case $M=\OY$.  For this $M$, one uses a similar
argument to reduce to the case where also $N=\OY$ holds.

One checks that $\beta(\OY,\OY,f)$ is the canonical map 
$\OX\!\to\<\RH_X(f^!\OY\<, f^!\OY)$, a map that, by Remark~\ref{PinG}, 
is an isomorphism.
 \end{proof}

\begin{sublemma}
\label{B-compn}
Let\/ $f\col X\to Y$ be a perfect map

When\/ $M$ is in\/ $\Dcmi(Y)$ and\/ $B$ is in\/ $\Dcpl(Y),$ the 
complex\/ $\bL f^*\<\<M$ is derived\/ $\bL f^*\<\<B$-reflexive if and 
only if it is derived\/ $f^!\<\<B$-reflexive.
 \end{sublemma}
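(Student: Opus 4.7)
The strategy is to reduce both reflexivity conditions to statements about the single complex $C := \RH_{Y}(M,B) \in \Dcpl(Y)$, then invoke Lemma~\ref{! and *} to reconcile them. Using Lemma~\ref{perfect_maps}, the first-stage duals become
\[
\RH_{X}(\bL f^*M, \bL f^*B) \simeq \bL f^*C, \qquad \RH_{X}(\bL f^*M, f^!B) \simeq f^!C,
\]
and iterating identifies the targets of $\delta^{\bL f^*B}_{\bL f^*M}$ and $\delta^{f^!B}_{\bL f^*M}$ with $\RH_{X}(\bL f^*C, \bL f^*B)$ and $\RH_{X}(f^!C, f^!B)$, respectively.

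Next, apply Lemma~\ref{! and *} to the pair $(C, B)$, extended from $C \in \dcatc Y$ to $C \in \Dcpl(Y)$ via a way-out argument: for perfect $f$, both $\bL f^*$ and $f^!$ are bounded above and below (Remarks~\ref{char perfect*} and~\ref{char perfect}), so the source and target of $\beta(-,B,f)$ are way-out functors of their first argument, allowing reduction to $C = \OY$ as in the proof of Lemma~\ref{! and *}. This yields a canonical isomorphism
\[
\beta(C,B,f)\colon \RH_{X}(\bL f^*C, \bL f^*B) \iso \RH_{X}(f^!C, f^!B)
\]
between the two biduality targets. Combined with the projection identity $f^!C \simeq \bL f^*C \dtensor{X} L$ (where $L := f^!\OY$, semidualizing by Remark~\ref{PinG}) from~\eqref{* and !}, a local argument then shows that $\bL f^*C \in \dcatc X$ iff $f^!C \in \dcatc X$, so the finiteness conditions for the two reflexivities match.

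The main remaining task, which I expect to be the principal obstacle, is to verify that the isomorphism $\beta(C,B,f)$ carries $\delta^{\bL f^*B}_{\bL f^*M}$ to $\delta^{f^!B}_{\bL f^*M}$ under the above identifications. Unwinding the construction of $\beta$ from the map $\alpha(-,-,L)$---essentially, tensoring by $L$---and the iso~\eqref{* and !}, together with the characterization~\eqref{eq:sheafbiduality} of biduality as adjoint to the evaluation map~\eqref{evaluation}, this compatibility reduces to a coherence diagram relating evaluation, \eqref{* and !}, and the map~\eqref{right}. Once this is checked, $\delta^{\bL f^*B}_{\bL f^*M}$ is an isomorphism precisely when $\delta^{f^!B}_{\bL f^*M}$ is, and the equivalence of the two derived reflexivities for $\bL f^*M$ follows.
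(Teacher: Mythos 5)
Your plan correctly identifies the chain of isomorphisms (via \eqref{first_iso}, Lemma~\ref{! and *}, and \eqref{second_iso}) that connects the two double-duals, but the ``principal obstacle'' you flag---checking that $\beta(C,B,f)$ intertwines the two canonical biduality maps $\delta^{\bL f^*\<\<B}_{\bL f^*\<\<M}$ and $\delta^{f^!B}_{\bL f^*\<\<M}$---does not need to be surmounted. Proposition~\ref{greflexive}(ii) says that $F$ is derived $A$-reflexive as soon as $\RH_{\<X}(F,A)$ is bounded above and there exists \emph{some} $\D(X)$-isomorphism $F\iso\RH_{\<X}(\RH_{\<X}(F,A),A)$; it is not necessary to verify that any particular composite isomorphism equals $\delta^A_F$. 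This is exactly what the paper exploits, reducing the final step to chaining the three abstract isomorphisms. Once you invoke Proposition~\ref{greflexive}(ii), the coherence diagram you describe in your last paragraph becomes superfluous, and the proof shrinks to: (a) establish the boundedness equivalence, and (b) exhibit the isomorphism chain.

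For part (a), your claim that ``a local argument then shows that $\bL f^*\<\<C\in\dcatc X$ iff $f^!C\in\dcatc X$'' is left unsupported, and it is not purely local: the nontrivial direction uses both that $f^!\OY\in\dcatc X$ with $\Supp_{\<X}f^!\OY=X$ and a stalkwise bound (via~\cite[A.4.3]{AIL}) on the degree range of $\bL f^*\<\<C$ coming from the isomorphism $f^!\OY\dtensor{\<\<X}\bL f^*\<\<C\simeq f^!C$. This is the part of the proof with real content and deserves to be spelled out. Finally, be careful with the scope of Lemma~\ref{! and *}: it is stated for first argument in $\dcatc Y$, while $C=\RH_Y(M,B)$ is a priori only in $\Dcpl(Y)$; your way-out extension is plausible given the boundedness of $\bL f^*$ and $f^!$, but an unbounded-below complex of free modules falls outside the range of the quoted reduction in that lemma's proof, so the extension is not automatic and would need an explicit argument (or, as in the paper, one arranges to apply the lemma only after the boundedness has been controlled).
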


\begin{proof}
We deal first with the boundedness conditions in
Definition~\ref{defreflexive}.  The condition  $\bL f^*\<\<M\in\dcatc X$
holds throughout, by assumption.

Assume that $\RH_{\<X}(\bL f^*\<\<M\<,\bL f^*\<\<B)$ is in $\dcatc X$. As 
$\RH_Y(M,B)\in\Dcpl(Y)$ (see \cite[p.\,92, 3.3]{H}), one gets 
from Remark~\ref{char perfect} and \eqref{first_iso} an isomorphism
\begin{equation}
\label{f^!O and otimes}
f^!\OY\<\dtensor X \RH_{\<X}(\bL f^*\<\<M,\bL f^*\<\<B)\simeq f^!\RH_Y(M,B)\,.
\end{equation}
By Remark~\ref{char perfect}(iii), $f^!\OY\in\dcatc X$, so it
follows  that $f^!\RH_Y(M,B)\in\Dcmi(X)$. On the other hand,  by
\eqref{!perfect},  $f^!\RH_Y(M,B)\in\Dcpl(X)$. We conclude that
$f^!\RH_Y(M,B)\in\dcatc X$, and  so by~ \eqref{second_iso}, that
$\RH_{\<X}(\bL f^*\<\<M,\>f^!\<B)\in\dcatc X$.

Suppose, conversely, that $\RH_{\<X}(\bL f^*\<\<M,\>f^!\<B)\in\dcatc X$, so that by \eqref{second_iso}, there is an integer~$n$ such that
$$
H^i\bigl(f^!\RH_Y(M,B))= 0\text{\ \ for all\ \  }i>n.
$$
Using \eqref{first_iso} and Remark~\ref{char perfect*} one gets:
$$
\RH_{\<X}(\bL f^*\<\<M\<,\bL f^*\<\<B)\simeq \bL f^*\RH_Y(M,B)\in\Dc(X).
$$
Also, $f^!\OY\in \dcatc X$, by Remark~\ref{char perfect}, and it follows 
from an application of (i)--(iii) in ~\ref{twisted inverse} to a local 
factorization of $f$ as (essentially smooth)$\circ$(closed immersion)---or from
Proposition~\ref{perfect reflexive}---that $\Supp_X f^!\OY=X$. So except 
for the trivial case where $X$ is empty, there is an integer $m$ such that
$$
H^mf^!\OY\ne 0\quad\text{and}\quad
H^jf^!\OY= 0\text{\ \ for all\ \  }j>m.
$$
Hence, by \eqref{f^!O and otimes}, for each $x$ in $X$ and for all all $k>n-m$,
\cite[A.4.3]{AIL} gives
$
(H^k\>\RH_{\<X}(\bL f^*\<\<M,\>\bL f^*\<\<B){)}_{\<x}=0.
$
It follows that \vspace{1pt}  $\RH_{\<X}(\bL f^*\<\<M\<,\bL f^*\<\<B)$ is in $\dcatc X$.\looseness=-1

The desired assertions now result from the isomorphisms\vspace{3pt}
\begin{align*}
\RH_{\<X}(\RH_{\<X}(\bL f^*\<\<M\<,\bL f^*\<\<B),\bL f^*\<\<B)
&\iso \RH_{\<X}(\bL f^*\RH_{\<X}(M,B),\bL f^*\<\<B)
   \\
&\iso\RH_{\<X}(f^!\RH_{\<X}(M,B),\>f^!\<B) 
   \\
&\iso \RH_{\<X}(\RH_{\<X}(\bL f^*\<\<M,f^!\<B),f^!\<B),
\end{align*}
given by formula \eqref{first_iso}, Lemma \ref{! and *}, and formula
\eqref{second_iso}, respectively.
 \end{proof}

\subsection{Ascent and descent}
\label{Ascent and descent}
Let \/ $f\col X\to Y$ be a scheme-map.

\begin{subremark}
\label{faithfully_flat}
Recall that $f$ is said to be \emph{faithfully flat} if it
is flat and surjective; and that for any flat  $f\<$, the canonical map
to $f^*$ from its left-derived functor $\bL f^*$ is an isomorphism---in
brief, $\bL f^*=f^*\<$.
\end{subremark}

\begin{sublemma}
  \label{flat_descent}
Let\/ $f\col X\to Y$ be a perfect scheme-map and\/ $M$ a complex
in\/~$\D(Y)$.

If\/ $M$ is in\/ $\dcatc Y$ then\/ $\bL f^*\<\<M$ is in\/ $\dcatc X$. The
converse holds when $M$ is in $\Dc(Y)$ and\/ $f$ is faithfully flat,
or proper and surjective.
  \end{sublemma}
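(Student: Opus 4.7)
The forward implication is exactly the inclusion $\bL f^*\dcatc Y\subseteq\dcatc X$ recorded in Remark~\ref{char perfect*} for any perfect $f$. For the converse I would handle the two hypotheses separately.

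When $f$ is faithfully flat, flatness gives $\bL f^*=f^*$, which is exact and commutes with cohomology, so $H^n(\bL f^*\<\<M)=f^*H^n(M)$ for every $n$. The hypothesis $\bL f^*\<\<M\in\dcatc X$ yields an integer $N$ with $f^*H^n(M)=0$ for $|n|>N$. Each $H^n(M)$ is quasi-coherent, and faithful flatness (flat plus surjective) forces a quasi-coherent sheaf on $Y$ to vanish as soon as its pullback to $X$ does; hence $H^n(M)=0$ for $|n|>N$, and $M\in\dcatc Y$.

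When $f$ is perfect, proper, and surjective, my plan is to apply Theorem~\ref{descent through perfect} with $F:=M$ and $P:=\R f_*\OX$. By Remark~\ref{Rf*perfect}, $\R f_*$ preserves coherence and boundedness and sends $\OX$ to a perfect $\OY$-complex, so $\R f_*\bL f^*\<\<M\in\dcatc Y$ and $P$ is perfect. The projection formula identifies $\R f_*\bL f^*\<\<M$ with $M\Otimes{\OY}P$, hence $M\Otimes{\OY}P\in\dcatc Y$. It remains to verify the support hypothesis $\Supp_{\<Y} P\supseteq\Supp_{\<Y}\<\<M$, which I would obtain from the stronger equality $\Supp_{\<Y} P=Y$: for each $y\in Y=f(X)$, the fiber $X_y:=f^{-1}(y)$ is a nonempty proper $k(y)$-scheme, so $\R\Gamma(X_y,\mathcal O_{X_y})$ is nonzero in degree zero; proper base change identifies this with $P\Otimes{\OY}k(y)$, and Nakayama applied to the coherent stalks of $P$ then forces $P_y\ne0$. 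Theorem~\ref{descent through perfect} now delivers $M\in\dcatc Y$.

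The main technical point—and the only place where properness and surjectivity both enter substantively—is the support identity $\Supp_{\<Y}\R f_*\OX=Y$; the remaining inputs (the projection formula and the preservation of coherence and perfection under $\R f_*$ for perfect proper maps) are imported from Remark~\ref{Rf*perfect} and the calculus of Grothendieck duality recalled in the appendix.
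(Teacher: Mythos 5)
Your treatment of the forward direction and of the faithfully flat case coincides with the paper's, and for the proper, surjective case you also follow the paper's route: the projection isomorphism $\R f_*\bL f^*\<\<M\simeq \R f_*\OX\Otimes{Y}M$ together with Theorem~\ref{descent through perfect} applied to $P:=\R f_*\OX$. The only divergence---and the one genuine gap---is in your proof of the support identity $\Supp_Y\R f_*\OX=Y$.

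You appeal to ``proper base change'' to identify $\R\Gamma(X_y,\mathcal O_{X_y})$ with $\R f_*\OX\Otimes{Y}k(y)$. In coherent cohomology that identification holds only when the base-change square is tor-independent (see \S\ref{fiber square}), and perfection of $f$ gives only that $\OX$ has \emph{finite} Tor-dimension over $f^{-1}\OY$, not that $\textup{Tor}_i^{\mathcal O_{Y,y}}(\mathcal O_{X,x},k(y))=0$ for all $i>0$ and all $x$ over $y$. So for a non-flat perfect $f$ the base-change map $\R f_*\OX\Otimes{Y}k(y)\to\R\Gamma(X_y,\mathcal O_{X_y})$ need not be an isomorphism, and nonvanishing of its target does not yield nonvanishing of its source. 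The paper establishes the same support identity more simply: for $y\in Y$ choose (using surjectivity) some $x\in f^{-1}(y)$; the local homomorphism $\mathcal O_{Y,y}\to\mathcal O_{X,x}$ factors through the stalk $(f_*\OX)_y$ of $f_*\OX=H^0\R f_*\OX$, so that stalk is nonzero. Hence $\Supp_Y\R f_*\OX\supseteq\Supp_Y f_*\OX=Y$, with no base change and no Nakayama. Once you substitute this for your support argument, the rest of your plan goes through.
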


  \begin{proof}
The forward implication is contained in Remark~\ref{char perfect*}.

For the converse, when $f$ is faithfully flat  there are isomorphisms
$H^n(f^*\<\<M)\cong f^*\<\<H^n(M)\ (n\in\mathbb Z)$; so it suffices
that $f^*\<\<H^n(M)=0$ imply $H^n(M)=0$.  This can be seen stalkwise,
where we need only recall, for a flat local homomorphism $R\to S$ of
local rings and any $R$-module $P\<$, that $P\otimes_R S=0$ implies $P=0$.

When $f$ is proper then by Remark~\ref{Rf*perfect}, $\R f_*(\dcatc X)\subseteq \dcatc Y$ and $\R f_{*}\OX$ is perfect. Furthermore, surjectivity of $f$ implies that
$$
\Supp_{Y}\<\R f_{*}\OX\supseteq\Supp_{Y}\<H^0\R f_{*}\OX=\Supp_{Y}\<\<f_{*}\OX=Y.
$$ 
In view of the projection isomorphism
\[
\R f_{*}\bL f^{*}\<\<M \simeq \R f_{*}\OX\Otimes{Y}M\>,
\]
see \eqref{projection2}, the desired converse follows from Theorem \ref{descent through perfect}.
\end{proof}

\begin{subproposition}
\label{perfect descent}
Let $f\col X\to Y$ be a  scheme-map and $M\in\Dcmi(Y)$.

If $M$ is perfect, then $\bL f^*\<\<M$ is perfect. The converse holds
if $f$ is faithfully flat, or if $f$ is perfect, proper and surjective.
\end{subproposition}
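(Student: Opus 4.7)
The forward direction is immediate from Definition~\ref{defperfectcomplex}: on an open $V\subseteq Y$ over which $M|_V$ is $\D(V)$-isomorphic to a bounded complex of finite-rank locally free $\mathcal{O}_V$-modules, the same representation transports to one for $\bL f^*\<\<M$ on $f^{-1}(V)$ by bounded complexes of finite-rank locally free $\mathcal{O}_{f^{-1}(V)}$-modules; since such $V$ cover $Y\<$, their preimages cover $X\<$, so $\bL f^*\<\<M$ is perfect.

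For the converse when $f$ is perfect, proper, and surjective, I would apply Theorem~\ref{descent through perfect} on $Y$ with $P\!:=\R f_*\OX$ and $F\!:=M$. By Remark~\ref{Rf*perfect}, perfection and properness of~$f$ imply that $P$ is a perfect $\OY$-complex and that $\R f_*(\bL f^*\<\<M)$ is perfect. The projection isomorphism (used in the proof of Lemma~\ref{flat_descent}) identifies $\R f_*(\bL f^*\<\<M)$ with $P\Otimes{Y}M=P\Otimes{Y}F$. Surjectivity of~$f$ forces
\[
\Supp_Y\<P \supseteq \Supp_Y\<\<H^0\R f_*\OX = \Supp_Y\<\<f_*\OX=Y\supseteq \Supp_Y\<\<M,
\]
as in the proof of Lemma~\ref{flat_descent}, so the support hypothesis of Theorem~\ref{descent through perfect} is met. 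A first application of that theorem (its $\dcatc{X}$ clause) yields $M\in\dcatc{Y}$; a second (its ``perfect'' clause) yields that $M$ is perfect.

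For the converse when $f$ is faithfully flat, $f$ is in particular perfect (flat maps are perfect), and $\bL f^*=f^*$ by Remark~\ref{faithfully_flat}. Since $\bL f^*\<\<M$ is perfect it lies in $\dcatc{X}$, while $M\in\Dcmi(Y)\subseteq\dc{Y}$; so Lemma~\ref{flat_descent} gives $M\in\dcatc{Y}$. By Remark~\ref{perfect and ffd}, perfection now reduces to finite flat dimension, a local condition on $Y$. I would reduce to the affine case $Y=\Spec R$, $X=\Spec S$ with $R\to S$ faithfully flat, and invoke Examples~\ref{affine schemes} and~\ref{affine perfect}, converting the question to the ring-theoretic statement that perfection descends along faithfully flat ring maps, established as \cite[4.1]{AIL}. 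The main obstacle I anticipate is precisely this last passage: upgrading stalkwise finite flat dimension of $M$ (coming from the stalkwise faithfully flat local maps $\mathcal O_{Y,\>f(x)}\to\mathcal O_{X,x}$) to a \emph{uniform} bound, which then produces a global bounded flat representative. This is exactly what the affine descent of \cite{AIL} supplies, so reducing to commutative algebra is doing the essential work.
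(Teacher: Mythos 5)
Your forward direction is correct and matches the paper's, and your treatment of the perfect, proper, surjective case correctly fleshes out what the paper merely gestures at (``argue as in the last part of the proof of Lemma~\ref{flat_descent}''): the combination $P\!:=\R f_*\OX$, the projection isomorphism, Remark~\ref{Rf*perfect}, the support computation, and Theorem~\ref{descent through perfect} is exactly the intended argument.

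For the faithfully flat case, however, your plan departs from the paper's and has a genuine gap. You propose to ``reduce to the affine case $Y=\Spec R$, $X=\Spec S$ with $R\to S$ faithfully flat,'' but restricting to affine opens does not preserve faithful flatness: if $V=\Spec R\subseteq Y$ is affine, $f^{-1}(V)$ need not be affine, and a chosen affine piece $U=\Spec S\subseteq f^{-1}(V)$ need not surject onto $V$. You can rescue this---the image of $\Spec S$ in $\Spec R$ is open (flat plus essentially of finite type), and after shrinking to a basic open $D(g)\subseteq\Spec R$ inside that image one gets a genuinely faithfully flat $R_g\to S_g$---but that argument is not in your write-up, and as stated your reduction doesn't go through. (Also, \cite[4.1]{AIL} concerns duality of perfect complexes, not faithfully flat descent; the commutative-algebra fact you want is true but that reference appears to be a misattribution.) The paper instead argues globally and more economically: by Illusie's characterization, $\bL f^*\<\<M$ perfect gives a uniform interval $[m,n]$ with $H^i(E'\Otimes{X}\bL f^*\<\<M)=0$ for $i\notin[m,n]$ and all $\OX$-modules $E'$; taking $E'=f^*\<\<E$, exactness of $f^*$ gives $f^*H^i(E\Otimes{Y}M)=0$, and faithful flatness then kills $H^i(E\Otimes{Y}M)$ stalkwise. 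This is precisely the ``uniform bound'' you flag as the main obstacle, and the paper's direct argument resolves it without any affine reduction.
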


\begin{proof}
Suppose $M$ is perfect in $\D(Y)$. One may assume, after passing to a
suitable open cover, that $M$~is a bounded complex of finite\kern.5pt-rank
free $\OY$-modules. Then $\bL f^*\<\<M=f^*\<\<M$ is a bounded complex
of finite\kern.5pt-rank free $\mathcal O_{\<X}$-modules.  Thus if $M$
is perfect then so is $\bL f^*\<\<M$.  \vspace{1.5pt}

For the \emph{converse,} when $f$ is faithfully flat  we use the
following characterization of perfection (\cite[p.\,135, 5.8.1]{Il}):
\emph{$M\in\D(Y)$ is perfect if and only if\/ $M\in\dcatc Y$ and there are
integers\/ $m\le n$ such that for all\/ $\OY$-modules $E$ and all\/ $i$
outside the interval} $[m,n]$, $H^i(E\dtensor{\<\<Y} M\>)=0.$\looseness=-1

Writing $f^*$ in place of $\bL f^*$ (see Remark~\ref{faithfully_flat})
we have, as in the proof of
Lemma~\ref{flat_descent}, that for any $i$, the vanishing of
$$ 
H^i(f^*\<\<E\dtensor{X}\<  f^*\<\<M\>)=
H^i(f^*(E\dtensor{\<\<Y}\<\< M\>))\cong f^*H^i(E\dtensor{\<\<Y}\<\< M\>)
$$ 
implies that of $H^i(E\dtensor{\<\<Y}\<\< M\>)$. Hence the converse holds.

When $f$ is perfect, proper and surjective, one can argue as in the
last part of the proof of Lemma~\ref{flat_descent} to show that if $\bL
f^{*}\<\<M$ is perfect then $M$ is perfect.
 \end{proof}

\begin{subproposition}
\label{dir image and refl}
Let\/ $f\col X\to Y$ be a proper scheme-map and\/ $B\in\Dqcpl(Y).$

If\/ $F\in\D(X)$ is derived\/ $f^!B$-reflexive then\/ $\R\> f_*F$
is derived\/ $B$-reflexive.
\end{subproposition}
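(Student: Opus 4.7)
The plan is to use the sheafified Grothendieck duality isomorphism for the proper map~$f$,
\[
\theta_G\col \R f_*\>\RH_{\<X}(G,f^!B)\iso\RH_{\<Y}(\R f_*G,B)\qquad(G\in\Dqc(X)),
\]
together with the classical finiteness theorem $\R f_*\>\dcatc X\subseteq\dcatc Y$ for proper~$f$. Note that Proposition~\ref{greflexive} is not directly applicable, since $B$ is only assumed to be quasi-coherent rather than coherent, so the biduality map itself will need to be tracked.

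First I verify the coherence conditions in Definition~\ref{defreflexive}. From $F\in\dcatc X$ (part of the hypothesis of $f^!B$-reflexivity) and the finiteness theorem, $\R f_*F\in\dcatc Y$. The isomorphism $\theta_F$ identifies $\RH_{\<Y}(\R f_*F,B)$ with $\R f_*\>\RH_{\<X}(F,f^!B)$; since $\RH_{\<X}(F,f^!B)\in\dcatc X$, another use of finiteness yields $\RH_{\<Y}(\R f_*F,B)\in\dcatc Y$.

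Next, I apply $\R f_*$ to the biduality isomorphism $\delta_F^{f^!B}\col F\iso \RH_{\<X}(\RH_{\<X}(F,f^!B),f^!B)$ and compose with $\theta$ twice---first with $G=\RH_{\<X}(F,f^!B)$, then with $G=F$---to produce an isomorphism
\[
\psi\col \R f_*F\iso\RH_{\<Y}(\RH_{\<Y}(\R f_*F,B),B).
\]
The main obstacle is to verify that $\psi$ coincides with the biduality map $\delta_{\R f_*F}^{B}$. Both arrows arise by adjunction from evaluation maps, so it suffices to check that applying $\R f_*$ to $\varepsilon^F_{f^!B}$ and composing with the duality counit (trace) $\R f_*f^!B\to B$ recovers, under $\theta$ and the projection formula, the evaluation $\varepsilon^{\R f_*F}_{B}$. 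This is a standard compatibility of the Grothendieck duality isomorphism with the evaluation pairing; granting it, the equality $\psi=\delta_{\R f_*F}^{B}$ follows by functoriality of the tensor-Hom and $(\R f_*,f^!)$ adjunctions, and the proposition is proved.
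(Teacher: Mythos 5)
The first half of your argument matches the paper exactly: same three-step chain of isomorphisms, same use of sheafified duality \eqref{duality iso} and the finiteness theorem for proper maps. But you then part ways at exactly the point the authors flag as delicate.

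Your opening observation---that Proposition~\ref{greflexive} is ``not directly applicable, since $B$ is only assumed to be quasi-coherent rather than coherent''---is what drives you onto the harder road. It is true that Proposition~\ref{greflexive} is \emph{stated} with $A\in\dcatc X$, but the implication (ii)$\,\Rightarrow\,$(i), which is all that is needed here, does not in fact require $A$ to be coherent. The definition of ``derived $A$-reflexive'' only asks that $F$ and $\RH_{\<X}(F,A)$ lie in $\dcatc X$, and the standard split-monomorphism argument underlying (ii)$\,\Rightarrow\,$(i) (showing that $\delta^A_{\RH(F,A)}$, hence $\delta^A_F$, is forced to be an isomorphism by noetherianness, once one has \emph{some} isomorphism of the right shape) never touches $A$ itself. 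So the hypothesis you worried about is not actually an obstruction, and the paper's proof simply invokes Proposition~\ref{greflexive}(ii) at the end, sidestepping the identification of the composite with $\delta^B_{\R f_*F}$ altogether.

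Instead, you try to verify directly that the composite $\psi$ \emph{is} $\delta^B_{\R f_*F}$, dismissing this as ``a standard compatibility of the Grothendieck duality isomorphism with the evaluation pairing.'' That is the genuine gap. The authors say explicitly that this ``doesn't seem so easy to show''---it involves chasing the duality counit $\R f_*f^!B\to B$, the projection formula, the $(\R f_*,f^!)$ adjunction, and \eqref{adjunction2} through a non-trivial chain of identifications, and no reference or computation is offered. Waving it through as ``standard'' leaves the argument incomplete at precisely the step the paper's route was designed to avoid.
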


 \begin{proof}
Since $F$ and $\RH_X(F,\>f^!\<B)$ are in $\dcatc X$, it follows from
Remark~\ref{Rf*perfect} that $\R\>f_* F$ is in $\dcatc Y$, and (via
~\eqref{duality iso}) that
$$
\RH_Y(\R\>f_*F, B)\simeq\R\>f_* \RH_X(F,\>f^!\<B)\in\dcatc Y.
$$

Now apply the functor $\R\>f_*$ to the assumed isomorphism 
$$
\delta^{f^{\<!}\!B}_{\<F}\col F\iso\RH_X(\RH_X(F,f^!\<\<B),f^!\<\<B),
$$
and  use the duality isomorphism~\eqref{duality iso} twice, to get the isomorphisms
\begin{align*}
\R\>f_* F &\iso \R\>f_* \RH_X(\RH_X(F,f^!\<B),\>f^!\<B)\\
&\iso\RH_Y(\R\>f_*\RH_X(F,f^!\<B),\>B)\\
&\iso\RH_Y(\RH_Y(\R\>f_*F,B),\>B).
\end{align*}
Their composition is actually $\delta^B_{\R f_{\mkern-.5mu*}\<F}$,
though that doesn't seem so easy to show. Fortunately, owing to
Proposition~\ref{greflexive}(ii) we needn't do so to conclude that
$\R\>f_*F$ is derived $B$-reflexive.
 \end{proof}

\begin{subtheorem}
\label{descent}
Let\/ $f\col X\to Y$ be a perfect scheme-map, $M$ a complex in\/
$\Dcmi(Y)$, and $B$ a complex in\/ $\Dcpl(Y)$.

If\/ $M$ is derived $B$-reflexive, then\/ $\bL f^*\<\<M$ is derived\/ $\bL
f^*\<\<B$-reflexive and derived\/ $f^!\<\<B$-reflexive.  Conversely, if\/
$f$ is faithfully flat, or  proper and surjective,   and\/ $\bL f^*\<\<M$
is derived\- $\bL f^*\<\<B$-reflexive or derived\/ $f^!\<\<B$-reflexive,
then\/ $M$ is derived\/ $B$-reflexive.\looseness=-1
  \end{subtheorem}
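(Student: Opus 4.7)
The plan is to use the isomorphism~\eqref{first_iso} of Lemma~\ref{perfect_maps} to commute $\bL f^*$ past $\RH_Y$, combined with Proposition~\ref{greflexive}(ii) in the forward direction and a descent argument for vanishing of cones in the converse. Throughout, the hypotheses $M\in\Dcmi(Y)$ and $B\in\Dcpl(Y)$ let Lemma~\ref{B-compn} identify derived $\bL f^*\<\<B$-reflexivity of $\bL f^*\<\<M$ with derived $f^!\<B$-reflexivity, so in each direction it suffices to treat the $\bL f^*\<\<B$-case.

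For the forward direction, I would assume $M$ is derived $B$-reflexive, and use perfection of $f$ together with Remark~\ref{char perfect*} to get $\bL f^*\<\<M\in\dcatc X$ and $\bL f^*\RH_Y(M,B)\in\dcatc X$; via~\eqref{first_iso}, the latter is $\RH_X(\bL f^*\<\<M,\bL f^*\<\<B)$, which is thus in $\dcatc X$. Then I would apply $\bL f^*$ to the biduality isomorphism for $M$ and use~\eqref{first_iso} twice to produce \emph{some} isomorphism
\[
\bL f^*\<\<M\iso\RH_X(\RH_X(\bL f^*\<\<M,\bL f^*\<\<B),\bL f^*\<\<B),
\]
which, by Proposition~\ref{greflexive}(ii), suffices for derived $\bL f^*\<\<B$-reflexivity of $\bL f^*\<\<M$.

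For the converse, assume $\bL f^*\<\<M$ is derived $\bL f^*\<\<B$-reflexive. Since $M\in\Dc(Y)$ and $\bL f^*\<\<M\in\dcatc X$, Lemma~\ref{flat_descent} gives $M\in\dcatc Y$; similarly $\RH_Y(M,B)\in\Dcpl(Y)\subseteq\Dc(Y)$, and~\eqref{first_iso} identifies its pullback with $\RH_X(\bL f^*\<\<M,\bL f^*\<\<B)\in\dcatc X$, so Lemma~\ref{flat_descent} yields $\RH_Y(M,B)\in\dcatc Y$. It then remains to show that the biduality map $\delta^B_M$ is an isomorphism. By naturality of the constructions behind~\eqref{evaluation} and~\eqref{eq:sheafbiduality}, $\bL f^*(\delta^B_M)$ corresponds under the canonical isomorphisms~\eqref{first_iso} to $\delta^{\bL f^*\<\<B}_{\bL f^*\<\<M}$, which is an isomorphism by hypothesis. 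Hence the cone $C\in\Dc(Y)$ of $\delta^B_M$ satisfies $\bL f^*C=0$. When $f$ is faithfully flat, the identities $f^*H^n(C)=H^n(f^*C)=0$ combined with faithful flatness at each stalk force $H^n(C)=0$ for every $n$, whence $C=0$. When $f$ is perfect, proper, and surjective, the projection formula~\eqref{projection2} gives $\R f_{*}\OX\Otimes{Y}C\simeq\R f_{*}\bL f^*C=0$; by Remark~\ref{Rf*perfect} and surjectivity, $\R f_{*}\OX$ is perfect with $\Supp_Y\R f_{*}\OX=Y$, so~\eqref{Supp Tensor} forces $\Supp_Y C=\emptyset$, whence $C=0$.

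The main obstacle I anticipate is the naturality assertion that $\bL f^*(\delta^B_M)$ agrees with $\delta^{\bL f^*\<\<B}_{\bL f^*\<\<M}$ modulo~\eqref{first_iso}: this is a routine but notationally delicate diagram chase that traces the adjunction~\eqref{adjunction2} and the definition of~\eqref{first_iso} (which is induced from the evaluation map) through the construction of biduality. Once granted, all remaining steps are applications of results in the chapter---principally Lemmas~\ref{flat_descent} and~\ref{B-compn}, Remark~\ref{char perfect*}, and the support/projection-formula argument.
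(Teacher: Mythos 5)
Your forward direction and the faithfully-flat half of the converse match the paper's proof step for step: the reductions via Lemma~\ref{B-compn}, the boundedness bookkeeping via \eqref{first_iso} and Lemma~\ref{flat_descent}, and the stalkwise argument from $f^*H^n(\delta)=H^n(f^*\delta)$.

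For the proper-and-surjective case, however, you take a genuinely different route, and it has two weak points worth flagging. First, your cone argument invokes \eqref{Supp Tensor}, which is stated only for complexes in $\D^{\lift.95,\text{\cmt\char'055},}_{\mathsf c}(Y)$; you assert only $C\in\Dc(Y)$, and \emph{a priori} the target $\RH_Y(\RH_Y(M,B),B)$ of $\delta^B_M$ is merely in $\Dcpl(Y)$ (since $B$ need not be bounded above), so $C$ may fail to be bounded above. The gap is fillable: apply \eqref{first_iso} twice more to see that $\bL f^*\RH_Y(\RH_Y(M,B),B)\simeq\RH_X(\RH_X(\bL f^*\<\<M,\bL f^*\<\<B),\bL f^*\<\<B)\simeq\bL f^*\<\<M\in\dcatc X$, and then invoke Lemma~\ref{flat_descent} once more to get $\RH_Y(\RH_Y(M,B),B)\in\dcatc Y$. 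Second, and more substantively, your argument requires the compatibility $\bL f^*\<(\delta^B_M)\leftrightarrow\delta^{\bL f^*\!B}_{\bL f^*\!M}$ for an \emph{arbitrary} perfect map. The paper invokes this only when $f$ is flat, where $\bL f^*=f^*$ is exact and the check is already ``moderate effort''; for the proper case it deliberately sidesteps the issue by a different mechanism, namely Proposition~\ref{dir image and refl} (which shows $\R f_*\bL f^*\<\<M$ is derived $B$-reflexive without identifying any specific isomorphism with a biduality map) followed by Theorem~\ref{descent through perfect} applied to $\R f_*\OX\Otimes{Y}M\simeq\R f_*\bL f^*\<\<M$. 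The two routes end up using essentially the same ingredients ($\R f_*\OX$ perfect with full support, projection formula, support-tensor formula hidden inside Theorem~\ref{descent through perfect}), but the paper's avoids the delicate naturality chase altogether, while yours unifies the two converse cases at the price of needing that chase for non-flat $f$. If you retain your approach, you must either carry out that naturality verification for general perfect $f$ or note that Proposition~\ref{greflexive}(ii) lets you replace it with the existence of \emph{some} isomorphism---though descending ``some isomorphism exists'' from $X$ to $Y$ is itself not obvious, which is precisely why the paper routes through $\R f_*$.
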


\begin{proof}
Suppose first that $M$ is derived $B$-reflexive, so that, by definition,
both $M$ and $\RH_Y(M\<,B)$ are  in $\dcatc Y$. Then \eqref{first_iso}
and Remark \ref{char perfect*} show that $\bL f^*\<\<M$ and $\RH_X(
\bL f^*\<\<M\<,\bL f^*\<\<B)$ are in $\dcatc X$.   Moreover,
application of the functor~$\bL f^*$ to the $\D(Y)$-isomorphism
$M\simeq\RH_Y(\RH_Y(M\<,B),B)$ yields
 $$
\bL f^*\<\<M\simeq\RH_X(\RH_X(\bL f^*\<\<M\<,\bL f^*\<\<B),\>\bL f^*\<\<B)
 $$
in $\D(X)$. This implies that $\bL f^*\<\<M$ is derived $\bL
f^*\<\<B$-reflexive; see Proposition~\ref{greflexive}(ii).  When $B$
is in $\Dcpl(Y)$, Lemma~\ref{B-compn} yields that $\bL f^*\<\<M$ is
derived $f^!\<\<B$-reflexive.

Suppose, conversely, that  $\bL f^*\<\<M$ is derived $\bL
f^*\<\<B$-reflexive, or equivalently,  that $\bL f^*\<\<M$ is
derived $f^!\<\<B$-reflexive (see Lemma~\ref{B-compn}).  Then,
first of all, $\bL f^*\<\<M\in\dcatc X$ and, by~\eqref{first_iso},
$\bL f^*\RH_Y(M\<,B)\in\dcatc X$. Lemma \ref{flat_descent} gives then
that $M\in\dcatc Y$,  and similarly, since $\RH_Y(M\<,B)\in\Dc(Y)$
(see  \cite[p.\,92, 3.3]{H}), that $\RH_Y(M\<,B)\in\dcatc Y$.

Next, when $f$ is faithfully flat (so that  $\bL f^*=f^*\<$, see
Remark~\ref{faithfully_flat}), one checks, with moderate
effort, that if
 $$
\delta\!:=\delta^B_M\col M\to\RH_Y(\RH_Y(M\<,B),B)
 $$
\vskip-1pt\noindent is the canonical $\D(Y)$-map, then $f^*\<\delta$
is identified, via \eqref{first_iso}, with the canonical $\D(X)$-map
$\delta^{f^*\!B}_{f^*\<\<M}\>$.  The latter being an isomorphism,
therefore so are all the maps $H^n(f^*\delta)=f^*\<H^n(\delta)$. Verifying
that a sheaf-map is an isomorphism can be done stalkwise, and so,
$f$~being faithfully flat, local considerations show that the
maps~$H^n(\delta)$ are  isomorphisms. Therefore, $\delta$ is an
isomorphism.

Finally, when $f$ is proper and surjective and $\bL f^{*}\<\<M$ is
derived $f^!\<\<B$-reflexive, whence, by Proposition~\ref{dir image
and refl}, $\Rf M$ is derived $B$-reflexive,  one can argue as in the
last part of the proof of Lemma~\ref{flat_descent} to deduce that $M$
is derived $B$-reflexive.
 \end{proof}

Taking $M=\OY$ one gets:

 \begin{subcorollary}
  \label{lift semid}
Let\/ $f\col X\to Y$ be a perfect scheme-map\/ and $B\in\Dcpl(Y)$. 

If $B$ is  semidualizing, then so are\/ $\bL f^*\<\<B$ and\/ $f^!\<B$.
Conversely, if\/ $f$ is faithfully flat, or proper and surjective, and\/
$\bL f^*\<\<B$ or\/ $f^!\<B$ is semidualizing, then so is\/ $B$.\qed
 \end{subcorollary}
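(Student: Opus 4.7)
The plan is to deduce the corollary directly from Theorem~\ref{descent} by specializing $M=\OY$. The definition of semidualizing (Definition~\ref{defsemidual}) states that an $\mathcal O_Z$-complex $A$ is semidualizing exactly when $\mathcal O_Z$ is derived $A$-reflexive. So the corollary is simply the translation of Theorem~\ref{descent}, with $M=\OY$, into the language of semidualizing complexes.

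The one identification needed is $\bL f^*\OY\simeq\OX$, which holds for every scheme-map (it is noted in Remark~\ref{char perfect*}). With this in hand, the first statement of Theorem~\ref{descent} says: if $\OY$ is derived $B$-reflexive, then $\bL f^*\OY\simeq\OX$ is derived $\bL f^*\<\<B$-reflexive and derived $f^!\<B$-reflexive. By Definition~\ref{defsemidual}, this is exactly the assertion that $\bL f^*\<\<B$ and $f^!\<B$ are semidualizing. Note that the hypotheses are met: $M=\OY\in\Dcmi(Y)$, and $B\in\Dcpl(Y)$ by assumption.

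For the converse direction, assume $f$ is faithfully flat, or proper and surjective, and that $\bL f^*\<\<B$ or $f^!\<B$ is semidualizing. Then $\OX\simeq\bL f^*\OY$ is derived $\bL f^*\<\<B$-reflexive or derived $f^!\<B$-reflexive. The converse half of Theorem~\ref{descent} (again with $M=\OY$) then yields that $\OY$ is derived $B$-reflexive, i.e., that $B$ is semidualizing.

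No step presents a real obstacle: the work has already been done in Theorem~\ref{descent}, and the corollary is just the $M=\OY$ specialization together with the triviality $\bL f^*\OY\simeq\OX$.
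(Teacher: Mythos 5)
Your proof is correct and is exactly the paper's argument: the paper introduces Corollary~\ref{lift semid} with the single line ``Taking $M=\OY$ one gets,'' i.e., it is precisely the specialization of Theorem~\ref{descent} to $M=\OY$ together with the identification $\bL f^*\OY\simeq\OX$, just as you spelled out.
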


\begin{subcorollary} \label{descent2} Let\/ $f\col X\to Y$ be a perfect
scheme-map and\/ $M$ a complex in\/~ $\Dcmi(Y)$. Consider the following
properties: \begin{enumerate}[\quad\rm(a)]
 \item
$M$ is semidualizing.
 \item
$M$ is derived $\OY$-reflexive.
 \item
$M$ is invertible.  \end{enumerate} Each of these properties implies the
corresponding property for\/ $\bL f^*\<\<M$ in $\D(X)$.  The converse
holds when\/ $f$ is faithfully flat, or proper and surjective.
\end{subcorollary}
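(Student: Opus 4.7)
The plan is to reduce each of (a), (b), (c) to results established earlier in the paper, with essentially no new computation required—only bookkeeping of hypotheses. The forward directions are immediate; the one point that requires a little care is ensuring, in the converse of~(a), that the hypothesis $M\in\Dcpl(Y)$ needed to invoke Corollary~\ref{lift semid} is actually available.

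For~(a), I would apply Corollary~\ref{lift semid} with $B=M$. In the forward direction, a semidualizing complex automatically lies in $\dcatc Y\subseteq\Dcpl(Y)$, so there is nothing to check. For the converse, if $\bL f^*\<\<M$ is semidualizing then $\bL f^*\<\<M\in\dcatc X$; since $M\in\Dcmi(Y)\subseteq\Dc(Y)$ and $f$ is either faithfully flat or proper and surjective, Lemma~\ref{flat_descent} then forces $M\in\dcatc Y\subseteq\Dcpl(Y)$, after which the converse part of Corollary~\ref{lift semid} applies. Statement~(b) is immediate from Theorem~\ref{descent} applied with $B=\OY$, using the canonical isomorphism $\bL f^*\OY\simeq\OX$ to identify ``derived $\bL f^*\<B$-reflexive'' with ``derived $\OX$-reflexive'' in both directions.

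For~(c), recall from Definition~\ref{definvertible} that invertibility means semidualizing together with perfect. The forward direction then combines~(a) with the forward direction of Proposition~\ref{perfect descent}. Conversely, if $\bL f^*\<\<M$ is invertible, then it is both semidualizing and perfect in $\D(X)$; the converse directions of (a) and of Proposition~\ref{perfect descent} each supply one of the two required properties for $M$, and together yield that $M$ is invertible. The only subtlety in the whole argument is the preliminary verification of $M\in\Dcpl(Y)$ noted above, which Lemma~\ref{flat_descent} handles cleanly; no other obstacle is anticipated.
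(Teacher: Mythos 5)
Your proof is correct and takes essentially the same route as the paper: the paper reduces via Lemma~\ref{flat_descent} to the case $M\in\dcatc Y$ and then cites Theorem~\ref{descent} with $(M,B)=(\OY,M)$ and $(M,B)=(M,\OY)$ for (a) and (b), and combines (a) with Proposition~\ref{perfect descent} for (c) — your invocation of Corollary~\ref{lift semid} for (a) is just the already-packaged special case $(M,B)=(\OY,M)$ of that theorem. The one subtlety you flagged, namely first securing $M\in\dcatc Y$ so that the hypothesis $B\in\Dcpl(Y)$ of Corollary~\ref{lift semid} is met, is exactly the preliminary reduction the paper records.
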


\begin{proof}
Note that, given Lemma~\ref{flat_descent}, we may assume that $M$ is in $\dcatc Y$.
The assertions about properties (a) and (b) are the special cases $(M,B)=(\OY,M)$ and $(M,B)=(M,\OY)$, respectively, of Theorem~\ref{descent}. The assertion about (c) follows from the assertion 
about (a) together with Proposition~\ref{perfect descent}.
\end{proof}
  
\subsection{Gorenstein-perfect maps}
  \label{reldual}
Let $f\col X\to Y$ be a scheme-map. 

\begin{subdefinition}
A \emph{relative dualizing complex} for $f$ is any $\OX$-complex 
isomorphic in~$\D(X)$ to $f^!\OY$.
\end{subdefinition}

\emph{Any relative dualizing complex is in} $\Dcpl(X)$. Indeed, 
\S\S\ref{twisted inverse}(i) and \ref{any base change} reduce the 
assertion to the case of maps between affine schemes, where the 
desired assertion follows from the following example.

\begin{subexample}
  \label{relative_for_rings}
For a homomorphism $\tau\col K\to P$ of commutative rings we write 
$\Omega_\tau$ for the $P$-module of relative differentials, and set
\[
\Omega^n_{\tau} = {\ts\bwedge}^{\<n}_P\>\>\Omega_{\tau}\quad \text{for 
each $0\le n\in\BZ$}.
\]

Let $\sigma\col K\to S$ be a homomorphism of rings that is essentially of finite type; thus, there exists a factorization
\begin{equation}
  \label{eq:factorization}
K\xra{\dot\sigma}P\xra{\sigma{\lift 1.05,',}}S
\end{equation}
where $\dot\sigma$ is \emph{essentially smooth of relative dimension} $d$ 
and $\sigma{\lift 1.3,',}$ is \emph{finite}, see~\eqref{reduce1}. As in
\mbox{\cite[(8.0.2)]{AIL},} we set 
\begin{equation}
  \label{eq:reduce2}
D^{\sigma}\!:=\Shift^d\Rhom PS{\Omega^d_{\dot\sigma}}\in\D(S).
\end{equation}

With $f\col X=\Spec S\to\Spec K=Y$ the scheme-map corresponding to~$\sigma$, 
the complex of $\OX$-modules $(D^\sigma)^\sim$ is a relative dualizing complex 
for $f$; in particular, up to isomorphism, $D^\sigma$ \emph{depends only on} 
$\sigma$, and not on the factorization~\eqref{eq:factorization}. 

Indeed, there is a $\Dqc(X)$-isomorphism
\begin{equation}
\label{eq:semidualizing, global}
f^!\OY\simeq (D^\sigma)^\sim\>;
\end{equation}
for, if $f=\dot f f'$ is the factorization corresponding to \eqref{eq:factorization} then
$$
f^!\OY\simeq {f'}^{\lift.8,!,}{\dot f}^!\OY
\simeq {f'}^{\lift.8,!,}(\Shift^d\Omega^d_{\dot\sigma})^\sim
\simeq \Shift^d\Rhom PS{\Omega^d_{\dot\sigma}}^\sim =(D^\sigma)^\sim,
$$
the second isomorphism coming from \S\ref{smooth ^!}, and
the third from \eqref{finite}.
\end{subexample}

\begin{subdefinition}
A complex $F$ in $\D(X)$ is said to be \emph{G-perfect} (for
\emph{Gorenstein-perfect}) \emph{relative to} $f$ if $F$ is 
derived $f^!\OY$-reflexive.  The full subcategory of $\dcatc X$,
whose objects are the complexes that are G-perfect
relative to~$f$ is denoted $\dcatg f$.\looseness=-1
 \end{subdefinition}

In particular, $F$ is in $\dcatg{\id^{\<X}}$ if and only if $F$ is derived 
$\OX$-reflexive. We set 
  \[
\dcatg X\!:=\dcatg{\id^X}\,.
  \]

In view of~\eqref{eq:semidualizing, global}, in the affine case G-perfection
can be expressed in terms of \emph{finite G-dimension}  in the sense
of Auslander and Bridger \cite{AB}; see \cite[\S6.3 and 8.2.1]{AIL}. 

As is the case for perfection (Remark \ref{perfect via factorization}), G-perfection 
can be tested locally.

\begin{subremark}
\label{gperfect via factorization}
A complex $F$ in $\D(X)$ is in $\dcatg f$ if and only if \mbox{every $x\in Z$} 
has an open neighborhood~$U$ such that $F|_U$ is in $\dcatg{f|_U}$.

If $f$ factors as $X\xra{i}W\xra{h} Y$  with $i$ a closed immersion and $h$ 
essentially smooth, then $F$ is in $\dcatg f$  if and only if $i_*F$ is in $\dcatg W$. 
It suffices to show this locally; and then this is  \cite[8.2.1]{AIL}, in view of the 
equivalence of categories in Example \ref{affine schemes}.
 \end{subremark}

\begin{subdefinition}
\label{defG-dim}
The map $f\col X\to Y$ is said to be \emph{G-perfect} (for \emph{Gorenstein
perfect}) if~$f^!\OY$ is semidualizing, that is,  if $\>\OX$ is in $\dcatg f$.
\end{subdefinition}

A local theory of such maps already exists:

  \begin{subexample}
\label{finite G-dim}
If $X=\Spec S$ and $Y=\Spec K\<$, where $K$ and $S$ are noetherian 
rings, and $\sigma\col K\to S$ is the ring-homomorphism corresponding 
to $f$, then $f$ is \emph{G-perfect} if and only if $\sigma$ is of 
\emph{finite G-dimension} in the sense of \cite{AF:qG}; see \cite[8.4.1]{AIL}.
  \end{subexample}

Recall from Remark \ref{char perfect} that $f$ is perfect if and only if 
$f^!\OY$ is in $\dcatf f$, the full subcategory of~$\D(X)$ whose objects 
are all the complexes that are perfect with respect to $f$.  There is a
similar description of G-perfection:

\begin{subremark}
The map $f$ is G-perfect if and only if $f^!\OY\in\dcatg f$. 
This follows from Proposition~\ref{greflexive}, since for all $x\in X\<$, 
the stalk $(f^!\OY)_x\not\simeq 0$; see~\eqref{eq:semidualizing, global}.
\end{subremark}

  \begin{subremark}
\label{Gor-target}
When $Y$ is  Gorenstein,  \emph{every} map
\mbox{$f\col X\to Y$} is G-perfect:  \cite[8.3.1]{AIL} and 
\eqref{eq:semidualizing, global} together imply that $\dcatg f=\dcatc X$.\vspace{1pt}
  \end{subremark}
  
Via \eqref{eq:semidualizing, global}, a slight generalization
of \cite[p.\,258, 4.9\emph{ff}\kern.5pt]{Il} globalizes
~\cite[1.2]{AILN}:\looseness=-1

\begin{subproposition}
\label{perfect reflexive}
Let\/ $f\col X\to Y$ be a scheme-map.

The following inclusion holds: $\dcatf f\subseteq\dcatg f$.

If\/ $M\in\dcatf Y$ then the functor\/ $\RH_{\<X}(-,\>f^!\<M)$ takes\/
$\dcatf f$ $($resp.~$\dcatg f)$ into itself; and if $M\in\dcatg Y$
then $\RH_{\<X}(-,\>f^!\<M)$ takes\/ $\dcatf f$ into\/ $\dcatg f$.
 \end{subproposition}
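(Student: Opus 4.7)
All three assertions are local on $X$ (and on~$Y$), so we may assume $f$ factors as $X\xra{i} W\xra{h} Y$ with $i$ a closed immersion and $h$ essentially smooth; in particular $h$ is perfect and $h^!\OY\simeq\Shift^d\Omega^d_h$ is invertible.

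For the inclusion $\dcatf f\subseteq\dcatg f$, note that by Remarks~\ref{perfect via factorization} and~\ref{gperfect via factorization}, one has $F\in\dcatf f$ iff $i_*F\in\dcatf W$, and $F\in\dcatg f$ iff $i_*F\in\dcatg W$. So it suffices to show every perfect $\OW$-complex is derived $\OW$-reflexive, which is the content of Proposition~\ref{perf-reflexive} applied with $A=F=\OW$.

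For the claims about $\RH_{\<X}(-,f^!M)$, use the same factorization: since $h$ is perfect, Remark~\ref{char perfect}(iv) yields
\[
f^!M\simeq i^!h^!M\simeq i^!\big(\bL h^*M\Otimes W h^!\OY\big).
\]
When $M\in\dcatf Y$, Proposition~\ref{perfect descent} shows $\bL h^*M$ is perfect on $W$, and tensoring with the invertible $h^!\OY$ keeps it so; when $M\in\dcatg Y$, Theorem~\ref{descent} gives $\bL h^*M\in\dcatg W$, and Corollary~\ref{refl and inv} shows that tensoring with the invertible $h^!\OY$ preserves this. The three claims therefore reduce to the corresponding assertions for the closed immersion $i\colon X\to W$, applied to a complex $N\in\dcatf W$ (resp.~$N\in\dcatg W$). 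Passing to affine opens and applying the equivalence of Example~\ref{affine schemes} together with the identification~\eqref{eq:semidualizing, global}, these become ring-theoretic statements about the $S$-complex $\Rhom S{-}{D^\sigma}$ for the corresponding finite ring-homomorphism, which are precisely the content of \cite[Lemma~1.2]{AILN}.

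The \emph{main obstacle} is this last reduction: verifying that the sheafification procedure of Example~\ref{affine schemes} is compatible with the operations $f^!$, $\bL f^*$, $\RH_{\<X}$ and $\Otimes X$ in the combinations required, so that the resulting assertion is indeed the ``slight generalization of \cite[p.\,258, 4.9\emph{ff}]{Il}'' alluded to in the proposition. Once that technical compatibility is in place, the global conclusion follows automatically by patching, because membership in $\dcatf f$ and $\dcatg f$ is tested locally on $X$.
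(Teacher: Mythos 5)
Your reduction is correct up to step where you have a complex $N := \bL h^*\<\<M\Otimes W h^!\OY$ in $\dcatf W$ (resp.\ $\dcatg W$) and want to understand $\RH_{\<X}(F,i^!N)$. At that point, however, you declare the ``main obstacle'' to be checking that sheafification commutes with the relevant derived operations; this is precisely where the paper's proof takes a much more direct path, and your proposal as written leaves the gap open.

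The key tool you are missing is the sheafified duality isomorphism \eqref{duality iso} for the proper (indeed finite) map $i\colon X\to W$:
\[
i_*\RH_{\<X}(F,\>i^!N)\;\simeq\;\RH_W(i_*F,\>N)\,.
\]
Combined with Remarks~\ref{perfect via factorization} and~\ref{gperfect via factorization}, which say $\RH_{\<X}(F, f^!\<M)$ is in $\dcatf f$ (resp.\ $\dcatg f$) if and only if its pushforward $i_*\RH_{\<X}(F, f^!\<M)=\RH_W(i_*F,N)$ is in $\dcatf W$ (resp.\ $\dcatg W$), this transfers all three assertions to questions about $\RH_W(G,N)$ with $G=i_*F$ perfect (resp.\ derived $\mathcal O_{\<W}$-reflexive) and $N\in\dcatf W$ (resp.\ $\dcatg W$), entirely inside $\dcatc W$. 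These are then handled by Lemma~\ref{RHom and perfect}, Theorem~\ref{global perfect}, Proposition~\ref{perfect descent}, Theorem~\ref{descent}, Proposition~\ref{perf-reflexive}, and Remark~\ref{dual G-perfect} --- no descent to rings, and hence no sheafification--compatibility verification is needed. (The paper actually keeps $h^!\<M$ intact and peels off the invertible factor $h^!\OY$ via Lemma~\ref{RHom and perfect}(2) and Corollary~\ref{refl and inv}(2); your rewriting $h^!\<M\simeq\bL h^*\<\<M\Otimes W h^!\OY$ is equivalent.) Until the duality isomorphism is invoked, your ``passing to affine opens'' step is not a proof but a deferral, and the citation of \cite[1.2]{AILN} is circular in spirit since this proposition is presented as that lemma's globalization.
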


\begin{proof}
The first assertion is a restatement of Remark~\ref{PinG}.

The second assertion is local on $X$, so one may suppose $f$ factors as 
$X\xra{i}W\xra{h}Y$ with $i$ a closed immersion and $h$ essentially 
smooth.\vspace{.7pt} For any $F\in\dcatc X$ and $M\in\Dqcpl(Y)$ one has, 
using formula~\eqref{duality iso}, \S\ref{smooth ^!} and 
Lemma~\ref{RHom and perfect}.\looseness=-1
\[
i_*\RH_{\<X}(F, i^!h^!\<M)\simeq\RH_W(i_*F, h^!\<M)
\simeq\RH_W(i_*F, h^*\<\<M)\dtensor{\< W} h^!\OY,
\]
where $h^!\OY$ is  invertible. Consequently, by Remark~\ref{perfect via factorization}, 
\begin{align*}
\RH_{\<X}(F, f^!\<M)\in\dcatf f&\iff i_*\RH_{\<X}(F, f^!\<M)\in\dcatf W\\
&\iff \RH_W(i_*F, h^*\<\<M)\in\dcatf W.
\end{align*}

Similarly, by Remark~\ref{gperfect via factorization} and
Corollary~\ref{refl and inv}(2), 
  \begin{align*}
\RH_{\<X}(F, f^!\<M)\in\dcatg f&\iff i_*\RH_{\<X}(F, f^!\<M)\in\dcatg h\\
&\iff \RH_W(i_*F, h^*\<\<M) \textup{ is derived $\mathcal O_{\<W}$-reflexive}.
  \end{align*}

If $F\in\dcatf f$ then $i_*F$ is a perfect $\mathcal O_{\<W}$-complex, and 
by Lemma~\ref{RHom and perfect}(2), 
  \begin{equation}
  \label{RHom}
\RH_W(i_*F, h^*\<\<M)\simeq h^*\<\<M\dtensor W\RH_W(i_*F, \mathcal O_{\<W}),
  \end{equation}
where $\RH_W(i_*F, \mathcal O_W)$ is perfect (see Theorem~\ref{global
perfect}).

If $M\in\dcatf Y$ then by Proposition~\ref{perfect descent},
$h^*M\in\dcatf W$, and then \eqref{RHom} shows that $\RH_W(i_*F,
h^*\<\<M)\in\dcatf W$. Thus $\RH_{\<X}(F, f^!\<M)\in\dcatf f$.

If $M\in\dcatg Y$, then $h^*\<\<M$ is derived $\mathcal O_{\<W}$-reflexive,  hence so is $\RH_W(i_*F,
h^*\<\<M)$; see Theorem~\ref{descent}, \eqref{RHom} and Proposition~\ref{perf-reflexive}. So
$\RH_{\<X}(F, f^!\<M)\in\dcatg f$.

If $F\in\dcatg f$ and $M\in\dcatf Y$ then $i_*F\in\dcatg h$ is $\mathcal O_{\<W}$-reflexive and $h^*\<\<M$ is perfect; so by Lemma~\ref{RHom and perfect}(1), \eqref{RHom} still holds, so  $\RH_W(i_*F, h^*\<\<M)$ is $\mathcal O_{\<W}$-reflexive, by Remark~\ref{dual G-perfect} and Proposition~\ref{perf-reflexive}. So again, 
$\RH_{\<X}(F, f^!\<M)\in\dcatg f$.
\end{proof}

{From} Proposition~\ref{perfect reflexive} one gets the following
result. It can also be seen as the special case $g=\id^Y$ of
Proposition~\ref{compn} below.

\begin{subcorollary}
\label{perfect is G-perfect}
Any perfect map is G-perfect. \qed
\end{subcorollary}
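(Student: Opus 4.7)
The plan is essentially an unwinding of definitions, culminating in a single appeal to work already done earlier in the chapter. The only substantive ingredient is the inclusion $\dcatf f\subseteq\dcatg f$ from Proposition~\ref{perfect reflexive} (whose proof itself rests on Remark~\ref{PinG}); everything else is definitional.

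Concretely, I would proceed in three steps. First, invoke Definition~\ref{relatively perfect}: saying that $f$ is perfect means precisely that $\OX$ is perfect over $f$, i.e., $\OX\in\dcatf f$. Second, apply the inclusion $\dcatf f\subseteq\dcatg f$ from Proposition~\ref{perfect reflexive} to conclude $\OX\in\dcatg f$, which by the definition of $\dcatg f$ says that $\OX$ is derived $f^!\OY$-reflexive. Third, observe that $\OX$ being derived $f^!\OY$-reflexive is exactly the assertion (Definition~\ref{defsemidual}) that $f^!\OY$ is semidualizing, which by Definition~\ref{defG-dim} is the definition of $f$ being G-perfect.

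There is no genuine obstacle here: all the work was absorbed into Remark~\ref{PinG} (a restatement of \cite[p.\,259, 4.9.2]{Il}) and its packaging in Proposition~\ref{perfect reflexive}. One could alternatively bypass the proposition and appeal to Remark~\ref{PinG} directly, using that $\OX$ is trivially among the complexes perfect over~$f$ when $f$ is perfect; but this is the same argument. A from-scratch approach would require verifying the biduality isomorphism $\OX\iso\RH_{\<X}(f^!\OY,\>f^!\OY)$ without invoking the cited reference, which is precisely the content already being imported, so there is nothing to gain.
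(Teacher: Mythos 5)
Your proof is correct and matches the paper's own route: the paper explicitly derives this corollary from Proposition~\ref{perfect reflexive}, whose inclusion $\dcatf f\subseteq\dcatg f$ (itself a restatement of Remark~\ref{PinG}) you invoke after unwinding the definitions of perfect, G-perfect, and semidualizing. Nothing further is needed.
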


Applying Proposition~\ref{perfect reflexive} to  $\RH_{\<X}(\OX,f^!F)$,
one gets: \begin{subcorollary} If\/ $f\col X\to Y$ is perfect then\/
$f^!\dcatf Y\subseteq\dcatf f$ and\/ $f^!\dcatg Y\subseteq\dcatg f$.
If\/ $f$ is G-perfect then\/ $f^!\dcatf Y\subseteq\dcatg f$.\qed
\end{subcorollary}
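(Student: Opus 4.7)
The plan is to deduce each of the three inclusions by applying Proposition~\ref{perfect reflexive} with its first argument set to $\OX$, and invoking the trivial identification $\RH_{\<X}(\OX,\>G)\simeq G$ in $\D(X)$ for arbitrary $G$. By the very definitions, $f$ perfect translates to the statement $\OX\in\dcatf f$, while $f$ G-perfect translates to $\OX\in\dcatg f$, so the hypothesis of the corollary supplies exactly the membership of $\OX$ needed to feed into Proposition~\ref{perfect reflexive}.

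First, suppose $f$ is perfect, so $\OX\in\dcatf f$. If $M\in\dcatf Y$, Proposition~\ref{perfect reflexive} asserts that $\RH_{\<X}(-,\>f^!\<\<M)$ carries $\dcatf f$ into itself; evaluating at $\OX$ yields $f^!\<\<M\simeq\RH_{\<X}(\OX,\>f^!\<\<M)\in\dcatf f$, proving $f^!\dcatf Y\subseteq\dcatf f$. If instead $M\in\dcatg Y$, the same proposition says $\RH_{\<X}(-,\>f^!\<\<M)$ sends $\dcatf f$ into $\dcatg f$; evaluating again at $\OX$ gives $f^!\<\<M\in\dcatg f$, hence $f^!\dcatg Y\subseteq\dcatg f$.

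Finally, suppose $f$ is merely G-perfect, so $\OX\in\dcatg f$. For $M\in\dcatf Y$, Proposition~\ref{perfect reflexive} tells us that $\RH_{\<X}(-,\>f^!\<\<M)$ preserves $\dcatg f$; applied to $\OX$ this yields $f^!\<\<M\in\dcatg f$, establishing $f^!\dcatf Y\subseteq\dcatg f$.

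There is no real obstacle here: the proof consists of three immediate specializations of Proposition~\ref{perfect reflexive} at the structure sheaf. The only point to watch is matching the two roles played by the variable complex $M$ in that proposition---which controls which of $\dcatf f$ and $\dcatg f$ is preserved by $\RH_{\<X}(-,\>f^!\<\<M)$---against the three hypothesis/conclusion pairings called for in the corollary.
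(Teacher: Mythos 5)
Your proof is correct and is essentially the paper's own argument: the paper's one-line proof reads ``Applying Proposition~\ref{perfect reflexive} to $\RH_{\<X}(\OX,f^!F)$, one gets:'' which is precisely your specialization at the structure sheaf combined with $\RH_{\<X}(\OX,\>G)\simeq G$. You have correctly matched each hypothesis ($f$ perfect $\Leftrightarrow\OX\in\dcatf f$, $f$ G-perfect $\Leftrightarrow\OX\in\dcatg f$) with the appropriate clause of Proposition~\ref{perfect reflexive}.
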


Also, in view of Proposition~\ref{greflexive}(iii):

  \begin{subcorollary} \label{dualities}
For any scheme-map\/ $f\col X\to Y\<,$ the\/ \emph{relative dualizing
functor} $\RH_{\<X}(-,f^!\OY)$ induces a commutative diagram of
categories, where horizontal arrows represent equivalences:\vspace{-3pt}
\[
\xymatrixcolsep{3.5pc}
\xymatrixrowsep{1.5pc}
\xymatrix {
{\dcatg{f}}^{\mathsf{op}}\, \ar@{<->}[r]^-{\equiv}\ar@{}[d]|{\bigcup\!|}
&{\,\dcatg{f}}\ar@{}[d]|{\bigcup\!|}
\\
\dcatf{f}^{\mathsf{op}}\,\ar@{<->}[r]^-{\equiv}
& \,\dcatf{f} 
}
\]
These equivalences are \emph{dualities,} in the sense of\/  \cite[\S6]{AIL}. \qed
\end{subcorollary}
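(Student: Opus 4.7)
My plan is to extract the corollary directly from Propositions~\ref{greflexive} and~\ref{perfect reflexive}. Write $D\!:=\RH_{\<X}(-,f^!\OY)$ for the relative dualizing functor.

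First I would check that $D$ preserves $\dcatg f$. By definition, $F\in\dcatg f$ means $F$ is derived $f^!\OY$-reflexive, i.e.\ condition (i) of Proposition~\ref{greflexive} holds with $A=f^!\OY$. By the equivalence (i)$\Leftrightarrow$(iii) of that proposition, $D(F)=\RH_{\<X}(F,f^!\OY)$ is itself derived $f^!\OY$-reflexive, hence lies in $\dcatg f$ (the support condition in (iii) is automatic because it is part of the equivalence). Moreover, for each such $F$ the biduality map $\delta^{f^!\OY}_F\colon F\iso D(D(F))$ is an isomorphism, and by its construction in~\eqref{eq:sheafbiduality} it is natural in~$F$. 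Thus $D\circ D\simeq \id_{\dcatg f}$ via a natural isomorphism, exhibiting $D\colon \dcatg f^{\mathsf{op}}\to\dcatg f$ as an equivalence, with itself as quasi-inverse.

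For the bottom row I would specialize Proposition~\ref{perfect reflexive} to $M=\OY\in\dcatf Y$: it gives $D(\dcatf f)\subseteq\dcatf f$. Since the same proposition yields the inclusion $\dcatf f\subseteq\dcatg f$, the biduality isomorphism from the previous paragraph restricts to a natural isomorphism $\id\iso D\circ D$ on $\dcatf f$. Hence $D\colon \dcatf f^{\mathsf{op}}\to\dcatf f$ is an equivalence as well.

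The commutativity of the displayed square is then automatic: both horizontal arrows are the single functor $D$, and the vertical inclusions obviously commute with it. The interpretation of the two equivalences as \emph{dualities} in the sense of \cite[\S6]{AIL} amounts exactly to having contravariant endofunctors equipped with a natural biduality isomorphism $\id\iso D^2$, which is precisely what we have produced. Since all nontrivial inputs (reflexivity of $D(F)$, preservation of $\dcatf f$ under $D$, and the inclusion $\dcatf f\subseteq\dcatg f$) are already recorded in the quoted propositions, no genuine obstacle remains; the argument is essentially a bookkeeping exercise once Proposition~\ref{greflexive}(iii) is available.
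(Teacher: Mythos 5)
Your argument is correct and matches what the paper intends: the top equivalence comes from Proposition~\ref{greflexive}(i)$\Leftrightarrow$(iii) together with the biduality isomorphism for derived $f^!\OY$-reflexive complexes, the bottom equivalence and the vertical inclusion come from Proposition~\ref{perfect reflexive} applied with $M=\OY$, and commutativity of the square is automatic since both horizontal arrows are the single functor $D$. The paper offers no written proof beyond the pointer to Proposition~\ref{greflexive}(iii), so your write-up is exactly the expected unwinding of that reference.
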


\pagebreak[3]

\subsection{Quasi-Gorenstein maps}
For the following notion of quasi-Gorenstein map, cf.~\cite[2.2]{AI} and \cite[\S
8.6.1]{AIL}. For the case when $f$ is flat, see also
\cite[p.\,298, Exercise~9.7]{H}, which can be done, e.g.,  along the
lines of the proof of \cite[Lemma 1]{Lp1}.)\vspace{1pt}

\begin{subdefinition}
A map $f\col X\to Y$ is \emph{quasi-Gorenstein} if $f^!\OY$ is
invertible. If, in addition, $f$ is perfect, then\/ $f$ is said to be
a \emph{Gorenstein map}.
 \end{subdefinition}

If $f\col X\to Y$ is quasi-Gorenstein then, clearly, $\OX\in\dcatg
f$, i.e., $f$ \emph{is G-perfect.}  More generally,
Corollary~\ref{refl and inv} shows that  $\dcatg f=\dcatg X$.

\begin{subexample}
Let $f\col X\to Y$ be a scheme map.  If $X$ is Gorenstein and
$f$ is G-perfect, then $f$ is quasi-Gorenstein; see
Example~\ref{ex:Gorenstein}. Example~\ref{Gor-target} shows then
that when $X$ and $Y$ are both Gorenstein  $f$ is quasi-Gorenstein.
\end{subexample}

One has the following globalization of the \emph{flat} case of
\cite[8.6.2]{AIL}, see also \cite[2.4]{AI}:
\begin{subproposition} 
%\pushQED{\qed}
If\/  $f\col X\to Y$ is a flat Gorenstein map, with diagonal map\/
$\delta\col X\to X\times_Y X,$ then there are natural isomorphisms
$$ W_{\<\<f}\!:=\mathcal Hom_{\<X}(\delta^!(\mathcal O_{\<X\times_Y
X}),\>\>\OX\<) \underset{\lift1.05,\nu,}{\iso}\RH_{\<X}(\delta^!(\mathcal
O_{\<X\times_Y X}),\>\>\OX\<) \iso f^!\OY.  $$ If furthermore
$g\col Z\to X$ is  finite, then \eqref{duality iso} gives a natural
isomorphism
 \[
g_*(f\<g)^!\OY\cong \R\>g_*g^!\<f^!\mathcal
O_Y\iso \RH_{\<X}(g_*\mathcal O_{\<Z},W_{\<\<f})\,.
%\qedhere
 \]
\end{subproposition}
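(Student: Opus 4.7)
The plan is to identify $\delta^!(\mathcal O_{X\times_Y X})$ as an invertible complex whose tensor-inverse is $f^!\OY$, and then to derive both of the displayed isomorphisms defining $W_{\<\<f}$ from this identification; the finite-$g$ assertion will then reduce to Grothendieck duality for~$g$.

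\smallskip
\emph{Identification of $\delta^!(\mathcal O_{X\times_Y X})$.} Since $f$ is flat, the two projections $p_1,p_2\col X\times_Y X\to X$ are flat base changes of $f$ and form a cartesian square. Flat base change for the twisted inverse image pseudofunctor yields $p_1^!\OX\simeq p_2^*\<f^!\OY$, which is invertible by Corollary~\ref{tensor inv}(3), because $f$ is Gorenstein.  From $p_1\delta=\id_X$ and pseudofunctoriality of~$(-)^!$ one then gets $\OX=\delta^!p_1^!\OX\simeq\delta^!(p_2^*\<f^!\OY)$. Applying the projection formula for the closed immersion~$\delta$ with respect to the perfect twisting $p_2^*\<f^!\OY$---obtainable from the identity $\delta_*\delta^!(-)\simeq\RH_{X\times_Y X}(\delta_*\OX,-)$ (Grothendieck duality for the finite map~$\delta$) together with Lemma~\ref{RHom and perfect}(1)---and using $\bL\delta^*p_2^*=(p_2\delta)^*=\id^*$, one converts the preceding isomorphism into
$$
\OX\simeq\delta^!(\mathcal O_{X\times_Y X})\dtensor{X}f^!\OY,
$$
showing that $\delta^!(\mathcal O_{X\times_Y X})$ is invertible and is tensor-inverse to~$f^!\OY$.

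\smallskip
\emph{The $W_{\<\<f}$ isomorphisms.} The tensor-inverse of an invertible complex~$L$ is computed by $\RH_X(L,\OX)$---as seen, for instance, in the proof of the equivalence (i)$\Leftrightarrow$(ii) of Theorem~\ref{thm:semid}. Applied to $L=\delta^!(\mathcal O_{X\times_Y X})$ this delivers the second isomorphism, $\RH_X(\delta^!(\mathcal O_{X\times_Y X}),\OX)\simeq f^!\OY$. For the canonical map $\nu$ from $\mathcal Hom_X$ to $\RH_X$: Theorem~\ref{thm:semid}(iii$'$) permits one to represent $\delta^!(\mathcal O_{X\times_Y X})$, on each connected component of~$X$, by a shifted line bundle $\Shift^r\mathcal L$; for such a flat representative, the canonical map $\mathcal Hom_X(\Shift^r\mathcal L,\OX)\to\RH_X(\Shift^r\mathcal L,\OX)$ is an isomorphism.

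\smallskip
\emph{Finite $g$ and main obstacle.} Pseudofunctoriality of $(-)^!$ gives $(f\<g)^!\OY\simeq g^!\<f^!\OY$, and since $g$ is finite, $g_*=\R g_*$ on quasi-coherent complexes, so $g_*(f\<g)^!\OY\simeq\R g_*g^!\<f^!\OY$. Grothendieck duality \eqref{duality iso} for the finite map~$g$, applied with $E=\OZ$ and $F=f^!\OY$, then yields
$$
\R g_*g^!\<f^!\OY\simeq\RH_X(\R g_*\OZ,\,f^!\OY)=\RH_X(g_*\OZ,\,f^!\OY)\simeq\RH_X(g_*\OZ,W_{\<\<f}),
$$
the last step using the identification $f^!\OY\simeq W_{\<\<f}$ established above. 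The main obstacle is the identification step: one must combine flat base change for $(-)^!$ with the projection formula for~$\delta^!$ in exactly the right way to extract the explicit tensor-inverse relation. Both ingredients are standard in Grothendieck duality, but the bookkeeping must be done carefully.
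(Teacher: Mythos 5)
Your proof is correct, and it takes a genuinely different route for the key identification. The paper simply cites the isomorphism
$\delta^!(\mathcal O_{X\times_Y X})\iso\RH_{\<X}(f^!\OY,\OX)$ from \cite[Cor.~6.5]{AILN}, which holds for \emph{any} flat map, and then flips it using invertibility of $f^!\OY$. You instead re-derive the needed tensor-inverse relation $\delta^!(\mathcal O_{X\times_Y X})\dtensor{\<\<X}f^!\OY\simeq\OX$ from first principles of duality theory inside the paper's own toolkit: flat base change ($p_1^!\OX\simeq p_2^*f^!\OY$), the pseudofunctor identity $\delta^!p_1^!=\id^!$, and the ``twisting'' formula $\delta^!G\simeq\delta^!\mathcal O_{X\times_Y X}\dtensor{\<\<X}\bL\delta^*G$ for perfect $G$ (obtained from $\delta_*\delta^!(-)\simeq\RH(\delta_*\OX,-)$ plus Lemma~\ref{RHom and perfect}(1) plus the projection isomorphism). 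This is more self-contained than the paper's citation, but at the cost of being specific to the Gorenstein case: your projection-formula step needs $p_2^*\<\<f^!\OY$ to be perfect, which uses invertibility of $f^!\OY$, whereas the cited \cite[Cor.~6.5]{AILN} holds for flat $f$ in general. A small notational slip: in invoking \eqref{duality iso} you wrote ``$E=\OZ$ and $F=f^!\OY$'', but the roles in the paper's formula are $F=\OZ$ and $M=f^!\OY$; the substance is right. The $\nu$ step and the finite-$g$ step match the paper's proof in essence.
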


\begin{proof}
For \emph{any} flat scheme-map $f\col X\to Y$ there is a natural isomorphism
$$
\delta^!(\mathcal O_{\<X\times_Y X})\iso
\RH_{\<X}(f^!\OY\<, \OX\<)
$$ 
(see Corollary 6.5 in \cite{AILN}, with $M=\OX=N$). 

It follows, when $f^!\OY$ is invertible, that the complex
$\delta^!(\mathcal O_{\<X\times_Y X}\mkern-.5mu)$ is invertible, and
that there is a natural $\D(X)$-isomorphism
$$
f^!\OY\iso\RH_{\<X}(\delta^!(\mathcal O_{\<X\times_Y X}),\>\>\OX\<).
$$

That the natural map $\nu$ is an isomorphism holds true with any 
perfect complex in place of $\delta^!(\mathcal O_{\<X\times_Y X})$:
the assertion is local, hence reduces to the corresponding (obvious) 
assertion for rings.

For the final assertion, note that the natural map is an isomorphism
$$
g_*(f\<g)^!\OY\iso\R\>g_*(f\<g)^!\OY
$$
because the equivalence of categories given in \cite[p.\,133, 7.19]{H}
allows us to work exclusively with quasi-coherent sheaves, on which the
functor $g_*$ is exact.  \end{proof}

\subsection{Composition, decomposition, and base change}
\label{compbc}
We turn now to the behavior of relative perfection and
G-perfection, especially vis-\`a-vis the derived direct-
and inverse-image functors and  the twisted inverse image functor,
when several maps are involved.

Generalizing Proposition \ref{perfect descent} (which is the special
case $f=\id^{\<X}$), one has:

\begin{subproposition}[cf.~{\cite[pp.\,253--254, \kern-1pt 4.5.1]{Il}}]
\label{f-perfect descent}
Let\/ $Z\xra{g}X\xra{f}Y$ be scheme-maps, with $g$ perfect. 

Then\/ $\bL g^*\dcatf f\subseteq\dcatf {f\<g}$.  
In particular, if\/ $f$ is perfect then so is $f\<g$.

Conversely, if $\>g\<$~is faithfully flat, or if\/ $g$ is proper and
surjective and $F\in\dc X$, then\/ $\bL g^*\<\<F\<\in\dcatf{f\<g}\implies
F\<\in\dcatf f$.  In particular, if\/ $f\<g$ is perfect then so is\/ $f$.
\end{subproposition}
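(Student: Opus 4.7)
The plan is to reduce both directions to stalkwise assertions, drawing on the commutative-algebra version of perfection and the descent results already in hand.

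For the forward inclusion, assume $F\in\dcatf f$. By Remark~\ref{char perfect*}, $\bL g^*\<\<F\in\dcatc Z$. To check relative perfection at a point $z\in Z$ (with $x\!:=g(z)$, $y\!:=f(x)$), note that $(\bL g^*\<\<F)_z\simeq F_x\dtensor{\mathcal O_{X\<,x}}\mathcal O_{Z,z}$ in $\D(\mathcal O_{Y\<,y})$. Via Example~\ref{affine perfect}, $F\in\dcatf f$ translates to $F_x$ being perfect over $\mathcal O_{Y\<,y}$, and $g$ perfect translates to $\mathcal O_{Z,z}$ being perfect over $\mathcal O_{X,x}$. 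The assertion then reduces to the commutative-algebra fact that, given ring maps $R\to S\to T$, if $M\in\D(S)$ is perfect over $R$ and $T$ is perfect over $S$, then $M\dtensor S T$ is perfect over $R$: representing $T$ by a bounded complex of finitely generated projective $S$-modules exhibits $M\dtensor S T$ as a finite iterated cone on shifts of $M$, and perfection over $R$ is preserved by finite cones and shifts.

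For the converse when $g$ is faithfully flat, one has $\bL g^*=g^*$, and $F\in\dcatc X$ follows from Lemma~\ref{flat_descent}. At each $x\in X$, surjectivity produces $z\in g^{-1}(x)$; then $\mathcal O_{X,x}\to\mathcal O_{Z,z}$ is faithfully flat, and the standard faithfully flat descent of perfection (as invoked in the proof of Proposition~\ref{perfect descent}) forces $F_x$ to be perfect over $\mathcal O_{Y\<,f(x)}$, yielding $F\in\dcatf f$.

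For the converse when $g$ is proper, surjective and $F\in\dc X$, I would mimic the closing steps of the proof of Proposition~\ref{perfect descent}. The projection formula gives $\R g_*\bL g^*\<\<F\simeq F\dtensor X\R g_*\OZ$; by Remark~\ref{Rf*perfect}, $\R g_*\OZ$ is a perfect $\OX$-complex, and surjectivity of $g$ forces $\Supp_X\R g_*\OZ=X$. Illusie's theorem on proper direct images of relatively perfect complexes (a relative version of \cite[p.\,250, 3.7.2]{Il}) gives $\R g_*\bL g^*\<\<F\in\dcatf f$, hence $F\dtensor X\R g_*\OZ\in\dcatf f$. A relative-perfection analogue of Theorem~\ref{descent through perfect}, obtained by the local factorization $f=h\circ i$ of Remark~\ref{perfect via factorization} (which converts an assertion about $\dcatf f$ on $X$ into one about absolute perfection of $i_*(-)$ on the essentially smooth $W$), then descends the property from $F\dtensor X\R g_*\OZ$ to $F$ itself. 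The principal obstacle is precisely this transfer of Theorem~\ref{descent through perfect} and the direct-image statement from absolute to relative perfection; the factorization device of Remark~\ref{perfect via factorization} supplies the natural bridge to the absolute case already handled.
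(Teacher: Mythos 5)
Your proof takes a genuinely different route from the paper, reducing everything to stalkwise assertions, whereas the paper works globally with the Tor-amplitude criterion for relative perfection (\cite[p.\,242, 3.3; p.\,251, 4.3; p.\,115, 3.5(b)]{Il}), together with the isomorphism $\bL g^*\<\<F\dtensor{\<Z}\bL(f\<g)^*\<\<M\cong\bL g^*(F\dtensor{\<\<X}\bL f^*\<\<M)$ and boundedness of $\bL g^*$. Your faithfully flat converse is in the same spirit as the paper's, and for the proper-surjective case both of you follow the template of Proposition~\ref{perfect descent}.

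However, there is a genuine gap in your forward direction. You translate ``$g$ perfect'' at the stalk $z$ into ``$\mathcal O_{Z,z}$ is perfect over $\mathcal O_{X,x}$,'' and then represent $T=\mathcal O_{Z,z}$ by a bounded complex of finitely generated projective $S$-modules ($S=\mathcal O_{X,x}$). But perfection of $g$ over $X$ only gives that $\mathcal O_{Z,z}$ has \emph{finite flat dimension} over $\mathcal O_{X,x}$; since $g$ is merely essentially of finite type, $\mathcal O_{Z,z}$ is typically not a finitely generated $\mathcal O_{X,x}$-module, so it is not a perfect $S$-complex, and the ``finite iterated cone on shifts of $M$'' description of $M\dtensor_S T$ fails. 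The stalkwise fact is nonetheless true, but the correct argument is via associativity: for any $R$-module $N$, $N\dtensor_R(M\dtensor_S T)\simeq(N\dtensor_R M)\dtensor_S T$; finiteness of the flat dimension of $M$ over $R$ bounds the amplitude of $N\dtensor_R M$ uniformly in $N$, and finiteness of the flat dimension of $T$ over $S$ bounds the further spread on tensoring with $T$. (You should also say a word about why stalkwise bounds give a uniform bound over the noetherian, hence quasi-compact, scheme $X$, which your reduction needs and the paper's global argument avoids.)

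A second, subtler concern is your justification of the relative-perfection analogue of Theorem~\ref{descent through perfect} in the proper-surjective converse. The factorization $f=h\circ i$ converts ``perfect over $f$'' into ``$i_*(-)$ perfect on $W$,'' but $i_*(P\dtensor{\<\<X}F)$ is \emph{not} naturally $i_*P\dtensor{\<W} i_*F$, and $P$ need not be of the form $\bL i^*P'$ for a perfect $\mathcal O_W$-complex $P'$, so the absolute theorem on $W$ does not apply directly. The paper's actual point, flagged in a parenthetical in its proof, is that the cited local statements \cite[4.3, 4.4, 4.5]{AIL} are themselves proved via the Tor-amplitude criterion and therefore already cover relative perfection; no extra bridge through the factorization is needed.
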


\begin{proof} 
Let $F\in\dcatf f$. By Lemma~\ref{flat_descent}, $\bL g^*\<\<F\in\dcatc
Z$. Hence by \cite[p.\,242, 3.3, p.\,251, 4.3 and p.\,115, 3.5(b)]{Il}
(whose proofs are easily made to apply to essentially finite\kern.5pt-type
maps of noetherian schemes), for $\bL g^*\<\<F$ to be in $\dcatf{f\<g}$
it suffices that there be integers $m\le n$ such that  for any
$\OY$-module~$M$ and integer~$j\notin[m,n]$,
$$
0=H^j(\bL g^*\<\<F\dtensor{\<Z} \bL (f\<g)^*\<\<M)
\cong H^j(\bL g^*(F\dtensor{\<\<X}  \bL f^*\<\<M)).
$$
But by \emph{loc.\,cit.} this holds because $F$ is in $\dcatf g$ and
$\bL g^*$ is bounded.

Taking $M=\OY$ one gets that if $f$ is perfect then $f\<g$ is
perfect.\vspace{1pt}

For the converse, if $g$ is faithfully flat (so that $\bL g^*=g^*$) then
for any $\OX$-module~$F$ and any $j\in\mathbb Z$, one sees stalkwise  that
\[
H^j(g^*\<\<F\>)\cong g^*\<H^j(F\>)=0\iff H^j(F\>)=0.
\]
Hence if $F\in\Dc(X)$ and $g^*\<\<F\in\dcatf{f\<g}\subseteq\dcatc
Z$---whence $F\in\dcatc X$---then by an argument like that above,
$F\in\dcatf f$.

In the remaining case one argues as in the proof of Proposition
\ref{perfect descent}.  (It should be noted that the relevant part of
Theorem~\ref{descent through perfect} is proved via the above criterion
for relative perfection, so it applies not only to perfection but more
generally to relative perfection.)  \end{proof}

Analogously, for $A\!:=f^!\OY$ one has $(f\<g)^!\OY\simeq g^!\<A$, so
Theorem~\ref{descent} gives 

\begin{subproposition}[Cf.\,{\cite[4.7]{AF:qG}}]
\label{compn}
Let\/ $Z\xra{g}X\xra{f}Y$ be scheme-maps, with $g$ perfect. 

Then\/ $\bL g^*\dcatg f\subseteq\mathsf G(f\<g)$.  In particular, if $f$
is G-perfect then so is $f\<g$.\vspace{1pt}

Conversely,  if\/  $g$~is  faithfully flat and\/ $F\in\Dcmi(X)$,
or if\/ $\>g$ is proper and surjective and $F\in\Dc(X)$, then\/ $\bL
g^*\<\<F$ in $\dcatg{f\<g}$ implies $F\in \dcatg f$.
  \qed \end{subproposition}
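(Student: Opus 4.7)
The plan is to apply Theorem~\ref{descent} with the complex $B\!:=f^!\OY$ on $X$, reducing the statement about G-perfection relative to $f\<g$ to the statement about $(f\<g)^!\OY$-reflexivity, and then invoking pseudofunctoriality of the twisted inverse image.

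First I would note that $B=f^!\OY\in\Dcpl(X)$; indeed, every relative dualizing complex lies in $\Dcpl$, as recorded in the paragraph following the definition of relative dualizing complexes (this reduces, via \S\ref{twisted inverse}(i) and \S\ref{any base change}, to Example~\ref{relative_for_rings}). By definition, $F\in\dcatg f$ means exactly that $F$ is derived $B$-reflexive; since $\dcatg f\subseteq\dcatc X\subseteq\Dcmi(X)$, the hypotheses of Theorem~\ref{descent} (applied to $g$) are satisfied.

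Then, Theorem~\ref{descent} yields that $\bL g^*\<\<F$ is derived $g^!B$-reflexive. By the pseudofunctoriality of~$(-)^!$, there is a natural isomorphism $g^!B=g^!f^!\OY\simeq(f\<g)^!\OY$, so $\bL g^*\<\<F$ is derived $(f\<g)^!\OY$-reflexive, i.e., $\bL g^*\<\<F\in\mathsf G(f\<g)$. Taking $F=\OX$, for which $\bL g^*\OX\simeq\OZ$, gives the ``in particular'' assertion: if $f$~is G-perfect then so is $f\<g$.

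For the converse, suppose $\bL g^*\<\<F\in\dcatg{f\<g}$, that is, $\bL g^*\<\<F$ is derived $(f\<g)^!\OY\simeq g^!B$-reflexive. Under either of the stated hypotheses on~$g$ (faithfully flat with $F\in\Dcmi(X)$, or proper surjective with $F\in\Dc(X)$) and with $B\in\Dcpl(X)$ as above, the converse half of Theorem~\ref{descent} applies and yields that $F$ is derived $B=f^!\OY$-reflexive, i.e., $F\in\dcatg f$. There is no real obstacle here beyond correctly identifying $B=f^!\OY$ and invoking pseudofunctoriality; the work has already been done in Theorem~\ref{descent}.
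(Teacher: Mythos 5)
Your proof is correct and follows the paper's own route: the paper introduces the proposition with the remark ``Analogously, for $A := f^!\OY$ one has $(f\<g)^!\OY \simeq g^!\<A$, so Theorem~\ref{descent} gives'' and then cites the proposition with a \emph{q.e.d.}, which is precisely the reduction (set $B = f^!\OY$, note $B \in \Dcpl(X)$, invoke pseudofunctoriality of $(-)^!$, and apply Theorem~\ref{descent}) that you carry out.
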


The next proposition generalizes parts of Proposition~\ref{perfect reflexive}. 
The proof is quite similar, and so is omitted.

\begin{subproposition}
  \label{more perfect reflexive} 
Let\/ $Z\xra{g}X\xra{f}Y$ be scheme-maps, $P\in\dcatf g$, $F,A\in\D(X)$.

If\/ $F\in\dcatf f$ then $\RH_Z(P\<,\>g^!F)\in\dcatf{f\<g}$. 
\textup{(Cf.~\cite[p.\,258, 4.9]{Il}.)} 
In other words, the functor $\RH_Z(-,\>g^!\<F)$ takes\/ $\dcatf g$ to\/ 
$\dcatf{f\<g}$.

If $F$ is $A$-reflexive then $\RH_Z(P\<,\>g^!\<F)$ is $g^!\<A$-reflexive.
For\/ $A=f^!\OY$ this gives that  $\RH_X(-,\>g^!\<F)$ takes\/ $\dcatf g$ 
to\/ $\dcatg{f\<g}$.\qed
\end{subproposition}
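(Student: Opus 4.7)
The plan is to mirror the proof of Proposition~\ref{perfect reflexive}. Both conclusions being local on $Z$, one may assume $g$ factors as $g=hi$ with $i\col Z\hookrightarrow V$ a closed immersion and $h\col V\to X$ essentially smooth, so that $h$ is flat (hence perfect) and $h^!\OX$ is invertible with $h^!E\simeq h^*E\dtensor{V}h^!\OX$ for $E\in\Dqcpl(X)$. Combining the standard closed-immersion duality $i_*\RH_Z(P,i^!N)\simeq\RH_V(i_*P,N)$ with Lemma~\ref{RHom and perfect}(1) one obtains
\[
i_*\RH_Z(P,g^!F)\iso\RH_V(i_*P,h^!F)\iso\RH_V(i_*P,h^*F)\dtensor{V}h^!\OX.
\]
Since $P\in\dcatf g$, Remark~\ref{perfect via factorization} gives that $i_*P$ is perfect on $V$; Lemma~\ref{RHom and perfect}(2) and Theorem~\ref{global perfect} then yield $\RH_V(i_*P,h^*F)\simeq h^*F\dtensor{V}\RH_V(i_*P,\mathcal O_V)$, with $\RH_V(i_*P,\mathcal O_V)$ perfect on $V$.

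For the first assertion, assume $F\in\dcatf f$; then $h^*F\in\dcatf{fh}$ by Proposition~\ref{f-perfect descent}. Tensoring a complex in $\dcatf{fh}$ successively with a perfect $\mathcal O_V$-complex and with the invertible $h^!\OX$ stays inside $\dcatf{fh}$ (locally, a perfect $\mathcal O_V$-complex is a direct summand of a finite free one, and flatness over $(fh)_0^{-1}\OY$ is preserved by these operations). Hence $i_*\RH_Z(P,g^!F)\in\dcatf{fh}$. As membership in $\dcatf{fg}=\dcatf{(fh)\circ i}$ can be tested stalkwise on~$Z$, it transfers faithfully across the closed immersion~$i$, whence $\RH_Z(P,g^!F)\in\dcatf{fg}$.

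For the second assertion, let $F$ be derived $A$-reflexive. Theorem~\ref{descent} applied to the perfect map~$h$ yields derived $h^*A$-reflexivity of $h^*F$; together with Proposition~\ref{perf-reflexive} and the second displayed identity above, this gives derived $h^*A$-reflexivity of $\RH_V(i_*P,h^*F)$. Two applications of Corollary~\ref{refl and inv}---one to tensor with the invertible $h^!\OX$, one to replace the reflexivity target by $h^*A\dtensor{V}h^!\OX\simeq h^!A$---upgrade this to derived $h^!A$-reflexivity of $\RH_V(i_*P,h^!F)$. The closed-immersion duality and full faithfulness of $i_*$ then translate this back to derived $i^!h^!A=g^!A$-reflexivity of $\RH_Z(P,g^!F)$. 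For the ``In particular,'' specialize to $A=f^!\OY$: by Proposition~\ref{perfect reflexive}, $\dcatf f\subseteq\dcatg f$, so any $F\in\dcatf f$ is derived $f^!\OY$-reflexive; the pseudofunctoriality identity $g^!f^!\OY\simeq(fg)^!\OY$ then places $\RH_Z(P,g^!F)$ in $\dcatg{fg}$.

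The principal obstacle is the faithful transfer of perfection and derived reflexivity across the closed immersion~$i$ when the residual map $fh$ on $V$ is \emph{not} essentially smooth, so that Remark~\ref{perfect via factorization} does not directly apply to $fg=(fh)\circ i$; this has to be handled by the local (stalkwise) nature of the definitions. Once that reformulation is secured, the remainder of the proof is a formal chain of applications of Lemma~\ref{RHom and perfect}, the invertibility of~$h^!\OX$, and the ascent results of Subsections~\ref{Ascent and descent} and~\ref{perfection}.
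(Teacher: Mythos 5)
Your proposal correctly mirrors the omitted proof: factoring $g$ (not $f$) as $Z\xra{i}V\xra{h}X$ with $i$ a closed immersion and $h$ essentially smooth is indeed the right analogue of the factorization used for Proposition~\ref{perfect reflexive}, and the chain of identifications via closed-immersion duality, $h^!\simeq h^*(-)\dtensor{V}h^!\OX$, and Lemma~\ref{RHom and perfect} is exactly what is wanted. However, two points in the transfer step need repair. First, ``full faithfulness of $i_*$'' is false at the level of derived categories: for a closed immersion $i_*\colon\D(Z)\to\D(V)$ is exact, faithful, and conservative, but not full (e.g., for $R=k[t]/(t^2)$, $I=(t)$, one has $\Ext 1R{k}{k}\ne 0$ while $\Ext1{R/I}{k}{k}=0$); so one cannot ``translate back'' simply by quoting full faithfulness. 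Second, the ``principal obstacle'' you flag is genuine and not resolved merely by saying perfection is stalkwise: Remark~\ref{perfect via factorization} compares $\dcatf{(\text{smooth})\circ i}$ with \emph{absolute} perfection on the middle scheme, not $\dcatf{\ell i}$ with $\dcatf{\ell}$ for arbitrary $\ell$. The correct way to close this gap is either (a) to choose, locally, a further factorization $V\xra{j}W\xra{k}Y$ of $fh$ with $j$ a closed immersion and $k$ essentially smooth, so that $fg=k\circ(ji)$, and then apply Remarks~\ref{perfect via factorization} and~\ref{gperfect via factorization} twice (once for $j$, once for $ji$); or (b) to prove directly that for any closed immersion $i\colon Z\to V$ and any $\ell\colon V\to Y$ one has $Q\in\dcatf{\ell i}\iff i_*Q\in\dcatf{\ell}$ and $Q\in\dcatg{\ell i}\iff i_*Q\in\dcatg{\ell}$, using that $i_*$ is exact and conservative together with the cohomological criterion for relative perfection cited in the proof of Proposition~\ref{f-perfect descent}, the projection formula $i_*Q\dtensor{V}\bL\ell^*M\simeq i_*(Q\dtensor{Z}\bL(\ell i)^*M)$, and Propositions~\ref{greflexive}(ii) and~\ref{dir image and refl}. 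With one of these substituted for the appeal to ``full faithfulness,'' the rest of your argument goes through.
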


\begin{subproposition}
\label{g^! preserves}  
Let\/ $Z\xra{g}X\xra{f}Y$ be scheme-maps, with $g$ perfect.

Then\/ $g^!\dcatf f\subseteq\dcatf{f\<g}$ and\/ $g^!\dcatg
f\subseteq\dcatg{f\<g}$.

Conversely, if\/ $g$ is proper and surjective, $F$ is in $\Dcpl(X)$, and
$g^!F$ is in $\dcatf{f\<g}$ $($resp.~$\dcatg{f\<g})$ then $F$ is in
$\dcatf{f}$ $($resp.~$\dcatg{f})$.  \end{subproposition}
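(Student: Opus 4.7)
For the forward direction, I invoke Proposition~\ref{more perfect reflexive} with $P\!=\!\OZ$. Since $g$ is perfect, $\OZ\in\dcatf g$, and the identification $\RH_Z(\OZ,g^!F)\simeq g^!F$ yields both inclusions: taking $F\in\dcatf f$ gives $g^!F\in\dcatf{f\<g}$, and taking $F$ derived $f^!\OY$-reflexive gives $g^!F$ derived $g^!f^!\OY=(f\<g)^!\OY$-reflexive, i.e., $g^!F\in\dcatg{f\<g}$.

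For the converse, with $g$ now also proper and surjective, the plan is to push forward along $g$ and descend through an absolute-perfect $\OX$-complex of full support. By Grothendieck duality~\eqref{duality iso} and Lemma~\ref{RHom and perfect}(1) applied with the perfect complex $\R g_*\OZ$ (perfect by Remark~\ref{Rf*perfect} since $g$ is perfect and proper, with $\Supp_{\<X}\R g_*\OZ=X$ thanks to surjectivity of $g$), I obtain
\[
\R g_*\>g^!F\;\simeq\;\RH_{\<X}(\R g_*\OZ,F)\;\simeq\;(\R g_*\OZ)\Bigcheck\dtensor{\<\<X}F,
\]
where $(\R g_*\OZ)\Bigcheck\!:=\RH_{\<X}(\R g_*\OZ,\OX)$ is again perfect with $\Supp_{\<X}$ equal to $X$, by Theorem~\ref{global perfect}.

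For the G-perfect assertion, assuming $g^!F\in\dcatg{f\<g}$ means $g^!F$ is derived $(f\<g)^!\OY=g^!f^!\OY$-reflexive; Proposition~\ref{dir image and refl} applied to the proper map~$g$ with $B\!:=\!f^!\OY$ then shows that $\R g_*\>g^!F$, hence also $(\R g_*\OZ)\Bigcheck\dtensor{\<\<X}F$, is derived $f^!\OY$-reflexive, and Theorem~\ref{descent through perfect} with $P\!:=\!(\R g_*\OZ)\Bigcheck$ and $A\!:=\!f^!\OY$ delivers $F\in\dcatg f$. The perfect case proceeds along the same lines, but needs two additional inputs: first, that $\R g_*$ carries $\dcatf{f\<g}$ into $\dcatf f$ (a relative version of Remark~\ref{Rf*perfect}, essentially contained in \cite{Il}); and second, an analog of Theorem~\ref{descent through perfect} for the property ``perfect over $f$'', so that $(\R g_*\OZ)\Bigcheck\dtensor{\<\<X}F\in\dcatf f$ forces $F\in\dcatf f$. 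This second step is where I expect the main difficulty: Theorem~\ref{descent through perfect} is not stated in this relative generality, and stalkwise it amounts to the assertion that, for a nonzero perfect complex $P$ over a noetherian local ring $R$ and any ring map $K\to R$, finiteness of the $K$-flat dimension of $P\dtensor RM$ forces finiteness of that of $M$, which one verifies by adapting the local descent argument underlying \cite[4.5]{AIL}.
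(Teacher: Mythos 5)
Your proof is correct and follows essentially the same strategy as the paper's: express $\R g_*\>g^!F$ as a perfect $\OX$-complex of full support tensored with $F$, use Proposition~\ref{dir image} to know this object lies in $\dcatf f$ or $\dcatg f$, and then descend via Theorem~\ref{descent through perfect}. The paper reaches the key identity $\R g_*\>g^!F\simeq P\dtensor{\<\<X}F$ by writing $g^!F\simeq g^!\OX\dtensor{Z}\bL g^*F$ (Remark~\ref{char perfect}) and applying the projection formula, so $P=\R g_*g^!\OX$, citing Illusie directly for perfection of $P$; you instead apply sheafified duality~\eqref{duality iso} to get $\RH_X(\R g_*\OZ,F)$ and then Lemma~\ref{RHom and perfect}(1) to rewrite it as $(\R g_*\OZ)^\dagger\dtensor{\<\<X}F$. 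Since $(\R g_*\OZ)^\dagger\simeq\R g_*g^!\OX$ by duality, these are two routes to the same complex, and both are valid; yours stays a bit more within the paper's own lemmas, the paper's is a shade quicker. You are right to flag that Theorem~\ref{descent through perfect} covers the $\dcatg f$ case directly (take $A=f^!\OY\in\Dcpl(X)$, per~\eqref{!perfect}) but not, as literally stated, the relative-perfection case $\dcatf f$; this is precisely the point addressed by the parenthetical remark at the end of the paper's proof of Proposition~\ref{f-perfect descent}, which the proof of Proposition~\ref{g^! preserves} cites: the descent argument for Theorem~\ref{descent through perfect} runs via the boundedness criterion for relative perfection (\cite[p.\,115, 3.5]{Il}), so it applies verbatim with $\dcatf f$ in place of $\dcatf X$. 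So the concern you raise is real but already resolved in the paper's chain of references, and your proposed route via \cite[4.5]{AIL} is also fine.
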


\begin{proof} 
The direct assertions are obtained from Proposition~\ref{more perfect
reflexive} by taking $P=\OZ$.

If $g$ is perfect then $g^!\OX\in\dcatf g$ and 
$$
\R g_*g^!F\simeq \R g_*(g^!\OX\dtensor{\<Z} \bL g^*\<\<F)
\simeq \R g_*g^!\OX\dtensor{\<\<X} F\,;
$$
see Remark~\ref{char perfect}. If $g$ is also proper then $\R g_*g^!\OX$
is perfect \cite[p.\,257, 4.8(a)]{Il}. One can then argue as at the end
of the proof of Proposition~\ref{f-perfect descent}.  \end{proof}

\begin{subproposition}
\label{dir image}
Let\/ $Z\xra{g}X\xra{f}Y$ be scheme-maps, with $g$ proper. 

Then $\R\>g_*\dcatf{f\<g}\subseteq\dcatf f$ and 
$\R\>g_*\dcatg{f\<g}\subseteq\dcatg f$.
\end{subproposition}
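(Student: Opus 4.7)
The plan is to handle the two inclusions separately. The G-perfect inclusion is essentially immediate from Proposition~\ref{dir image and refl}, and the perfect inclusion is a mild variant of the argument in the proof of Proposition~\ref{f-perfect descent}.

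For the G-perfect inclusion, I would begin with $F\in\dcatg{f\<g}$, meaning that $F$ is derived $(f\<g)^!\OY$-reflexive. Pseudofunctoriality of $(-)^!$ yields a canonical isomorphism $(f\<g)^!\OY\simeq g^!f^!\OY$, so $F$ is derived $g^!(f^!\OY)$-reflexive. By \eqref{!perfect} one has $f^!\OY\in\Dcpl(X)\subseteq\Dqcpl(X)$, so Proposition~\ref{dir image and refl} applied to the proper map $g$ with $B=f^!\OY$ gives that $\R g_*F$ is derived $f^!\OY$-reflexive, that is, $\R g_*F\in\dcatg f$.

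For the perfect inclusion, let $F\in\dcatf{f\<g}$. Since $F\in\dcatc Z$ and $g$ is proper, Remark~\ref{Rf*perfect} yields $\R g_*F\in\dcatc X$. The relative-perfection criterion used inside the proof of Proposition~\ref{f-perfect descent} (via \cite[p.\,115, 3.5(b)]{Il} and \cite[p.\,242, 3.3 and p.\,251, 4.3]{Il}) provides integers $m\le n$ with the property that for every $\OY$-module $M$ and every $j\notin[m,n]$,
$$
H^j(F\dtensor{Z}\bL(f\<g)^*M)=0.
$$
The projection isomorphism~\eqref{projection2} then gives
$$
\R g_*F\dtensor{X}\bL f^*M\simeq \R g_*(F\dtensor{Z}\bL g^*\bL f^*M)\simeq \R g_*(F\dtensor{Z}\bL(f\<g)^*M).
$$
Since $g$ is proper between noetherian schemes (and essentially of finite type), $\R g_*$ has finite cohomological dimension on quasi-coherent sheaves; let $d$ denote such a bound. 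The displayed isomorphisms then force $H^j(\R g_*F\dtensor{X}\bL f^*M)=0$ for all $j\notin[m,n+d]$ and all $\OY$-modules $M$. Invoking the same criterion once more, now for $f$, one concludes $\R g_*F\in\dcatf f$.

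The only point that needs care is the finiteness of the cohomological dimension $d$ of $\R g_*$ on quasi-coherent sheaves for the proper map $g$, but this is a classical fact for proper maps of noetherian schemes. Once $d$ is in hand, both inclusions reduce cleanly to results already proved in the excerpt, so I do not anticipate a serious obstacle.
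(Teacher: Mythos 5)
Your proof is correct and follows the same route as the paper: the G-perfect inclusion is obtained exactly as the paper does, by applying Proposition~\ref{dir image and refl} with $B=f^!\OY$ after identifying $(f\<g)^!\OY\simeq g^!f^!\OY$, and the perfect inclusion reproduces the SGA\,6-style argument that the paper delegates to \cite[p.\,257, 4.8]{Il}, resting on the projection formula and the Tor-amplitude characterization of relative perfection. Two small slips to fix: the isomorphism you use is \eqref{projection} (applied with $N=\bL f^*M$) rather than \eqref{projection2}, and Remark~\ref{Rf*perfect} is stated for \emph{perfect} proper maps, though the inclusion $\R g_*\dcatc Z\subseteq\dcatc X$ holds for any proper $g$, as the paper itself uses in the proof of Proposition~\ref{dir image and refl}.
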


\begin{proof} 
For $\mathsf P$ one can proceed as in \cite[p.\,257, 4.8]{Il}. (This
ultimately uses the projection isomorphism \eqref{projection}.)

For $\mathsf G$ apply Proposition~\ref{dir image and refl} with $B=f^!\OY$.
\end{proof}

\begin{subproposition}[Cf.~{\cite[5.2]{IW}}]
\label{qG and G-perfect} 
Let\/ $Z\xra{g}X\xra{f}Y$ be scheme-maps, with $f$ quasi-Gorenstein. 

Then $\dcatg{f\<g}=\dcatg g$.  In particular,  $f\<g$~is
G-perfect if and only if so is~$g$.

Also, if\/ $g$ is quasi-Gorenstein then so is $f\<g$.
\end{subproposition}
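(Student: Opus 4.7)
Set $A := f^!\OY$. The hypothesis on $f$ says $A$ is invertible in $\D(X)$, and pseudofunctoriality of $(-)^!$ yields $(f\<g)^!\OY \simeq g^!\<A$. The plan is to reduce the asserted equality $\dcatg g = \dcatg{f\<g}$ to a local statement, exploiting that derived reflexivity is a local condition (per the remark following Definition \ref{defreflexive}).

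Concretely, I would work in a neighborhood of each $z \in Z$. By Theorem \ref{thm:semid}(iii) applied to $A$, there is an open $V \ni g(z)$ in $X$ and an integer $r$ with $A|_V \simeq \Shift^{r}\OX|_V$; set $U := g^{-1}(V)$. Compatibility of $(-)^!$ with restriction to opens (Appendix \S\ref{twisted inverse}(i)) and with the translation functor then yields the chain
\begin{equation*}
(f\<g)^!\OY|_U \simeq g^!\<A|_U \simeq (g|_U)^!(A|_V) \simeq \Shift^{r}(g|_U)^!(\OX|_V) \simeq \Shift^{r} g^!\OX|_U.
\end{equation*}
Since derived $C$-reflexivity is trivially equivalent to derived $\Shift^{r}\<C$-reflexivity (the biduality map $\delta^{C}_{\<F}$ is compatible with shifts of $C$, and the conditions $\RH_{\<Z}(F,C)\in\dcatc U$ and $\RH_{\<Z}(F,\Shift^{r}\<C)\in\dcatc U$ are likewise equivalent), on $U$ a complex $F$ is derived $g^!\OX|_U$-reflexive iff it is derived $(f\<g)^!\OY|_U$-reflexive. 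Covering $Z$ by such opens and gluing these local equivalences delivers $\dcatg g = \dcatg{f\<g}$; specializing to $F=\OZ$ yields the G-perfect equivalence.

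For the final assertion, if $g$ is also quasi-Gorenstein then $g^!\OX$ is invertible, hence by Theorem \ref{thm:semid}(iii) locally a shift of $\OZ$; combined with the displayed chain, $(f\<g)^!\OY$ is then locally a shift of $\OZ$ as well, so invertible by the converse direction of Theorem \ref{thm:semid} (i.e., (iii)$\Rightarrow$(i)). The only technical input is the second displayed chain of isomorphisms, which rests on the commutation of $g^!$ with restriction to opens and with $\Shift$---both standard elements of the duality formalism---so no serious obstacle appears; the argument is essentially a local trivialization of the invertible complex $A$.
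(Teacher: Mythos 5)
Your argument is correct and rests on the same key observation as the paper's: an invertible complex is locally isomorphic to a shift of the structure sheaf, and both the twisted inverse image functor and the notion of derived reflexivity behave well under such localization. The paper packages this somewhat differently. It first establishes the global isomorphism $g^!\OX\Otimes{Z}\bL g^*\<\<f^!\OY\iso(f\<g)^!\OY$ by showing that for any invertible $F\in\D(X)$ the natural map \eqref{!tensor}, $g^!\OX\Otimes{Z}\bL g^*\<\<F\to g^!F$, is an isomorphism (a local question, reduced to $F=\OX$ via Theorem~\ref{thm:semid}); this is exactly the local trivialization you carry out. It then appeals to Corollary~\ref{refl and inv}(1) with $A=g^!\OX$ and the invertible complex $L=\bL g^*\<\<f^!\OY$ for the reflexivity equivalence, and to Corollary~\ref{tensor inv} for the quasi-Gorenstein assertion. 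You bypass those corollaries by keeping the whole argument local; that is slightly more self-contained, but it does not record the global comparison isomorphism, which has some independent interest. Both routes are valid.
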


\begin{proof}  
For any invertible $F\in\D(X)$  the natural map (see \ref{!tensor})
$$
g^!\OX\Otimes{Z}\bL g^*\<\< F\to g^!\<F
$$
is an \emph{isomorphism}: the question being local (see~\S\ref{any
base change}), one reduces  via ~\ref{thm:semid}(iii$'$) to the simple
case $F=\OX$.

When $F$ is the invertible complex $f^!\OY\<$, there results an isomorphism
  \[
g^!\OX\Otimes{Z}\bL g^*\<\<f^!\OY\to g^!\<f^!\OY\simeq(f\<g)^!\OY.
  \]
The first assertion follows from Corollary~\ref{refl and inv}(1)
(with $A=g^!\OX$, $L=\bL g^*\<\<f^!\OY$); and the last holds because if
$g^!\OX$ is invertible then by Corollary~\ref{tensor inv}, $(f\<g)^!\OY$
is invertible as well.  \end{proof}

The last assertion of Proposition~\ref{qG and G-perfect}
expresses a composition property of quasi-Gorenstein homomorphisms. Here
is a decomposition property:

\begin{subproposition}[Cf.~{\cite[4.6]{AF:Gor}, \cite[5.5]{IW}}]
Let\/ $Z\xra{g}X\xra{f}Y$ be scheme-maps, with\/ $g$ perfect. 

If\/ $f\<g$ is quasi\kern.5pt-Gorenstein then $g$ is Gorenstein. 

Suppose $g$ is faithfully flat, or proper and surjective. \kern-1pt
If\/ $f\<g$ is quasi\kern.5pt-Gorenstein $($resp.~Gorenstein$)$  then so is $f$.
  \end{subproposition}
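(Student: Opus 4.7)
The backbone is the natural isomorphism
\[
(f\<g)^!\OY\simeq g^!\<f^!\OY\iso g^!\OX\dtensor{Z}\bL g^*\<\<f^!\OY,
\]
where the second isomorphism is \eqref{!tensoriso} applied with $M=\OX$ (which is always perfect in $\D(X)$) and $N=f^!\OY\in\Dqcpl(X)$; this is available because $g$ is perfect. I would record at the outset the two boundedness facts needed later: $g^!\OX\in\dcatf g\subseteq\dcatc Z$ by Remark~\ref{char perfect}, and $\bL g^*\<\<f^!\OY\in\Dcpl(Z)\subseteq\Dc(Z)$, the latter because $\bL g^*$ is bounded on both sides when $g$ is perfect (Remark~\ref{char perfect*} together with Remark~\ref{char perfect}), applied to $f^!\OY\in\Dcpl(X)$.

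For the first assertion, assume $f\<g$ is quasi-Gorenstein, so the left-hand side of the displayed isomorphism is invertible. Since $g^!\OX\in\dcatc Z$, Corollary~\ref{tensor inv}(2) applies directly with $L_1=g^!\OX$ and $L_2=\bL g^*\<\<f^!\OY$ and yields that $g^!\OX$ is invertible. Combined with the given perfection of $g$, this means $g$ is Gorenstein.

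For the remaining assertions, I would first invoke the first assertion to get that $g^!\OX$ is invertible. Tensoring the displayed isomorphism by $(g^!\OX)^{-1}$ yields
\[
\bL g^*\<\<f^!\OY\simeq (g^!\OX)^{-1}\dtensor{Z}(f\<g)^!\OY,
\]
which is a tensor of two invertibles, hence itself invertible by Corollary~\ref{tensor inv}(1); in particular it lies in $\dcatc Z$. Since $f^!\OY\in\Dc(X)$, Lemma~\ref{flat_descent}, applied under either of the added hypotheses on $g$, forces $f^!\OY\in\dcatc X\subseteq\Dcmi(X)$. The converse direction of Corollary~\ref{descent2}(c) then descends the invertibility of $\bL g^*\<\<f^!\OY$ to $f^!\OY$, so $f$ is quasi-Gorenstein. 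If in addition $f\<g$ is Gorenstein, then $f\<g$ is perfect, and the converse in Proposition~\ref{f-perfect descent} gives that $f$ is perfect, whence $f$ is Gorenstein.

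The main technical point is that \emph{a priori} only $f^!\OY\in\Dcpl(X)$ is known, so Corollary~\ref{descent2}(c) is not directly applicable on the $f^!\OY$-side. The key observation is that, once $\bL g^*\<\<f^!\OY$ has been shown to be invertible (and hence bounded), the coherence-and-boundedness descent half of Lemma~\ref{flat_descent} kicks in to force $f^!\OY\in\dcatc X$, after which the standard descent-of-invertibility machinery applies. Everything else is formal, once the backbone isomorphism is in place.
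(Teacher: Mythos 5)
Your proof is correct and follows essentially the same route as the paper's: the same backbone isomorphism $g^!\OX\dtensor{Z}\bL g^*f^!\OY\simeq(f\<g)^!\OY$ from Remark~\ref{char perfect}, the same use of Corollary~\ref{tensor inv}(2) for the first assertion, and the same descent via Corollary~\ref{descent2}(c). Your treatment is in fact slightly more careful at two points: you derive the invertibility of $\bL g^*f^!\OY$ by tensoring with $(g^!\OX)^{-1}$ and invoking Corollary~\ref{tensor inv}(1), avoiding the need to verify the extra $\dcatc Z$ hypothesis that a second direct application of Corollary~\ref{tensor inv}(2) would require; and you explicitly pass through Lemma~\ref{flat_descent} to obtain $f^!\OY\in\dcatc X\subseteq\Dcmi(X)$ before invoking Corollary~\ref{descent2}, which is needed since a priori only $f^!\OY\in\Dcpl(X)$ is known. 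Your citation of Proposition~\ref{f-perfect descent} for the Gorenstein part is the right reference (the paper cites Proposition~\ref{compn}, which concerns G-perfection rather than perfection and appears to be a slip).
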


\begin{proof} By Remark~\ref{char perfect}, one has $g^!\OX\in\dcatc Z$ and
an isomorphism
 \[
g^!\OX\Otimes{Z}\bL g^*\<\<f^!\OY\to g^!\<f^!\OY\simeq(f\<g)^!\OY.
  \]
Also, the paragraph immediately before \S5.5 in \cite{Nk2} yields
$f^!\OY\in\Dc(X)$, whence $\bL g^*\<\<f^!\OY\in\Dc(Z)$. Now
Corollary~\ref{tensor inv}(2) gives the first assertion.  It  also shows
that $\bL g^*\<\<f^!\OY$ is invertible, whence so is $f^!\OY$ if $g$ is
faithfully flat, or proper and surjective (see Corollary~\ref{descent2}),
giving the quasi-Gorenstein part of the second assertion.  The last
assertion in Proposition~\ref{compn} now gives the Gorenstein part.
  \end{proof}

{From} Propositions~\ref{compn},~\ref{g^! preserves}  and~\ref{qG and
G-perfect} one gets:

\begin{subcorollary}
\label{G-dim and comm diag}
Let there be given  a commutative diagram
\[
\xymatrixrowsep{1pc}
\xymatrixcolsep{1pc}
\xymatrix{
X'\:\ar@{->}[rr]^-{v}
\ar@{->}[dd]_-{h}
&&
\:X
\ar@{->}[dd]^-{f}
\\
\\
Y'\:
\ar@{->}[rr]_-{u}
&&
\:Y^{\mathstrut}
}
\]
with\/ $u$ quasi-Gorenstein and\/ $v$ perfect.

Then $\bL v^*\dcatg f\subseteq \dcatg h$ and $v^!\dcatg f\subseteq \dcatg
h$. Thus, when $f$ is G-perfect so is $h$.\qed
  \end{subcorollary}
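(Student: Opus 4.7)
The plan is to reduce everything to a single identity of subcategories, $\dcatg{uh}=\dcatg h$, which is provided by the quasi-Gorenstein hypothesis on $u$, and then to feed in the two ascent statements for perfect maps provided by Propositions~\ref{compn} and~\ref{g^! preserves} along the perfect map $v$.

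First I would rewrite the square as the single factorization $fv=uh\col X'\to Y$ and observe that $v$ is perfect while $u$ is quasi-Gorenstein. Applying Proposition~\ref{compn} to the composition $X'\xra{v}X\xra{f}Y$ gives
\[
\bL v^*\dcatg f\subseteq\dcatg{fv}=\dcatg{uh}.
\]
Similarly, Proposition~\ref{g^! preserves} applied to the same composition yields
\[
v^!\dcatg f\subseteq\dcatg{fv}=\dcatg{uh}.
\]

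The key step is now to identify $\dcatg{uh}$ with $\dcatg h$. Since $u$ is quasi-Gorenstein, Proposition~\ref{qG and G-perfect}, applied to the composition $X'\xra{h}Y'\xra{u}Y$, gives exactly $\dcatg{uh}=\dcatg h$. Combining this with the two inclusions above yields the desired
\[
\bL v^*\dcatg f\subseteq\dcatg h\qquad\text{and}\qquad v^!\dcatg f\subseteq\dcatg h.
\]

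For the final assertion, if $f$ is G-perfect then by definition $\OX\in\dcatg f$. Since $v$ is perfect one has $\bL v^*\OX\simeq\mathcal O_{X'}$, so the first inclusion gives $\mathcal O_{X'}\in\dcatg h$, which is precisely the G-perfection of $h$. No real obstacle is anticipated here; the proof is just a matter of stitching together the three previous results in the correct order, with the only subtlety being to make sure that the quasi-Gorenstein hypothesis is applied to $u$ in the right factorization (namely as the second map in $uh$, not as the second map in any composition involving $v$).
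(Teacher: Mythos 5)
Your proof is correct and follows exactly the route the paper intends: the paper's proof consists solely of the citation ``{From} Propositions~\ref{compn},~\ref{g^! preserves} and~\ref{qG and G-perfect} one gets,'' and your argument spells out how those three results combine via the identity $fv=uh$. One tiny imprecision: $\bL v^*\OX\simeq\mathcal O_{X'}$ holds for \emph{any} scheme-map, not because $v$ is perfect — the perfection of $v$ is what makes the inclusion $\bL v^*\dcatg f\subseteq\dcatg h$ hold in the first place — but this does not affect the validity of the argument.
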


It is shown in \cite[p.\,245, 3.5.2]{Il} that relative perfection is
preserved under tor-independent base change. Here is an analog (and more)
for relative G-perfection.

\begin{subproposition}
\label{G-dim and base change}
Let there be given  a tor-independent fiber square 
\emph{(}see~\S\emph{\ref{fiber square})}
\[
\xymatrixrowsep{1pc}
\xymatrixcolsep{1pc}
\xymatrix{
X'\:\ar@{->}[rr]^-{v}
\ar@{->}[dd]_-{h}
&&
\:X
\ar@{->}[dd]^-{f}
\\
\\
Y'\:
\ar@{->}[rr]_-{u}
&&
\:Y^{\mathstrut}
}
\]

If  the map $u$ is Gorenstein, or flat, or if\/ $u$ is perfect and $f$
is proper, then\/ $\bL v^*\dcatg f\subseteq \dcatg h$. In particular,
if $f$ is G-perfect then so is $h$.

Conversely, suppose that\/ $u$ is faithfully flat, or that\/
$u$ is perfect, proper, and surjective and $f$ is proper.  If\/
$F\in\dcatc X$ and\/ $\bL v^*\<\<F\in\dcatg h$, then $F\in\dcatg f$.
\end{subproposition}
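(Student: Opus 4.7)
My plan is to dispatch the forward direction in two blocks, depending on which base-change isomorphism for $f^!$ is available, and then to handle the converse uniformly via Theorem~\ref{descent}.

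For the first block, I shall assume that $u$ is flat, or that $u$ is perfect and $f$ is proper. Under the tor-independence hypothesis, flat base change (respectively, tor-independent base change for a proper map) supplies a natural isomorphism $\bL v^*\<\<f^!\OY \iso h^!\mathcal O_{Y'}$. In either subcase, $v$ is perfect---flat whenever $u$ is, and in the perfect-and-proper subcase by tor-independent base change of perfection, as cited just before the proposition from~\cite[p.\,245, 3.5.2]{Il}. Given $F\in\dcatg f$, Theorem~\ref{descent} applied to the perfect map $v$ then shows that $\bL v^*\<\<F$ is derived $\bL v^*\<\<f^!\OY$-reflexive, and hence, by the base-change isomorphism, derived $h^!\mathcal O_{Y'}$-reflexive; that is, $\bL v^*\<\<F\in\dcatg h$.

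For the remaining forward case, I assume $u$ is Gorenstein, hence in particular quasi-Gorenstein and perfect. Once again $v$ is perfect, by tor-independent base change of perfection. Proposition~\ref{compn} then yields $\bL v^*\dcatg f \subseteq \dcatg{f\<v}$; and since $f\<v = uh$ and $u$ is quasi-Gorenstein, Proposition~\ref{qG and G-perfect} identifies $\dcatg{uh}$ with $\dcatg h$, producing $\bL v^*\dcatg f \subseteq \dcatg h$. The ``in particular'' statement---that G-perfection of $f$ descends to $h$---follows in every subcase by taking $F=\OX$.

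For the converse, I note that under either standing hypothesis (``$u$ faithfully flat'', or ``$u$ perfect, proper, and surjective, with $f$ proper'') the map $v$ inherits the parallel set of properties across the fiber square, and the base-change isomorphism $\bL v^*\<\<f^!\OY \iso h^!\mathcal O_{Y'}$ remains available (flat base change in the first case, tor-independent base change for proper $f$ in the second). Given $F\in\dcatc X$ with $\bL v^*\<\<F\in\dcatg h$, transferring across this base-change isomorphism renders $\bL v^*\<\<F$ derived $\bL v^*\<\<f^!\OY$-reflexive; the converse direction of Theorem~\ref{descent}, applied to the perfect, faithfully flat (or proper and surjective) map $v$, then concludes that $F$ is derived $f^!\OY$-reflexive, i.e.,~$F\in\dcatg f$. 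The principal obstacle is not depth but bookkeeping: verifying that each standing property of $u$ truly propagates to $v$ through the tor-independent square, and pinning down in each subcase exactly which base-change isomorphism for $f^!$ one is entitled to invoke.
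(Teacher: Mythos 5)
Your proof is correct and follows essentially the same route as the paper's. The only noticeable variation is in the Gorenstein case: the paper disposes of it in one line by citing Corollary~\ref{G-dim and comm diag}, whereas you re-derive that corollary on the spot from Propositions~\ref{compn} and~\ref{qG and G-perfect}; this is the same underlying argument, just inlined. For the flat and perfect-plus-proper cases, you invoke precisely what the paper does (the base-change isomorphism $\bL v^*\<\<f^!\OY\iso h^!\mathcal O_{Y'}$ from \S\ref{any base change} or \cite[4.4.3]{Lp2}, the transfer of perfection to $v$ from \cite[p.\,245, 3.5.2]{Il}, and Theorem~\ref{descent} for both directions). One small point the paper makes explicit that you elide: to apply Theorem~\ref{descent} one needs $f^!\OY\in\Dcpl(X)$, which is \eqref{!perfect}; the boundedness hypothesis $F\in\Dcmi(X)$ is likewise implicit in your $F\in\dcatg f\subset\dcatc X$. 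These are only bookkeeping checks and do not affect correctness.
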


\begin{proof}
In all cases, $u$ is perfect, whence so is $v$ 
\cite[p.\,245, 3.5.2]{Il}.

If $u$ is Gorenstein, the assertion is contained in Corollary~\ref{G-dim
and comm diag}.

By Lemma~\ref{flat_descent}, if $F$ is $f^!\OY$-reflexive then $\bL
v^*\<\<F$ is $\bL v^*\<\<f^!\OY$-reflexive.

If $u$ (hence $v$) is flat then by~\S\ref{any base change}, one has 
\begin{equation}\label{torindt basechange}
\bL v^*\<\<f^!\OY\cong h^!\bL u^*\OY=h^! \mathcal O_{Y'}.
 \end{equation}
Thus $v^*\<\<F$ is $h^!\mathcal O_{Y'}$-reflexive, i.e.,
$v^*\<\<F\in\dcatg h$.

The  case when $u$ is perfect and $f$ is proper is treated similarly
through the tor-independent base-change theorem \cite[4.4.3]{Lp2}.

For the converse, the assumption is, in view of the
isomorphism~\eqref{torindt basechange}, that $\bL v^*\<\<F$ is derived
$\bL v^*\<\<f^!\OY$-reflexive. Formula \eqref{!perfect} gives  that
$f^!\OY\in\Dcpl(X)$. So since $v$ satisfies all the same hypotheses as
$u$ does, Theorem~\ref{descent} yields that $F$ is $f^!\OY$-reflexive,
as asserted.  \end{proof}

\begin{subproposition}\label{Gor and base change}
Let there be given  a tor-independent fiber square
$(\<$see~\textup{\ref{fiber square})}
\[
\xymatrixrowsep{1pc}
\xymatrixcolsep{1pc}
\xymatrix{
X'\:\ar@{->}[rr]^-{v}
\ar@{->}[dd]_-{h}
&&
\:X
\ar@{->}[dd]^-{f}
\\
\\
Y'\:
\ar@{->}[rr]_-{u}
&&
\:Y^{\mathstrut}
}
\]
with either\/ $u$ flat, or\/ $u$  perfect and\/ $f$ proper. 

If the map\/ $f$ is quasi-Gorenstein $($resp.~Gorenstein$)$ then so
is\/ $h\>$.

The converse holds if\/ $u$ $($hence $v)$ is faithfully flat, or if \/
$u$ $($hence $v)$ is perfect, proper and surjective and\/ $f$ is proper.
\end{subproposition}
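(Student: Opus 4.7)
The plan is to route everything through the tor-independent base-change isomorphism $\bL v^*\<\<f^!\OY \simeq h^!\mathcal O_{Y'}$ from \eqref{torindt basechange}, which was already established in the proof of Proposition~\ref{G-dim and base change}. In both of our sub-cases on $u$, this isomorphism is available (via \S\ref{any base change} when $u$ is flat, and via the tor-independent base-change theorem \cite[4.4.3]{Lp2} when $u$ is perfect and $f$ is proper), and moreover $v$ inherits perfectness from $u$ by \cite[p.\,245, 3.5.2]{Il}.

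\medskip

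For the forward direction, if $f^!\OY$ is invertible then Corollary~\ref{tensor inv}(3) gives that $\bL v^*\<\<f^!\OY \simeq h^!\mathcal O_{Y'}$ is invertible, so $h$ is quasi-Gorenstein. When $f$ is additionally perfect, the preservation of relative perfection under tor-independent base change, \cite[p.\,245, 3.5.2]{Il}, makes $h$ perfect, so $h$ is Gorenstein.

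\medskip

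For the converse, with $u$ (hence $v$) faithfully flat or perfect proper surjective (and $f$ proper in the latter sub-case), invertibility of $\bL v^*\<\<f^!\OY \simeq h^!\mathcal O_{Y'}$ puts it in $\dcatc{X'}$. Combined with $f^!\OY \in \Dcpl(X)$ from \eqref{!perfect} and Lemma~\ref{flat_descent} (applied to $v$), this gives $f^!\OY \in \dcatc X$; then the converse half of Corollary~\ref{descent2}(c) yields that $f^!\OY$ is invertible, so $f$ is quasi-Gorenstein. For the Gorenstein converse, assuming additionally that $h$ is perfect, one uses $f\<v = u\<h$: since $u$ and $h$ are both perfect, Proposition~\ref{f-perfect descent} makes $u\<h = f\<v$ perfect; the converse half of that same proposition, applied to $v$, then upgrades this to $f$ being perfect. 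Thus $f$ is Gorenstein.

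\medskip

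The chief subtlety is in the Gorenstein converse: one cannot read perfectness of $f$ directly from invertibility of $f^!\OY$, since ``perfect over $f$'' in the sense of Definition~\ref{relatively perfect} (equivalently, condition~(iii) of Remark~\ref{char perfect}) is strictly stronger than being a perfect complex in $\D(X)$. The fix is to bypass $f^!\OY$ entirely and instead appeal to the converse half of Proposition~\ref{f-perfect descent} applied to the composition $f\<v = u\<h$.
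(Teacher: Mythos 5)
Your proof is correct and follows essentially the same route as the paper's: the base-change isomorphism \eqref{torindt basechange}, Corollary~\ref{tensor inv}(3) and \cite[p.\,245, 3.5.2]{Il} for the forward directions, and Corollary~\ref{descent2}(c) and Proposition~\ref{f-perfect descent} (via $f\<v = u\<h$) for the converses. Your explicit invocation of Lemma~\ref{flat_descent} to get $f^!\OY \in \dcatc X$ before applying Corollary~\ref{descent2}(c), and your closing remark on why one cannot read relative perfection of $f$ off invertibility of $f^!\OY$, usefully make explicit steps the paper leaves tacit.
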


\begin{proof}  
As in the proof of Proposition~\ref{G-dim and base change}, one
has the isomorphism \eqref{torindt basechange}. Hence if $f^!\OY$ is
invertible then so is $h^!\mathcal O_{Y'}$ (see  Corollary~\ref{tensor
inv}(3)), whence the first quasi-Gorenstein assertion, whose converse
follows from Corollary~\ref{descent2}(c).  Also, by  \cite[p.\,245,
3.5.2]{Il}, if $f$ is perfect then so is $h$, whence the first Gorenstein
assertion, whose converse follows from the preceding converse and
Proposition~\ref{f-perfect descent} (since $u$ perfect and $h$ perfect
implies $hu=fv$ perfect).  \end{proof}

\section{Rigidity over schemes}
  \label{sec:Rigidity}

\emph{As in previous sections, schemes are assumed to be noetherian,
and scheme-maps to be essentially of finite type, and separated}.

\setcounter{equation}{0}
\numberwithin{equation}{subtheorem}

\subsection{Rigid complexes}
\label{Rigid global complexes}
Fix a scheme $X$ and a semidualizing $\OX$-complex $A$, and for
any $F\in\D(X)$ set 
$$
\bigcheck F:=\R\mathcal Hom_X(F\<, A).
$$

\begin{subdefinition}
\label{map of rigids} 
An $A$-\emph{rigid pair} $(F\<,\rho)$ is one where $F\in\dcatc X$ and 
$\rho$~is a $\D(X)$-isomorphism\vspace{-2pt}
$$
\rho\col F\iso \R\mathcal Hom_X(\>\>\bigcheck F\<, F).
$$
An $\OX$-complex $F$ is $A$-\emph{rigid} if there exists a $\rho$
such that $(F\<,\rho)$ is an $A$-rigid pair. Such a $\rho$ is called an
\emph{$A$-rigidifying isomorphism for} $F\<$.\vspace{1pt}

A \emph{morphism of\/ $A$-rigid pairs} $(F\<,\rho)\to (G,\sigma)$
is\vspace{.8pt} a $\D(X)$-map $\phi\col F\to G$ such~that the following
diagram,\vspace{.8pt} with $\tilde\phi\col \R\mathcal Hom_X(\bigcheck
F\<, F)\to \R\mathcal Hom_X(\>\bigcheck G\<, G)$ the map induced by
$\phi$, commutes:
$$
\CD
F @>\rho>>  \R\mathcal Hom_X(\bigcheck F\<, F)\\
@V\phi VV  @VV\tilde\phi V \\
G @>>\sigma>  \R\mathcal Hom_X(\>\bigcheck G\<, G)
\endCD
$$
\end{subdefinition}

The terminology ``rigid" is motivated by the fact, contained in
Theorem~\ref{unique iso}, that \emph{the only automorphism of an
$A$-rigid pair is the identity.}

  \begin{subexample}
If $R$ is a ring, $X=\Spec R$, and $M, C\in\dcatb R$ are such that $\Rhom RMC\<
\in\<\dcatb R$, then by Example~\eqref{affine schemes}, $M$ is $C$-rigid in~the
sense \mbox{of \cite[\S7]{AIL}} if and only if $M^\sim$ is $C^\sim$-rigid
in the present sense.
\end{subexample}

Since $\RH$ commutes with restriction to open subsets, an $A$-rigid pair
restricts over any open  $U\subseteq X$ to an $A|_U$-rigid pair. However,
rigidity is \emph{not} a local condition: any invertible sheaf $F$
is $F$-rigid, but $\OX$ is not $F$-rigid unless $F\cong\OX$.

On the other hand, \emph{rigid pairs glue,} in the sense explained
in Theorem~\ref{igluing} of the Introduction, and generalized in
Theorem~\ref{gluing} below.

The central result of this section, Theorem~\ref{thm:global rigidity}, a
globalization of~\cite[7.2]{AIL}, is that any $A$-rigid $F$ is isomorphic
in $\D(X)$ to $\>i_*i^*\<\<A$, with $i$ the inclusion into~$X$ of~some
open-and-closed subscheme---necessarily the support of $F\<$, see
\eqref{defsupport}; or equivalently, $F\simeq IA$ for some idempotent
$\OX$-ideal~$I$, uniquely determined by~$F$ (see Appendix~\ref{idempotent
ideals}); or equivalently, $F$ is, in $\D(X)$, a direct summand of~$A$.

\begin{subexample}
\label{gcanonical} 
The pair $(A, \rho^A)$ with\/ $\rho^A$ the natural composite isomorphism
$$
\rho^A\col A\iso \RH_{\<X}(\OX,A)
\iso \R \mathcal Hom_X(\RH_{\<X}(A,A), A),
$$
is\/ $A$-rigid.
\end{subexample}

Extending this example a little leads to:

\begin{subexample}
\label{open-and-closed}
Let $U\subseteq X$ be an open-and-closed subset, and $i\col U\hookrightarrow X$ the inclusion. Recall that the $\mathcal O_{U}$-module $i^*\<\<A$ is semidualizing; see Corollary~\ref{lift semid}. 
If $F\in\D(U)$ is $i^*\<\<A$-rigid then $i_{*}F$ is  $A$-rigid.

Indeed, if $\rho$ is an $i^*\<\<A$-rigidifying isomorphism for $F\<$, 
then one has isomorphisms
\begin{align*}
i_{*}F \underset{\lift1.3,\!i_*\rho,}\iso
&i_*\RH_U\<\big(\RH_U( F\<,i^*\<\<A),  F\big)\\[-1pt]
\iso
&i_*\RH_U\<\big(i^*\RH_{\<X}(i_{*}F,A), F\big)\\[1pt]
\iso
&\RH_{\<X}\<\big(\RH_{\<X}(i_{*}F,A), i_*F\big),
\end{align*}
where the second  comes from \eqref{^* and Hom} (since $i^*i_*F=F$), and the third
is a special case of \cite[p.\,98, (3.2.3.2)]{Lp2} (or see \cite[\S3.5.4]{Lp2}, or just reason directly, using that $i_*F$ vanishes outside $U$).

The composition of these isomorphisms is  $A$-rigidifying  for $i_*F$.
\end{subexample}

\begin{subdefinition}
\label{U-canonical}
The \emph{$U\<\<$-canonical $A$-rigid pair} $(i_*i^*\<\<A, \rho^{\>i_*i^*\!A})$ is the one constructed in Example~\ref{open-and-closed} out of the $i^*\<\<A$-rigid pair $(i^*\<\<A, \rho^{\>i^*\!A})$ in Example~\ref{gcanonical}.
\end{subdefinition}

It is well known that \emph{any} monomorphism (resp.~ epimorphism) in $\D(X)$ is split, i.e.,  has a left (resp.~ right) inverse (see e.g., \cite[1.4.2.1]{Lp2}). Thus, when we speak  of mono\kern.5pt- or epimorphisms, the adjective ``split" will usually be omitted.

\begin{sublemma}
\label{unique rho} 
Let\/ $\theta\col F\hra A$ be a  monomorphism in $\D(X)$. Let $(A,\rho^A)$ be the canonical\/ $A$-rigid pair in Example~\emph{\ref{gcanonical}}. There exists a unique\/ $A$-rigidifying isomorphism~ 
$\rho$ for\/ $F$ such that\/ $\theta$ is a morphism of rigid pairs\/ $(F\<,\rho)\to(A,\rho^A)$.
\end{sublemma}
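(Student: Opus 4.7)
The plan is to exploit the fact (recalled just before the lemma) that every monomorphism in $\D(X)$ is split. Choose a retraction $\pi\colon A\to F$ with $\pi\theta=\id_F$, and set
\[
\rho := \RH_{\<X}(\bigcheck\pi,\pi)\circ\rho^A\circ\theta\,.
\]
Note that $F\in\dcatc X$ automatically, being a direct summand of $A\in\dcatc X$.

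I would first verify the commutativity of the rigid-pair diagram. Functoriality gives $\RH_{\<X}(\bigcheck\theta,\theta)\circ\RH_{\<X}(\bigcheck\pi,\pi)=\RH_{\<X}(\bigcheck{(\theta\pi)},\theta\pi)$, and naturality of the construction of $\rho^A$ applied to the endomorphism $\theta\pi$ of $A$ gives $\RH_{\<X}(\bigcheck{(\theta\pi)},\theta\pi)\circ\rho^A=\rho^A\circ(\theta\pi)$. Combined with $\pi\theta=\id_F$, these identities yield $\tilde\theta\rho=\rho^A\theta\pi\theta=\rho^A\theta$ in a single line.

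To see $\rho$ is an isomorphism, set $e:=\theta\pi$. This is an idempotent endomorphism of the semidualizing complex $A$, so by Lemma~\ref{semi and mult} it equals $\mu_A(\alpha)$ for a unique $\alpha\in H^0(X,\OX)$, which is necessarily itself idempotent. Two applications of Lemma~\ref{mult and RHom} then identify $\tilde e$ with $\mu_{\RH_{\<X}(\bigcheck A,A)}(\alpha)$, so $\rho^A$ carries the direct summand $F=\image(e)$ of $A$ isomorphically onto $\image(\tilde e)\subseteq\RH_{\<X}(\bigcheck A,A)$. Using a splitting $A\simeq F\oplus G$, one identifies $\bigcheck A\simeq\bigcheck F\oplus\bigcheck G$ and sees directly that $\image(\tilde e)$ is the summand $\RH_{\<X}(\bigcheck F,F)$, while $\RH_{\<X}(\bigcheck\pi,\pi)$ is the corresponding projection. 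Hence $\rho$ is an isomorphism.

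For uniqueness, observe that $\bigcheck\theta$ is a split epimorphism (with section $\bigcheck\pi$), so both $\RH_{\<X}(\bigcheck\theta,F)$ and $\RH_{\<X}(\bigcheck A,\theta)$ are split monomorphisms, and hence so is their composite $\tilde\theta$. Thus any $\rho'$ with $\tilde\theta\rho'=\rho^A\theta=\tilde\theta\rho$ coincides with $\rho$; as a bonus, $\rho$ is independent of the chosen retraction $\pi$. The main obstacle I anticipate is the middle step---cleanly identifying $\image(\tilde e)$ inside $\RH_{\<X}(\bigcheck A,A)$ with the block $\RH_{\<X}(\bigcheck F,F)$ under the splittings of $A$ and $\bigcheck A$, so as to recognize $\RH_{\<X}(\bigcheck\pi,\pi)$ as the matching projection.
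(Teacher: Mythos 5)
The paper disposes of this in one line: reduce to connected $X$, then Lemma~\ref{semid-prop} applied to the splitting $A\simeq F\oplus G$ of $\theta$ forces $F=0$ or $\theta$ an isomorphism, and both cases are immediate. Your direct verification is a genuinely different (and much heavier) route; the construction of $\rho$ and the uniqueness argument via split-monicness of $\tilde\theta$ are sound.

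However, the commutativity step has a genuine gap. The claimed ``naturality of the construction of $\rho^A$'', namely that $\tilde\psi\circ\rho^A=\rho^A\circ\psi$ for an endomorphism $\psi$ of $A$, is \emph{false} in general. Writing $\psi=\mu_A(u)$ by Lemma~\ref{semi and mult}, two applications of Lemma~\ref{mult and RHom} give $\tilde\psi=\mu_{\RH_{\<X}(\bigcheck A,A)}(u^2)$ (exactly as in the proof of Theorem~\ref{unique iso}), whereas $\textup{H}^0(X,\OX\<)$-linearity of $\D(X)$ gives $\rho^A\circ\mu_A(u)=\mu_{\RH_{\<X}(\bigcheck A,A)}(u)\circ\rho^A$. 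Hence $\tilde\psi\rho^A=\rho^A\psi$ holds if and only if $u^2=u$; a general naturality principle does not exist here. You do record that $e:=\theta\pi$ is idempotent, but only in the following paragraph. That idempotence---so $\mu_A(u^2)=(\theta\pi)^2=\theta\pi=\mu_A(u)$, forcing $u^2=u$---is exactly what makes the middle equality true, and it needs to be invoked at the point where you assert naturality. With that reorganization the argument goes through, but the proposal as written relies on an invalid general step.
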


\begin{proof}
It suffices to deal with the situation separately over each connected component of $X$; so we may assume that $X$ is connected. Then,  by Lemma~\ref{semid-prop}, either $F=0$ or $\theta$ is an isomorphism. In either case the assertion is obvious.
\end{proof}

\begin{subtheorem}
\label{thm:global rigidity}  
For any\/ $F\in\D(X)$, the following conditions are equivalent.
\begin{enumerate}[{\rm(i)}]
\item
$F$ is $A$-rigid.
\item
In\/ $\D(X),$ $F\simeq I\dtensor{\<\<X}A\simeq IA\simeq \bigcheck I$ for some idempotent\/ $\OX$-ideal\/~$I$.
\item 
There is an open-and-closed\/ $U\subseteq X$ such that, $i\col U\hookrightarrow X$ being the inclusion,
$F\simeq i_*i^*\<\<A$ in\/ $\D(X),$ whence\/ $U=\Supp_{\<X}\<\<F$.\vspace{1pt}

\item
There is, in\/ $\D(X),$ a monomorphism $F\to A$.
\end{enumerate}
When they hold, there is a unique ideal $I$ satisfying condition \emph{(ii)}.
\end{subtheorem}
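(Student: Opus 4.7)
The plan is to establish the implications $(iv)\Rightarrow(i)$, $(ii)\Leftrightarrow(iii)\Rightarrow(iv)$, and $(i)\Rightarrow(iii)$, from which uniqueness of the ideal $I$ in $(ii)$ will follow.

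For $(iv)\Rightarrow(i)$, Lemma \ref{unique rho} directly supplies a rigidifying isomorphism for any monomorphism $F\hra A$. For $(ii)\Leftrightarrow(iii)$, invoke the bijection (cf.\ Appendix~\ref{idempotent ideals}) between open-and-closed subsets $U\subseteq X$ and quasi-coherent idempotent ideals $I\subseteq\OX$: given $U$ with inclusion $i\col U\hra X$ and complementary inclusion $j\col V\!:=\!X\setminus U\hra X$, the decomposition $\OX\simeq i_*\mathcal{O}_U\oplus j_*\mathcal{O}_V$ exhibits $I\!:=\!i_*\mathcal{O}_U$ as a quasi-coherent idempotent ideal, flat over~$\OX$, so $I\dtensor{\<\<X}A=IA$; the projection formula then yields $IA\simeq i_*i^*\<\<A$, and a parallel computation using the semidualizing property of $A|_U$ gives $\bigcheck I=\RH_X(I,A)\simeq i_*i^*\<\<A$. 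For $(iii)\Rightarrow(iv)$, tensoring $\OX\simeq i_*\mathcal{O}_U\oplus j_*\mathcal{O}_V$ with $A$ produces $A\simeq i_*i^*\<\<A\oplus j_*j^*\<\<A$, so $F\simeq i_*i^*\<\<A$ admits a split monomorphism into $A$.

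The main work is $(i)\Rightarrow(iii)$. Let $(F,\rho)$ be an $A$-rigid pair. On each affine open $U=\Spec R$, Example~\ref{affine schemes} identifies $(F|_U,\rho|_U)$ with a rigid pair over~$R$, and the affine structure theorem \cite[7.2]{AIL} produces a unique idempotent $\varepsilon_U\in R$ with $F|_U\simeq\varepsilon_U\>A|_U$; in particular $\Supp F|_U=V(1-\varepsilon_U)$ is open-and-closed in~$U$. Being a local property, this forces $W\!:=\!\Supp F$ to be open-and-closed in~$X$. With $i\col W\hra X$ and $j\col V\!:=\!X\setminus W\hra X$, the splitting $\OX\simeq i_*\mathcal{O}_W\oplus j_*\mathcal{O}_V$ decomposes any $\OX$-complex into pieces supported on~$W$ and on~$V$; since $\Supp F\subseteq W$, one obtains $F\simeq i_*i^*F$. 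After replacing $(X,A,F)$ by $(W,i^*\<\<A,i^*F)$ we reduce to the case $\Supp F=X$, in which the affine theorem forces the local idempotents to equal~$1$, yielding local isomorphisms $\psi_\alpha\col F|_{U_\alpha}\simeq A|_{U_\alpha}$.

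The hard part is upgrading these local isomorphisms to a global one, as isomorphisms in $\D(X)$ do not in general glue. The decisive input from \cite[7.2]{AIL} is that each $\psi_\alpha$ is determined \emph{canonically}: it is the unique morphism of rigid pairs from $(F|_{U_\alpha},\rho|_{U_\alpha})$ to $(A|_{U_\alpha},\rho^A|_{U_\alpha})$. By this uniqueness, the restrictions of $\psi_\alpha$ and $\psi_\beta$ to $U_\alpha\cap U_\beta$ coincide, and the $\psi_\alpha$, viewed as local sections of the quasi-coherent sheaf $\mathcal{H}om_{\OX}(F,A)$, glue to a global morphism $F\to A$, which is an isomorphism since it is so locally. (Equivalently, one may invoke Proposition~\ref{idem gluing} to glue the local ideals $I_{U_\alpha}$ into a global idempotent ideal $I\subseteq\OX$ and conclude $F\simeq IA$.) Finally, the uniqueness of $I$ in~$(ii)$ follows from $(ii)\Leftrightarrow(iii)$ together with the fact that the open-and-closed set $U=\Supp F$ is intrinsic to~$F$.
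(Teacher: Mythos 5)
Your treatment of the easy implications (iv)$\Rightarrow$(i), (ii)$\Leftrightarrow$(iii), (iii)$\Rightarrow$(iv), the reduction to $\Supp_{\<X}\<\<F=X$, and the uniqueness of $I$ all match the paper and are fine.  The gap is in the final, central step of (i)$\Rightarrow$(iii): you cannot glue your local $\D(U_\alpha)$-isomorphisms $\psi_\alpha\col F|_{U_\alpha}\iso A|_{U_\alpha}$ the way you claim.  A morphism in $\D(X)$ from $F$ to $A$ is \emph{not} a section of the (underived) sheaf $\mathcal Hom_{\OX}(F,A)$; it is an element of $\textup{Hom}_{\mathsf D(X)}(F,A)=\textup{H}^0\R\Gamma\big(X,\RH_{\<X}(F,A)\big)$, which is governed by a local-to-global spectral sequence.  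Agreement of the $\psi_\alpha$ on double overlaps (which your uniqueness argument does deliver) is necessary but not sufficient: without knowing more about the complex $\RH_{\<X}(F,A)$ one cannot rule out higher \v{C}ech obstructions, and the presheaf $U\mapsto \textup{Hom}_{\mathsf D(U)}(F|_U,A|_U)$ is in general not a sheaf.  Your parenthetical fallback via Proposition~\ref{idem gluing} does not rescue this either: once you have reduced to $\Supp_{\<X}\<\<F=X$, the local idempotent ideals are all the unit ideal, so gluing them gives back $\OX$ and tells you nothing about whether $F\simeq A$ globally.

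What the paper does instead is precisely what is needed to close this gap, and it avoids gluing altogether.  Having reduced to $\Supp_{\<X}\<\<F=X$, it first proves---locally, hence legitimately, via \cite[4.9]{AIL}---that $L\!:=\RH_{\<X}(F,A)$ is isomorphic in $\D(X)$ to an invertible \emph{sheaf} $H^0L$ (this is the statement that \emph{does} localize, because ``being concentrated in degree $0$ with invertible $H^0$'' is a stalkwise condition).  It then runs a purely algebraic computation: $L$ and $L\OTX L$ are derived $A$-reflexive, $F$ is derived $A$-reflexive (using $\Supp_{\<X}\<\<A=X$ and Proposition~\ref{greflexive}(iii)), so $\bigcheck L\simeq F$, and $\bigcheck L\simeq\RH_{\<X}(L,\bigcheck L)\simeq(L\OTX L)^{\lift1.1,\<\dagger,}$; applying $\bigcheck{\phantom{x}}$ again gives $L\OTX L\simeq L$, hence $L\simeq\OX$, hence $F\simeq\bigcheck L\simeq A$.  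If you want to retain your gluing strategy you would have to insert the fact that $\RH_{\<X}(F,A)$ is (isomorphic to) a sheaf concentrated in degree~$0$, at which point $\textup{Hom}_{\mathsf D(X)}(F,A)$ really is $\textup{H}^0(X,H^0L)$ and sections do glue; but proving that is essentially the same lemma the paper uses, so you end up duplicating the key step while still carrying extra machinery.
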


\begin{proof}
(iii)${}\Rightarrow{}$(i). In view of Example~\ref{gcanonical}, this is contained in Example~\ref{open-and-closed}.

(i)${}\Rightarrow{}$(iii).
For the last assertion in (iii), since  $i_*i^*\<\<A$ vanishes outside
$U\<$,   and since for all $x\in U$ one has, in $\D(\mathcal O_{U\<\<,\>x})$,
$$
0\ne \mathcal O_{\<U\<\<,\>x}\simeq(\RH_{U}(i^*\<\<A,i^*\<\<A))_x \simeq 
\RH_{\mathcal O_{\<U\<\<,x}}((i_*i^*\<\<A)_x,(i_*i^*\<\<A)_x)
$$
therefore $U=\Supp_{\<X}( i_*i^*\<\<A\<)$.\vspace{1pt}

Now let $F$ be $A$-rigid. Then $U\!:=\Supp_{\<X}\<\<F$  \emph{is an open-and-closed subset of}~$X\<$. For, $X$~is covered by open subsets of the form $V=\Spec R$; and with $j\col V\hookrightarrow X$ the inclusion, the $j^*\<\<A$-rigid complex $j^*\<F$ (resp.~its homology) is the sheafification of $F_V\!:=\R\Gamma(V\<,F)$ (resp.~its homology), so $(\Supp_{X} F)\cap V = \Supp_R F_V$. But $F_V$ is $\R\Gamma(V\<,A)$-rigid (since $(F_V)^\sim\cong j^*\<F$ is $j^*\<\<A$-rigid), so by~\cite[7.2]{AIL}, $\Supp_R F_V=U\cap V$ is an open-and-closed subset of $V\<$. That $U$ is open-and-closed follows.

Hence, the natural map $F\to i_*i^*\<F$ is a $\D(X)$-isomorphism; so to prove the theo\-rem we can replace~$(X,A,F)$ by $(U,  i^*\<\<A, i^*\<F)$, i.e., \emph{we may assume\/ $\Supp_{\<X}\<\<F=X$.}\looseness=-1

In $\D(X)$, the complex $L\!:=\bigcheck F$ is isomorphic  to $H^0L$, which is an invertible sheaf: this assertion need only be checked locally, i.e., for affine $X\<$, where it is given by \cite[4.9]{AIL}.
(The assumptions of that theorem are satisfied because $F$ and $A$ are both in~$\dcatc X$.) The  invertible complex $L$ is derived $A$-reflexive (take $F=\OX$ in ~\ref{refl and inv}(2)); similarly, so is $L\OTX L$.  Since $\Supp_{\<X}A=X$, by Lemma~\ref{semid-prop}, therefore Proposition~\ref{greflexive}(iii) yields that  $F$ is derived $A$-reflexive. So $\bigcheck L\simeq F\<$, and
 $$
\bigcheck L\simeq \RH_{\<X}(L, \bigcheck L)\simeq
(L\dtensor{\<\<X}\<\< L)^{\lift1.1,\<\dagger,}
\qquad\textup{(see \eqref{sheafified adjunction}).}
 $$
Applying the functor $\bigcheck{}$ to these isomorphisms we get
$L\OTX L\simeq L$. Tensoring with~$L^{-1}$ shows then that $L\simeq\OX$. Thus 
$F\simeq \bigcheck L\simeq A$.

(iii)${}\Rightarrow{}$(ii). 
Associated to any open-and closed $U\subseteq X$ is the unique idempotent $\OX$-ideal $I$ that is isomorphic to $i_*\mathcal O_U$ (Corollary~\ref{idem and clopen}). For this $I$ we have natural isomorphisms, the
second from \eqref{projection} and the last two from Corollary~\ref{idem and Hom}:
$$
i_*i^*\<\<A\simeq i_*(\mathcal O_U\dtensor U i^*\<\<A)\simeq 
i_*\mathcal O_U\dtensor{\<\<X} A
\simeq I\dtensor{\<\<X} A\simeq IA\simeq \bigcheck I.
$$

(ii)${}\Rightarrow{}$(iii). Given $I$ as in (ii), let $U=\Supp_{\<X}I$, with inclusion
$i\col U\hookrightarrow X$, and use the preceding isomorphisms (see Corollary~\ref{idem and clopen}).

(iii)${}\Rightarrow{}$(iv). If $i$ is as in (iii), then
$i_*i^*\<\<A$ is a direct summand of~$A$.

(iv)${}\Rightarrow{}$(i). See Lemma~\ref{unique rho}.

It remains to note that the uniqueness of $I$ in (ii) results from 
$$
\Supp_{\<X}IA=\Supp_{\<X}(I\dtensor{\<\<X} A)=\Supp_{\<X}I\cap \Supp_{\<X}A
=\Supp_{\<X}I\cap X=\Supp_{\<X}I,
$$
see \eqref{Supp Tensor}.  The proof of Theorem~\ref{thm:global rigidity} is now completed.
\end{proof}

Define a \emph{direct decomposition} of~$F\in\D(X)$ to be a $\D(X)$-isomorphism
 \begin{equation}
 \label{orth}
F\simeq F_1\oplus F_2\oplus\cdots\oplus F_n
 \end{equation}
such that no $F_i$ vanishes; call $F$ \emph{indecomposable} if $F\ne 0$ and in any direct decomposition 
of $F\<$, one has $n=1$. Say that \eqref{orth} is an \emph{orthogonal decomposition} of~$F$ if, in addition, $F_i\Otimes{\<\<X}\<F_j=0$ for all $i\ne j$.

\begin{subcorollary}
\label{rigid decomp}
Let\/ $F\ne 0$ be an $A$-rigid complex.\vspace{1pt} Let\/ $\Supp_{\<X}\<\<F =\bigsqcup_{s=1}^n U_s$ be a decomposition into disjoint nonempty connected closed  subsets, and  \mbox{$i_s\col U_s\hookrightarrow X$} $(1\le s\le n)$ the canonical inclusions.

The\/ $U_{s}$ are then connected components of\/ $X\<,$ and there is an orthogonal decomposition into indecomposable $A$-rigid complexes$\>:$\vspace{1pt} $F\simeq\bigoplus_{s=1}^n\>(i_s)_{*}(i_s)^*\<\<A$.

If\/ $F\simeq  F_1\oplus \cdots\oplus F_r$ is a direct decomposition with each $F_{t}$ indecomposable, then $r=n$ and $(\<$after renumbering\/$)$ there is for each $s$ an isomorphism $F_{s}\simeq (i_s)_{*}(i_s)^*\<\<A$.
\end{subcorollary}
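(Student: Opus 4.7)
The plan is to reduce every statement to Theorem~\ref{thm:global rigidity} combined with Lemma~\ref{semid-prop}, which forbids nontrivial direct decompositions of a semidualizing complex on a connected scheme. First, I would apply Theorem~\ref{thm:global rigidity} to $F$: setting $U\!:=\Supp_{\<X}\<\<F$ and $i\col U\hookrightarrow X$, the subset $U$ is open-and-closed in $X$ and $F\simeq i_*i^*\<\<A$; the complex $i^*\<\<A$ is semidualizing on $U$ by Corollary~\ref{lift semid}, so $\Supp_U i^*\<\<A=U$ by Lemma~\ref{semid-prop}. Since $X$ is noetherian, $U$ has finitely many connected components, each open-and-closed in $U$ and hence in $X$; these are precisely the $U_s$. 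Any nonempty connected open-and-closed subset of $X$ is by definition a connected component of $X$, giving the first assertion.

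For the explicit orthogonal decomposition, I would use the disjoint union $U=\bigsqcup_s U_s$ with each $U_s$ open-and-closed in $U$ to split $i^*\<\<A\simeq\bigoplus_s(j_s)_*(j_s)^*i^*\<\<A$, where $j_s\col U_s\hookrightarrow U$; applying $i_*$ and using $i\circ j_s=i_s$ yields $F\simeq\bigoplus_s(i_s)_*(i_s)^*\<\<A$. Each summand $(i_s)_*(i_s)^*\<\<A$ has support $U_s$ by Lemma~\ref{semid-prop} applied to the semidualizing complex $(i_s)^*\<\<A$ on $U_s$, so pairwise disjointness of the $U_s$ together with~\eqref{Supp Tensor} forces $(i_s)_*(i_s)^*\<\<A\dtensor{\<\<X}(i_t)_*(i_t)^*\<\<A\simeq 0$ whenever $s\ne t$. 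Indecomposability of $(i_s)_*(i_s)^*\<\<A$ follows because $(i_s)^*\<\<A$ is semidualizing on the connected scheme $U_s$, hence indecomposable by Lemma~\ref{semid-prop}, and because $(i_s)_*$ reflects indecomposability ($(i_s)^*(i_s)_*\simeq\id$ for the open-and-closed immersion $i_s$).

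For the uniqueness statement, suppose $F\simeq F_1\oplus\cdots\oplus F_r$ with each $F_t$ nonzero and indecomposable. Each $F_t$ inherits $A$-rigidity via the split monomorphism $F_t\hookrightarrow F\hookrightarrow A$ (the second arrow coming from Theorem~\ref{thm:global rigidity}(iv)); its support $\Supp_{\<X}\<\<F_t$ is thus open-and-closed in $X$ and, applying the decomposition procedure of the previous paragraph to $F_t$ in place of $F$, must be connected, as otherwise $F_t$ would split nontrivially. Hence $\Supp_{\<X}\<\<F_t$ is a connected component of $X$ contained in $\bigsqcup_s U_s$, so equals a unique $U_{s(t)}$. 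Restricting $F\simeq\bigoplus_t F_t$ to $U_s$ then yields $(i_s)^*\<\<A\simeq\bigoplus_{t\colon s(t)=s}(i_s)^*F_t$; indecomposability of $(i_s)^*\<\<A$ forces exactly one $t$ to satisfy $s(t)=s$, so $t\mapsto s(t)$ is a bijection between $\{1,\dots,r\}$ and $\{1,\dots,n\}$. For the matched indices, $F_s\simeq(i_s)_*(i_s)^*F_s\simeq(i_s)_*(i_s)^*\<\<A$, the latter because $(i_s)^*F_s$ is a nonzero direct summand of the indecomposable $(i_s)^*\<\<A$.

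The main obstacle I anticipate lies in the uniqueness statement, specifically in matching indecomposable summands of $F$ bijectively with connected components of $\Supp_{\<X}\<\<F$; this is handled by the repeated interplay between Theorem~\ref{thm:global rigidity} (supports of $A$-rigid complexes are open-and-closed) and Lemma~\ref{semid-prop} (indecomposability of semidualizing complexes on connected bases), applied once to $F$ itself and once to each indecomposable summand.
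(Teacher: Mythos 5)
Your proof is correct and follows essentially the same strategy as the paper's: Theorem~\ref{thm:global rigidity} to identify $A$-rigid complexes with restrictions of $A$ to open-and-closed subsets, and Lemma~\ref{semid-prop} to establish indecomposability and disjointness of supports. The only difference is bookkeeping in the uniqueness part: the paper first reduces to the case $F=A$ (replacing $X$ by $\Supp_X F$) and then invokes Lemma~\ref{semid-prop} directly to conclude that any direct decomposition of a semidualizing complex is orthogonal, so that supports already partition $X$; you instead keep $F$ and establish rigidity of each summand $F_t$ via the chain of split monomorphisms $F_t\hookrightarrow F\hookrightarrow A$ and Theorem~\ref{thm:global rigidity}(iv), then match components by counting summands of $(i_s)^*\<\<A$. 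Both routes pass through the same two ingredients and are of equal length and difficulty.
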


\begin{proof} 
Since by Theorem~\ref{thm:global rigidity}(iii), $\Supp_{\<X}\<\<F$ is open and closed in $X\<$, therefore each~$U_{s}$ is a connected component of $X\<$. Moreover, if 
$i\colon \Supp_{\<X}\<\<F\hookrightarrow X$ is the inclusion, then $i^*\<\<A$ is semidualizing (Corollary~\ref{lift semid}), and compatibility of $\R\mathcal Hom$ with open immersions
(to see which, use  \cite[2.4.5.2]{Lp2}) implies that $i^*\<\<F$ is $i^*\<\<A$-rigid. It follows then from Theorem~\ref{thm:global rigidity}(iii) that we may assume $F=A$.
\vspace{1pt}

The decomposition $X =\bigsqcup_{s=1}^n U_s$ now yields a decomposition of  $F\in\D(X)$:
\[
F\simeq\bigoplus_{s=1}^n\>(i_s)_{*}(i_s)^{*}\<F=\bigoplus_{s=1}^n\>(i_s)_{*}(i_s)^{*}\<\<A\,.
\]
As before, $(i_s)^{*}\<A$ is a semidualizing complex of ${\mathcal O}_{U_{s}}$-modules,
so  its support is $U_{\<s\>}$, and it is indecomposable; see
Lemma~\ref{semid-prop}. Hence $(i_s)_{*}(i_s)^{*}\<A$ is indecomposable, and has support~$U_{s}$. It then follows from \eqref{Supp Tensor} that the
decomposition above is orthogonal. Moreover,  the complexes
$(i_s)_{*}(i_s)^{*}A$ are $A$-rigid; see Definition \ref{U-canonical}.

Let $F\simeq  F_1\oplus \cdots\oplus F_r$ be a direct decomposition. 
 It results from Lemma~\ref{semid-prop} that this
decomposition is orthogonal. Hence $X=\Supp_{\<X}\<F =\bigsqcup_{t=1}^r V_t$.
Furthermore, $F\in\dcatc X\implies F_t\in\dcatc X$ for all~$t$.
Hence $V_{t}= \Supp_{\<X}\<F_t$ is open and closed;
 and since $F_t$ is indecomposable,  $V_t$ is connected. 
Thus the $V_t$ are the connected components of $X$.  In particular, $r=n$, and, after renumbering, one may assume $V_{t}=U_{t}$ for each $t$. It remains to observe that $F_{s}\simeq (i_s)_{*}(i_{s})^{*}\<F\simeq (i_s)_{*}(i_{s})^{*}\<A$.
\end{proof}

\subsection{Morphisms of rigid complexes}
\label{Rigid global complexes-II}

We present elaborations of Theorem~\ref{thm:global rigidity}, leading
to a simple description of the skeleton of the category of  rigid pairs;
see Theorem~\ref{rigid and clopen} and Remark~\ref{qinverse}.

The result below involves the $\textup{H}^0(X,\OX\<)$ action on $\D(X)$
described in \ref{Gamma acts}.

\begin{subtheorem}
\label{unique iso}
If\/ $(F\<,\rho),$ $(F'\<,\>\rho')$ are $A$-rigid pairs with\/
$\Supp_{\<X}\<\<F=\Supp_{\<X}\<\<F'$ then there exists a unique
isomorphism $(F\<,\rho)\iso(F'\<,\>\rho')$. In particular, any $A$-rigid
pair\/  $(F\<,\rho)$ admits a unique isomorphism into a  $U\<\<$-canonical
one, for some open-and-closed\/ $U\subseteq X,$ necessarily the support
of\/~$F\<$.

Moreover, if $F'=F$ then with\/ $U_{\<\<F}\!:=\Supp_{\<X}\<\<F\<,$  there
is a unique unit\/ $u$ in the ring\/ $\textup{H}^0(U_{\<\<F},\mathcal
O_{U_{\!F}}\<)$ such that $\rho'=\rho\>\bar u$, where\/ $\bar
u\in\textup{H}^0(X,\OX\<)$ is\/ $u$ extended by\/~$0,$ and the unique
isomorphism $(F\<,\rho)\iso(F\<,\>\rho')$ is multiplication in $F$ by\/
$\bar u$.

For any endomorphism\/ $\phi$ of the $A$-rigid pair\/
$(F\<,\rho)$\vspace{.5pt} there is a uniquely determined idempotent
$u\in\textup{H}^0(U_{\<\<F},\mathcal O_{U_{\!F}}\<)$ such that\/ $\phi$~is
multiplication by\/~$\bar u$.
 \end{subtheorem}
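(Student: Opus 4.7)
The plan is to reduce to the case $X = U_F = \Supp_X F$ (where $F = F' = A$), and there convert the question into algebra in the ring $\textup{H}^0(X,\OX) \iso \textup{End}_{\mathsf D(X)}(A)$ provided by Lemma \ref{semi and mult}. To carry out the reduction, I note that by Theorem \ref{thm:global rigidity}(iii), $U_F$ is open-and-closed in $X$, so with $i\col U_F \hookrightarrow X$ the canonical inclusion, $F \simeq i_* i^* F$, and more generally the functor $i_*$ restricts to an equivalence between $\D(U_F)$ and the full subcategory of $\D(X)$ of complexes supported on $U_F$. Combining this with Example \ref{open-and-closed} gives an equivalence between $A$-rigid pairs on $X$ with support $U_F$ and $i^*A$-rigid pairs on $U_F$. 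Since $i^*A$ is semidualizing on $U_F$ (Corollary \ref{lift semid}) and $\Supp_{U_F} i^*F = U_F$, Theorem \ref{thm:global rigidity} lets me further assume that $F = A = F'$ and $X = U_F$.

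Next, I would write $\rho^{-1} \circ \rho' = \mu_A(\alpha)$ for the unique unit $\alpha \in \textup{H}^0(X, \OX)$ afforded by Lemma \ref{semi and mult}, and, for a candidate morphism $\phi = \mu_A(\beta)$ of rigid pairs, compute $\tilde\phi = \RH_X(\RH_X(\phi, A), \phi)$ by two applications of Lemma \ref{mult and RHom}, one for each argument of $\RH_X$. The first application turns $\RH_X(\mu_A(\beta), A)$ into $\mu_{\bigcheck A}(\beta)$; the second then gives $\tilde\phi = \mu_{\RH_X(\bigcheck A, A)}(\beta) \circ \mu_{\RH_X(\bigcheck A, A)}(\beta) = \mu_{\RH_X(\bigcheck A, A)}(\beta^2)$. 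The rigidity square $\tilde\phi \circ \rho = \rho' \circ \phi$ then rewrites, using the $\textup{H}^0(X, \OX)$-linearity of $\D(X)$-maps (from the tensor-compatibility of $\mu$ in \S\ref{Gamma acts}), as $\mu_A(\beta^2) = \mu_A(\alpha \beta)$, and the injectivity part of Lemma \ref{semi and mult} turns this into the polynomial identity $\beta(\beta - \alpha) = 0$ in $\textup{H}^0(X, \OX)$.

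The remaining steps are algebraic. For the unique-isomorphism claim, $\phi$ is invertible iff $\beta$ is a unit, which together with $\beta(\beta - \alpha) = 0$ forces $\beta = \alpha$; so the unique isomorphism is $\mu_A(\alpha)$. For the endomorphism claim, setting $\rho' = \rho$ gives $\alpha = 1$, and $\beta(\beta - 1) = 0$ is precisely the idempotency of $\beta$, while conversely any idempotent $\beta$ gives a valid $\phi$. Finally, pushing back up through $i_*$, the unit (resp.\ idempotent) $u$ lives in $\textup{H}^0(U_F, \mathcal O_{U_F})$; its extension-by-zero $\bar u \in \textup{H}^0(X, \OX)$ acts on $F = i_* i^* F$ as multiplication, giving both the relation $\rho' = \rho\>\bar u$ and the identification of the unique morphism of rigid pairs with $\mu_F(\bar u)$. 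The uniqueness of the isomorphism into a $U$-canonical pair is the case $(F', \rho') = (i_* i^* A, \rho^{\>i_* i^* A})$ of Definition \ref{U-canonical}.

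The main technical hurdle is the identity $\tilde\phi = \mu(\beta^2)$: one must confirm, despite the contravariance of $\RH_X$ in its first argument, that Lemma \ref{mult and RHom} produces $\mu_{\bigcheck A}(\beta)$ with the \emph{same} $\beta$, so that $\tilde\phi$ picks up two factors of $\beta$ rather than $\beta$ times its inverse. This asymmetry between $\phi = \mu(\beta)$ and $\tilde\phi = \mu(\beta^2)$ is precisely what yields the defining relation $\beta(\beta - \alpha) = 0$ and makes rigid pairs genuinely rigid.
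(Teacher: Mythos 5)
Your proof is correct and follows essentially the same path as the paper's: reduce to $F = F' = A$ with $\Supp_X F = X$ via Theorem~\ref{thm:global rigidity}(iii) and Example~\ref{open-and-closed}, use Lemma~\ref{mult and RHom} twice to see that $\phi = \mu_A(\beta)$ induces $\tilde\phi = \mu(\beta^2)$ on $\RH_X(A^\dagger, A)$, and then do algebra in $\textup{H}^0(X,\OX) \cong \textup{End}_{\D(X)}(A)$ (Lemma~\ref{semi and mult}). Your packaging of both the isomorphism and the endomorphism claims via the single relation $\beta(\beta - \alpha) = 0$ is a mild stylistic variant; the paper arrives at the same conclusions by directly cancelling the invertible $u_F$ in $\rho' u_F = \rho u_F^2$ to get $\rho' = \rho u_F$, and for endomorphisms $\rho u = \rho u^2$.
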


\begin{proof}
Modulo Theorem~\ref{thm:global rigidity}, the proof is basically that
of~\cite[7.3]{AIL}. Indeed, Theorem~\ref{thm:global rigidity}(iii)
implies  that we may assume $F=F'$, and that furthermore, we may replace
$X$ by $U\<$, i.e., assume $F=A$ (so that $\bar u  =u$).

Each endomorphism of $F$ is multiplication by a unique element $u$
in~$\textup{H}^0(X,\OX\<)$. {From} Lemma~\ref{mult and RHom} it follows
that multiplication by $u$ induces multiplication by~$u$ on~$F^\dagger$
and multip\-lication by $u^2$ on $\RH_{\<X}(F^\dagger\<,F)$. With
$u^{}_{\<F}$, resp.~$u^{}_{\<H}$, multiplication by $u$
on~$F$, resp.~on $\RH_{\<X}(F^\dagger\<,F)$, we have then that
$u^{}_{\<H}\rho=\rho\> u^{}_{\<F}$, see ~\ref{Gamma acts}, so that
$u_{\<H}^2\rho=u^{}_{\<H}\<\rho\mkern1.5mu u^{}_{\<F}=\rho\>u_{\<F}^2$.

In view of this identity, one gets that $u^{}_{\<F}$ is an isomorphism
from the  rigid pair $(F\<,\rho)$ to the rigid pair~$(F\<,\rho')$ $\iff$
$\rho'u^{}_{\<F}=u_{\<H}^2\rho$ $\iff$ $\rho'u^{}_{\<F}=\rho\> u_{\<F}^2$
$\iff$ $\rho'=\rho\>u^{}_{\<F}$. Thus the sought-after $u$ is the unique
one such that $u^{}_{\<F}$ is the automorphism $\rho^{-1}\rho'$.

In the same vein, when $u^{}_{\<F}$ induces an endomorphism of the rigid
pair $(F\<,\rho)$ one gets a relation $\rho\> u=\rho\> u^2$, whence,
$\rho$ being an isomorphism, $u^2= u$.
 \end{proof}

\begin{subcorollary}
\label{rigid autos}
For any\/ $A$-rigid complex\/ $F\<$, the group of automorphisms
of\/~$F$  acts faithfully and transitively on the set of rigidifying
isomorphisms\/~$\rho$ of\/ $F$.
 \qed
\end{subcorollary}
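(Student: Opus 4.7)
The plan is to define an action of $\operatorname{Aut}(F)$ on the set of rigidifying isomorphisms of $F,$ and then reduce both the transitivity and the faithfulness assertions to Theorem~\ref{unique iso}. For any automorphism $\phi$ of $F$ and any rigidifying isomorphism $\rho\col F\iso\RH_{\<X}(\bigcheck F\<,F),$ I would set
\[
\phi\cdot\rho \!:=\tilde\phi\circ\rho\circ\phi^{-1},
\]
where $\tilde\phi$ is the automorphism of $\RH_{\<X}(\bigcheck F\<,F)$ induced by $\phi$ as in Definition~\ref{map of rigids}. Writing $\tilde\phi$ as the composite $\RH_{\<X}(\bigcheck F\<,\phi)\circ\RH_{\<X}(\bigcheck\phi\<,F)$ and invoking bifunctoriality of $\RH_{\<X}$ yields $\widetilde{\phi\psi}=\tilde\phi\tilde\psi;$ combined with the evident fact that $\textup{id}_F$ induces $\textup{id}$ on $\RH_{\<X}(\bigcheck F\<,F),$ this shows the formula above defines a well-defined left action on the set of $\D(X)$-isomorphisms $F\iso\RH_{\<X}(\bigcheck F\<,F),$ i.e., on the set of rigidifying isomorphisms for $F.$

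The bridge to Theorem~\ref{unique iso} is the immediate observation that, in view of the defining commutative square in~\ref{map of rigids}, a $\D(X)$-automorphism $\phi$ of $F$ is an isomorphism of $A$-rigid pairs $(F\<,\rho)\to(F\<,\rho')$ precisely when $\rho'\circ\phi=\tilde\phi\circ\rho,$ equivalently, when $\phi\cdot\rho=\rho'.$ Theorem~\ref{unique iso} supplies, for any two rigidifying isomorphisms $\rho,\rho'$ of the same complex $F$ (which automatically share the support $U_{\<\<F}=\Supp_{\<X}\<\<F$), a unique isomorphism of rigid pairs $(F\<,\rho)\iso(F\<,\rho')$---explicitly, $\mu_{\<F}(\bar u)$ for the unique unit $u\in\textup{H}^0(U_{\<\<F},\mathcal O_{U_{\<\<F}})$ with $\rho'=\rho\bar u.$ The existence half yields transitivity of the action; the uniqueness half, specialized to $\rho'=\rho,$ says that the only $\phi$ in the stabilizer of $\rho$ is the identity. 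For a transitive action, having trivial stabilizers at one (hence every) point forces faithfulness, so the proof is complete.

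I do not expect any substantive obstacle beyond setting up the action carefully: Theorem~\ref{unique iso} is doing the real work, and the remaining verifications---that $\tilde\phi$ depends functorially on $\phi,$ and that $\phi\cdot\rho$ is again a $\D(X)$-isomorphism (automatic, as a composite of three isomorphisms)---are purely formal. The one place deserving a sentence of attention is the bifunctorial identity $\widetilde{\phi\psi}=\tilde\phi\tilde\psi,$ obtained by commuting the middle two factors in the expanded expression for $\tilde\phi\tilde\psi$ via naturality of $\RH_{\<X}$ in each slot; everything else is formal bookkeeping.
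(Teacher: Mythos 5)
Your proposal is correct and takes essentially the same route the paper intends: the corollary is (with the action $\phi\cdot\rho=\tilde\phi\circ\rho\circ\phi^{-1}$ that makes $\phi$ a morphism of rigid pairs $(F,\rho)\to(F,\phi\cdot\rho)$) an immediate consequence of the existence-and-uniqueness statement in Theorem~\ref{unique iso} applied to the pairs $(F,\rho)$ and $(F,\rho')$, which share support. Your extra verifications---that $\widetilde{\phi\psi}=\tilde\phi\tilde\psi$ by the interchange law for the bifunctor $\RH_{\<X}$, and that triviality of the stabilizer plus transitivity gives faithfulness---are exactly the ``purely formal'' bookkeeping the paper leaves to the reader with its bare \texttt{\textbackslash qed}.
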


\begin{subcorollary}
\label{unique map}
If\/ $X$ is connected then every nonzero morphism of\/ $A$-rigid pairs
is an isomorphism.  \qed
  \end{subcorollary}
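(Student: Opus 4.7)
The plan is to reduce, using Theorems~\ref{thm:global rigidity} and~\ref{unique iso}, to the case of an endomorphism of the canonical $A$-rigid pair $(A,\rho^A)$, and then exploit connectedness of~$X$ to pin down the resulting scalar.

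Let $\phi\col(F\<,\rho)\to(G,\sigma)$ be a nonzero morphism. If either $F=0$ or $G=0$ then $\phi=0$, so nonzero $\phi$ forces $F\ne 0\ne G$. By Theorem~\ref{thm:global rigidity}(iii), $\Supp_{\<X}\<\<F$ and $\Supp_{\<X}\<\<G$ are nonempty open-and-closed subsets of~$X$. Since $X$ is connected, both must equal $X$, and Theorem~\ref{thm:global rigidity}(iii) then yields $F\simeq A\simeq G$ in~$\D(X)$. Moreover, by the first part of Theorem~\ref{unique iso}, the rigid pairs $(F\<,\rho)$ and $(G,\sigma)$ are each \emph{uniquely} isomorphic to the $X$-canonical $A$-rigid pair $(A,\rho^A)$.

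Transporting $\phi$ across these canonical isomorphisms, the question becomes whether every nonzero endomorphism of $(A,\rho^A)$ is an isomorphism. By the last paragraph of Theorem~\ref{unique iso} applied with $U_{\<\<F}=X$, such an endomorphism is multiplication by $\bar u$ for a uniquely determined idempotent $u\in \textup{H}^0(X,\OX\<)$. Thus it remains to show that the only idempotents of $\textup{H}^0(X,\OX\<)$ are $0$ and~$1$.

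The expected main (and only real) obstacle is this last step, which is an easy sheaf-theoretic argument: given an idempotent $e\in\textup{H}^0(X,\OX\<)$, for every point $x\in X$ the stalk $e_x$ is an idempotent in the local ring $\mathcal O_{\<\<X\<,\>x}$, hence is $0$ or~$1$. The sets $V_0\!:=\{x\in X:e_x=0\}$ and $V_1\!:=\{x\in X:e_x=1\}$ are therefore open (defined by the vanishing of a section), disjoint, and cover~$X$; connectedness of~$X$ forces one of them to be empty, so $e\in\{0,1\}$. Consequently $\bar u\in\{0,1\}$, and since $\phi\ne 0$ we must have $\bar u=1$, whence $\phi$ is the identity of $(A,\rho^A)$, and in particular an isomorphism.
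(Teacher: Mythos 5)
Your proof is correct and follows the route the paper clearly has in mind (the corollary is stated with just a \qed, as an immediate consequence of Theorem~\ref{unique iso}): reduce to an endomorphism of a rigid pair, identify it as multiplication by an idempotent of $\textup{H}^0(X,\OX)$ via the last paragraph of Theorem~\ref{unique iso}, and observe that connectedness of $X$ leaves only $0$ and $1$. The only stylistic remark is that the final sheaf-theoretic step (idempotents of $\textup{H}^0(X,\OX)$ on a connected $X$ are trivial) is already available in the paper as a consequence of Corollary~\ref{idem and clopen} in the appendix, so you could simply cite it rather than redo the stalkwise argument.
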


 \begin{subdefinition}
For any $\D(X)$-map $\phi\col F\to F'$ of $A$-rigid pairs,
$\Supp_{\<X}\<\phi$ is the union of those connected components of $X$
to which the restriction of $\phi$ is nonzero.

By Corollary~\ref{unique map}, if $X$ is connected then  nonzero maps of
$A$-rigid pairs are isomorphisms. So for a composable pair $(\phi,\psi)$
of maps of $A$-rigid pairs,
\begin{equation}
\label{suppcomp}
\Supp_{\<X}(\phi\>\psi)=\Supp_{\<X}\<\phi\cap \Supp_{\<X}\<\psi.
\end{equation}
\end{subdefinition}

\pagebreak[3]

\begin{subcorollary}
 \label{factorization} 
Let\/ $(F\<,\rho)$ and\/ $(F'\<,\rho')$ be  $A$-rigid pairs.
\begin{enumerate}[\quad\rm(1)]
 \item
Suppose that\/ $\Supp_{\<X}\<F\subseteq \Supp_{\<X}\<F'\<$. Then there
is a unique monomorphism\/ $(F\<,\rho)\hra(F'\<,\rho')$ and a unique
epimorphism $(F'\<,\rho')\tra(F\<,\rho)$.
 \item
For any morphism $\phi\col(F\<,\rho)\to(F'\<,\rho'),$
if\/ $(G,\sigma)$ is an\/ $A$-rigid pair with\/
$\Supp_{\<X}\<G=\Supp_{\<X}\<\phi$ then\/ $\phi$ factors uniquely as
$$
\smash{(F\<,\rho)\overset{\lift1.3,\phi'\>,}
\tra(G\<,\sigma)\overset{\lift1.3,{\,\,\phi''},}\hra(F'\<,\rho')}
$$
with\/ $\phi'$ an  epimorphism and\/ $\phi''$ a monomorphism.
 \end{enumerate}
Thus\/ $\phi$ is uniquely determined by its source, target and support.
\end{subcorollary}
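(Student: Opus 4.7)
The plan is to reduce to connected components, where Corollary~\ref{unique map} forces every nonzero morphism of $A$-rigid pairs to be an isomorphism and Theorem~\ref{unique iso} pins down such isomorphisms uniquely. Decompose $X=\bigsqcup_s X_s$ into connected components; by Corollary~\ref{rigid decomp} each $A$-rigid pair $(F\<,\rho)$ splits orthogonally as $F\simeq\bigoplus_{X_s\subseteq\Supp_{\<X}\<\<F}(i_s)_*(i_s)^*\<\<A$, with each summand carrying the canonical $A$-rigid structure on $X_s$. Since $\R\mathcal Hom_X$ commutes with restriction to open subsets, morphisms of $A$-rigid pairs on $X$ are exactly families of morphisms of $A|_{X_s}$-rigid pairs on the $X_s$.

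For part~(1), assume $\Supp_{\<X}\<\<F\subseteq\Supp_{\<X}\<F'$. For each $X_s\subseteq\Supp_{\<X}\<\<F$, Theorem~\ref{unique iso} furnishes a unique isomorphism $\alpha_s\colon F|_{X_s}\iso F'|_{X_s}$ of rigid pairs; on components $X_s\subseteq\Supp_{\<X}\<F'\setminus\Supp_{\<X}\<\<F$ the map $F|_{X_s}=0\to F'|_{X_s}$ is forced to be zero. Assembling these gives a split monomorphism $\phi\colon(F\<,\rho)\hra(F'\<,\rho')$ of rigid pairs. For uniqueness, any such split mono $\psi$ restricts on each $X_s\subseteq\Supp_{\<X}\<\<F$ to a split mono of rigid pairs over the connected scheme $X_s$; this restriction is nonzero (its source being nonzero), hence an isomorphism by Corollary~\ref{unique map}, hence equal to $\alpha_s$ by Theorem~\ref{unique iso}. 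So $\psi=\phi$. The epimorphism $(F'\<,\rho')\tra(F\<,\rho)$ and its uniqueness are obtained symmetrically using the $\alpha_s^{-1}$.

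For part~(2), set $S\!:=\Supp_{\<X}\phi$. Whenever $F$ or $F'$ vanishes on a component $X_s$, so does $\phi$; thus $S\subseteq\Supp_{\<X}\<\<F\cap\Supp_{\<X}\<F'$, and in particular $\Supp_{\<X} G=S$ is contained in both supports. Part~(1) then supplies a unique epimorphism $\phi'\colon(F\<,\rho)\tra(G,\sigma)$ and a unique monomorphism $\phi''\colon(G,\sigma)\hra(F'\<,\rho')$. By~\eqref{suppcomp}, $\Supp_{\<X}(\phi''\phi')=S\cap S=S$, so both $\phi$ and $\phi''\phi'$ vanish outside $S$; on each $X_s\subseteq S$, the restrictions of $\phi$, $\phi'$, $\phi''$ are all nonzero morphisms of $A|_{X_s}$-rigid pairs over the connected scheme $X_s$, hence isomorphisms by Corollary~\ref{unique map}, and the identity $\phi|_{X_s}=\phi''|_{X_s}\circ\phi'|_{X_s}$ follows from the uniqueness in Theorem~\ref{unique iso}. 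Uniqueness of the factorization is immediate from~(1) applied to any competing $\psi'$, $\psi''$. The concluding sentence is then clear, since the constructed factorization depends only on $(F\<,\rho)$, $(F'\<,\rho')$ and $S$.

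The argument is routine given the previous results; the only point needing care is verifying that morphisms of $A$-rigid pairs restrict along open-and-closed inclusions to morphisms of rigid pairs, so that the problem genuinely reduces to the connected case handled by Theorem~\ref{unique iso} and Corollary~\ref{unique map}.
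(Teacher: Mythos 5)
Your proof is correct and follows essentially the same route as the paper's, which says only "Looking at connected components separately, one reduces to where $X$ is connected; the assertions then follow from Corollary~\ref{unique map} and Theorem~\ref{unique iso}." You have spelled out the reduction explicitly (via Corollary~\ref{rigid decomp} and the fact that rigid-pair structure restricts along open-and-closed inclusions), but the mechanism is identical.
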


\begin{proof} 
Looking at connected components separately, one reduces to where $X$
is connected; the assertions then follow from Corollary~\ref{unique map}
and Theorem~\ref{unique iso}.
 \end{proof}

Here is a \emph{structure theorem} for the category $\rp AX$  of\/
$A$-rigid pairs.

\begin{subtheorem}
\label{rigid and clopen} 
Let\/ $\oc X$ be the category whose objects are the open-and-closed
subsets of\/ $X,$ and whose maps\/ $U\to V$ are the open-and-closed
subsets of\/~\mbox{$U\cap V,$} the composition of\/ $S\subseteq U\cap V$
and\/ $T\subseteq V\cap W$ being\/ $S\cap T\subseteq U\cap W\<$.

Let\/ $\Psi\col\rp AX\to\oc X$ be the functor taking\ $(F\<,\rho)\in\rp
AX$ to\/ $\Supp_{\<X}\<F\<,$ and taking a morphism\/ $\phi\in\rp AX$
to\/ $\Supp_{\<X}\<\phi$ $($see \eqref{suppcomp}$)$.

This\/ $\Psi$ is an equivalence of categories. 
\end{subtheorem}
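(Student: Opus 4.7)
The plan is to verify in turn that $\Psi$ is a well-defined functor, that it is essentially surjective, and that it is fully faithful; the bulk of the work has been done in Corollaries~\ref{factorization} and the preceding results.

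First I would check that $\Psi$ is a well-defined functor. By Theorem~\ref{thm:global rigidity}(iii), the support $\Supp_{\<X}\<F$ of any $A$-rigid pair $(F\<,\rho)$ is open-and-closed in~$X$, so $\Psi$ sends objects to objects of $\oc X$. For any morphism $\phi\col(F\<,\rho)\to(F'\<,\rho')$ of rigid pairs, the set $\Supp_{\<X}\<\phi$ is a union of connected components of~$X$, hence open-and-closed; moreover on components not meeting $\Supp_{\<X}\<F\cap\Supp_{\<X}\<F'$ either the source or target vanishes, so $\Supp_{\<X}\<\phi\subseteq\Supp_{\<X}\<F\cap\Supp_{\<X}\<F'$, giving a morphism $\Supp_{\<X}\<F\to\Supp_{\<X}\<F'$ in~$\oc X$.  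The identity on $(F\<,\rho)$ has support $\Supp_{\<X}\<F$, which is the identity of $\Supp_{\<X}\<F$ in~$\oc X$, and equation~\eqref{suppcomp} says $\Psi$ preserves composition.

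Next, essential surjectivity. Given any open-and-closed $U\subseteq X$ with inclusion $i\col U\hookrightarrow X$, the $U\<\<$-canonical $A$-rigid pair $(i_*i^*\<\<A,\rho^{\>i_*i^*\!A})$ of Definition~\ref{U-canonical} has support equal to $U$, as noted in the proof of (i)$\Rightarrow$(iii) of Theorem~\ref{thm:global rigidity}. So every object of $\oc X$ lies in the image of $\Psi$.

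For fully faithfulness, fix $A$-rigid pairs $(F\<,\rho),(F'\<,\rho')$ with supports $U\!:=\Supp_{\<X}\<F$ and $V\!:=\Supp_{\<X}\<F'\<$. I claim $\Psi$ restricts to a bijection from $\Hom_{\rp AX}\big((F\<,\rho),(F'\<,\rho')\big)$ to the set of open-and-closed subsets of $U\cap V$. For injectivity: Corollary~\ref{factorization}(2), together with the final sentence of that corollary, states that any morphism of $A$-rigid pairs is uniquely determined by its source, target, and support. For surjectivity: given an open-and-closed $W\subseteq U\cap V$, pick the $W\<\<$-canonical $A$-rigid pair $(H,\tau)$, which has $\Supp_{\<X}\<H=W$.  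Since $W\subseteq U$, Corollary~\ref{factorization}(1) yields a (unique) epimorphism $\phi'\col(F\<,\rho)\twoheadrightarrow(H,\tau)$, and since $W\subseteq V$ it yields a (unique) monomorphism $\phi''\col(H,\tau)\hookrightarrow(F'\<,\rho')$. By construction $\Supp_{\<X}\<\phi'=\Supp_{\<X}\<\phi''=W$, and equation~\eqref{suppcomp} gives $\Supp_{\<X}(\phi''\phi')=W\cap W=W$, so $\phi''\phi'$ is a morphism of rigid pairs with $\Psi(\phi''\phi')=W$. This proves fully faithfulness, and together with essential surjectivity shows that $\Psi$ is an equivalence.

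The only mild obstacle is to be careful about zero morphisms and the empty support (corresponding to the zero object, i.e.\ the pair with $F=0$), but the convention that $\Supp_{\<X}\<\phi=\emptyset$ when $\phi=0$ makes the empty open-and-closed subset of $U\cap V$ correspond precisely to the zero morphism under the bijection, and the argument goes through componentwise on~$X$.
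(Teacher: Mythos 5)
Your proof is correct and follows essentially the same route as the paper's: both arguments reduce fully faithfulness to Corollary~\ref{factorization} (existence of the unique morphism of rigid pairs with prescribed support, built via the $S$-canonical pair), and both use the $U$-canonical pairs for essential surjectivity. The paper compresses this into a single sentence, while you spell out the verification that $\Psi$ is a well-defined functor and split the hom-set bijection into injectivity and surjectivity halves, but the underlying argument is identical.
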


\begin{proof}
Let\/ $(F\<,\rho)$ and\/ $(F'\<,\rho')$ be $A$-rigid
pairs, $U\!:=\Supp_{\<X}\<F\<$, $V\!:=\Supp_{\<X}\<F'\<$,
and $S$ an open-and-closed subset of $U\cap V$. It follows
from Corollary~\ref{factorization}, with $(G\<,\sigma)$ the
$S$-canonical pair, that there is a unique map of $A$-rigid pairs
$\phi\col(F\<,\rho)\to(F'\<,\rho')$ such that $\Supp_{\<X}\<\phi=S$,
whence the conclusion.
  \end{proof}

\begin{subremark}
\label{qinverse}
A quasi-inverse\/ $\Phi$ of\/ $\Psi$ can be constructed as follows:

$\Phi\col\oc X \to\rp AX$  takes an open-and-closed\/ $U\subseteq X$ to an
arbitrarily chosen rigid pair\/ $(F\<,\rho)$ with $\Supp_{\<X}\<F=U;$ and
then for any\/ $\oc X$-map $S\subseteq U\cap V,$ $\Phi(S)$ is  the unique
epimorphism\/ $\Phi U\tra \Phi S$ followed by the unique monomorphism\/
$\Phi S\hra \Phi V$ (see Corollary~\ref{factorization}).

That this describes a functor is, modulo \eqref{suppcomp}, straightforward to see.

Taking into account that the map $S\subseteq U\cap V$ factors as a split
epimorphism (namely $S\subseteq U\cap S)$ followed by a split monomorphism
(namely $S\subseteq S\cap V)$, and that any functor respects left and
right inverses, one sees that in fact \emph{all} quasi-inverses of~$\Psi$
have the preceding form.

In particular, there is a canonical $\Phi$, associating to each $U$
the $U$-canonical pair. Thus $\oc X$ is \emph{canonically isomorphic}
to the category of canonical $A$-rigid pairs.
 \end{subremark}

The next result is in preparation for establishing a gluing property
for rigid pairs.

\begin{sublemma}
\label{f*rigid}
If $g\col Z\to X$ is a perfect map and $F$ is an $A$-rigid complex in
$\D(X)$, then\/ $\bL g^*\<\<A\in\dcatc Z$ is semi\-dualizing and\/
$\bL g^*\<\<F$ is\/ $\bL g^*\<\<A$-rigid.
 \end{sublemma}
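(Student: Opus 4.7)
The cleanest route is to exploit the characterization of rigidity in Theorem~\ref{thm:global rigidity}(iv), which asserts that an $\OX$-complex is $A$-rigid if and only if it admits a (split) monomorphism into $A$ in $\D(X)$. Since any additive functor between triangulated categories preserves split monomorphisms, $\bL g^*$ will transport such a monomorphism $F\hra A$ to a monomorphism $\bL g^*\<\<F\hra \bL g^*\<\<A$ in $\D(Z)$, and then a second application of Theorem~\ref{thm:global rigidity}(iv) over $Z$ will finish the job---provided the ambient semidualizing complex on $Z$ is in place.

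\textbf{Step 1.} Observe that $A\in\dcatc X\subseteq\Dcpl(X)$ since $A$ is semidualizing. The map $g$ is perfect, so Corollary~\ref{lift semid} applies and yields that $\bL g^*\<\<A$ is semidualizing in $\D(Z)$; in particular $\bL g^*\<\<A\in\dcatc Z$.

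\textbf{Step 2.} Because $F$ is $A$-rigid, Theorem~\ref{thm:global rigidity}(iv) supplies a (split) monomorphism $\iota\col F\hra A$ in $\D(X)$. Choose a retraction $\pi\col A\tra F$ with $\pi\iota=\id^F$. Applying the additive functor $\bL g^*$ produces morphisms
\[
\bL g^*\iota\col \bL g^*\<\<F\lra \bL g^*\<\<A,\qquad \bL g^*\pi\col \bL g^*\<\<A\lra \bL g^*\<\<F,
\]
with $(\bL g^*\pi)(\bL g^*\iota)=\id^{\bL g^*\<\<F}\!$, i.e.\ $\bL g^*\iota$ is a split monomorphism in $\D(Z)$.

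\textbf{Step 3.} Since $g$ is perfect and $F\in\dcatc X$, Lemma~\ref{flat_descent} gives $\bL g^*\<\<F\in\dcatc Z$. Combined with Step~1 and Step~2, Theorem~\ref{thm:global rigidity}(iv), applied now in $\D(Z)$ with the semidualizing complex $\bL g^*\<\<A$, shows that $\bL g^*\<\<F$ is $\bL g^*\<\<A$-rigid, as desired.

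\textbf{Expected difficulty.} There is essentially no hard step: the result is a transparent consequence of the structural equivalences (i)$\Leftrightarrow$(iv) of Theorem~\ref{thm:global rigidity} together with Corollary~\ref{lift semid}. The only point requiring attention is verifying that all finiteness hypotheses needed to invoke those results hold---namely $A\in\Dcpl(X)$ (automatic from $A\in\dcatc X$) and $\bL g^*\<\<F\in\dcatc Z$ (from perfection of $g$ via Lemma~\ref{flat_descent}). One could alternatively argue via the description $F\simeq IA$ with $I$ an idempotent ideal, but the monomorphism approach is both shorter and avoids having to track the behavior of idempotent ideals under $\bL g^*$.
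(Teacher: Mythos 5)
Your proof is correct but takes a genuinely different route from the paper. The paper constructs the rigidifying isomorphism explicitly: it sets $\bL g^*\rho$ to be the composite
\[
\bL g^*\<\<F \iso \bL g^*\RH_{\<X}(\bigcheck F\<,F)
\iso \RH_Z(\bL g^*\bigcheck F\<,\bL g^*\<\<F)
\iso \RH_Z\bigl(\RH_Z(\bL g^*\<\<F\<,\bL g^*\<\<A),\bL g^*\<\<F\bigr),
\]
where the first isomorphism is $\bL g^*$ applied to $\rho$ and the remaining two come from \eqref{first_iso} (using perfection of $g$ so that $\bL g^*$ commutes with $\RH$ on the relevant subcategories). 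By contrast, you prove only the \emph{existence} of a rigidifying isomorphism, via the equivalence (i)$\Leftrightarrow$(iv) in Theorem~\ref{thm:global rigidity} and the fact that additive functors preserve split monomorphisms. Your argument is shorter and arguably cleaner, and it does establish the lemma as literally stated. What it loses is the constructive content: the explicit $\bL g^*\rho$ defined in the paper's proof is used later, notably in Theorem~\ref{gluing}, where the conclusion is phrased as an isomorphism of rigid pairs $(G,\sigma)\simeq (g^*\<\<F\<,g^*\<\<\rho)$, which presupposes a definite meaning for $g^*\<\<\rho$. So your proof works for the stated lemma, but if you wanted to follow the paper's development through \S\ref{Rigid global complexes-II} and \S\ref{Relatively rigid complexes} you would still need to record the explicit formula for $\bL g^*\rho$. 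One minor remark: your Step~3 invocation of Lemma~\ref{flat_descent} is harmless but superfluous, since a direct summand of $\bL g^*\<\<A\in\dcatc Z$ automatically lies in $\dcatc Z$.
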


\begin{proof}
That $\bL g^*\<\<A$ is semidualizing is given by Corollary~\ref{lift semid}.

If $\rho$ is an $A$-rigidifying isomorphism for $F\in\D(X)$, then, abusing notation, we let 
$\bL g^*\<\<\rho$ be the composed isomorphism
\begin{align*}
\bL g^*\<\<F&
\iso \bL g^*\RH_{\<X}(\bigcheck F\<,F)\\
&\iso 
\RH_{Z}(\bL g^*\<\<\bigcheck F\<,\>\bL g^*\<\<F)\\
&\iso
\RH_{Z}(\RH_{Z}(\bL g^*\<\<F\<,\>\bL g^*\<\<A),\>\bL g^*\<\<F),
\end{align*}
where the first isomorphism is the result of applying the functor $\bL
g^*\<\<$ to $\rho$, and the other two come from \eqref{first_iso}. Thus
$\bL g^*\<\<\rho$ is $\bL g^*\<\<A$-rigidifying  for $\bL g^*\<\<F\<$.
 \end{proof}

\begin{subtheorem}
\label{gluing} 
Let\/ $g\col Z\to X$ be a faithfully flat scheme-map,
\mbox{$W\!:=Z\times_X Z,$} $\pi_1\col W\to Z$ and\/ $\pi_2\col W\to Z$
the canonical projections.

Let\/ $A\in \D(X)$ be  semidualizing. If\/ $(G,\sigma)$ is
a\/ $g^*\<\<A$-rigid pair such that there exists an isomorphism
$\pi_1^*G\simeq\pi_2^*G,$ then there is, up to unique isomorphism,
a~unique\/ $A$-rigid pair\/~$(F\<,\rho)$ such that\/ $(G,\sigma)\simeq
(g^*\<\<F\<,\>g^*\<\<\rho)$.
 \end{subtheorem}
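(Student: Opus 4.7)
The plan is to exploit the structure theorem for rigid pairs (Theorem~\ref{thm:global rigidity}, refined by Theorem~\ref{unique iso} and Theorem~\ref{rigid and clopen}), which classifies $A$-rigid pairs essentially by their supports, and thereby to reduce the gluing problem to faithfully flat descent for open-and-closed subsets of $X$.

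First, I would observe that $g^*A$ is semidualizing on $Z$ by Corollary~\ref{lift semid}, since $g$ is faithfully flat, hence perfect. Theorem~\ref{thm:global rigidity}(iii) then gives that the $g^*A$-rigid complex $G$ is supported on an open-and-closed subset $V:=\Supp_Z G \subseteq Z$, with $G\simeq (j_V)_*(j_V)^*g^*A$ for the inclusion $j_V\col V\hookrightarrow Z$. Since $\pi_1,\pi_2$ are pullbacks of $g$ they are flat, so support commutes with pullback, giving $\Supp_W \pi_k^*G=\pi_k^{-1}(V)$. The hypothesis $\pi_1^*G\simeq \pi_2^*G$ therefore forces $\pi_1^{-1}(V)=\pi_2^{-1}(V)$.

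Next, by faithfully flat descent for open-and-closed subsets---which is precisely the content of Proposition~\ref{idem gluing} applied to the idempotent quasi-coherent ideal cutting out the complement of $V$---there is a unique open-and-closed $U\subseteq X$ with $g^{-1}(U)=V$. I then take $(F,\rho):=(i_*i^*\<\<A,\>\rho^{\>i_*i^*\!A})$ to be the $U$-canonical $A$-rigid pair of Definition~\ref{U-canonical}, where $i\col U\hookrightarrow X$ is the inclusion. By Lemma~\ref{f*rigid}, $(g^*\<F,\>g^*\<\rho)$ is a $g^*A$-rigid pair, and its support equals $g^{-1}(U)=V=\Supp_Z G$. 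Theorem~\ref{unique iso} applied over~$Z$ then produces a unique isomorphism $(g^*\<F,\>g^*\<\rho)\iso(G,\sigma)$ of $g^*A$-rigid pairs, settling existence.

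For uniqueness, if $(F'\<,\rho')$ is any other $A$-rigid pair with $(g^*\<F'\<,g^*\<\rho')\simeq (G,\sigma)$, then faithful flatness of $g$ yields $\Supp_X F'=g(\Supp_Z g^*\<F')=g(V)=U=\Supp_X\<\<F$, and Theorem~\ref{unique iso} supplies the required unique isomorphism $(F,\rho)\iso(F'\<,\rho')$ in $\rp AX$. The only genuine obstacle is the descent step---faithfully flat descent for open-and-closed subsets of a scheme---but that is exactly what Proposition~\ref{idem gluing} is designed to deliver; everything else reduces formally to the classification of rigid pairs and the uniqueness statements already in hand.
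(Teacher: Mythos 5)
Your proof is correct and follows essentially the same route as the paper: both reduce the gluing problem to faithfully flat descent for idempotent ideals (Proposition~\ref{idem gluing}) via the support computation, and both settle uniqueness with Theorem~\ref{unique iso}. The only cosmetic difference is that you phrase the descent step in terms of open-and-closed subsets while the paper works directly with idempotent ideals, but these are equivalent via Corollary~\ref{idem and clopen} and the paper itself passes through the same support equality $\Supp_W\pi_1^*J=\Supp_W\pi_2^*J$ to verify the hypothesis of Proposition~\ref{idem gluing}.
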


\begin{proof} (Uniqueness.) If 
$g^*\<\<F\simeq g^*\<\<F'$ then, since
$$
g^{-1}\Supp_{\<\<X}F =\Supp_{\<Z} g^*\<\<F=\Supp_{\<Z} g^*\<\<F'=
g^{-1}\Supp_{\<\<X}F',
$$
and $g$ is surjective, therefore $\Supp_{\<\<X}F=\Supp_{\<\<X}F'\<$;
and so by Theorem~\ref{unique iso}, there is a unique isomorphism
$(F\<,\>\rho)\iso(F'\<,\>\rho')$.

(Existence.) In view of Theorem~\ref{thm:global rigidity}, we may assume that  $G=Jg^*\<\<A$ for some idempotent $\mathcal O_{\<Z}$-ideal $J$. Then, for $i=1,2$, Corollaries~\ref{idem and Hom} and~\ref{lift mult idem} yield 
\begin{align*}
\Supp_W \pi_i^*G&=\Supp_W (\pi_i^*\<\<J\dtensor W\pi_i^*g^*\<\<A)\\
&=\Supp_W\pi_i^*\<\<J\cap\>\Supp_W \pi_i^*g^*\<\<A\\
&=\Supp_W \pi_i^*\<\<J.
\end{align*}
So $\pi_1^*J$ and $\pi_2^*J$, being isomorphic to idempotent ideals with the same support, must be isomorphic.
Hence by Proposition~\ref{idem gluing}, there is a unique idempotent $\OX$-ideal
$I$ such that $J=I\mathcal O_{\<Z}$. If $F=IA$ then $G\simeq g^*\<\<F$. 

Let $\rho$ be a rigidifying isomorphism for $F\<$, so that
$(g^*\<\<F\<,\>g^*\<\<\rho)$ is a $g^*\<\<A$-rigid pair. By Theorem~\ref{unique iso}, there is a unique
isomorphism $(g^*\<\<F\<,\>g^*\<\<\rho)\iso(G\<,\>\sigma)$.
\end{proof}

\subsection{Relative rigidity}
\label{Relatively rigid complexes}

With reference to a G-perfect map $f\col X\to
Y\<$, we  take particular interest in those complexes
that are $f^!\OY$-rigid---complexes we will simply call
$f\<$-\emph{rigid}.\vspace{1pt}

For $g$ any essentially \'etale map (so that, by Proposition~\ref{compn},
$f\<g$ is G-perfect), there is a natural isomorphism of
functors $(f\<g)^!\simeq g^*\<\<f^!$ (see \S\ref{twisted inverse}). By
Lemma~\ref{f*rigid},  if $P$ is $f$-rigid then $g^*\<\<P$ is
$(f\<g)$-rigid.

The following \emph{\'etale gluing} result (where for simplicity we
omit mention of rigidi\-fying isomorphisms) is an immediate consequence
of Theorem~\ref{gluing}.

\begin{subproposition}
\label{etale gluing} 
Let\/ $Z\xra{g}X\xra{f}Y$ be scheme-maps, where \/ $f$ is G-perfect
and $g$ is essentially \'etale and surjective. Let\/
$W\!:=Z\times_X Z,$  with canonical projections\/ $\pi_1\col W\to Z$
and\/ \mbox{$\pi_2\col W\to Z$.} If\/ $P$ is an\/ $(f\<g)$-rigid complex
such that there exists an isomorphism\/  $\pi_1^*P\simeq\pi_2^*P,$ then
there exists, up to  isomorphism, a~unique $f\<$-rigid complex\/~$F$
with\/ $g^*\<\<F\simeq P\<$.\qed \end{subproposition}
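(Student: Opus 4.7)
The plan is to reduce Proposition~\ref{etale gluing} to a direct application of Theorem~\ref{gluing}, with $A := f^!\OY$. First I would verify the three ingredients needed to invoke Theorem~\ref{gluing}: that $A$ is semidualizing, that $g$ is faithfully flat, and that an $(f\<g)$-rigid complex is the same as a $g^*\<\<A$-rigid complex.

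The first point is immediate from the G-perfection hypothesis on $f$ together with Definition~\ref{defG-dim}. The second follows because an essentially \'etale surjective map is flat (hence $\bL g^* = g^*$, see Remark~\ref{faithfully_flat}) and surjective by assumption. For the third, I would invoke the fact, recalled in the paragraph just before the statement and rooted in \S\ref{twisted inverse}, that for essentially \'etale $g$ there is a natural isomorphism of functors $(f\<g)^!\simeq g^*\<\<f^!$; applied to $\OY$ this gives $(f\<g)^!\OY\simeq g^*\<\<A$, so an $(f\<g)^!\OY$-rigid pair is the same datum as a $g^*\<\<A$-rigid pair.

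Given these identifications, the existence statement goes as follows. Since $P$ is $(f\<g)$-rigid, one may choose some rigidifying isomorphism $\sigma$, making $(P,\sigma)$ a $g^*\<\<A$-rigid pair. The hypothesis provides an isomorphism $\pi_1^*P\simeq\pi_2^*P$ at the level of complexes, which is precisely what Theorem~\ref{gluing} requires (the gluing theorem does \emph{not} demand compatibility with rigidifying isomorphisms on overlaps). Applying Theorem~\ref{gluing} produces an $A$-rigid pair $(F,\rho)$, unique up to unique isomorphism, such that $(g^*\<\<F,\>g^*\<\<\rho)\simeq(P,\sigma)$; in particular $g^*\<\<F\simeq P$, and $F$ is $f$-rigid by definition. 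For uniqueness at the level of complexes, if $F'$ is another $f$-rigid complex with $g^*\<\<F'\simeq P$, I would pick any rigidifying isomorphism $\rho'$ for $F'$, observe that then $(g^*\<\<F'\<,\>g^*\<\<\rho')$ is a $g^*\<\<A$-rigid pair mapping isomorphically to some $g^*\<\<A$-rigid structure on $P$, and invoke the uniqueness clause of Theorem~\ref{gluing} (after possibly adjusting $\rho'$ via Corollary~\ref{rigid autos}) to conclude $F\simeq F'$.

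There is no substantive obstacle: essentially \'etale surjective maps automatically satisfy the faithful flatness hypothesis of Theorem~\ref{gluing}, and the only real content beyond Theorem~\ref{gluing} is the identification $(f\<g)^!\OY\simeq g^*\<\<A$, which is a standard property of the twisted inverse image recalled in the appendix. The proof is therefore essentially a bookkeeping exercise translating between the ``relative'' language of $f$-rigidity and the ``absolute'' language of $A$-rigidity used in Theorem~\ref{gluing}.
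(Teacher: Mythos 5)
Your proposal is correct and matches the paper's intent exactly: the paper offers no explicit proof, calling the result ``an immediate consequence of Theorem~\ref{gluing},'' and the paragraph preceding the statement supplies precisely the identification $(f\<g)^!\OY\simeq g^*\<\<f^!\OY$ (via $(f\<g)^!\simeq g^*\<\<f^!$ for essentially \'etale $g$) that you use to translate $(f\<g)$-rigidity into $g^*\<\<A$-rigidity with $A=f^!\OY$. One small simplification: your worry about ``adjusting $\rho'$'' in the uniqueness step is unnecessary, since the uniqueness argument in Theorem~\ref{gluing} already shows that $g^*\<\<F\simeq g^*\<\<F'$ as complexes forces $\Supp_{\<X}\<\<F=\Supp_{\<X}\<\<F'$ (by surjectivity of $g$), whence Theorem~\ref{unique iso} gives a unique isomorphism of rigid pairs regardless of which rigidifying isomorphisms are chosen.
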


Fix a semidualizing complex~$A$ on a scheme $X$. The main result in
this section, Theorem~\ref{extend functors}, is that for any additive
functor from $A$-rigid complexes to the derived category of some
scheme, that takes $A$ to a semidualizing complex $C$---and hence,
by Theorem~\ref{thm:global rigidity}(iv), takes $A$-rigid complexes
to $C$-rigid complexes---there is a unique lifting to the category of
$A$-rigid pairs that takes the canonical pair $(A,\rho^A)$ to $(C,
\rho^{C}\>)$, provided that the functor ``respects intersection of
supports."

{From} Theorem~\ref{extend functors} we will derive the behavior
of relatively rigid complexes with respect to perfect maps
(Corollaries~\ref{rigid functors} and~\ref{g! rigid}). These results
generalize---and were inspired by---results in \cite[Sections 3 and
6]{YZ}.  

Let $\rc {\>\>A}X\subseteq\D(X)$ be the full subcategory of
$A$-rigid complexes, and  let $\rp {\>A}X$ de the category of $A$-rigid
pairs. Let~$\bph_{\<\<X}\col\rp {\>A}X\to \D(X)$ be the functor taking
$(F\<,\rho)$ to $F\in\rc {\>\>A}X$.  The rigid pair $(A,\rho^A)$ is defined 
in Example~\ref{gcanonical}.

\begin{subtheorem}\label{extend functors}
Let\/ $X$ and\/ $Z$ be schemes, let\/ $A\in\D(X)$ be semidualizing,
and let\/ $\CF\col\rc AX\to\D(Z)$  be an additive functor such that\/
$\CF A$ is semidualizing.\vspace{1pt}

There exists at most one functor\/ $\oCF\col\rp AX\to\rp{\CF \<A}Z$,
such that\vspace{1pt} 
  \[
\smash{\bph^{}_{\<\<Z}\oCF=\CF\bph^{}_{\<\<X}}
  \quad\text{and}\quad
\oCF(A,\rho^A)=(\CF A,\rho^{\>\CF \<A})\,.
  \]
For such an\/ $\oCF$  to exist it is necessary that for any 
idempotent\/ $\OX\<$-ideals\/ $I\<,\>J,$\vspace{-1pt}
\begin{equation}
\label{keep cap}
\Supp_Z\CF(IJA)=\Supp_Z\CF(IA)\cap \>\Supp_Z\CF(JA),
\end{equation}
and it is sufficient that\/ \eqref{keep cap} hold whenever\/ $IJ=0$.
\end{subtheorem}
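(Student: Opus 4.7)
My plan is to prove uniqueness of $\oCF$ first, then the necessity of~\eqref{keep cap}, and finally the sufficiency (reducing the general~\eqref{keep cap} to the $IJ=0$ hypothesis, then constructing $\oCF$); the main obstacle will be verifying that $\CF\phi$ satisfies the rigidity square, especially for split epimorphisms $\phi$. For uniqueness: by Corollary~\ref{factorization}(1), each $A$-rigid pair $(F,\rho)$ has a unique monomorphism $\iota_F\colon(F,\rho)\hookrightarrow(A,\rho^A)$. The compatibility $\bph^{}_{\<\<Z}\oCF=\CF\bph^{}_{\<\<X}$ forces the underlying $\D(Z)$-map of $\oCF\iota_F$ to be $\CF\iota_F\colon\CF F\hookrightarrow\CF A$ (a split mono, by additivity), and then Lemma~\ref{unique rho} pins down $\sigma_F$ on $\CF F$ uniquely; likewise $\oCF\phi$ has underlying map $\CF\phi$, which by Corollary~\ref{factorization} determines $\oCF\phi$ as a morphism of rigid pairs.

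For necessity, an existing $\oCF$ transports via Theorem~\ref{rigid and clopen} to a functor $T\colon\oc X\to\oc Z$ with $T(U_I)=\Supp_Z\CF(IA)$. In $\oc X$ the composition of $U_I,U_J\subseteq X\cap X$ is $U_I\cap U_J=U_{IJ}$, so functoriality of $T$ yields $T(U_{IJ})=T(U_I)\cap T(U_J)$, which is~\eqref{keep cap}.

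For sufficiency, I first deduce the general~\eqref{keep cap}: let $K,L$ be the idempotent ideals for $U_I\setminus U_J$ and $U_J\setminus U_I$. Then $IA\simeq IJA\oplus KA$, $JA\simeq IJA\oplus LA$, and additivity of $\CF$ gives $\Supp_Z\CF(IA)=\Supp_Z\CF(IJA)\sqcup\Supp_Z\CF(KA)$ (similarly for $J$); since $KL=0$, the hypothesis forces $\Supp_Z\CF(KA)\cap\Supp_Z\CF(LA)=\emptyset$, whence~\eqref{keep cap}. Now set $\oCF(F,\rho):=(\CF F,\sigma_F)$ with $\sigma_F$ as above and $\oCF\phi:=\CF\phi$; functoriality is automatic, and the task is to show $\CF\phi$ satisfies the rigidity square for $(\CF F,\sigma_F)\to(\CF F',\sigma_{F'})$. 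By Corollary~\ref{factorization}(2) it suffices to treat split monos and split epis. For a split mono $\phi$, the identity $\iota_{F'}\phi=\iota_F$ in $\rp A X$ gives $\CF\iota_{F'}\circ\CF\phi=\CF\iota_F$; post-composing the desired square with $\widetilde{\CF\iota_{F'}}$ and applying the rigidity of $\CF\iota_F,\CF\iota_{F'}$ reduces both sides to $\rho^{\CF A}\circ\CF\iota_F$, and left-cancellation through $\widetilde{\CF\iota_{F'}}$ (a split mono in $\D(Z)$) yields the square. For a split epi $\phi\colon F\twoheadrightarrow G$, replace $F$ by its canonical form (Theorem~\ref{unique iso}) and decompose $F\simeq G\oplus H$ in $\rp A X$, where $H$ is the canonical pair for $U_F\setminus\Supp\phi$; additivity gives $\CF F\simeq\CF G\oplus\CF H$, and~\eqref{keep cap} ensures disjoint supports in $Z$. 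The inclusions $\CF\iota_G,\CF\iota_H\hookrightarrow\CF F$ are rigid-pair morphisms by the split mono case, so precomposing the rigidity square for $\CF\phi$ with each reduces to the squares for $\phi\iota_G=\id^G$ and $\phi\iota_H=0$, both trivially commutative; the additive decomposition of $\CF F$ then yields the full square.
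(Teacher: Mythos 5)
Your uniqueness and necessity arguments match the paper's almost exactly: uniqueness via the unique split monomorphism into $(A,\rho^A)$ together with Lemma~\ref{unique rho}, and necessity via transporting $\oCF$ through $\Psi$ and $\Phi$ to a functor $\oc X\to\oc Z$. The existence argument, however, is genuinely different, and valid. The paper decomposes $X$ into connected components $U_1,\dots,U_n$, writes $\phi=\sum\phi_j$ with each $\phi_j$ supported over a single $U_j$ (using Corollary~\ref{rigid decomp} and additivity of $\CF$, with condition \eqref{keep cap} for $IJ=0$ ensuring the images $V_j$ stay disjoint), reduces to $X$ connected where $\phi$ is zero or an isomorphism, and handles the isomorphism case by a three-row diagram chase involving the mono $\phi^{}_{\<G'}$ into the canonical pair. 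You instead invoke the epi--mono factorization of Corollary~\ref{factorization}(2) and treat the two cases separately: for a split mono you post-compose the desired square with $\widetilde{\CF\iota_{F'}}$, use the defining squares for $\CF\iota_F,\CF\iota_{F'}$ (both rigid-pair morphisms by construction of $\sigma$), and cancel the split mono $\widetilde{\CF\iota_{F'}}$; for a split epi you decompose $F\simeq G\oplus H$ with $H$ the canonical pair on $U_F\setminus\Supp\phi$, apply additivity of $\CF$ to get a biproduct, and check the square on each summand where it reduces to the identity and the zero map. Both routes rely on the same two ingredients---$\CF$ preserves split monos/epis, and condition \eqref{keep cap} for $IJ=0$---but organize the reduction differently; yours avoids the explicit component indexing at the modest cost of a biproduct check. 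One minor point: your opening derivation of the general \eqref{keep cap} from the $IJ=0$ case, while correct, is not actually used anywhere in what follows (you only ever apply \eqref{keep cap} to disjointly supported idempotents, where $IJ=0$), so that paragraph can be dropped. Also, the remark that ``\eqref{keep cap} ensures disjoint supports in $Z$'' in the split-epi case is likewise superfluous: the biproduct argument goes through without needing the supports of $\CF G$ and $\CF H$ to be disjoint.
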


\begin{subremark}\label{fab}
Let $a,b\in\textup{H}^0(X\<,\OX\<)$ be the idempotents such that $I=a\OX$
and $J=b\OX$. Since $IA$ admits a monomorphism into $A$, therefore
$\CF(IA)$ admits a  monomorphism into $\CF A$, and it follows from
Theorem~\ref{thm:global rigidity} that there is a unique idempotent
$f(a)\in\textup{H}^0(Z, \mathcal O_{\<Z})$ with $\CF(IA)\simeq f(a)\CF
A$. By \eqref{Supp Tensor}, Corollary~\ref{idem and Hom}, and the
fact that a semidualizing complex on a scheme is supported at every
point of the underlying space, see Lemma \ref{semid-prop}, 
condition~\eqref{keep cap} amounts then to $f(ab)=f(a)f(b)$.
  \end{subremark}

Before proving Theorem~\ref{extend functors}, we gather together some
examples.  Part (1) of the next corollary elaborates Lemma~\ref{f*rigid}.

Recall  that if $g\col Z\to X$ is perfect then both $\bL g^*\<\<
B$ and $g^!A$ are semidualizing; see Corollary~\ref{lift semid}. If
$L\in\D(X)$ is invertible then $L\dtensor{\<\<X}A$ is semidualizing, by
Corollary~\ref{refl and inv}(3); and if $F\in\Dqcpl(X)$, then there is
as in~\eqref{!tensoriso} a natural isomorphism $g^!\<L\dtensor{\<Z}\bL
g^*\<\<F\iso g^!(L\dtensor{\<\<X}F)$.

\begin{subcorollary}\label{rigid functors} Let\/ $g\col Z\to X$ be a perfect map, 
\vspace{1pt} and\/ $A\in\dcatc X$ semidualizing.
   \begin{enumerate}[\quad\rm(1)]
     \item
There is a unique functor\/ $g^{**}\col\rp AX\to\rp{\>\> \bL g^*\<\< A}Z$ such that
$$
\smash{\bph^{}_{\<\<Z} \>\>g^{**}=\bL g^* \quad\textup{and}\quad
g^{**}(A,\rho^A)=(\bL g^*\<\<A,\>\rho^{\>\bL g^*\!A}\>).}
$$
     \item
There is a unique functor\/ $g^{!!}\col\rp AX\to\rp{g^!\<\< A}Z$ such that
$$
\smash{\bph^{}_{\<\<Z} \>\>g^{!!}=g^! \quad\textup{and}\quad
g^{!!}(A,\rho^A)=(g^!\<\<A,\>\rho^{\>g^!\!A}\>).}
$$
     \item
For each invertible\/ $L\in\D(X)$ there is a unique bifunctor 
  \begin{align*}
g^\otimes\col \rp{g^!\<\<L\<\<}Z\times \rp {A}X&\to \rp{\>g^! (L\dtensor{\<\<X}A)}Z
  \\
\intertext{such that} 
\bph^{}_{\<\<Z}\>\>g^\otimes(P\<,F)&=P\dtensor{\<Z}\bL g^*\<\<F
  \\
\intertext{and} 
g^\otimes\big((g^!L,\rho^{\>g^!L}),(A,\rho^A)\big)
&=(g^!\<(L\dtensor{\<\<X}A),\>\rho^{\>g^!\<(L\dtensor{\<\<X}A)}\>)\,.
  \end{align*}
     \end{enumerate}
\end{subcorollary}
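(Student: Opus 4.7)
The plan is to apply Theorem~\ref{extend functors} to handle parts~(1) and~(2), and then to reduce part~(3) to the preceding results via the structural description of rigid pairs in Theorem~\ref{rigid and clopen}.

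For parts~(1) and~(2), I would take $\CF$ to be $\bL g^*$ and $g^!$, respectively, restricted to $\rc AX$. Additivity is clear; that $\CF A$ is semidualizing is provided by Corollary~\ref{lift semid}. The remaining condition~\eqref{keep cap} to verify is that
$$
\Supp_Z\CF(IJA)=\Supp_Z\CF(IA)\cap\Supp_Z\CF(JA)
$$
for every pair of idempotent $\OX$-ideals $I,J$. For part~(1), the tensor-compatibility isomorphism $\bL g^*(IJA)\simeq \bL g^*\<I\dtensor Z\bL g^*\<J\dtensor Z\bL g^*\<\<A$ together with iterated application of~\eqref{Supp Tensor} reduces this to a tautology about intersections of three sets. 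For part~(2), the same argument works once one rewrites $g^!(IJA)\simeq g^!\<\<A\dtensor Z\bL g^*\<I\dtensor Z\bL g^*\<J$ using the natural isomorphism~\eqref{!tensoriso}.

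For part~(3), Theorem~\ref{extend functors} does not apply directly, because the bifunctor $(P,F)\mapsto P\dtensor Z\bL g^*\<\<F$ does not fit its hypotheses (the target semidualizing complex $g^!(L\dtensor{\<\<X}A)$ coincides with $P\dtensor Z\bL g^*\<\<F$ only at the canonical pair). I would instead use the equivalences provided by Theorem~\ref{rigid and clopen}, which realize $\rp{g^!L}Z,$ $\rp AX,$ and $\rp{g^!(L\dtensor{\<\<X}A)}Z$ as the categories $\oc Z,$ $\oc X,$ and $\oc Z$ of open-and-closed subsets, via the support functor, with canonical quasi-inverses as in Remark~\ref{qinverse}. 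Under these equivalences, the desired $g^\otimes$ should correspond to the manifestly well-defined bifunctor $(V,U)\mapsto V\cap g^{-1}(U)$. To confirm this, I would write $P\simeq Jg^!\<L$ and $F\simeq IA$ for idempotent ideals (Theorem~\ref{thm:global rigidity}) and apply~\eqref{!tensoriso} to obtain
$$
P\dtensor Z\bL g^*\<\<F\simeq (J\dtensor Z\bL g^*\<I)\dtensor Z g^!(L\dtensor{\<\<X}A),
$$
where $J\dtensor Z\bL g^*\<I$ is an idempotent $\OZ$-ideal of support $\Supp_Z P\cap g^{-1}(\Supp_X F),$ matching the bifunctor on supports; at the canonical pair this collapses to the canonical pair on the target, as required.

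Uniqueness in all three parts follows from Theorem~\ref{unique iso} and Corollary~\ref{factorization}, which say that rigid pairs and their morphisms are uniquely determined by their supports. The main obstacle I anticipate is in part~(3): since the result asserts \emph{bifunctoriality} of $g^\otimes,$ not merely an object map, one must check that the assignment on morphisms, defined via bifunctoriality of $\dtensor Z$ and $\bL g^*,$ actually lifts to morphisms of rigid pairs. By the uniqueness of maps of rigid pairs with given source, target, and support, this reduces to matching supports, which is the content of the computation above.
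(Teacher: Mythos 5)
For parts (1) and (2), your route is essentially the paper's: apply Theorem~\ref{extend functors}, after verifying \eqref{keep cap}.  The paper verifies \eqref{keep cap} by noting (via Corollary~\ref{lift mult idem}) that the idempotent $f(a)$ of Remark~\ref{fab} is just the image of $a$ under $\textup{H}^0(X,\OX)\to\textup{H}^0(Z,\mathcal O_Z)$, whence $f(ab)=f(a)f(b)$; your direct computation with \eqref{Supp Tensor} amounts to the same thing. Both are fine.

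For part (3) you take a genuinely different route, and that route has a gap. The paper replaces $X$ by $Z\sqcup X$ (using $\D(Z\sqcup X)\simeq\D(Z)\times\D(X)$ and hence $\rp{(g^!L,A)}{Z\sqcup X}\simeq\rp{g^!L}{Z}\times\rp{A}{X}$), turning the bifunctor into a single functor $\CF:\rc{(g^!L,A)}{Z\sqcup X}\to\D(Z)$ so that Theorem~\ref{extend functors}---including its nontrivial treatment of morphisms---applies verbatim. You instead propose to transport the set-theoretic bifunctor $(V,U)\mapsto V\cap g^{-1}(U)$ across the equivalences of Theorem~\ref{rigid and clopen}. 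Two issues arise. First, the transported functor only satisfies $\bph_Z g^\otimes(P,F)\simeq P\dtensor Z\bL g^*F$, not the required equality $\bph_Z g^\otimes=P\dtensor Z\bL g^*F$; fixing this forces you to define $g^\otimes$ directly on the given underlying objects and then manufacture rigidifying isomorphisms, which is exactly what the existence half of Theorem~\ref{extend functors} does. Second, and more seriously, your claim that verifying $\phi\dtensor Z\bL g^*\psi$ is a morphism of rigid pairs ``reduces to matching supports'' by uniqueness is not correct as stated: Corollary~\ref{factorization} says there is a \emph{unique} morphism of rigid pairs with given source, target and support, but it does not tell you that a given $\D(Z)$-map $\phi\dtensor Z\bL g^*\psi$---which a priori is just one of many $\D(Z)$-maps with that support---coincides with it.  On a connected component where the map is an isomorphism, $\textup{Hom}_{\D(Z)}$ of the underlying rigid complexes is all of $\textup{H}^0(\OZ)$ (Lemma~\ref{semi and mult}), so the rigid-pair morphism is singled out only by the compatibility diagram in Definition~\ref{map of rigids}. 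Checking that diagram is precisely the ``diagram chase'' carried out at the end of the proof of Theorem~\ref{extend functors}, which the paper's disjoint-union reduction invokes for free and which your sketch leaves out. To make your argument complete you would need to reproduce that chase for the tensor bifunctor, which would essentially reprove a bifunctorial version of Theorem~\ref{extend functors}.
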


\begin{proof}
Corollary~\ref{lift mult idem} implies that for either functor, one has
in Remark~\ref{fab} that $f(a)$ is the image of $a$ under the natural
map $\textup{H}^0(X\<,\OX\<)\to \textup{H}^0(Z, \mathcal O_{\<Z})$.
Thus \mbox{$f(ab)=f(a)f(b)$} holds, and so (1) and (2) result from
Theorem~\ref{extend functors}.

For (3) replace $X$ in Theorem~\ref{extend functors} by the disjoint
union $Z\sqcup X$. For \mbox{$P\in\D(Z)$} and $F\in\D(X),$ let
$(P\<,F)\in\D(Z\sqcup X)$ be the complex whose restriction to $Z$ is
$P$ and to $X$ is $F$. There is an obvious functor $\CF\col\D(Z\sqcup
X)\to\D(Z)$ taking $(P\<,F)$ to $P\dtensor{\<Z}\bL g^*\<\<F$.   This
functor takes the semidualizing complex $(g^!L,A)$ to the semidualizing
complex $g^!L\dtensor{\<Z}\bL g^*\<\<A\simeq g^!(L\dtensor{\<\<X}\<A)$.
Using \eqref{Supp Tensor} and~Remark~\ref{fab}, one verifies that
\eqref{keep cap} holds; and so (3) results.
  \end{proof}

Recall that if $Z\xra{g}X\xra{f}Y$ are maps such that\/ $g$ is
perfect and $f$ is G-perfect then $f\<g$ is G-perfect
(Proposition~\ref{compn}).  Taking $A=f^!\OY$ and $L=\OX$ in (2) and~(3)
of Corollary~\ref{rigid functors} one gets:

\begin{subcorollary}\label{g! rigid}
Let\/ $g\col Z\to X$ be perfect, and\/ $f\col X\to Y$  G-perfect.
   \begin{enumerate}[\quad\rm(1)]
     \item
If\/ $F$ is\/ $f\<$-rigid then\/ $g^!\<\<F$ is $f\<g$-rigid.
     \item
If\/ $P$ is\/ $g$-rigid and\/ $F$ is\/ $f\<$-rigid then\/
$P\dtensor{\<Z} \bL g^*\<\<F$ is\/ $f\<g$-rigid.
\qed
   \end{enumerate}
\end{subcorollary}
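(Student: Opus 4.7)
The plan is to derive both parts of the corollary as direct specializations of Corollary~\ref{rigid functors}, combined with the pseudofunctorial isomorphism $g^!f^!\OY\iso(fg)^!\OY$ from \S\ref{twisted inverse}. Since $f$ is G-perfect, the complex $A:=f^!\OY$ is semidualizing on $X$ (Definition~\ref{defG-dim}), so the standing hypothesis of Corollary~\ref{rigid functors} is satisfied and $f$-rigidity is by definition $A$-rigidity.

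For part (1), I would apply Corollary~\ref{rigid functors}(2) with this $A$ to produce a functor $g^{!!}\col\rp AX\to\rp{g^!\<\<A}Z$. Given an $A$-rigidifying isomorphism $\rho$ for the $f$-rigid complex $F$, the object $g^{!!}(F,\rho)$ is a $g^!\<\<A$-rigid pair whose underlying complex is $g^!\<\<F$. Via the natural isomorphism $g^!\<\<A=g^!f^!\OY\iso(fg)^!\OY$, this exhibits $g^!\<\<F$ as $(fg)^!\OY$-rigid, that is, $fg$-rigid.

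For part (2), I would take $A:=f^!\OY$ and $L:=\OX$ in Corollary~\ref{rigid functors}(3). Then $g^!L=g^!\OX$, and since $g$ is perfect, hence G-perfect by Corollary~\ref{perfect is G-perfect}, the hypothesis ``$P$ is $g$-rigid'' coincides with ``$P$ is $g^!\OX$-rigid.'' Furthermore $L\dtensor{\<\<X}A=A$, so $g^!(L\dtensor{\<\<X}A)=g^!\<\<A\iso(fg)^!\OY$. The bifunctor $g^\otimes$ therefore sends (rigid pair refinements of) $P$ and $F$ to an $(fg)^!\OY$-rigid pair whose underlying complex is $P\dtensor{\<Z}\bL g^*\<\<F$, which is thus $fg$-rigid. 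The whole argument is a formal consequence of Corollary~\ref{rigid functors} together with pseudofunctoriality of $(-)^!$; no serious obstacle remains once one has identified the correct data to plug into the functors $g^{!!}$ and $g^\otimes$, since the rigidifying isomorphisms on the outputs are built into those results.
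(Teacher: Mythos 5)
Your proposal is correct and matches the paper's proof, which reads exactly ``Taking $A=f^!\OY$ and $L=\OX$ in (2) and (3) of Corollary~\ref{rigid functors} one gets:'' followed by the statement with a \textsf{qed}. You have simply spelled out the identifications the paper leaves implicit, namely $g^!A\simeq(fg)^!\OY$ via pseudofunctoriality and $g$-rigidity coinciding with $g^!\OX$-rigidity.
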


\begin{subcorollary}\label{g_*rigid} 
Let\/ $g\col Z\to X$ be a proper map such that the natural map is an
isomorphism\/ $\OX\iso\R g_*\mathcal O_{\<Z}$.  Let $A\in\Dqcpl(X)$
be such that \smash{$g^!A$} is semidualizing.

Then\/ $A$ is semidualizing, the canonical  map is an
isomorphism $\R g_*g^!\<\<A\iso A,$ and there is a unique functor
$g_{**}\col\rp{g^!\<\<A}Z\to \rp AX$ such that
$$
\smash{\bph^{}_{\<\<X} \>\>g_{**}=\R g_*\bph^{}_{\<\<Z} \quad\textup{and}\quad
g_{**}(g^!\<\<A,\>\rho^{g^!\!A})=(\R g_*g^!\<\<A,\>\rho^{\R g_*g^!\!A}).}
$$
Hence, if\/ $f\col X\to Y$ is such that\/ $f\<g$ is G-perfect
then $f$ is G-perfect, and if\/ $P$ is $f\<g$-rigid then $\R
g_*P$ is $f$-rigid.  \end{subcorollary}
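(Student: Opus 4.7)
The plan is to apply Theorem~\ref{extend functors} to the functor $\R g_*$, obtaining $g_{**}$; parts (1) and~(2) serve as preliminaries, and parts~(4) and~(5) follow as immediate consequences.

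\emph{Parts (1) and (2).} First invoke the duality isomorphism~\eqref{duality iso} with $F=\OZ$, $B=A$ to obtain
\[
\R g_* g^!A \simeq \R g_*\RH_Z(\OZ, g^!A) \iso \RH_X(\R g_*\OZ, A) \iso \RH_X(\OX, A) \iso A,
\]
the third isomorphism coming from the hypothesis $\OX \iso \R g_*\OZ$. A routine chase through the adjunction identifies the composite with the counit $\R g_* g^!A \to A$, which proves~(2). For~(1), combine this with duality applied to $F=g^!A$, $B=A$:
\[
\RH_X(A,A) \simeq \RH_X(\R g_*g^!A, A) \iso \R g_*\RH_Z(g^!A,g^!A) \iso \R g_*\OZ \iso \OX,
\]
the third isomorphism being the semidualizing property of $g^!A$; another diagram chase identifies the composite with the canonical biduality map $\OX \to \RH_X(A,A)$.

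\emph{Part (3).} Apply Theorem~\ref{extend functors} with the roles of $X$ and $Z$ exchanged, with the semidualizing complex $g^!A$ in the role of $A$, and with $\CF:=\R g_*$. Three things must be verified. First, $\CF(g^!A)=\R g_* g^!A$ is semidualizing, by~(1) via the canonical identification with $A$. Second, $\R g_*$ sends $g^!A$-rigid complexes to $A$-rigid ones: by Theorem~\ref{thm:global rigidity}(iv) any $g^!A$-rigid complex $F$ admits a (split) monomorphism into $g^!A$, so $\R g_* F$ is a direct summand of $\R g_* g^!A \simeq A$, hence $A$-rigid by that same Theorem. Third, the support condition~\eqref{keep cap}, for which it suffices to treat the case $IJ=0$: given orthogonal idempotent $\OZ$-ideals $I,J$, the splitting $\OZ\simeq I\oplus J\oplus(1-I-J)$ induces $g^!A\simeq Ig^!A\oplus Jg^!A\oplus(1-I-J)g^!A$; applying $\R g_*$ yields a direct-sum decomposition of $A$, so the $A$-rigid summands $\R g_*(Ig^!A)$ and $\R g_*(Jg^!A)$ have disjoint supports, while $\R g_*(IJg^!A)=0$ has empty support, so both sides of~\eqref{keep cap} are empty. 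The output functor $\oCF$ lands in $\rp{\R g_* g^!A}{X}$; composing with the equivalence $\rp{\R g_*g^!A}{X}\iso\rp{A}{X}$ induced by the canonical isomorphism of~(2) (which identifies canonical rigid pairs) produces the desired~$g_{**}$.

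\emph{Parts (4) and (5).} Take $A:=f^!\OY$, so that $g^!A\simeq(f\<g)^!\OY$. The assumption that $f\<g$ is G-perfect then means $g^!A$ is semidualizing, and (1) yields that $A=f^!\OY$ is semidualizing, i.e., $f$ is G-perfect. An $f\<g$-rigid complex $P$ is by definition $g^!A$-rigid; applying $g_{**}$ produces an $A$-rigid complex whose underlying $\OX$-complex is $\R g_* P$, i.e., $\R g_* P$ is $f$-rigid.

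\emph{Main obstacle.} The essential new ingredient is the verification of~\eqref{keep cap}; once idempotent $\OZ$-ideals are regarded as giving orthogonal direct-sum decompositions of~$\OZ$, the functoriality of $\R g_*$ reduces the condition to the direct-sum calculation on~$A$ above. Identifying the composite isomorphisms in (1) and (2) with the counit and biduality maps is routine but needs to be done with care, as elsewhere in the paper.
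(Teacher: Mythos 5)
Your argument is essentially correct, and parts of it take a genuinely different (and somewhat more economical) route than the paper's. For~(2) the paper does not use the duality isomorphism~\eqref{duality iso} directly, but instead establishes functorial isomorphisms $\textup{Hom}_{\mathsf D(X)}(E,\R g_*g^!A)\cong\textup{Hom}_{\mathsf D(X)}(E,A)$ for $E\in\Dqc(X)$ via adjunction and the projection formula, then applies a Yoneda-type principle to conclude the canonical map is an isomorphism---this sidesteps the ``routine chase'' you defer. For~(1) the paper simply invokes Proposition~\ref{dir image and refl} with $F=\OZ$, $B=A$; this is slightly cleaner than your approach because Proposition~\ref{greflexive}(ii) only requires the existence of \emph{some} isomorphism $\OX\simeq\RH_X(A,A)$ (plus boundedness conditions), so the second diagram chase you allude to is unnecessary. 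For~\eqref{keep cap} the paper argues topologically: since $\OX\iso\R g_*\OZ$ induces a bijection on idempotents, $g$ is surjective and $g^{-1}$ bijects open-and-closed subsets; combined with the estimate $\Supp_X\R g_*P\subseteq g(\Supp_Z P)$ (from properness), this yields~\eqref{keep cap} for orthogonal ideals. Your direct-sum decomposition of $\OZ$ followed by $\R g_*$ and Lemma~\ref{semid-prop} is an equally valid alternative and is arguably more in the spirit of the algebraic arguments elsewhere in Chapter~3.

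One genuine (though small) gap: you conclude ``$A$ is semidualizing'' from the isomorphism $\OX\iso\RH_X(A,A)$, but the definition of semidualizing also requires $A\in\dcatc X$, whereas the hypothesis only gives $A\in\Dqcpl(X)$. Since $g^!A$ is semidualizing it lies in $\dcatc Z$, and since $g$ is proper, $\R g_*$ preserves $\dcatc$; combined with the isomorphism of~(2) this gives $A\in\dcatc X$. This needs to be said before (1) can be concluded. (For the same reason, the order in your writeup---establishing (2) before (1)---is the right one, but you should make the dependency explicit.) Also, the notation $\OZ\simeq I\oplus J\oplus(1-I-J)$ should be phrased in terms of the complementary idempotent ideal $K\!:=(1-e_I-e_J)\OZ$, where $e_I,e_J$ are the orthogonal idempotent global sections corresponding to $I,J$.
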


\begin{proof}  
That $A$ is semidualizing is given by Proposition~\ref{dir image and
refl}.

There are, for $E\in\Dqc(X)$, natural isomorphisms, the second from
~\ref{twisted inverse}(ii), and the third from \eqref{projection},
\begin{align*}
\textup{Hom}_{\>\mathsf D(X)}(E,\R g_*g^!\<\<A)&\cong \textup{Hom}_{\>\mathsf D(Z)}(\bL g^*\<\<E\<,\> g^!\<\<A)\\
&\cong
\textup{Hom}_{\>\mathsf D(X)}(\R g_*(\mathcal O_{\<Z} \dtensor Z{\bL g^*\<\<E)}, A) \\
&\cong 
\textup{Hom}_{\>\mathsf D(X)}(\R g_*\mathcal O_{\<Z} \dtensor{\<\<X} E\<, A)
\cong  \textup{Hom}_{\>\mathsf D(X)}(E\<, A).
\end{align*}
It follows, via \cite[3.4.7(ii)]{Lp2}, that the canonical map is an isomorphism 
\[
\R g_*g^!\<\<A\iso A.
\]
By assumption, one has the natural isomorphism
$\textup{H}^0(X\<,\OX\<)\iso\textup{H}^0(Z,\mathcal O_{\<Z})$. So there
is a bijection between the idempotents in these two rings; and also, $g$
is surjective. Hence $g^{-1}$ gives a bijection from the open-and-closed
subsets of~$X$ to the open-and-closed subsets of $Z$. Furthermore, for
any $P\in\dcatc Z$, $\Supp_{\<Z}P$ is closed, whence, $g$ being proper,
$U\!:=X\setminus g(\Supp_{\<Z}P)$ is open; and  the restriction of $P$ to
$g^{-1}U$ is acyclic. Thus $\Supp_{\<\<X}\< \R g_*P\subseteq g(\Supp_{\<Z}
P).$ The verification of~\eqref{keep cap}, with $\CF=\R g_*$ and $A$
replaced by $g^!\<\<A$, when $IJ=0$---so that $\Supp_{\<Z}(Ig^!\<\<A)$
and $\Supp_{\<Z}(Jg^!\<\<A)$ are disjoint open-and-closed subsets
of~$Z$---is now immediate. The existence and uniqueness of $g_{**}$
follows then from Theorem~\ref{extend functors}.

For the last assertion, take $A=f^!\OY$.
\end{proof}

\begin{subcorollary}
\label{rigidity and base change}
Let there be given  a tor-independent fiber square 
$(\<$see~\textup{\ref{fiber square})}
\[
\xymatrixrowsep{1pc}
\xymatrixcolsep{1pc}
\xymatrix{
X'\:\ar@{->}[rr]^-{v}
\ar@{->}[dd]_-{h}
&&
\:X
\ar@{->}[dd]^-{f}
\\
&{}
\\
Y'\:
\ar@{->}[rr]_-{u}
&&
\:Y
}
\]
in which\/ $f$ is G-perfect. 

If\/ $u$ is flat, or if\/ $u$ is perfect and\/ $f$ is proper, then\/ $h$
is G-perfect and for any\/ $f\<$-rigid\/ $\OX\<$-complex\/~$F\<,$
$\bL v^*\<\<F$ is\/ $h$-rigid.  \end{subcorollary}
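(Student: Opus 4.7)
The G-perfection of $h$ is immediate from Proposition \ref{G-dim and base change}, so all that remains is to show that $\bL v^*\<\<F$ is $h^!\mathcal O_{Y'}$-rigid whenever $F$ is $f^!\OY$-rigid. The plan is to reduce this to Lemma \ref{f*rigid} (or equivalently, the functor of Corollary \ref{rigid functors}(1)) applied to the perfect map $v$, and then transport the resulting rigidity along the base-change isomorphism $\bL v^*\<\<f^!\OY \iso h^!\mathcal O_{Y'}$.

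First, $v$ is perfect in both cases under consideration: when $u$ is flat, $v$ is flat (and flatness is stable under base change); when $u$ is perfect, $v$ is perfect by \cite[p.\,245, 3.5.2]{Il}. Since $f^!\OY$ is semidualizing by hypothesis, Lemma \ref{f*rigid} gives that $\bL v^*\<\<f^!\OY$ is semidualizing and $\bL v^*\<\<F$ is $\bL v^*\<\<f^!\OY$-rigid.

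Next, by tor-independence one has a natural base-change isomorphism
\[
\theta\col \bL v^*\<\<f^!\OY \iso h^!\<\bL u^*\OY = h^!\mathcal O_{Y'};
\]
for $u$ flat, this is \S\ref{any base change}, while for $u$ perfect and $f$ proper, this is \cite[4.4.3]{Lp2} (this is precisely the isomorphism \eqref{torindt basechange} used in the proof of Proposition \ref{G-dim and base change}).

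Finally, I would observe that rigidity is transported along isomorphisms of semidualizing complexes: if $\theta\col A \iso A'$ is a $\D(X')$-isomorphism with $A, A'$ semidualizing and $\rho\col G \iso \RH_{X'}(\RH_{X'}(G,A),G)$ is an $A$-rigidifying isomorphism for $G$, then functoriality of $\RH_{X'}(-,-)$ in the first argument converts $\rho$ into an $A'$-rigidifying isomorphism for $G$. Applying this with $G = \bL v^*\<\<F$, $A = \bL v^*\<\<f^!\OY$, and $A' = h^!\mathcal O_{Y'}$ shows that $\bL v^*\<\<F$ is $h^!\mathcal O_{Y'}$-rigid, as desired. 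The only nontrivial ingredient beyond cited results is the choice of base-change isomorphism under each hypothesis on $u$; everything else is a formal assembly, and I don't anticipate a substantial obstacle.
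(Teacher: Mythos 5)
Your proof is correct and follows essentially the same route as the paper: G-perfection of $h$ from Proposition~\ref{G-dim and base change}, perfection of $v$ from \cite[p.\,245, 3.5.2]{Il}, rigidity of $\bL v^*\<\<F$ with respect to $\bL v^*\<\<f^!\OY$ from Corollary~\ref{rigid functors}(1) (equivalently Lemma~\ref{f*rigid}), and finally the base-change isomorphism \eqref{torindt basechange} to replace $\bL v^*\<\<f^!\OY$ by $h^!\mathcal O_{Y'}$. The paper leaves the last transport-along-isomorphism step implicit; your spelling it out is fine, though strictly speaking the induced isomorphism $\RH_{X'}(\RH_{X'}(G,A'),G)\iso\RH_{X'}(\RH_{X'}(G,A),G)$ uses covariant functoriality of $\RH$ in the second slot followed by contravariant functoriality in the first slot, not just the first argument as you wrote.
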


\begin{proof} 
Proposition~\ref{G-dim and base change} and \cite[p.\,245,
3.5.2]{Il} imply $h$ is G-perfect and $v$ is
perfect. By Corollary~\ref{rigid functors}(i), $\bL v^*\<\<F$ is $\bL
v^*\<\<f^!\OY$-rigid, i.e., $h^!\mathcal O_{Y'}$-rigid; see \eqref{torindt
basechange}.  \end{proof}

\begin{proof}[Proof of Theorem \emph{\ref{extend functors}}](Uniqueness.)  Let $(G\<,\sigma)$ be an $A$-rigid pair. 

Set $(\CF
G\<,\tau)\!:=\oCF(G\<,\sigma)$. Let $\phi^{}_{\<G}$~be the  unique (split)
monomorphism from $(G\<,\sigma)$ to the canonical pair $(A,\rho^A)$, so that
$\oCF(\phi^{}_{\<G})$ is a (split) monomorphism,\vspace{1pt} necessarily
the unique one from $(\CF G\<,\tau)$ to $(\CF A,\sigma^{\CF A})$, see
Corollary~\ref{factorization}. It follows then from Lemma~\ref{unique rho}
that $\tau$ depends only on $\CF$ and $(G\<,\sigma)$.

Also, for any morphism $\phi$ of $A$-rigid pairs,
$\smash{\bph^{}_{\<\<Z}\oCF=\CF}$ implies
 $\oCF\phi=\CF\phi$.
\vspace{1pt}

(Necessity of~\eqref{keep cap}). Let $\Psi_{\<\<Z}\col\rp {\CF
A}Z\to\oc Z$ be as in Theorem~\ref{rigid and clopen}. Let $\Phi:\oc
X\to\rp AX$ be as in Remark~\ref{qinverse}, sending an open-and-closed
$U\subseteq X$ to~$I_UA$, where $I_U$ is the idempotent $\OX$-ideal
that is $\mathcal O_U$ over $U$ and (0)~elsewhere.  Then
$\Psi_{\<\<Z}\oCF\>\Phi\col\oc X \to \oc Z$  respects composition of maps,
i.e., \eqref{keep cap} holds.\looseness=-1 \vspace{1pt}

(Existence.) Since any functor preserves a map's property of being
split---mono or~ epi---Theorem~\ref{thm:global rigidity}(iv) shows
that $\CF$ takes $A$-rigid complexes to $\CF A$-rigid complexes; and the
preceding uniqueness argument shows how $\oCF(G,\sigma)$ must be defined. It
remains to prove that for any morphism $\phi\col(G\<,\sigma)\to(G'\<,\sigma')$
of $A$-rigid pairs, $\CF\phi$~is a morphism of $\CF A$-rigid pairs.

Let $U_1,\dots,U_n$ be the connected components of $X$. For each $j$,
let $V_j$ be the support of the $\CF A$-rigid complex $\CF(I_{U_j}A)$
(see above).  The condition~\eqref{keep cap}, for~$IJ=0$, guarantees
that if $j\ne k$ then the open-and-closed subsets $V_j$ and $V_k$ are
disjoint. So we need only show that

\noindent$(*)$ \emph{the
restriction of\/ $\CF\phi$ over each\/ $V_j$ is a morphism of\/ $\CF
A|_{V_j}$-rigid pairs.}

Corollary~\ref{rigid decomp} shows that $\phi=\sum_{j=1}^n\phi_j$
where for each $j$, the source and target\- of $\phi_j$ each have
support that, if not empty, is~ $U_j$.  Then, since $\CF$ is additive,
$\CF\phi=\sum_{j=1}^n\CF\phi_j$; and the source and target of $\CF\phi_j$
each have support contained in~$V_j$ (see the first assertion in
Theorem~\ref{unique iso}). Hence the restriction of $\CF\phi$ over~$V_j$
is~$\CF\phi_j$.  Proving $(*)$ is thus reduced to the case where $X$
is connected, so that by~Corollary~\ref{unique map}, $\phi$ is either
0 or an isomorphism.

If $\phi=0$, $(*)$ is obvious.  If $\phi$ (hence
$\CF\phi$) is an isomorphism consider the diagram, where $(\CF
G\<,\tau)\!:=\oCF(G\<,\sigma)$, $(\CF G'\<,\>\tau')\!:=\oCF(G'\<,\>\sigma')$,
where $\phi^{}_{\<G'}$ is as above, and where the maps on the right are
induced by those on the left:
$$
\CD
\CF G @>\tau >> \RH_{\<Z}(\RH_{\<Z}(\CF G\<,\CF A),\>\CF G) \\
@V\CF \phi VV  @VV\xi V\\
\CF G' @>\tau' >> \RH_{\<Z}(\RH_{\<Z}(\CF G'\<,\CF A),\>\CF G') \\
@V\CF \phi^{}_{\<\<G'}VV  @VV\xi' V\\
\CF A @>>\sigma^{\CF\<\< A}> \RH_{\<Z}(\RH_{\<Z}(\CF A,\CF A),\>\CF A)
\endCD
$$
By the above-indicated definition of $\tau$ and $\tau'\<,$ the bottom
square commutes, as does the square obtained by erasing $\tau'$.
Since $\xi'$ is a monomorphism, therefore the top square commutes
too. Thus $\CF \phi$ is a map of $\CF A$-rigid pairs.  
\end{proof}

\begin{subremark}
One would naturally like more concrete definitions of the functors in
Corollary~\ref{rigid functors}.  

One does find in \cite[\S3]{YZ} some explicitly formulated---in DGA
terms---versions of special cases of these functors. (Indeed, that's
what suggested Corollary~\ref{rigid functors}.) But getting from here
to there does not appear to be a simple matter. One might well have to
go via the Reduction Theorem \cite[4.1]{AILN}, the main result of that
paper, cf.\ \cite[8.5.5]{AIL}); and, say for smooth maps, make use of
nontrivial formal properties of Verdier's isomorphism (\S\ref{smooth ^!}).

In Duality Land the well-cultivated concrete and abstract plains are 
not presently known to be connected other than by forbidding 
mountain passes, that can only be traversed  by hard slogging.

 \end{subremark}

\section*{Background}
\numberwithin{equation}{theorem}

We review background concepts and basic facts having to do with
\mbox{scheme\kern.5pt-maps,} insofar as needed in the main text.
Of special import is the \emph{twisted inverse-image pseudofunctor},
a~fundamental object in Grothendieck duality theory.

\emph{Rings and schemes are assumed throughout to be noetherian.}

\appendix

\section{Essentially finite-type maps}
\label{eft etc}
 
  \begin{chunk}
  \label{reduce1}
A homomorphism $\sigma\col K\to S$ of commutative rings
is \emph{essentially of finite type} if $\sigma$
can be factored as a composition of ring-homomorphisms
  \[
K\hra K[x_1,\dots,x_d]\to V^{-1}K[x_1,\dots,x_d]\tra S\,,
  \]
where $x_1,\dots,x_d$ are indeterminates, $V\subseteq K[x_1,\dots,x_d]$
is a multiplicatively closed set, the first two maps are canonical  and
the third is surjective.  The map $\sigma$ is \emph{of finite type}
if one can choose $V=\{1\}$; the map $\sigma$ is \emph{finite} if it
turns $S$ into a finite (that is, finitely generated) $R$-module.

A homomorphism $\dot\sigma\col K\to P$ is (\emph{essentially})
\emph{smooth} if it is flat and (essentially) of finite type,  and if for
each homomorphism of rings $K\to k$, where $k$ is a field, the ring $k
\otimes_KP$ is regular. By \cite[17.5.1]{Gr4}, this notion of smoothness
is equivalent to the one defined in terms of lifting of homomorphisms.

When $\dot\sigma$ is essentially smooth the $P$-module
$\Omega_{\dot\sigma}$ of relative K\"ahler differentials is finite
projective; we say $\dot\sigma$ has \emph{relative dimension} $d$ if for
every $p\in\Spec S$, the free $S_p$-module $(\Omega_{\dot\sigma}\<)_p$
has rank~$d$.
  \end{chunk}

  \begin{chunk}
  \label{reduce2}
A scheme\kern.5pt-map  $f \col X \to Y$ is \emph{essentially of
finite type\/} if every~$y \in Y$ has an affine open neighborhood $V =
\Spec(A)$ such that $f^{-1}V$ can be covered by finitely many affine
open sets $U_i = \Spec(C_i)$ so that the corresponding ring homomorphisms
$A \to C_i$ are essentially of finite type.  

If, moreover, there exists
for each $i$ a  multiplicatively closed subset $V_i\subseteq A$ such
that  $A\to C_i$ factors as $A\to V_i^{-1}A\iso C_i$ where the first
map is canonical and the second is an isomorphism (in other words,
$A\to C_i$ is a \emph{localization} of $A$), then we say that $f$
is  \emph{localizing}.  If the scheme-map $f$ is localizing and also 
set-theoretically injective, then we say that $f$ is a \emph{localizing 
immersion}.

The map $f$ is \emph{essentially smooth} (of relative dimension $d\>$)
if it~is essentially of finite type and the above data $A\to C_i$ can be
chosen to be essentially smooth ring homomorphisms (of relative dimension
$d\>$).  The map $f$  is \emph{essentially \'etale} if it is essentially
smooth  of relative dimension 0.  Equivalently, $f$ is essentially smooth
(resp.~\'etale) if it is essentially of finite type and formally smooth
(resp.~\'etale); see~\cite[\S17.1]{Gr4}.  For example, any localizing
map is essentially \'etale.\vspace{1pt}
  \end{chunk}

\begin{remark}\label{diag}
We will refer a few times to proofs in \cite{Il} that make use of the fact that 
the diagonal  of a smooth map is a quasi-regular immersion. To ensure that 
those proofs apply here, we note that  the same property for 
\emph{essentially smooth} maps is given by \cite[16.10.2 and 16.9.4]{Gr4}.
\end{remark}

In \cite[4.1]{Nk2}, extending  a compactification theorem of
Nagata,  Nayak shows that \emph{every essentially-finite-type separated
map $f$ of noetherian schemes factors\- as \mbox{$f=\bar{f\,}\!u$}
with $\bar{f\,}\!$ proper and $u$~a localizing immersion.}\vspace{1.5pt}

\begin{example}
\label{example}(Local compactification.) 
A map $f\col X=\Spec S\to\Spec K=Y\<$ coming from an essentially
finite\kern.5pt-type homomorphism of rings $ K\to S$ factors
as\looseness=-1
 $$
X\xra{j} W \overset i\hookrightarrow  \bar W\xra{\pi}Y,
 $$
where $W$ is the Spec of a finitely-generated $K$-algebra $T$ of which
$S$ is a localization, $j$ being the corresponding map, where $i$ is
an open immersion, and where $\pi$ is a projective map, so that $\pi$
is proper and $ij\>$ is a localizing immersion.
 \end{example}

\section{Review of global duality theory}

 \emph{All scheme-maps are assumed to be essentially of finite type and separated.}

\vspace{2pt}

We recall some global duality theory, referring to \cite{Lp2} and \cite{Nk2} for details.

\begin{chunk}
To any scheme\kern.5pt-map $f\col X\to Y$ one associates the right-derived
direct-image functor~$\Rf\col\Dqc(X)\to\Dqc(Y)$ and its left adjoint, the 
left-derived inverse-image functor $\bL f^*\col\Dqc(Y)\to\Dqc(X)$
\cite[3.2.2, 3.9.1, 3.9.2]{Lp2}. These functors interact with
the left-derived tensor product $\Otimes{}$ via a natural isomorphism
  \begin{equation}
    \label{^* and tensor}
\bL f^*(M\Otimes{Y}N)\iso
\bL f^*\<\<M\Otimes{\<\<X} \bL f^*\<\<N
\qquad\big(M,N\in\D(Y)\big),
   \end{equation}
see \cite[3.2.4]{Lp2};  via the functorial map 
\begin{equation}
\label{_* and Tensor}
\Rf F\Otimes{Y}\Rf G \to \Rf (F\Otimes{\<\<X} G)
\qquad\big(F,G\in\D(X)\big)
\end{equation}
adjoint to the natural composite map
$$
\bL f^*(\Rf F\Otimes{Y}\R f_*G)\iso
\bL f^*\Rf F\Otimes{\<\<X} \bL f^*\Rf G\longrightarrow
F\Otimes{\<\<X} G;
$$
and via the \emph{projection isomorphism}
\begin{equation}
\label{projection}
\Rf F\Otimes{Y} M \iso \Rf(F\Otimes{\<\<X} \bL f^*M)
\qquad\big(F\in\Dqc(X),\;M\in\Dqc(Y)\big),
\end{equation}
defined qua map to be  the natural composition
$$
\Rf F\Otimes{Y}M\to \Rf F\Otimes{Y}\Rf\bL f^*M\to
\Rf(F\Otimes{\<\<X} \bL f^*M).
$$
see \cite[3.9.4)]{Lp2}. The projection isomorphism yields a natural isomorphism
\begin{equation}
\label{projection2}
\R f_*\bL f^*\<\<M\simeq \R f_*(\OX\dtensor {\<X} \bL f^*\<\<M)
\simeq  \R f_*\OX \dtensor{\<\<Y}M.
\end{equation}

Interactions with the derived (sheaf-)homomorphism functor $\RH{}$
occur via natural bifunctorial maps:
\begin{equation}
  \label{^* and Hom}
\bL f^*\RH_Y(M,N)\to \RH_{\<X}(\bL f^*\<\<M\<,\> \bL f^*\<\<N)
\qquad\big(M,N\in\D(Y)\big)\,,
\end{equation}
(see \cite[3.5.6(a)]{Lp2}) which is an \emph{isomorphism} if $f$ is an open immersion \cite[p.\,190, end of \S4.6]{Lp2}; and 
\begin{equation}
\label{_* and Hom}
\Rf\RH_{\<X}(F,G)\to\RH_Y(\Rf F\<,\>\Rf G)\qquad\big(F,G\in\D(X)\big),
\end{equation}
the latter corresponding via \eqref{adjunction2} to the natural composition
$$
\Rf\RH_{\<X}(F,G)\Otimes{Y}\Rf F\to \Rf\big(\RH_{\<X}(F,G)\Otimes{\<\<X}F\big)
\xra{\Rf\varepsilon} \Rf G,
$$
where the first map comes from~\eqref{_* and Tensor}, and
$\varepsilon$ is the evaluation map  \eqref{evaluation}.
\end{chunk}

\begin{chunk}
\label{fiber square}
For any commutative square of scheme\kern.5pt-maps
  \begin{equation}
    \label{square}
\xymatrixrowsep{1pc}
\xymatrixcolsep{1pc}
\begin{gathered}
\xymatrix{
X'\ar@{->}[rr]^-{v}
\ar@{->}[dd]_-{h}
&&
X
\ar@{->}[dd]^-{f}
\\
&\Xi
\\
Y'
\ar@{->}[rr]_-{u}
&&
Y
}
\end{gathered}
  \end{equation}
one has the  map
$\theta_{\>\Xi}\col \bL u^* \Rf\to \R\>h_*\bL v^*$
adjoint to the
natural composite map
$$
\Rf\longrightarrow\Rf\R v_*\bL v^*\iso
\R u_*\R\>h_*\bL v^*.
$$
When $\Xi$ is a \emph{fiber square} (which means that the map 
associated to $\Xi$
is an isomorphism $X'\iso X\times_Y Y'$), and $u$ is \emph{flat}, then
$\theta_{\>\Xi}$ is an \emph{isomorphism}. In fact, for any fiber square
$\Xi$, $\theta_{\>\Xi}$ \emph{is an isomorphism\/ $\iff \Xi$ is
tor-independent} \cite[3.10.3]{Lp2}.
  \end{chunk}

\begin{chunk}
\label{twisted inverse}
Duality theory focuses on the \emph {twisted inverse-image pseudo\-functor}
$$
f^!\col\Dqcpl(Y)\to\Dqcpl(X),
$$
where  ``pseudofunctoriality" (also known as \mbox{ ``2-functoriality"})\vspace{.5pt}
entails, in addition to functoriality,  a family of functorial\vspace{1.3pt}
isomorphisms $c^{}_{\< g,\mkern1.5mu f}\col (fg)^!\iso g^!f^!$, one for 
each composable pair $Z\xra{\lift.7,g,} X\xra{\lift1.05,f,} Y$, 
satisfying\vspace{.7pt} a natural  ``associativity" property
vis-\`a-vis any composable triple, see, e.g., \cite[3.6.5]{Lp2}.\vspace{1pt}

This pseudofunctor is uniquely determined up to isomorphism by the following
three properties:\vspace{1pt}

(i) If $f$ is essentially \'etale then $f^!$ is the usual restriction functor
$f^*$.

(ii) If $f$ is proper then $f^!$ is right-adjoint to $\Rf\mkern1mu$.

(iii) If in a fiber square $\Xi$ as in \eqref{square} the map $f$ (and hence
$h$) is proper and $u$~is essentially \'etale, then  the functorial 
\emph{base-change map}
\begin{equation}\label{beta}
\beta_{\>\Xi}(M)\col v^*\<\<f^!\<M\to h^!\<u^*M\qquad\big(M\in\Dqcpl(Y)\big),
\end{equation}
defined to be  adjoint to the natural composition
$$
\R\>h_* v^*\mkern-2mu f^!\<M\underset{\lift1.2,\theta_{\>\Xi}^{-\<1},}{\iso}
u^*\Rf f^!\<M
\longrightarrow u^*\<\<M,\\[1.5pt]
$$
is identical with the natural composite \emph{isomorphism}
$$
v^*\<\<f^!\<M= v^!\<f^!\<M
\iso (fv)^!M=(uh)^!M\iso
h^!\<u^!M  =h^!\<u^*M.
$$
For the existence of such a pseudofunctor, see \cite[section 5.2 ]{Nk2}.
  \end{chunk}

  \begin{chunk}\label{any base change}
Nayak's theorem \cite[5.3]{Nk2} (as elaborated in \cite[7.1.6]{Nk}) 
shows that one can associate, in a unique way, to \emph{every} fiber 
square~$\Xi$ as in \eqref{square} with~$u$ (and hence~$v$) flat, a functorial isomorphism 
  \[
\beta_{\>\Xi}(M)\col v^*\<\<f^!\<M\iso h^!\<u^*M \qquad (M\in\Dqcpl(Y))\,,
  \]
equal to~\eqref{beta} when $f$ is proper, and to the natural 
isomorphism \mbox{$v^*\<\<f^*\iso h^*u^*$} when $f$ is essentially \'etale.\looseness=-1
  \end{chunk}

\begin{chunk}\label{smooth ^!}
Generalizing (i)  in \S\ref{twisted inverse}, let $f\col X\to Y$ be essentially smooth, 
so that by \cite[16.10.2]{Gr4} the relative
differential sheaf $\>\Omega_f$  is locally free over $\OX$. 
On any connected component $W$ of~$X\<$, the 
rank of~$\>\Omega_f$ is a constant, denoted $d(W)$.

There is a \emph{functorial isomorphism}\vspace{1pt}
$$
f^!\<M\iso
\Shift^d\>\Omega^d_f\otimes_{\OX} f^* \<\<M\qquad \big(M\in\Dqc(Y)\big),\\[1pt]
$$
with 
$\Shift^d\>\Omega^d_f$ the complex whose restriction to any  $W$
is $\Shift^{d(W)}\lift2,\bwedge{}^{\lift1.7,\!d(W),}_{\lift.4,\>\mathcal O_W,}, \<\<\big(\Omega_f\big|_W\big)$.\vspace{3.5pt}

\noindent($\Shift$ is the usual translation automorphism of $\D(X)$; and $\lift2,\bwedge,$ denotes\vspace{1.5pt} ``exterior power.")\looseness=-1

To prove this, one may assume that $X$ itself  is connected, and set 
$d\!:= d(X)$.  Noting that the diagonal $\Delta\col X\to X\times_YX$
is defined locally by a regular seq\-uence of length $d$ 
(see Remark~\ref{diag}), so that
\mbox{$\Delta^!\mathcal O_{X\times_Y X}\Otimes{}\bL \Delta^* G\cong\Delta^!G$}
for all $G\in\Dqc(X\times_Y X)$ \cite[p.\,180, 7.3]{H}, one can imitate
the proof of \cite[p.\,397, Thm.\,3]{Ve}, where, in view of~(a) above,
one can drop the properness condition and take $U=X\<$, and where 
finiteness of Krull dimension is superfluous.
\end{chunk}

\begin{chunk}\label{sheaf duality}
The fact that $\beta_{\>\Xi}(M)$ in \eqref{beta}  is an isomorphism for
all $M$ whenever
$u$ is an open immersion and
$f$ is proper, is shown in \cite[\S4.6, part
V\kern.5pt]{Lp2} to be equivalent to \emph{sheafified duality,} which is that
\emph{for any proper $f\col X\to Y,$ and any
\mbox{$F\in\Dqc(X),$} $M\in\Dqcpl(Y),$  the natural composition,
in which the first map comes from \textup{\ref{_* and Hom},}}
\begin{equation}\label{duality iso}
\Rf\mathcal Hom_X(F,\,f^!\<M)\to
\R\mathcal Hom_Y(\Rf F,\,\Rf f^!\<M)\to
\R\mathcal Hom_Y(\R f_*F,M),
\end{equation}
\emph{is an isomorphism.}

Moreover, if the proper map $f$ has finite flat dimension, then sheafified duality holds for \emph{all} $M\in\Dqc(Y)$, see \cite[4.7.4]{Lp2}.\vspace{1pt}

If $f$ is a \emph{finite} map, then \eqref{duality iso} with $F=\OX$ determines the functor $f^!$. (See also \cite[\S2.2]{Co2}.) In particular, if $f\col\Spec B\to \Spec A$ corresponds to a finite  ring homomorphism $A\to B$, and $^\sim$ is the standard sheafification functor, then for an $A$-complex N, $f^!(N^\sim)$ is the $B$-complex
\begin{equation}
\label{finite}
f^!(N^\sim) = \R\textup{Hom}_A(B,N)^\sim,
\end{equation}
where $\R\textup{Hom}_A(B,-)$ denotes the right-derived functor of the functor~$\textup{Hom}_A(B,-)$ from $A$-modules to $B$-modules.
\end{chunk}

\section{Idempotent ideal sheaves}
\label{idempotent ideals}

\begin{definition}
\label{defidem}
 Let\/ $(X,\OX\<)$ be a local-ringed space, that is, $X$ is a topological space and\/  $\OX\<$ is a sheaf of commutative rings whose stalk at each point of $X$ is a local ring $($not necessarily noetherian$).$ An $\OX$-ideal is \emph{idempotent} if it is of finite type $($i.e., locally finitely generated$\>\>)$ 
and  satisfies the equivalent conditions in the next proposition.
\end{definition}

\begin{proposition}
\label{char idem} 
Let\/ $(X,\OX\<)$ be a local-ringed space. Consider the following conditions on an\/ $\OX$-ideal\/ $I$.
\begin{enumerate}[\quad\rm(i)]
\item
There is an\/ $a\in H^0(X,\OX\<)$ such that\/ $a^2=a$ and\/ $I=a\OX$.
\item[\quad{\rm(i$'$)}]  
The identity map of $I$ extends to an $\OX$-homomorphism
$\pi\col\OX\to I$.
\item
There is an open and closed $U\subseteq X$, with inclusion, say,
$i\col U\hookrightarrow X,$ and an $\OX$-isomorphism 
$i_*\mathcal O_U\simeq I$.
\item
The\/ $\OX$-module\/ $\OX/I$ is flat.
\item
For all\/ $\OX$-modules\/ $F\<,$ the natural map is an isomorphism $I\otimes_X F\iso IF.$
\item
For all\/ $\OX$-ideals\/ $J,$ $IJ=I\cap J$.
\item
$I^2=I$.
\end{enumerate}
One has the implications
$$
{\rm(i)}\iff(\textup{i}')\iff{\rm(ii)}\implies{\rm(iii)}\iff{\rm(iv)}\iff{\rm(v)}\implies{\rm(vi);}
$$
and if\/ $I$ is of finite type then\/ ${\rm(vi)}\implies{\rm(i).}$
\end{proposition}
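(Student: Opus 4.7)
The plan is to verify the implications
\[
(\mathrm{i}) \Leftrightarrow (\mathrm{i'}) \Leftrightarrow (\mathrm{ii}) \Rightarrow (\mathrm{iii}) \Leftrightarrow (\mathrm{iv}) \Leftrightarrow (\mathrm{v}) \Rightarrow (\mathrm{vi})
\]
in full generality and then close the circuit via $(\mathrm{vi}) \Rightarrow (\mathrm{i})$ under the finite-type hypothesis. I would organize the argument into three blocks: the initial equivalences, the homological core, and the Nakayama closure.

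For the first block, $(\mathrm{i}) \Leftrightarrow (\mathrm{i'})$ is a one-line identification: the extension $\pi$ is necessarily multiplication by $a := \pi(1)$, and $\OX$-linearity together with the extension property forces $a^2 = a$ and $I = a\OX$. For $(\mathrm{i}) \Rightarrow (\mathrm{ii})$, the key observation is that each stalk $a_x$ is an idempotent of the local ring $\mathcal O_{X,x}$, hence $a_x \in \{0,1\}$; the loci $U := \{x : (1-a)_x = 0\}$ and $V := \{x : a_x = 0\}$ are both open (zero-loci of sections are open in a sheaf), disjoint, and cover $X$, hence are open and closed, and a stalkwise comparison identifies $I \simeq i_*\mathcal O_U$. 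The converse extracts $a$ from the image of the global section $1 \in \mathcal O_U(U)$ under the given isomorphism.

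For the homological core, $(\mathrm{ii}) \Rightarrow (\mathrm{iii})$ is immediate: the decomposition $X = U \sqcup V$ splits $\OX \simeq i_*\mathcal O_U \oplus j_*\mathcal O_V$, so $\OX/I \simeq j_*\mathcal O_V$ is flat since each stalk is either $\mathcal O_{X,x}$ or $0$. The equivalence $(\mathrm{iii}) \Leftrightarrow (\mathrm{iv})$ is the standard Tor translation for $0 \to I \to \OX \to \OX/I \to 0$: the kernel of $I \otimes_X F \to F$ is $\operatorname{Tor}_1^{\OX}(\OX/I, F)$ and its image is $IF$, so $(\mathrm{iv})$ for all $F$ amounts to flatness of $\OX/I$. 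For $(\mathrm{iv}) \Rightarrow (\mathrm{v})$, I would specialize to $F = \OX/J$ and compare $I \otimes_X (\OX/J) \simeq I/IJ$ with $I \cdot (\OX/J) \simeq (I+J)/J \simeq I/(I \cap J)$, forcing $IJ = I \cap J$. For $(\mathrm{v}) \Rightarrow (\mathrm{iii})$, tensor $0 \to J \to \OX \to \OX/J \to 0$ with $\OX/I$: the long Tor sequence collapses under $(\mathrm{v})$ to give $\operatorname{Tor}_1^{\OX}(\OX/I, \OX/J) = 0$ for every sheaf ideal $J$, and a stalkwise argument combined with the standard flatness criterion (Tor-vanishing against cyclic modules) yields flatness of $\OX/I$. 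Setting $J = I$ in $(\mathrm{v})$ gives $(\mathrm{vi})$.

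For the Nakayama closure $(\mathrm{vi}) \Rightarrow (\mathrm{i})$, assume $I$ is of finite type. Each stalk $I_x$ is finitely generated with $I_x^2 = I_x$; if $I_x \subseteq \fm_x$, then $I_x = \fm_x I_x$ and Nakayama forces $I_x = 0$, while otherwise $I_x$ contains a unit and so $I_x = \mathcal O_{X,x}$. Finite type makes both $U := \{x : I_x = \mathcal O_{X,x}\}$ and $V := \{x : I_x = 0\}$ open in $X$, and since they partition $X$ they are open and closed; the section $a \in H^0(X, \OX)$ equal to $1$ on $U$ and $0$ on $V$ is the required idempotent generator. The principal obstacle in the argument is $(\mathrm{v}) \Rightarrow (\mathrm{iii})$: transferring a sheaf-ideal condition to stalkwise flatness requires care, since not every ideal of $\mathcal O_{X,x}$ arises as the stalk of a global sheaf ideal, but finitely generated local ideals can be realized after restricting to a sufficiently small neighborhood of $x$, which is enough to feed into the standard flatness criterion.
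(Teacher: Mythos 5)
Your proposal is correct and follows essentially the same strategy as the paper: stalkwise reduction to a local ring, Nakayama's lemma for the finite-type closure, and the Tor criterion for the flatness block. Two small remarks. First, you correctly flag the quantifier mismatch in $(\mathrm{v})\Rightarrow(\mathrm{iii})$ --- condition $(\mathrm{v})$ quantifies over $\OX$-ideals, while the stalkwise flatness criterion needs arbitrary finitely generated ideals $\mathfrak a\subseteq\mathcal O_{X,x}$. Your proposed resolution (realize $\mathfrak a$ over a small neighborhood $U$) needs one further step, since $(\mathrm{v})$ is a hypothesis about $X$ and not about $U$: having produced an $\mathcal O_U$-ideal $J'$ with $J'_x=\mathfrak a$, extend it to the $\OX$-ideal $J:=\ker\big(\OX\to i_*(\mathcal O_U/J')\big)$, where $i\colon U\hookrightarrow X$ is the inclusion; then $J|_U=J'$, so $J_x=\mathfrak a$, and applying $(\mathrm{v})$ to $J$ yields $I_x\mathfrak a=I_x\cap\mathfrak a$ as required. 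Second, in the Nakayama closure $(\mathrm{vi})\Rightarrow(\mathrm{i})$ the set $U=\{x:I_x=\mathcal O_{X,x}\}$ is open with no finite-type hypothesis (if $1\in I_x$ then $1\in I(W)$ for some open $W\ni x$, forcing $I|_W=\mathcal O_W$); finite type is needed only to make $V=\{x:I_x=0\}$ open. Finally, your direct proof of $(\mathrm{i})\Rightarrow(\mathrm{ii})$ via the zero-loci of $a$ and $1-a$ is a clean alternative to the paper's slightly more economical route, which feeds $(\mathrm{i})\Rightarrow(\mathrm{vi})$ into the finite-type implication $(\mathrm{vi})\Rightarrow(\mathrm{ii})$, noting that $I=a\OX$ is automatically of finite type.
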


\begin{proof} 
(i) ${}\Leftrightarrow{}$(i$'$). If  (i) holds, let $\pi$ be the map taking $1\in \textup{H}^0(X\<,\OX\<)$ to $a$. Conversely, given (i$'$), let $a=\pi(1)$.

(ii)${}\Rightarrow{}$(i). Let $a$ be the global section 
that is 1 over $U$ and 0 over $X\setminus U$.

(i)${}\Rightarrow{}$(vi). Trivial.

(vi)${}\Rightarrow{}$(ii) when $I$ is of finite type (whence 
(i)${}\Rightarrow{}$(ii) always). The support of $I$,
$U\!:=\{\,x\in X\mid I_x\ne 0\,\}$, is closed when $I$ is of finite type. For any 
$x\in U\<$, since $I_x$ is a finitely generated $\mathcal O_{\<\<X\<,\>x}$-ideal such that $I_x=I_x^2$, therefore Nakayama's lemma shows that  $I_x=\mathcal O_{\<\<X\<,\>x}$. So 
$X\setminus U=\{\,x\in X\mid \mathcal O_{\<\<X\<,\>x}/I_x\ne 0\,\}$ is closed, and thus
$U$~is open as well as closed.
Clearly, $I|_U=\mathcal O_U$ and $I|_{\<X\setminus U}=0$, whence
$I\simeq i_*\mathcal O_U$.

(i)${}\Rightarrow{}$(iii). If (i) holds then  the germ of $a$ at 
any $x\in X$ is 1 or 0, so 
$(\mathcal O/I)_x$ is either (0) or $\mathcal O_{\<\<X\<,\>x}$, both of which are flat
over $\mathcal O_{\<\<X\<,\>x}$.\vspace{1pt}

The remaining implications can be tested stalkwise, and so reduce to
the corresponding well-known implications for ideals $I$, $J\>$ in a
local ring $R$, and $R$-modules~$F\>$:\looseness=-1

(iii)${}\Rightarrow{}$(iv). The surjection
$
I\otimes_R F\tra IF\subseteq R\otimes_R F
$ 
has kernel $\textup{Tor}_1^R(R/I,F)=0$.

(iv)${}\Rightarrow{}$(v). $(I\cap J)/IJ$ is the kernel of the natural
injective (by (iv)) map
$$
R/IJ\cong I\otimes_R R/J\to R\otimes_R (R/J)=R/J.
$$

(v)${}\Rightarrow{}$(iii). Flatness of $R/I$ is implied by
injectivity, for all $R$-ideals $J$, of the natural map 
$J/IJ\cong J\otimes_R(R/I)\to R\otimes_R(R/I)=R/I$, with kernel $(I\cap J)/IJ$.

(v)${}\Rightarrow{}$(vi). Take $J=I$.
\end{proof}

\begin{corollary}
\label{idem and clopen}
{\rm(1)} Taking $a$ to $a\OX$ gives a bijection from the set of idempotent elements of $H^0(X,\OX\<)$ to the set of  idempotent\/ $\OX$-ideals.

{\rm(2)} There is a bijection that  associates to each idempotent\/ $\OX$-ideal
its support---an open-and-closed subset of $X$---and to each open-and-closed $U\subseteq X\<,$ with inclusion map $i,$ the unique idempotent
$\OX\<$-ideal isomorphic to $i_*\mathcal O_U,$ that is, the ideal whose restriction
to\/ $U$ is\/ $\mathcal O_U$ and to $X\setminus U$ is $(0)$.\qed
\end{corollary}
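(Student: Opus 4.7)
Both parts are intended to follow from Proposition~\ref{char idem} together with the observation that the only idempotents in a local ring are $0$ and $1$. I will handle the two assertions in turn.

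For (1), the assignment $a \mapsto a\OX$ takes values in idempotent ideals because $a\OX$ is of finite type and $(a\OX)^2 = a^2\OX = a\OX$, which is condition (vi); surjectivity is then the implication (vi)$\,\Rightarrow\,$(i) of Proposition~\ref{char idem}. For injectivity I will pass to stalks: if $a\OX = b\OX$ with $a,b$ idempotent in $H^0(X,\OX)$, then $a_x\mathcal{O}_{X,x} = b_x\mathcal{O}_{X,x}$ for every $x$; but each of $a_x,b_x$ equals $0$ or $1$ in the local ring $\mathcal{O}_{X,x}$, and the principal ideals they generate are distinct unless the idempotents themselves coincide. Hence $a_x = b_x$ at every point, so $a = b$.

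For (2), I will describe mutually inverse maps. An idempotent ideal $I$ is sent to $\Supp_X I$, which is open-and-closed, as established inside the proof of (vi)$\,\Rightarrow\,$(ii). Conversely, given an open-and-closed $U \subseteq X$ with complementary open-and-closed $V$ and corresponding inclusions $i$ and $j$, the canonical splitting $\OX = i_*\mathcal{O}_U \oplus j_*\mathcal{O}_V$ realizes $i_*\mathcal{O}_U$ as a sub-$\OX$-ideal of $\OX$, namely the one whose restriction to $U$ is $\mathcal{O}_U$ and to $V$ is $(0)$; equivalently, this is $a\OX$ for the unique global section $a \in H^0(X,\OX)$ that is $1$ on $U$ and $0$ on $V$, which is visibly idempotent. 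That these two operations are mutually inverse is then immediate from~(1): the ideal assigned to $U$ has support $U$, and any idempotent ideal $I = a\OX$ is recovered from $\Supp_X I = \{x : a_x = 1\}$ via the same recipe, since by~(1) the generator $a$ is the unique idempotent with this zero locus.

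I expect no substantive obstacle: the whole content is a matter of marshalling the equivalences already established in Proposition~\ref{char idem} and identifying idempotent ideals with their canonical realizations as subsheaves of $\OX$ via the complementary-summand decomposition above.
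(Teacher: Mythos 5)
Your proof is correct and follows the route the paper itself takes implicitly: the corollary is stated with \textsf{\qed} and no written argument, precisely because the authors regard it as an immediate unwinding of Proposition~\ref{char idem} (conditions (i), (ii), (vi)) together with the observation that the only idempotents in a local ring are $0$ and $1$. You have simply made that routine verification explicit, and every step checks out.
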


\begin{corollary}\label{idem and Hom}
A finite-type\/ $\OX$-ideal\/ $I$ is idempotent if and only if for each $G\in\D(X)$ there exist $\D(X)$-isomorphisms, functorial in $G,$
$$
\RH_{\<X}(I,G)\simeq I\dtensor{\<\<X}G\simeq IG.
$$
\end{corollary}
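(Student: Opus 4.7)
My plan is to exploit the long list of equivalent characterizations of idempotency recorded in Proposition~\ref{char idem}, handling the two implications by quite different means.

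For the direction \emph{``idempotent implies the isomorphisms exist,''} I would begin with the observation, read off Proposition~\ref{char idem}, that locally (in the Zariski topology) $I$ is either $\OX$ or $(0)$; in either case it is a direct summand of $\OX$, and in particular $I$ is a \emph{perfect} $\OX$-complex. A stalkwise check then identifies the canonical map $I \to \RH_X(I, I) \to \RH_X(I, \OX)$ (the latter induced by the inclusion $I \hookrightarrow \OX$) as an isomorphism: locally it is either the identity of $\OX$ or the zero map $0 \to 0$. Having the global isomorphism $\RH_X(I, \OX) \simeq I$ in hand, Lemma~\ref{RHom and perfect}(1) applied with $E = I$ perfect and $F = \OX$ yields, for every $G \in \D(X)$ and functorial in $G$,
\[
\RH_X(I, G) \;\simeq\; \RH_X(I, \OX) \dtensor{X} G \;\simeq\; I \dtensor{X} G.
\]

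For the remaining isomorphism $I \dtensor{X} G \simeq IG$, I would write $I \simeq i_*\mathcal O_U$ with $i\colon U \hookrightarrow X$ the inclusion of an open-and-closed subset (Proposition~\ref{char idem}(ii)), and invoke the projection formula~\eqref{projection} to obtain $I \dtensor{X} G \simeq i_*(\mathcal O_U \dtensor{U} i^*G) \simeq i_* i^* G$, which one identifies with $IG$. Equivalently, since $I$ is flat by Proposition~\ref{char idem}(iii), the derived tensor product agrees termwise with the ordinary tensor product, and Proposition~\ref{char idem}(v) gives the required termwise isomorphism.

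For the converse, assuming the isomorphisms hold for all $G$, I would specialize to $G = \OX/I$. The natural multiplication $I \otimes_{\OX}(\OX/I) \to \OX/I$ is identically zero, so the subcomplex $I(\OX/I)$ is zero; the hypothesis thus yields $I \dtensor{X}(\OX/I) \simeq 0$ in $\D(X)$, and in particular
\[
0 \;=\; H^0\bigl(I \dtensor{X}(\OX/I)\bigr) \;=\; I \otimes_{\OX}(\OX/I) \;=\; I/I^2.
\]
Thus $I^2 = I$, and because $I$ is of finite type by hypothesis, the implication (vi)${}\Rightarrow{}$(i) of Proposition~\ref{char idem} finishes the job.

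The step I expect to demand the most care is pinning down the canonical identification $\RH_X(I, \OX) \simeq I$ in a way that makes the composite in $G$ manifestly functorial. Once this piece is nailed down, the rest of the argument is a straightforward concatenation of Lemma~\ref{RHom and perfect}, the projection formula, and the elementary equivalences already packaged in Proposition~\ref{char idem}.
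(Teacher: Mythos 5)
Your proof is correct and follows essentially the same route as the paper's, with minor variations. For the forward direction the paper simply observes that over $U := \Supp_X I$ one has $I|_U = \mathcal O_U$ and over $X\setminus U$ one has $I \simeq 0$, so all three objects restrict to $G|_U$ on the first piece and to $0$ on the second; your more elaborate route through Lemma~\ref{RHom and perfect}(1) and the projection formula arrives at the same place and is sound. For the converse, the paper applies $\RH_X(I,-)$ to the triangle $I\to\OX\to\OX/I$ and reads off (using the stated functoriality in $G$) that the natural map $I^2\to I$ is an isomorphism; your version, which specializes the hypothesis $I\dtensor{\<\<X}G\simeq IG$ at $G=\OX/I$ and simply computes $H^0(I\dtensor{\<\<X}\OX/I)\cong I/I^2=0$, is a slightly more direct way to reach $I=I^2$ and has the small advantage of not leaning on the functoriality of the identification.
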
 

\begin{proof} If $I$ is idempotent then over the open set $U\!:=\Supp_{\<X}I$ one has
$I=\mathcal O_U$, and over the disjoint open set $X\setminus U$, $I\simeq 0$, 
so the asserted isomorphisms obviously exist  over $X=U\sqcup (X\setminus U)$.

Conversely, if   these isomorphisms hold for all members of the natural triangle
$$
I\to\OX\to \OX/I \xra{+}
$$
then, since $I(\OX/I)=0$, application of the functor $\RH(I,-)$ yields that the natural
map is an isomorphism $I\simeq I^2$ in $\D(X)$, hence in $\OX$, i.e., $I=I^2$. 
\end{proof}

\begin{corollary}\label{derived idem}
Let\/ $X$ be a locally noetherian scheme.
For a complex\/ $L\in\D(X)$ the following conditions are equivalent. 

{\rm(i)} $L$ is isomorphic in\/ $\D(X)$ to an idempotent\/ $\OX$-ideal.

{\rm(ii)} $L\in\dcatc X$ and there exists a\/ $\D(X)$-isomorphism 
$L\dtensor{\<\<X}L\iso L$.
\end{corollary}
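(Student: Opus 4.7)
The implication (i)$\Rightarrow$(ii) is immediate: if $L\simeq I$ for an idempotent $\OX$-ideal $I$, then $I$ is coherent (as $X$ is locally noetherian and $I$ is of finite type), so $L\in\dcatc X$; and Corollary~\ref{idem and Hom} applied with $G=I$ gives $L\dtensor{\<\<X}L\simeq I\dtensor{\<\<X}I\simeq I^2=I\simeq L$.

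For (ii)$\Rightarrow$(i), I would first reduce to stalks. Fixing $x\in X$, set $R:=\mathcal O_{X,x}$, $k:=R/\fm$, and $M:=L_x\in\dcatb R$; the hypothesis passes to $M\dtensor{R}M\simeq M$ in $\D(R)$. Base-changing along $R\to k$, the complex $V:=M\dtensor{R}k$ satisfies $V\otimes_k V\simeq V$ in $\D(k)$. Since $V$ is quasi-isomorphic to its cohomology $H^*(V)$ (everything is projective over a field), the Laurent polynomial $\chi(t):=\sum_n(\dim_k H^n V)\,t^n$ satisfies $\chi(t)^2=\chi(t)$ by K\"unneth, forcing $\chi(t)\in\{0,1\}$ in the integral domain $\mathbb Z[t,t^{-1}]$. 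Thus either $V=0$ or $V\simeq k$ concentrated in degree~$0$. In the first case, Nakayama for complexes---namely $H^s(M\dtensor{R}k)=H^s(M)\otimes_R k$ for $s=\sup M$, which is nonzero whenever $H^s(M)$ is---forces $M=0$. In the second case, $\operatorname{Tor}_i^R(M,k)=0$ for all $i\ne 0$, so $M$ has finite projective dimension, and in its minimal free resolution the rank in degree $n$ equals $\dim_k H^n V$, so the resolution collapses to $R$ in degree $0$ and $M\simeq R$ in $\D(R)$.

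To globalize: the stalkwise result $L_x\simeq 0$ or $L_x\simeq\mathcal O_{X,x}$ shows that $H^i(L)=0$ as a sheaf for every $i\ne 0$, so $L\simeq N:=H^0(L)$ in $\D(X)$ for some coherent $\OX$-module $N$ with $N_x\in\{0,\mathcal O_{X,x}\}$ at every $x$. The support $U:=\Supp_X N$ is closed by coherence and open by local triviality: for $x\in U$, lift a generator of $N_x$ to a section $s$ of $N$ on a neighborhood $V$ of $x$; the induced map $\mathcal O_V\to N|_V$ is a stalk isomorphism at $x$ (a surjection of a noetherian local ring onto itself is an isomorphism) and hence an isomorphism on a smaller neighborhood by coherence. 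Thus $U$ is open-and-closed, $N|_U$ is invertible, and flatness of stalks lets us identify $N\otimes_X N\simeq N\dtensor{\<\<X}N\simeq N$; restricting to $U$ and tensoring with $(N|_U)^{-1}$ yields $N|_U\simeq\mathcal O_U$. Combined with the vanishing of $N$ off $U$, this gives $N\simeq i_*\mathcal O_U$ where $i\colon U\hra X$ is the inclusion, which by Corollary~\ref{idem and clopen} is isomorphic to an idempotent $\OX$-ideal.

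The main obstacle will be the local step---showing that over a local noetherian ring $R$, any $M\in\dcatb R$ with $M\dtensor{R}M\simeq M$ is quasi-isomorphic to $0$ or $R$. The Poincar\'e-polynomial identity $\chi^2=\chi$ for $V=M\dtensor{R}k$ is the decisive input; once $V$ is reduced to one of two trivial shapes, the conclusion for $M$ via its minimal free resolution and the subsequent sheaf-theoretic gluing via Corollary~\ref{idem and clopen} are routine.
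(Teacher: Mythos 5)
Your proof of (i)$\Rightarrow$(ii) is exactly the paper's: apply Corollary~\ref{idem and Hom} with $G=I$. For (ii)$\Rightarrow$(i), however, the paper simply invokes the affine result \cite[4.9]{AIL} and says the conclusion ``follows easily,'' whereas you give a self-contained argument; the two routes differ genuinely. Your strategy---reduce to stalks, base-change to the residue field, read off $\chi^2=\chi$ for the graded Euler series over $k$, conclude $V\simeq 0$ or $V\simeq k$, then lift via minimal free resolutions, and finally glue sheaf-theoretically using coherence and Corollary~\ref{idem and clopen}---is correct and has the advantage of not presupposing the affine classification; it makes visible exactly which facts (Nakayama, minimality of resolutions, openness of the free locus of a coherent sheaf) carry the weight, at the cost of redoing work that \cite[4.9]{AIL} packages.

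One small correction in the local step: since a priori $M=L_x$ need not have finite flat dimension over $R=\OX{}_{\<\<,x}$, the complex $V=M\dtensor R k$ is only bounded \emph{above}, so $\chi(t)=\sum_n(\dim_k H^nV)\,t^n$ is a Laurent \emph{series} in the ring $\BZ((t^{-1}))$ (each coefficient is finite because $H^*(M)$ is finitely generated over a noetherian local ring), not a Laurent polynomial. This costs nothing: $\BZ((t^{-1}))$ is still an integral domain, so $\chi^2=\chi$ forces $\chi\in\{0,1\}$, and the rest of your argument goes through unchanged---and indeed \emph{a posteriori} $V$ is concentrated in one degree, hence bounded. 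With that adjustment, the globalization you carry out (openness and closedness of $\Supp_X H^0(L)$, identification of $H^0(L)|_U$ with $\mathcal O_U$ by tensoring with the inverse line bundle, then Corollary~\ref{idem and clopen}) is complete and correct.
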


\begin{proof} If (i) holds then $L\in\dcatc X$ is clear; and taking $G=I$ 
in~\ref{idem and Hom}, one gets (ii). 

When (ii) holds,  (i) follows easily from \cite[4.9]{AIL}.
\end{proof}

\begin{proposition}\label{g*idem}
Let\/ $g\col Z\to X$ be a morphism of local ringed spaces
$($so that for each\/ $z\in Z$ the associated stalk homomorphism
$\mathcal O_{\<\<X\<\<,\>gz}\to\mathcal O_{\<Z,\>z}$ is a local homomorphism
of local rings$)$.  Let\/ $I$ be an\/ $\OX$-ideal.
If\/ $I$ is idempotent then so is $I\mathcal O_{\<Z}\cong g^*\<I\simeq\bL g^*\<I$.  The converse  holds if\/ $g$ is flat and surjective. 
\end{proposition}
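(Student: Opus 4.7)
The plan is to reduce both implications to the equivalent characterizations recorded in Proposition~\ref{char idem}, chiefly (i) (generation by a global idempotent section) and (vi) combined with the finite-type hypothesis ($I^2=I$ plus finite generation).

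For the forward direction, since $I$ is idempotent, Proposition~\ref{char idem}(i) provides $a\in H^0(X,\OX)$ with $a^2=a$ and $I=a\OX$. Its pullback $b:=g^*(a)\in H^0(Z,\mathcal O_Z)$ satisfies $b^2=b$, and the $\mathcal O_Z$-ideal $I\mathcal O_Z$ equals $b\mathcal O_Z$, so Proposition~\ref{char idem}(i) applied on $Z$ shows that $I\mathcal O_Z$ is idempotent. The identifications $I\mathcal O_Z\cong g^*I\simeq\bL g^*I$ follow from flatness of $I$ (Proposition~\ref{char idem}(iii)): flatness of $I$ makes $\bL g^*I\iso g^*I$, and flatness of the cokernel $\OX/I$ makes the pullback of the inclusion $I\hookrightarrow\OX$ remain injective, so $g^*I\to g^*\OX=\mathcal O_Z$ has image~$I\mathcal O_Z$.

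For the converse, assume $g$ is flat and surjective and that $I\mathcal O_Z$ is idempotent. The square of the $\mathcal O_Z$-ideal $I\mathcal O_Z$ equals $I^2\mathcal O_Z$, so idempotence yields $I^2\mathcal O_Z=I\mathcal O_Z$. Stalkwise flatness of $g$ makes the functor $(-)\otimes_{\OX}\mathcal O_Z$ exact, so the canonical sequence $0\to I^2\to I\to I/I^2\to 0$ remains exact after tensoring with $\mathcal O_Z$; hence $(I/I^2)\otimes_{\OX}\mathcal O_Z=I\mathcal O_Z/I^2\mathcal O_Z=0$, and faithful flatness at each stalk (supplied by surjectivity of $g$) forces $I/I^2=0$. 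The finite-type condition on $I$ descends in similar fashion: using the identification $g^*I\cong I\mathcal O_Z$ coming from the forward part, local finite generating sets of $I\mathcal O_Z$ can be lifted through $g^*$ to local finite generating sets of~$I$ over the image $g(Z)=X$. Proposition~\ref{char idem}(vi)$\Rightarrow$(i) then concludes that $I$ is idempotent.

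The only nontrivial point is the descent of the finite-type property of~$I$; in the scheme setting this is a routine instance of faithfully flat descent, and in the general local-ringed-space setting it is handled stalk by stalk, together with the observation that (by flatness) sections of $\mathcal O_Z$ generating $I\mathcal O_Z$ on a neighborhood of a point can be chosen to be pullbacks of sections of~$I$, which then generate $I$ on a suitable open neighborhood of the image point.
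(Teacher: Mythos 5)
Your proof is correct and follows essentially the same route as the paper's: the forward direction is immediate from Proposition~\ref{char idem} either via condition (vii) (as the paper does, observing $I=I^2\Rightarrow I\mathcal O_{Z}=(I\mathcal O_{Z})^2$) or via condition (i) (pulling back the global idempotent section, as you do), and your converse---pulling back $0\to I^2\to I\to I/I^2\to 0$ and invoking stalkwise faithful flatness---is the same argument the paper makes by checking directly that $I_x\mathcal O_{Z,z}=I_x^2\mathcal O_{Z,z}$ forces $I_x=I_x^2$. One small caveat: the paper's converse (like the core of yours) only establishes $I=I^2$ and does not address descent of the finite-type condition; your closing paragraph raises this point but the argument offered is not a complete proof in the stated generality of local-ringed spaces, so treat it as an aside rather than a necessary step, and note also that the identification $g^{*}I\cong I\mathcal O_Z$ in the converse should be justified by flatness of $g$ (so that $g^{*}$ preserves the injection $I\hookrightarrow\OX$) rather than by the flatness of $I$ used in the forward direction.
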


\begin{proof} If $I=I^2$ then $I\mathcal O_{\<Z}=(I\mathcal O_{\<Z})^2$. Flatness of $\OX/I$ implies that
$I$ is flat and that the natural map $g^*\<I \to g^*\OX=\mathcal O_{\<Z}$ is injective, and thus
$\bL g^*\<I\simeq  g^*\<I\cong I\mathcal O_{\<Z}$.

If $g$ is flat and surjective then for each $x\in X$ there is a $z\in Z$ such that $g(z)=x$, and then there is a flat local homomorphism\vspace{.5pt} 
$\mathcal O_{\<\<X\<\<,\>x}\to\mathcal O_{\<Z,\>z}$. 
Hence if $I\mathcal O_{\<Z}=(I\mathcal O_{\<Z})^2$ then 
$I_xO_{\<Z,\>z}=I_x^2O_{\<Z,\>z}$,
i.e., $I_x=I_x^2$. As this holds for all $x$, therefore $I=I^2$.
\end{proof}

\begin{corollary}\label{lift mult idem}
Let\/ $g\col Z\to X$ be a morphism of local ringed spaces, and $I$ an idempotent $\OX$-ideal.
  \begin{enumerate}[\quad\rm(1)]
    \item
For any\/ $E\in\D(X),$ there is a unique isomorphism 
$\bL g^*\<(IE)\simeq I\>\bL g^*\<\<E$ whose composition with the natural
map $I\>\bL g^*\<\<E\to \bL g^*\<\<E$ is the map obtained by applying\/
$\bL g^*$ to the natural map $IE\to E$.
    \item
If\/ $g$ is a perfect scheme-map then\vspace{-1.5pt} for any\/ $E\in\Dqcpl(X),$
there exists a unique isomorphism 
$g^!(IE)\simeq I\>g^!\<\<E$ whose\vspace{.7pt} composition with the natural
map \mbox{$I\>g^!\<\<E\to g^!\<\<E$} is the map obtained by applying~
$g^!$ to the natural map\/ $IE\to E$.
  \end{enumerate}
\end{corollary}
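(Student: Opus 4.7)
Both parts follow formally from the preceding machinery; the main task is to keep track of natural identifications.

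For (1), Corollary \ref{idem and Hom} gives a functorial isomorphism $IE\iso I\dtensor{X}E$ that identifies the natural map $IE\to E$ with $I\dtensor{X}E\to\OX\dtensor{X}E\simeq E$ induced by $I\hookrightarrow\OX$. Applying $\bL g^*$, using the tensor-compatibility \eqref{^* and tensor}, and invoking Proposition \ref{g*idem} to rewrite $\bL g^*I\simeq g^*I\cong I\OZ$ as an idempotent $\OZ$-ideal, yields
\[
\bL g^*\<\<(IE)\iso \bL g^*I\dtensor{Z}\bL g^*\<\<E \iso I\OZ\dtensor{Z}\bL g^*\<\<E.
\]
A second application of Corollary \ref{idem and Hom}, now over $Z$, identifies the last term with $(I\OZ)\bL g^*\<\<E$, i.e.\ with $I\bL g^*\<\<E$; by naturality, post-composing with the inclusion $I\bL g^*\<\<E\hookrightarrow\bL g^*\<\<E$ recovers $\bL g^*$ applied to $IE\to E$.

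For (2), by Corollary \ref{idem and clopen} one has $I\cong i_*\mathcal O_U$ for the inclusion $i$ of an open-and-closed $U\subseteq X$; hence $I$ is flat and $I\dtensor{X}E\simeq IE$ is a direct summand of $E$ in $\Dqcpl(X)$. Since $g$ is perfect, the isomorphism \eqref{!tensoriso} of Remark \ref{char perfect}, applied with $M=E$ and $N=I$, gives
\[
g^!\<\<E\dtensor{Z}\bL g^*I \iso g^!(E\dtensor{X}I)\simeq g^!(IE).
\]
Proposition \ref{g*idem} together with another application of Corollary \ref{idem and Hom} over $Z$ then identifies the source with $I\,g^!\<\<E$, and naturality of \eqref{!tensoriso} with respect to the map $I\to\OX$ shows that composition with $I\,g^!\<\<E\to g^!\<\<E$ is $g^!$ applied to $IE\to E$.

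For uniqueness in both parts, Proposition \ref{g*idem} and Corollary \ref{idem and clopen} identify $I\OZ$ with the idempotent $\OZ$-ideal supported on the open-and-closed subset $g^{-1}U\subseteq Z$; hence for any $F\in\D(Z)$ the inclusion $IF\hookrightarrow F$ is a split monomorphism (the retraction being restriction-and-extension along $Z=g^{-1}U\sqcup g^{-1}(X\setminus U)$). If $\phi_1,\phi_2$ are two isomorphisms into $IF$ with the same composition to $F$, then $\phi_1\phi_2^{-1}$ is an automorphism of $IF$ whose composition with this split mono equals the split mono itself, forcing $\phi_1\phi_2^{-1}=\id$. The only real obstacle is the clerical one of checking that every diagram of natural transformations implicit above actually commutes; no new idea beyond the existing machinery is needed.
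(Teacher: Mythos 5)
Your proof is correct and follows essentially the same route as the paper's: for both parts the key points are that $I$ (and hence $\bL g^*I\simeq g^*I\cong I\mathcal O_Z$) is flat and idempotent so that $\bL g^*I\dtensor{\<Z}F\simeq IF$, and that $g^!$ is compatible with $\dtensor{}$ via Remark~\ref{char perfect} when $g$ is perfect. Your uniqueness argument via the split monomorphism $IF\hookrightarrow F$ is also the one used in the paper (phrased there as $I\,\bL g^*E$ being a direct summand of $\bL g^*E$).
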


\begin{proof}
Uniqueness holds because, $I\mathcal O_{\<Z}$ being idempotent, $I\bL g^*\<\<E\simeq I\mathcal O_{\<Z}\otimes_Z \bL g^*\<\<E$ is a direct summand
of $\mathcal O_{\<Z}\otimes_Z \bL g^*\<\<E\simeq \bL g^*\<\<E$ (Proposition~\ref{char idem}, (iv) and (i$'$)).

Since both $I$ and $\OX/I$ are flat over $\OX$, there are for all $F\in \D(X)$
natural isomorphisms
$
\bL g^*\<I\dtensor{\<Z} F\simeq  g^*\<I\otimes_Z F\cong IF.
$
So for all $E\in\D(X)$,
$$
\bL g^*\<(IE)\simeq\bL g^*\<(I\dtensor{\<\<X}E)\simeq 
\bL g^*\<I\dtensor{\<Z} \bL g^*\<\<E\simeq I\>\bL g^*\<\<E.
$$
The composition of these isomorphisms has the property asserted in (1).

Similarly, if $g$ is a perfect scheme-map then, using Theorem~\ref{char perfect}, one gets natural isomorphisms for all $E\in\Dqcpl(X)$,
$$
\qquad g^!(IE)\simeq g^!(I\dtensor{\<\<X}E)\simeq 
\bL g^*\<I\dtensor{\<Z} \bL g^*\<\<E\dtensor{\<Z}g^!\OX\simeq
\bL g^*\<I\dtensor{\<Z} g^!E\simeq I\>g^!E,\qquad
$$
that compose to the isomorphism needed for (2).
\end{proof}

The next result is to the effect that \emph{idempotence satisfies faithfully flat descent} (without any ``cocycle condition").

\begin{proposition}
\label{idem gluing} 
Let\/ $g\col Z\to X$ be a faithfully flat map, and let $\pi_1\col
Z\times_X Z\to Z$ and $\pi_2\col Z\times_X Z\to Z$ be the canonical
projections. If\/ $J$ is an idempotent $\mathcal O_{\<Z}$-ideal such
that there exists an isomorphism $\pi_1^*J\cong\pi_2^*J$ then there is
a unique idempotent\/ $\OX$-ideal such that\/ $J=I\mathcal O_{\<Z}$.
\end{proposition}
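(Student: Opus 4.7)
The plan is to reduce the statement to faithfully flat descent for sections of $\mathcal{O}_X$, using the bijection between idempotent ideals and idempotent global sections provided by Proposition \ref{char idem}(i).

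By Proposition \ref{char idem}(i), write $J = b\,\mathcal{O}_{\<Z}$ for a unique idempotent $b \in H^0(Z, \mathcal{O}_{\<Z})$; then the pullbacks $\pi_1^*J = (\pi_1^* b)\mathcal{O}_{\<Z\times_X Z}$ and $\pi_2^*J = (\pi_2^* b)\mathcal{O}_{\<Z\times_X Z}$ are again idempotent ideals, by Proposition \ref{g*idem}. Any isomorphism $\pi_1^*J \cong \pi_2^*J$ preserves supports, and by the bijection in Corollary \ref{idem and clopen} between idempotent ideals and open-and-closed subsets (sending an ideal to its support), two idempotent ideals with the same support must coincide as subsheaves of the structure sheaf. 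Hence $\pi_1^*J = \pi_2^*J$ in $\mathcal{O}_{\<Z\times_X Z}$. A short ring-theoretic observation---two idempotents $b_1, b_2$ of a commutative ring with $b_1 R = b_2 R$ satisfy $b_1 = b_1 \cdot b_2 u = b_2$ after writing $b_1 = b_2 u$ and using $b_1^2 = b_1$, and symmetrically---applied to global sections then yields $\pi_1^* b = \pi_2^* b$ in $H^0(Z\times_X Z, \mathcal{O}_{\<Z\times_X Z})$.

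Next I would invoke faithfully flat descent for the quasi-coherent sheaf $\mathcal{O}_X$, which gives exactness of the sequence
\[
H^0(X, \mathcal{O}_{\<\<X}) \xrightarrow{\,g^*\,} H^0(Z, \mathcal{O}_{\<Z}) \rightrightarrows H^0(Z\times_X Z, \mathcal{O}_{\<Z\times_X Z}).
\]
So there is a unique $a \in H^0(X, \mathcal{O}_{\<\<X})$ with $g^* a = b$. The relation $g^*(a^2 - a) = b^2 - b = 0$ combined with injectivity of $g^*$ on sections (a consequence of faithful flatness) forces $a^2 = a$. Setting $I := a\,\mathcal{O}_{\<\<X}$ produces an idempotent ideal (by Proposition \ref{char idem}(i)) with $I\mathcal{O}_{\<Z} = b\,\mathcal{O}_{\<Z} = J$. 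Uniqueness of $I$ follows the same pattern: if $I' = a'\mathcal{O}_{\<\<X}$ also satisfies $I'\mathcal{O}_{\<Z} = J$, then $g^*a'$ and $b$ are idempotent generators of $J$, hence equal by the ring-theoretic argument, and injectivity of $g^*$ gives $a' = a$.

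The one delicate point will be the upgrade from a mere isomorphism $\pi_1^*J \cong \pi_2^*J$ to the equality of subsheaves $\pi_1^*J = \pi_2^*J$; everything hinges on Corollary \ref{idem and clopen}, which tells us that the isomorphism class of an idempotent ideal (more precisely, its support) determines the ideal itself. Once this is in hand, the rest is a routine combination of the idempotent--generator uniqueness in a ring and the standard faithfully flat descent of global sections of $\mathcal{O}_X$.
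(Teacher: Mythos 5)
Your proof is correct, but it follows a genuinely different route from the paper's. The paper never upgrades the hypothesis $\pi_1^*J\cong\pi_2^*J$ to an equality of subsheaves: it extracts from the isomorphism only the equality of supports $V\times_X Z = Z\times_X V$ (where $V:=\Supp_Z J$), deduces $V=g^{-1}g(V)$, and then carries out a hands-on topological argument---lifting generizations along the flat map $g$, and working with generic points of irreducible components---to show that $g(V)$ is open and closed in~$X$; the descended ideal $I$ is the one with support $g(V)$. You instead make the sharper preliminary observation that since $\pi_1^*J$ and $\pi_2^*J$ are idempotent ideals (by Proposition~\ref{g*idem}) with equal support, the bijection of Corollary~\ref{idem and clopen} already forces the honest equality $\pi_1^*J=\pi_2^*J$, hence $\pi_1^*b=\pi_2^*b$ for the idempotent generator $b$; then you descend $b$ via the equalizer exact sequence for global sections of $\mathcal{O}_X$ under the faithfully flat cover $g$. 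This is shorter and more structural, at the price of invoking fpqc descent for quasi-coherent sheaves as a black box; that is legitimate here because the paper's standing conventions (noetherian schemes, essentially-finite-type separated maps) guarantee $g$ is quasi-compact, which the descent theorem requires, whereas the paper's argument stays elementary and self-contained. One small wrinkle: your inline justification that two idempotents $b_1,b_2$ with $b_1R=b_2R$ must coincide is garbled as written; the clean computation is that $b_1=b_2u$ gives $b_2b_1=b_2^2u=b_2u=b_1$, and symmetrically $b_1b_2=b_2$, whence $b_1=b_2b_1=b_1b_2=b_2$.
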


\begin{proof} 
(Uniqueness.) If $J=I\mathcal O_{\<Z}=I'\mathcal O_{\<Z}$ where $I$ and
$I'$ are idempotent $\OX$-ideals with respective supports $U$ and $U'\<$,
then $g^{-1}U=g^{-1}U'\<$ (both being the support of~$J$), and since $g$
is surjective, therefore $U=U'\<$, so $I=I'$.

(Existence.) Let $V$ be the support of $J$. The support of $\pi_1^*J$ is
$\pi_1^{-1}V=V\times_X Z$, and similarly that of $\pi_1^*J$ is $Z\times_X V$.
Hence, since $\pi_1^*J\cong\pi_2^*J$, the following subsets of $Z\times_X Z$ are all the same:
$$
V\times_X Z = Z\times_X V =(V\times_X Z) \cap (Z\times_X V)= V\times_X V.
$$

If $v\in V$ and $w\in Z$ are such that $g(v)=g(w)$, then there is a
field $K$ and a map $\gamma\col\Spec K\to V\times_X Z=V\times_X V$
such that the set-theoretic images of $\pi_1\gamma$ and~$\pi_2\gamma$
are $v$ and~$w$ respectively,  so $w\in V$. Thus $V=g^{-1}g(V)$.

We claim that  $g(V)$ \emph{is open and closed in} $X$.  For this it
suffices to show that for each connected component $X'\subseteq X$,
$g(V\cap \>g^{-1}X')=X'$.  Without loss of generality,  then, we may
assume that $X$ is connected, so $X'=X$.

Since $g$ is flat, if $y\in g(V)$ then the generic point $x_1$ of any
irreducible component~ $X_1$ of $X$ containing $y$ is also in $g(V)$. In
fact $X_1\subseteq g(V)$, else the preceding argument applied to $\bar
V\!:=Z\setminus V$ would show that $x_1\in g(\bar V)=X\setminus g(V)$.
It results that some open neighborhood of $y$ is in $g(V)$; and thus
$g(V)$ is open. Similarly, $g(\bar V)=X\setminus g(V)$ is open, so $g(V)$
is closed.

The conclusion follows, with $I$ the idempotent $\OX$-ideal corresponding
to the open-and-closed set $g(V)\subseteq X$.  \end{proof}


\begin{thebibliography}{99}


\bibitem{AB} M.\,Auslander, M.\,Bridger,
\emph{Stable module theory},
Memoirs Amer. Math. Soc. \textbf{94},
Amer. Math. Soc., Providence, R.I.; 1969.

\bibitem{AF:Gor}
L.\,L.\,Avramov, H.-B.\,Foxby,
\emph{Locally Gorenstein homomorphisms},
Amer. J. Math.  \textbf{114} (1992), 1007--1047.

\bibitem{AF:qG}
L.\,L.\,Avramov, H.-B.\,Foxby,
\emph{Ring homomorphisms and finite Gorenstein dimension},
Proc.  London Math. Soc. (3) \textbf{75} (1997), 241--270.

\bibitem{AI} L.\,L.\,Avramov, S.\,B.\,Iyengar,
\emph{Gorenstein algebras and Hochschild cohomology},
Michigan Math. J. \textbf{57} (2008), 17--35.

\bibitem{AIL} L.\,L.\,Avramov, S.\,B.\,Iyengar, J.~Lipman,
\emph{Reflexivity and rigidity for complexes, I: Commutative rings},
Algebra Number Theory \textbf{4} (2010), 47--86.

\bibitem{AILN} L.\,L.\,Avramov, S.\,B.\,Iyengar, J.~Lipman, S.~Nayak,
\emph{Reduction of derived Hochschild cohomology over commutative
algebras and schemes}, Adv. Math. \textbf{223} (2010), 735--772.

\bibitem{BN} M.\,B\"okstedt, A.\,Neeman,
\emph{Homotopy limits in triangulated categories},
Compositio Math.\ \textbf{86} (1993), 209--234.

\bibitem{Ch1} L.\,W.\,Christensen \emph{Gorenstein dimensions},
Lecture Notes in Math.\ \textbf{1747},
Springer, Berlin, 2000.

\bibitem{Ch2} L.\,W.\,Christensen,
\emph{Semidualizing complexes and their Auslander categories},
Trans. Amer. Math. Soc.\ \textbf{353} (2001), 1839--1883.

\bibitem{Co1} B.\,Conrad, \emph{Grothendieck duality and base change},
Lecture Notes in Math.\ \textbf{1750},
Springer, Berlin, 2000.

\bibitem{Co2} B.\,Conrad, \emph{Deligne's notes on Nagata compactifications},
J. Ramanujan Math.\ Soc.  \textbf{22}  (2007),  205--257.

\bibitem{De} P.\,Deligne, \emph{Cohomologie \`a supports propres
et construction du foncteur $f^{!}$}, Residues and Duality,
Lecture Notes in Math.\ \textbf{20},
Springer-\kern-.5pt Verlag, New York, 1966; pp. 404--421.

\bibitem{FST} A.\,Frankild, S.\,Sather-Wagstaff, A.\,Taylor,
\emph{Relations between semidualizing complexes},
J. Commutative Algebra \textbf{1} (2009), 393--436.

\bibitem{Gr4} A.\,Grothendieck, J.\,Dieudonn\'e,
\emph{\'El\'ements de g\'eom\'etrie alg\'ebrique. IV:
\'Etude locale des sch\'emas et des morphismes de sch\'emas, Quatri\`eme partie},
Publ. Math. I.H.E.S. \textbf{32} (1967).

\bibitem{H} R.\,Hartshorne, \emph{Residues and Duality},
Lecture Notes in Math.\ \textbf{20},
Springer-\kern-.5pt Verlag, New York, 1966.

\bibitem{Il} L.\,Illusie,
\emph{Conditions de finitude relatives},
Th\'eorie des Intersections et Th\'eor\`eme de Riemann-Roch (SGA\;6),
Lecture Notes in Math.\ \textbf{225},
Springer-\kern-.5pt Verlag, New York, 1971,  222-273.

\bibitem{IW} S.\,Iyengar, S.\,Sather-Wagstaff,
\emph{G-dimension over local homomorphisms. Applications to the
Frobenius endomorphism},
Illinois J.~Math.\ \textbf{48} (2004), 241--272.

\bibitem{Lp1} J.\,Lipman, \emph{Double-point resolutions of
deformations of rational singularities},
Compositio Math.\  \textbf{38} (1979), 37--42.

\bibitem{Lp2} J.\,Lipman, \emph{Notes on derived functors and
Grothendieck duality}, Foundations of Grothendieck Duality for
Diagrams of Schemes, Lecture Notes in Math.\ \textbf{1960},
Springer-\kern-.5pt Verlag, New York, 2009, 1--261.

\bibitem{Nk} S.\,Nayak, \emph{Pasting Pseudofunctors}, Contemp.~
Math. \textbf{375}, Amer.~Math.~Soc., Providence, RI, 2005,
pp.~195--271.

\bibitem{Nk2} S.\,Nayak,
\emph{Compactification for essentially finite-type maps},
Adv. Math. \textbf{222} (2009), 527--546.

\bibitem{VdB} M.~Van den Bergh,
\emph{Existence theorems for dualizing complexes over non-commutative 
graded and filtered ring}, J. Algebra \textbf{195} (1997), 662--679.

\bibitem{Ve} J.-L.~Verdier,
\emph{Base change for twisted inverse image of coherent sheaves},
Algebraic Geometry (Bombay, 1968),  Oxford Univ. Press
London, 1969, 393--408.

\bibitem{YZ} A.~Yekutieli, J.~Zhang,
\emph{Rigid dualizing complexes on schemes},
 \texttt{arXiv:math.AG/0405570}.

\bibitem{YZ1} A.~Yekutieli, J.~Zhang,
\emph{Rigid complexes via DG Algebras},
Trans. AMS \textbf{360} (2008), 3211--3248.

\bibitem{YZ2} A.~Yekutieli, J.~Zhang,
\emph{Rigid dualizing complexes over commutative rings},
Algebras and Represent. Theory \textbf{12} (2009), 19--52.

\end{thebibliography}
\end{document}